%
%
%

\documentclass[envcountsame,envcountchap]{svmono}


\usepackage{makeidx}         
\usepackage{graphicx}        
\usepackage{multicol}        
\usepackage[bottom]{footmisc}
\usepackage{amssymb,amsfonts,euscript}         

\makeindex             


\begin{document}

\author{Anna Karczewska}
\title{Convolution type stochastic Volterra equations  
}
\subtitle{-- Monograph --}
\maketitle

\frontmatter

\preface

This volume is the habilitation dissertation of the author written at the 
Faculty of Mathematics, Computer Science and Econometrics of the 
University of Zielona G\'ora. 

The aim of this work is to present, in self-contained form, results 
concerning fundamental and the most important questions related to 
linear stochastic Volterra equations of convolution type. 
The paper is devoted to study the existence and some kind of regularity of
solutions to stochastic Volterra equations in Hilbert space and the space of
tempered distributions, as well.

In recent years the theory of Volterra equations, particularly fractional ones,
has undergone a big development. 
This is an emerging area of research with
interesting mathematical questions and various important applications.
The increasing interest in these equations 
comes from their applications to problems from physics and engeenering,
particularly from viscoelasticity, heat conduction in materials with memory 
or electrodynamics with memory.

The paper is divided into four chapters. The first two of them have 
an introductory 
character and provide deterministic and stochastic tools needed to study 
existence of solutions to the equations considered and
their regularity. Chapter \ref{SVEHSch:1} is devoted to stochastic 
Volterra equations in a separable Hilbert space. 
In chapter \ref{SEEMDch:4} stochastic linear evolution equations in the space of
distributions are studied.

The work is based on some earlier papers of the author, however part of results 
is not yet published.

I wish to express my sincere gratitude to prof. Jerzy Zabczyk for 
introducing me into the world of stochastic Volterra equations and  
inspiring mathematical discussions. I am particularly grateful  
to prof. Carlos Lizama for fruitful joint research, exclusively through
internet connection, which results is a big part of this monograph.

\vspace{5mm}
\begin{flushright}\noindent
Zielona G\'ora, June 2007\hfill {\it Anna Karczewska}\\
\end{flushright}

\tableofcontents

\mainmatter

\chapter*{Introduction}\label{INTROch:0} 
\addcontentsline{toc}{chapter}{Introduction}

\section*{The main results in brief}\label{sIN1}

In the paper, two general problems concerning linear stochastic
evolution equations of convolution type are studied: existence of strong 
solutions to such stochastic Volterra equations in a Hilbert space
and regularity of solutions to two classes of stochastic Volterra equations 
in spaces of distributions.

First, we consider
Volterra equations in a separable Hilbert space $H$ of the form
\begin{equation}\label{e0.1}
 X(t) = X(0) + \int_0^t a(t-\tau)\, A\, X(\tau)\,d\tau 
	  + \int_0^t \Psi(\tau)\,dW(\tau), \quad t\geq 0,
\end{equation}
where  $X(0)\in H$, $a\in L_\mathrm{loc}^1(\mathbb{R}_+)$ is a scalar
kernel function and $A$ is a
closed linear unbounded operator with the dense domain $D(A)$ equipped with the
graph norm. In (\ref{e0.1}), $\Psi(t)$, $t\ge 0$ is an appropriate 
stochastic process and $W(t)$, $t\ge 0$ is a cylindrical Wiener process;
both processes are  defined on a stochastic basis $(\Omega, \mathcal{F}, 
(\mathcal{F}_t)_{t\ge 0},P)$.

Equation (\ref{e0.1}) arises, in the deterministic case, in a
variety of applications as model problems, see e.g.\ \cite{Pr93}, 
\cite{Ba01} and references therein. Well-known techniques
like localization, perturbation and coordinate transformation
allow to transfer results for such problems to 
integro-differential equations.  In these applications, the
operator $A$ typically is a differential operator acting in
spatial variables, like the Laplacian, the Stokes operator, or the
elasticity operator. The kernel function $a(t)$ should be thought as a
kernel like $ a(t) = e^{-\eta t} t^{\beta -1}/\Gamma (\beta); \eta
\geq 0, \beta \in (0,2)$. 
The stochastic approach to integral equations has been recently
used due to the fact that the level of accuracy for a given model
not always seems to be significantly changed with increasing model
complexity. 

Our main results concerning (\ref{e0.1}), rely essentially on
techniques using a strongly continuous family of operators $S(t),
t\geq 0$, defined on the space $H$ and called the {\tt
resolvent}.  
Hence, in what follows, we assume that the deterministic version of equation
(\ref{e0.1}) is {\tt well-posed}, that is, admits a resolvent
$S(t), t\geq 0$. 

The stochastic Volterra equations of the form (\ref{e0.1})
have been treated by many
authors, see e.g. \cite{CD96}, \cite{CD97}, \cite{CDP97},
\cite{CD00} or \cite{RS00}, \cite{RS01}. In the first
three papers stochastic Volterra equations are studied in connection with
viscoelasticity and heat conduction in materials with memory.
The paper due to Cl\'ement and Da Prato \cite{CD96} is 
particularly significant
because the authors have extended the well-known semigroup approach, applied
to stochastic differential equations, to a subclass of the
equation (\ref{e0.1}). 
In the next papers, weak and mild solutions to the equation (\ref{e0.1})
have been studied and some results like regularity of solutions or large
deviations of equations have been given. The resolvent approach to
stochastic Volterra equations, introduced in \cite{CD96}, enables us to obtain
new results in an elegant way, analogously like in semigroup case.
In resolvent case, new difficulties appear because the family $S(t)$,
$t\ge 0$, in general do not create a semigroup.

Our main results concerning the equations (\ref{e0.1}) in the space $H$ 
are the existence theorems of strong
solutions to some classes of such equations. 
We provide existence of strong solutions to (\ref{e0.1}) under different
conditions on the kernel function. In some cases, we arrive at stochastic
versions of fractional Volterra equations with corresponding $\alpha$-times
resolvent families $S_\alpha(t)$, $t\ge 0$.
The key role in our proofs is played by convergence of resolvent or 
$\alpha$-times resolvent families corresponding to deterministic
versions of Volterra equations. These convergence theorems are resolvent
analogies of the well-known Hille--Yosida theorem in semigroup case. 
Our convergence
results generalize theorems due to Cl\'ement and Nohel \cite{CN79} obtained for
contraction semigroups.
Having such resolvent analogies of the
Hille--Yosida theorem, we proved that the stochastic convolutions arising
in Volterra equations (\ref{e0.1}) are strong solutions to (\ref{e0.1}). 
\\

In the remaining part of the paper we study two classes of equations of
convolution type: the equation 
\begin{equation}\label{e0.2}
X(t,\theta)=X_0(\theta)+\int_0^tb(t-\tau)AX(\tau,\theta)d\tau
+ W_\Gamma (t,\theta),\quad t\ge0,\quad \theta\in \mathbb{R}^d,
\end{equation}
and the integro-differential stochastic equation with infinite delay
\begin{equation}{\label{e0.3}}
X(t, \theta) = \int_{-\infty}^t b(t-s)[ \Delta X(s, \theta) +
\dot{W}_{\Gamma} (s,\theta)] ds, \quad t
\geq 0, \quad \theta \in T^d,
\end{equation}
where $T^d$ is a $d$-dimensional torus. 
\index{$b(t)$}
The kernel function $b$ is integrable on $\mathbb{R}_+$ and the class of
operators $A$ contains the Laplace operator and its fractional powers.
In both equations (\ref{e0.2}) and 
(\ref{e0.3}), $W_\Gamma$ denotes a spatially homogeneous Wiener process, which
takes values in the space of tempered distributions $S'(\mathbb{R}^d)$, 
$d\ge 1$.

Equations (\ref{e0.2}) and (\ref{e0.3}) are generalizations of stochastic heat
and wave equations studied by many authors. Particularly, regularity problems of
these equations have attracted many authors. 
For an exhaustive bibliography we refer to \cite{Le01}.

In the paper, we consider existence of the solutions 
to (\ref{e0.2}) and (\ref{e0.3})
in the space $S'(\mathbb{R}^d)$ and next we derive conditions under which the
solutions to (\ref{e0.2}) and (\ref{e0.3}) take values in function spaces.

In the case of equation (\ref{e0.2}), the results have been obtained by using
the resolvent operators corresponding to Volterra equations. The regularity
results have been expressed in terms of the spectral measure $\mu$ and the 
covariance kernel $\Gamma$ of the Wiener process $W_\Gamma$. Moreover, we give
necessary and sufficient conditions for the existence of a limit measure to the
equation  (\ref{e0.2}).

In the case of the equation (\ref{e0.3}), we study a particular case of weak
solutions under the basis of an explicit representation of the solution to 
(\ref{e0.3}). The regularity results have been expressed in terms of the Fourier
coefficients of the space covariance $\Gamma$ of the process $W_\Gamma$.\\

\section*{A guided tour through the paper}

Chapter \ref{DVEch:1} has a preliminary character. Its goal is to introduce the
reader to the theory of deterministic Volterra equations in Banach space 
and to provide facts used in the paper. 
Section \ref{DVEsec:1} gives notations used in the paper and section
\ref{DVEsec:2} gives basic definitions connected with resolvent operators.
Sections \ref{DVEsec:3} and \ref{DVEsec:4} contain definitions and facts
concerning kernel functions, particularly regular ones for parabolic Volterra
equations. Some ideas are illustrated by examples.
Section \ref{DVEsec:5} provides new results due to Karczewska and Lizama
\cite{KL07c}, that is, the approximation theorems, Theorems \ref{DVEt1} 
and \ref{DVEt1a}, not yet published. 
These results are resolvent analogies of the Hille-Yosida
theorem in semigroup case and play the same role like the Hille-Yosida 
theorem does. These results 
concerning convergence of resolvents for the deterministic version of
the equation (\ref{e0.1}) in Banach space play the key role for existence
theorems for strong solutions and they are used in Chapter \ref{SVEHSch:1}.

Chapter \ref{PBch:2} contains concepts and results from the infinite
dimensional stochastic analysis recalled from well-known monographs \cite{CP78},
\cite{DZ92} and \cite{GT95}. Among others, we recall an infinite dimensional
versions of the Fubini theorem and the It\^{o} formula. Additionally, we recall a
construction, published in \cite{Ka98}, of stochastic integral with respect to
cylindrical Wiener process. The construction bases on
the Ichikawa idea for the stochastic integral with respect to classical 
infinite dimensional Wiener process and it is an alternative to the
construction given in \cite{DZ92}.

Chapter \ref{SVEHSch:1} contains the main results for stochastic Volterra 
equations in Hilbert space. In Section \ref{SVEHSsec:1} we introduce the
definitions of solutions to the equation (\ref{e0.1}) and formulate 
auxiliary results being
a framework for the main theorems. In Section \ref{SVEHSsec:2}
we prove existence of strong solutions for two classes of equation (\ref{e0.1}).
Basing on convergence of resolvents obtained in Chapter \ref{DVEch:1}, 
we can formulate Lemma \ref{pSW5} and Theorem \ref{coSW4} giving sufficient
conditions under which stochastic convolution corresponding to (\ref{e0.1})
is strong solution to the equation (\ref{e0.1}). Section \ref{SVEHSsec:3}
deals with the so-called fractional Volterra equations.
First, we prove using other tools than in Section \ref{DVEsec:5}, approximation
results,that is, Theorems \ref{th2f} and  \ref{th3af},
for $\alpha$-times resolvents corresponding to the fractional equations.
Next, we prove the existence of strong solutions to fractional 
equations. These results are formulated in Lemma \ref{pSW5f} and  
Theorem \ref{coSW4f}.
In Section \ref{c3s4Ex} we give several examples illustrating the class of
equations fulfilling conditions of theorems providing existence of strong
solutions.

In Chapter \ref{SEEMDch:4} we study regularity of two classes of stochastic
Volterra equations in the space of tempered distributions. Section \ref{PizSS2} 
has an introductory character. It contais notions and facts concerning
generalized and classical homogeneous Gaussian random fields needed in the sequel.
In Section \ref{SEEMTDsec:2} we introduce the stochastic integral in the space
of distributions. Then we formulate Theorem \ref{PizThDawson} which
characterizes the stochastic convolution corresponding to (\ref{e0.2}).
The main regularity results obtained in Section \ref{SEEMTDsec:2} are collected
in Theorems \ref{PizTh2}, \ref{PizTh3} and \ref{PizTh4}. These theorems give
sufficient conditions under which solutions to the equation (\ref{e0.2}) are
function-valued and even continuous with respect 
to the space variable. These conditions are
given in terms of the covariance kernel $\Gamma$ of the Wiener process 
$W_\Gamma$ and the spectral measure of $W_\Gamma$, as well. The results obtained
in this section are illustrated by several examples.
Section \ref{SEEMTDsec:3} is a natural continuation of the previous one. In 
this section we give necessary and sufficient conditions for the existence of a
limit measure to the equation (\ref{e0.2}) and then we describe all limit
measures to  (\ref{e0.2}). The main results of this section, that is Lemmas 
\ref{LiLl1} - \ref{LiLl2} and Theorems \ref{LiTh1} - \ref{LiTh2}, are in a sense 
analogous to those formulated in \cite[Chapter 6]{DZ96}, obtained for semigroup
case. 

Section \ref{SEEMTDsec:4} is devoted to regularity of solutions to the equation
(\ref{e0.3}). Here we study a particular case of weak solutions basing on an
explicit representation of the solution to (\ref{e0.3}). 
We find the expression for the solution in terms of the kernel $b$ and
next we reduce the questions of regularity of solutions  to
problems arising in harmonic analysis. Our main results of this section, that is
Theorem \ref{th3.4} and Proposition \ref{pr4.21}, provide necessary and sufficient
conditions under which solutions to (\ref{e0.3}) are function-valued. These
conditions are given in terms of the Fourier coefficients of the covariance
$\Gamma$ of the Wiener process $W_\Gamma$. Additionally, Section  
\ref{SEEMTDsec:4} contains some corollaries which are consequences of the main 
results.

\pagebreak

\section*{Bibliographical notes}\label{sBiblN}

Sections \ref{DVEsec:1}--\ref{DVEsec:4} contain introductory material which in a
similar form can be found in the mongraph \cite{Pr93}. Section \ref{DVEsec:5}
originates from \cite{KL07b} and \cite{KL07c}, the latter not yet published. \\

Sections \ref{PBsec:1}, \ref{PBsec:3} and \ref{PBs:4} contain classical
results recalled from \cite{CP78}, \cite{Ic82}, \cite{DZ92} and \cite{GT95}.
Section \ref{PBs:3} originates from \cite{Ka98}.\\

The results of Section \ref{SVEHSsec:1} come from \cite{Ka05}.
Section \ref{SVEHSsec:2} originates from \cite{KL07b} and \cite{KL07c}.
The content of Sections \ref{SVEHSsec:3} and \ref{c3s4Ex} can be found in
\cite{KL07d}.\\

Section \ref{PizSS2} contains material coming from \cite{GS64}, \cite{GV64},
\cite{Ad81} and \cite{PZ97}. The results of Section \ref{SEEMTDsec:2} come from
\cite{KZ00a}. Section \ref{SEEMTDsec:3} originates from \cite{Ka03}. 
The results of Section \ref{SEEMTDsec:4} can be found in \cite{KL07a}.

\chapter{Deterministic Volterra equations} \label{DVEch:1} 

This chapter contains  notations and concepts concerning 
Volterra equations used
throughout the monograph and collects some results necessary to make the work
self-contained.
The notations are standard and follow the book by Pr\"uss \cite{Pr93}.

Section \ref{DVEsec:1} has a preliminary character. 
In section \ref{DVEsec:2} we recall the definition of the resolvent family to
the deterministic Volterra equation and the concept of well-posedness connected
with the resolvent. Kernel functions, particularly $k$-regular ones, are
recalled in sections \ref{DVEsec:3} and \ref{DVEsec:4}.  The above mentioned
definitions and results are described and commented in detail in Pr\"uss'
monograph \cite{Pr93} and appropriate references therein.

Section \ref{DVEsec:5} originates from \cite{KL07c}. Theorems \ref{DVEt1} and 
\ref{DVEt1a}, yet non-published, are deterministic approximation theorems
which play a key role for existence of strong solutions to stochastic Volterra
equations. These approximation theorems are resolvent analogies to 
Hille-Yosida's theorem for semigroup and play the same role as that theorem.

\section{Notations and preliminaries} \label{DVEsec:1}

Let $B$\index{$B$}
be a complex Banach space\index{space!Banach}
 with the norm $|\cdot|$. We consider in $B$ 
the Volterra equation of the form
\begin{equation} \label{DVEe1}
  u(t) = \int_0^t a(t-\tau)\,Au(\tau) d\tau + f(t)\,.
\end{equation}
In (\ref{DVEe1}), 
 $a\in L_{\mathrm{loc}}^1(\mathbb{R}_+;\mathbb{R})$
 \index{$L_{\mathrm{loc}}^1(\mathbb{R}_+;\mathbb{R})$} 
 is a non-zero scalar kernel;
for abbreviation we will write  $a\in L_{\mathrm{loc}}^1(\mathbb{R}_+)$.
\index{$L_{\mathrm{loc}}^1(\mathbb{R}_+)$} $A$
\index{$A$}
\index{$a(t)$}
is a closed unbounded linear operator in $B$ with a dense domain 
$D(A)$\index{$D(A)$} and
$f$ is a continuous $B$-valued function. In the sequel we assume that the domain
$D(A)$ is equipped with the graph norm $|\cdot|_A$ of $A$, i.e.\ 
$|x|_A := |x|+|Ax|$ for $x\in D(A)$. Then $(D(A),|\cdot|_A)$ is a Banach space
because $A$ is closed (see e.g.\ \cite{EN00}) and it is continuously and 
densely embedded into $(B,|\cdot|)$.
\index{$\mid\cdot\mid_A$}

By $\sigma(A)$\index{$\sigma(A)$} and $\varrho(A)$  
we shall denote spectrum and resolvent set of
the operator $A$, respectively.
\index{$\varrho(A)$}

The equation (\ref{DVEe1}) includes a big class of equations and is an abstract
version of several deterministic problems, see e.g.\  \cite{Pr93}.
For example, if $a(t)=1$ and $f$ is a function of $C^1$-class, the equation 
(\ref{DVEe1}) is equivalent to the Cauchy problem 
$$ \dot{u}(t) = Au(t) + \dot{f}(t) \quad \mbox{with} \quad u(0)=f(0).$$
Analogously, in the case $a(t)=t\,$ and $f$ of $C^2$-class, the equation 
(\ref{DVEe1}) is equivalent to 
$$ \ddot{u}(t) = Au(t) + \ddot{f}(t) \quad \mbox{with initial conditions} \quad
 u(0)=f(0) \quad \mbox{and} \quad \dot{u}(0) = \dot{f}(0).$$
Several other examples of problems which lead to Volterra equation (\ref{DVEe1}) 
can be found in \cite[Section 5]{Pr93}.

In the first three chapters of the monograph we shall use the abbreviation
$$ (g\star h)(t) = \int_0^t g(t-\tau)h(\tau)d\tau, \quad t\ge 0, $$
for the convolution
\index{$g\star h$}
\index{convolution}
 of two functions $g$ and $h$.

In the paper we write $a(t)$ for the kernel function $a$. The notation $a(t)$
will mean the function and not the value of the function $a$ at $t$.
Such notation will allow to distinguish the function $a(t)$ and the article a.
\index{$a(t)$}

If a function $v\in L_\mathrm{loc}^1(\mathbb{R}_+;B)$ is of {\tt 
exponential growth}, i.e. \linebreak
 $\int_0^\infty e^{-\omega t} |v(t)|dt<+\infty$ for some $\omega\in\mathbb{R}$,
we can define  the {\tt Laplace transform} of the function~$v$ 
$$
  \hat{v} (\lambda) = \int_0^\infty e^{-\lambda t} v(t) dt, \qquad \mathrm{Re}
  \lambda \ge \omega.
$$
In the whole paper we shall denote by $\hat{v}$ the Laplace transform 
of the function $v$. \index{$\hat{v} (\lambda)$}

In the whole paper the operator norm will be denoted by $||\cdot ||$.
\index{$\parallel\cdot\parallel$}

\section{Resolvents and well-posedness} \label{DVEsec:2}

The concept of the resolvent is very important for the theory of linear 
Volterra equations. 
The so-called resolvent approach to the Volterra equation (\ref{DVEe1}) has
been introduced many years ago, probably by Friedman and Shinbrot \cite{FS67};  
recently the approach has been presented in detail in the great monograph 
by Pr\"uss \cite{Pr93}. 
The resolvent approach is a generalization of the semigroup approach.

By $S(t),~t\geq 0$, we shall denote the family of resolvent 
operators corresponding to
the Volterra equation (\ref{DVEe1}) and defined as follows.
\index{$S(t)$}

\begin{definition} \label{DVEd1} (see e.g.\ \cite{Pr93})\\
A family $(S(t))_{t\geq 0}$ of bounded linear operators in the space $B$ is
called {\tt resolvent} for (\ref{DVEe1}) if the following conditions are
satisfied:
\begin{enumerate}
\item $S(t)$ is strongly continuous on $\mathbb{R}_+$ and $S(0)=I$;
\item $S(t)$ commutes with the operator $A$, that is, $S(t)(D(A))\subset D(A)$
 and $AS(t)x=S(t)Ax$ for all $x\in D(A)$ and $t\geq 0$;
 \item the following {\tt resolvent equation} holds
\begin{equation} \label{DVEe2}
 S(t)x = x + \int_0^t a(t-\tau) AS(\tau)x d\tau 
\end{equation}
for all $x\in D(A),~t\geq 0$.
\end{enumerate}
\end{definition}

We shall assume that the equation (\ref{DVEe1}) is 
{\tt well-posed}\index{equation!well-posed} in the sense
that (\ref{DVEe1}) admits the resolvent $S(t),~t\geq 0$. (Precise definition of
well-posedness is given in \cite{Pr93}). That defintion is a direct extension of
well-posedness of Cauchy problems. The lack of well-posedness of (\ref{DVEe1})
leads to distribution resolvents, see e.g.\ \cite{DI84}.
\begin{proposition} \label{pr1Pr} (\cite[Proposition 1.1]{Pr93})
The equation (\ref{DVEe1}) is well-posed if and only if (\ref{DVEe1}) admits 
a resolvent $S(t)$. If this is the case then, in addition, \linebreak
the range $R(a\star S(t))\subset D(A)$ for all $t\ge 0$ and
\begin{equation} \label{deq2Pr}
 S(t)x = x + A \int_0^t a(t-\tau) S(\tau)x d\tau \quad \mbox{for~all} 
 \quad x\in H, ~t\ge 0.
\end{equation}
\end{proposition}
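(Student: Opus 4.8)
The plan is to prove the two implications of the equivalence separately, and to read the extra identity $(\ref{deq2Pr})$ directly off the resolvent axioms. Throughout I use that well-posedness of $(\ref{DVEe1})$, in the sense of \cite{Pr93}, means: for every $v\in D(A)$ the equation with constant forcing $f\equiv v$, i.e.\ $u=v+a\star Au$, has a unique strong solution $u(\cdot;v)$ — a function in $C(\mathbb{R}_+;B)$ with values in $D(A)$, with $Au(\cdot;v)$ continuous, satisfying that equation — and that solutions depend continuously on the data: $v_n\in D(A)$ and $v_n\to 0$ in $B$ imply $u(\cdot;v_n)\to 0$ uniformly on compact $t$-intervals. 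For the implication ``resolvent $\Rightarrow$ well-posed'' I would put $u(t;v):=S(t)v$ for $v\in D(A)$; by (i)--(iii) of Definition~\ref{DVEd1}, in particular by $AS(t)v=S(t)Av$ and the resolvent equation $(\ref{DVEe2})$, this is a strong solution of $u=v+a\star Au$. Uniqueness I would get by a convolution argument: if $w$ is a strong solution of the homogeneous equation, so $w=a\star Aw$ with $w(t)\in D(A)$, form $q:=S\star w$ and compute it twice — inserting $(\ref{DVEe2})$ for $x=w(\tau)$ and pulling $A$ outside the Bochner integrals (legitimate because $A$ is closed) gives $q(t)=\int_0^t w(\tau)\,d\tau+(a\star Aq)(t)$, while substituting $w=a\star Aw$ and using associativity of convolution with the scalar kernel gives $q=a\star Aq$; subtracting, $\int_0^t w(\tau)\,d\tau\equiv 0$, hence $w\equiv 0$ by continuity. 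Continuous dependence is then automatic, since $M_T:=\sup_{0\le t\le T}\|S(t)\|<\infty$ (strong continuity together with the uniform boundedness principle), so $\sup_{t\le T}|u(t;v_n)|\le M_T|v_n|\to 0$.

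For the converse, ``well-posed $\Rightarrow$ resolvent'', I would define $S(t)v:=u(t;v)$ on $D(A)$; uniqueness makes $v\mapsto S(t)v$ linear, and the continuous-dependence hypothesis is exactly sequential continuity at $0$ of the linear map $v\mapsto u(\cdot;v)$ from $(D(A),|\cdot|)$ into $C([0,T];B)$, hence its boundedness: $\sup_{t\le T}|S(t)v|\le M_T|v|$ for $v\in D(A)$. Density of $D(A)$ then lets each $S(t)$ extend uniquely to $S(t)\in\mathcal{L}(B)$ with $\|S(t)\|\le M_T$ on $[0,T]$, a $3\varepsilon$-argument promotes continuity of $t\mapsto u(t;v)$ to strong continuity of $S(\cdot)$ on $B$, and $S(0)v=u(0;v)=v$ gives $S(0)=I$. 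The resolvent equation $(\ref{DVEe2})$ for $x\in D(A)$ is just the defining integral equation of $u(\cdot;v)$, and $S(t)D(A)\subset D(A)$ because strong solutions are $D(A)$-valued.

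The genuinely delicate point — where I expect the work to concentrate — is the commutation $AS(t)v=S(t)Av$ for $v\in D(A)$, because applying $A$ directly to $(\ref{DVEe2})$ only reproduces that identity. My route: (a) since $\tau\mapsto S(\tau)x=u(\tau;x)$ is continuous into $(D(A),|\cdot|_A)$ for $x\in D(A)$, closedness of $A$ turns $(\ref{DVEe2})$ into $(\ref{deq2Pr})$ for such $x$; approximating $x\in B$ by $x_n\in D(A)$ with $A(a\star S(\cdot)x_n)(t)=S(t)x_n-x_n\to S(t)x-x$ and using closedness extends $(\ref{deq2Pr})$ to all $x\in B$. (b) For fixed $v\in D(A)$, both $\varphi:=Au(\cdot;v)$ — here $(a\star Au(\cdot;v))(t)=u(t;v)-v\in D(A)$ — and $\eta:=S(\cdot)Av$ — by $(\ref{deq2Pr})$ with $x=Av$ — satisfy $y=Av+A(a\star y)$ with $a\star y$ taking values in $D(A)$, i.e.\ they are mild solutions with the same forcing. (c) Mild solutions of the homogeneous equation are unique: if $z=A(a\star z)$ then $w:=a\star z$ satisfies $w(t)\in D(A)$, $Aw=z$ (continuous) and $w=a\star Aw$, so $w$ is a strong solution of $w=a\star Aw$; by the uniqueness in the well-posedness hypothesis $w\equiv 0$, whence $z=Aw\equiv 0$. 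Thus $\varphi=\eta$, i.e.\ $AS(t)v=S(t)Av$, and Definition~\ref{DVEd1} is complete.

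Finally, the additional assertion holds whenever a resolvent exists, by the computation in step~(a) carried out from the axioms alone: for $x\in D(A)$, $(\ref{DVEe2})$ and continuity of $\tau\mapsto S(\tau)x$ into $(D(A),|\cdot|_A)$ (from (i)--(ii)) let one pull $A$ out of $\int_0^t a(t-\tau)AS(\tau)x\,d\tau$, giving $R(a\star S(\cdot)x)\subset D(A)$ and $(\ref{deq2Pr})$; density of $D(A)$ in $B$, the bound $\|S(t)\|\le M_T$, and closedness of $A$ then propagate both to every $x\in B$. The routine part here is just bookkeeping with Bochner integrals and closed operators; the real obstacle, as noted, is the commutation property, whose proof rests on the observation that $z\mapsto a\star z$ turns a mild solution into a strong one and thereby reduces uniqueness of mild solutions to uniqueness of strong ones.
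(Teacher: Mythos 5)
Your argument is correct, and it is essentially the classical proof of this result in the cited source \cite[Proposition 1.1]{Pr93} (the paper itself only quotes the statement): the convolution identity for $S\star w$ to get uniqueness of strong solutions, the definition $S(t)v:=u(t;v)$ extended by density using continuous dependence, and the reduction of the commutation $AS(t)v=S(t)Av$ to uniqueness of mild solutions via the observation that $a\star z$ upgrades a mild solution to a strong one. No gaps.
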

\noindent{\bf Comment} 
Let us emphasize that the resolvent $S(t),~t\geq 0$,
is determined by the operator $A$ and the function $a(t)$.
Moreover, as a consequence of the strong continuity of $S(t)$ we have
for any $T>0$
\begin{equation} \label{DVEe3}
 \sup_{t\leq T} ||S(t)||<+\infty \;.
\end{equation}

Suppose $S(t)$ is the  resolvent for (\ref{DVEe1}) and let 
$-\mu\in\sigma(A)$ be an eigenvalue of $A$ with eigenvector $x\neq 0$.
Then
\begin{equation} \label{DVEe4}
 S(t)x=s(t;\mu)x, \qquad t\geq 0,
\end{equation}
where $s(t;\mu)$ is the solution of the one-dimensional Volterra equation
\begin{equation} \label{DVEe5}
 s(t;\mu)+\mu \int_0^t a(t-\tau)s(\tau;\mu)d\tau = 1, \qquad t\geq 0.
\end{equation}
\index{$s(t;\mu)$}

By $W_\mathrm{loc}^{1,p}(\mathbb{R}_+;B)$ we denote the Sobolev space of
order $(1,p)$ of Bochner locally $p$-integrable functions acting from  
$\mathbb{R}_+$ into the space $B$, see e.g.\ \cite{EN00}.

\begin{definition}\label{DVEd2a}
A resolvent  $S(t)$, for the equation (\ref{DVEe1}),
is called 
{\tt diffe\-rentiable} if $S(\cdot )x\in
W_\mathrm{loc}^{1,1}(\mathbb{R}_+;B)$ for any $x\in D(A)$ and 
there \linebreak exists a function $\varphi \in L_\mathrm{loc}^1(\mathbb{R}_+)$ 
such that $|\dot{S}(t)x| \le \varphi (t)|x|_A$ a.e.\ on $\mathbb{R}_+$,
for every $x\in D(A)$.
\end{definition}
\index{resolvent!differentiable}
\index{$W_\mathrm{loc}^{1,1}(\mathbb{R}_+;B)$}

Similarly, if $S(t)$ is differentiable then
\begin{equation} \label{DVEe6}
 \dot{S}(t)x=\mu r(t;\mu)x, \qquad t\geq 0,
\end{equation}
where $r(t;\mu)$ is the solution of the one-dimensional equation
\begin{equation} \label{DVEe7}
 r(t;\mu)+\mu \int_0^t a(t-\tau)r(\tau;\mu)d\tau = a(t), \qquad t\geq 0.
\end{equation}
\index{$r(t;\mu)$}

In some special cases the functions $~s(t;\mu)~$ and $~r(t;\mu)~$ may be found
explicitely. For example, for $a(t)=1$, we have $s(t;\mu)=r(t;\mu)=e^{-\mu t}$,
$~t\ge 0, ~\mu\in\mathbb{C}$. For  $a(t)=t$, we obtain 
$s(t;\mu)=\cos (\sqrt{\mu}t)$, $~r(t;\mu)=\sin (\sqrt{\mu}t)/\sqrt{\mu}$,
$~t\ge 0, ~\mu\in\mathbb{C}$.

\begin{definition}\label{DVEd2}
Suppose $S(t),~t\geq 0$, is a resolvent for (\ref{DVEe1}). $S(t)$
is called {\tt exponentially bounded} if there are constants
$M\geq 1$ and $\omega\in\mathbb{R}$ such that
$$ ||S(t)|| \leq M\,e^{\omega t}, \mbox{~~for all~~} t\geq 0. $$
$(M,\omega)$ is called a {\tt type} of $S(t)$.
\end{definition}
\index{$(M,\omega)$ - type of S(t)}
\index{resolvent!exponentially bounded}

Let us note that in contrary to the case of semigroups, not every
resolvent needs to be exponentially bounded even if the kernel
function $a(t)$ belongs to $L^1(\mathbb{R}_+)$. The Volterra equation
version of the Hille--Yosida theorem (see e.g.\ \cite[Theorem
1.3]{Pr93}) provides the class of equations that admit
exponentially bounded resolvents. An important class of kernels providing
such class of resolvents are $a(t)=t^{\alpha-1}/\Gamma(\alpha), ~ \alpha\in
(0,2)$. For details, counterexamples and comments we refer to \cite{DP93}.
\vspace{-2mm}

\section{Kernel functions} \label{DVEsec:3}

Two classes of kernel functions defined below play a prominent role in the
theory of Volterra equations.

\begin{definition} \label{DVEd3a} 
A $C^\infty$-function $a:(0,+\infty)\rightarrow \mathbb{R}$ is called 
{\tt completely monotonic}\index{function!completely monotonic}
if $(-1)^n a^{(n)}(t)\ge 0$ for all $t>0$, $n\in \mathbb{N}$.
\end{definition}

\begin{definition} \label{DVEd3}
We say that function $a\in L^1(0,T)$ is 
{\tt completely positive}\index{function!completely positive}
on $[0,T]$ if for any $\mu\geq 0$, the solutions of the convolution
equations (\ref{DVEe5}) and (\ref{DVEe7})  
satisfy $s(t;\mu)\geq 0$ and $ r(t;\mu) \geq 0 $ on $[0,T]$, respectively.
\end{definition}

We recall that if $a\in L_\mathrm{loc}^1(\mathbb{R}_+)$ is completely positive,
then $s(t;\mu)$, the solution to (\ref{DVEe5}), is nonnegative and nonincreasing
for any $t\ge 0, ~\mu\ge 0$. In the consequence, one has  
$0\le s(t;\mu)\le 1$. This is a
special case of the result due to \cite{Fr63}.

Kernels with this property have been introduced by
Cl\'{e}ment and Nohel \cite{CN79}. We note that the class of
completely positive kernels appears quite naturally in applications,
particularly in the theory of
viscoelasticity. Several properties and examples of such kernels
appear in \cite[Section 4.2]{Pr93}.

\noindent{\bf Examples} 1.  
Let $\displaystyle a(t)=
\frac{t^{\alpha-1}}{\Gamma(\alpha)}$, $~\alpha >0$, where $\Gamma$ is the gamma
function. For $\alpha\in (0,1]$, function $a(t)$ is completely monotonic and 
completely positive.

2. Another example of completely positive function is $a(t)=e^{-t}, ~t\ge 0$.
An easy computation shows that then 
$s(t;\mu)=(1+\mu)^{-1}[1+\mu\,e^{-(1+\mu)t}]$, for $t,\mu > 0$.
\vspace{-1mm}

\section{Parabolic equations and regular kernels} \label{DVEsec:4}

This section is devoted to the so-called parabolic Volterra equations defined
by Pr\"uss \cite{Pr91}.

Let $B$ be a complex Banach space and 
$$ \sum (\omega,\theta):=\{ \lambda\in\mathbb{C} : 
|arg(\lambda -\omega)|<\theta \}.$$

\begin{definition}\label{d2.1Pr} (\cite[Definition 2.1]{Pr93})
 A resolvent\index{resolvent!analytic}
 $S(t)$ for (\ref{DVEe1}) is called {\tt analytic}, if the function
 $S(\cdot): \mathbb{R}_+ \rightarrow L(B)$ admits analytic extension to a sector
 $\sum (0,\theta_0)$ for some $0<\theta_0 <\pi/2$. An analytic resolvent $S(t)$ 
 is said to be of {\tt analyticity type}\index{analyticity type}
 $(\omega_0,\theta_0)$ if for each 
 $\theta <\theta_0$ and $\omega >\omega_0$ there is $M=M(\omega,\theta)$ such
 that
 \begin{equation}\label{eq2.2Pr}
  ||S(z)||\le M\,e^{\omega \mathrm{Re}z}, \qquad z\in \sum (0,\theta_0).
 \end{equation}
\end{definition}

\begin{corollary}\label{c2.1Pr} (\cite[Corollary 2.1]{Pr93})
Suppose $S(t)$ is an analytic resolvent for (\ref{DVEe1}) of analyticity type 
$(\omega_0,\theta_0)$. Then for each $\omega >\omega_0$ 
and $\theta < \theta_0$ there is 
$M=M(\omega,\theta)$ such that 
 \begin{equation}\label{eq2.3Pr}
  ||S^{(n)}(t)||\le M\,n! \,e^{\omega t(1+\alpha)}(\alpha t)^{-n}, 
  \quad t>0,~~ n\in\mathbb{N},
 \end{equation}
where $\alpha=\sin \theta$.
\end{corollary}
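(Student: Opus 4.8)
\medskip

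\noindent\textbf{Approach.} The plan is to reduce the estimate on $S^{(n)}(t)$ to the analyticity bound \eqref{eq2.2Pr} by means of the Cauchy integral formula for the analytic extension of $S(\cdot)$ on the sector $\sum(0,\theta_0)$. This is the classical argument used to pass from a bound on an analytic function to bounds on its derivatives: once $S(\cdot)$ is holomorphic on a sector and grows at most like $e^{\omega\,\mathrm{Re}\,z}$ there, the $n$-th derivative at a real point $t>0$ can be recovered by integrating $S(z)/(z-t)^{n+1}$ around a circle centred at $t$ that stays inside the sector, and the radius of that circle can be taken proportional to $t$ precisely because the sector has opening angle $2\theta_0$.

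\medskip

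\noindent\textbf{Key steps.} First I would fix $\omega>\omega_0$ and $\theta<\theta_0$, and invoke Definition \ref{d2.1Pr} to get the constant $M_0=M(\omega,\theta)$ with $\|S(z)\|\le M_0 e^{\omega\,\mathrm{Re}\,z}$ for $z$ in the subsector $\sum(0,\theta)$ (I shrink to $\theta$ so the Cauchy contour has room). Second, for $t>0$ I would apply the operator-valued Cauchy formula
\[
  S^{(n)}(t)=\frac{n!}{2\pi i}\int_{|z-t|=r} \frac{S(z)}{(z-t)^{n+1}}\,dz,
\]
choosing $r=r(t)=t\sin\theta=\alpha t$, which is exactly the largest radius for which the disc $\{|z-t|\le r\}$ is contained in the closed sector $\overline{\sum(0,\theta)}$ (elementary plane geometry: the distance from the real point $t$ to the boundary ray $\arg z=\pm\theta$ equals $t\sin\theta$). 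Third, I would estimate the integrand: on the circle $|z-t|=\alpha t$ one has $\mathrm{Re}\,z\le t+\alpha t=t(1+\alpha)$, hence $\|S(z)\|\le M_0 e^{\omega t(1+\alpha)}$, while $|z-t|^{-(n+1)}=(\alpha t)^{-(n+1)}$, and the length of the contour is $2\pi\alpha t$. Multiplying through by $n!/(2\pi)$ gives
\[
  \|S^{(n)}(t)\|\le \frac{n!}{2\pi}\cdot 2\pi\alpha t\cdot M_0 e^{\omega t(1+\alpha)}(\alpha t)^{-(n+1)}
  = M_0\,n!\,e^{\omega t(1+\alpha)}(\alpha t)^{-n},
\]
which is the claimed bound with $M=M_0$.

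\medskip

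\noindent\textbf{Main obstacle.} The only genuinely delicate point is justifying the operator-valued Cauchy differentiation formula and the strong/uniform continuity needed to differentiate under the integral and to bound the Bochner integral by the supremum of the integrand times the contour length; but since $S(\cdot)$ is by hypothesis analytic as an $L(B)$-valued function on the sector, the standard vector-valued complex analysis (Cauchy's theorem and its consequences hold verbatim for Banach-space-valued holomorphic functions) applies, and the estimate $\|\int f\| \le \mathrm{length}\cdot\sup\|f\|$ is immediate. The geometric verification that the disc of radius $\alpha t = t\sin\theta$ about $t$ fits inside $\sum(0,\theta)$ is the substantive observation that makes the $t$-dependence come out as $(\alpha t)^{-n}$ rather than something weaker, so I would state it carefully, but it is elementary. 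I would also remark that the factor $e^{\omega t(1+\alpha)}$ can be slightly improved, but the stated form suffices for the applications in the sequel.
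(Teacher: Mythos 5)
Your proof is correct and is the standard argument: the paper states this corollary without proof (citing Pr\"uss, Corollary 2.1), and the proof there is exactly your Cauchy-integral estimate on the disc of radius $t\sin\theta$ centred at $t$, which is the largest disc fitting inside the subsector where the bound (\ref{eq2.2Pr}) is available. The only point worth making explicit is that to have the bound on the \emph{closed} disc of radius $t\sin\theta$ one should either invoke continuity of the analytic extension up to the rays $\arg z=\pm\theta$ (available since $S$ is analytic on the strictly larger sector) or work with an intermediate angle $\theta<\theta''<\theta_0$; either fix is routine and does not affect the constants.
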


Analytic resolvents, the analog of analytic semigroups for Volterra equations,
have been introduced by Da Prato and Iannelli \cite{DI80}. Analogously like in
the theory of analytic semigroups, a characterization of analytic resolvents in
terms of the spectrum of the operator $A$ and the Laplace transform of the
kernel function $a(t)$ is possible.

\begin{theorem}\label{t2.1Pr} (\cite[Theorem 2.1]{Pr93})
 Let $A$ be a closed unbounded operator in $B$ with dense domain $D(A)$ and let
 $a\in L_\mathrm{loc}^1(\mathbb{R}_+)$ satisfy $\int_0^t |a(t)|\,e^{-\omega_a
 t}dt<\infty$ for some $\omega_a\in\mathbb{R}$. Then (\ref{DVEe1}) admits an 
 analytic resolvent $S(t)$ of analyticity type $(\omega_0,\theta_0)$ iff the
 following conditions hold:
 \begin{description}
  \item[1)] $\hat{a}(\lambda)$ admits meromorphic extension to 
    $\sum (\omega_0,\theta_0+\pi/2)$;
  \item[2)] $\hat{a}(\lambda)\neq 0$, and
  $1/\hat{a}(\lambda)\in\varrho(A)$ for all 
  $\lambda\in\sum (\omega_0,\theta_0+\pi/2)$;
  \item[3)] For each $\omega >\omega_0$ and $\theta <\theta_0 $ there is
  a constant $C=C(\omega,\theta)$ such that 
  $H(\lambda):=(1/\hat{a}(\lambda)-A)^{-1}/(\lambda\,\hat{a}(\lambda))$ 
  satisfies estimate
  \begin{equation}\label{e2.10Pr}
   ||H(\lambda)|| \le C/|\lambda-\omega| \quad \mbox{for all} \quad 
   \lambda \in sum (\omega,\theta+\pi/2).
\index{$H(\lambda)$}   
  \end{equation}
 \end{description}
\end{theorem}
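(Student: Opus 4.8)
The statement to be proved is Theorem~\ref{t2.1Pr}, the characterization of analytic resolvents via meromorphic extension of $\hat a$ and the resolvent estimate on $H(\lambda)$. The plan is to mimic the classical Hille--Yosida--type argument for analytic semigroups, transferring it to the resolvent setting through the Laplace transform and the contour-integral (Dunford) representation of $S(t)$.

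\emph{Necessity.} Suppose $S(t)$ is an analytic resolvent of type $(\omega_0,\theta_0)$. Laplace transforming the resolvent equation (\ref{DVEe2}) gives, at least formally, $\widehat S(\lambda)(I-\hat a(\lambda)A)x = x/\lambda$ for $x\in D(A)$, so that $\widehat S(\lambda) = \frac1\lambda(I-\hat a(\lambda)A)^{-1} = H(\lambda)$ wherever the inverse exists. First I would use the analytic extension of $S(\cdot)$ to the sector $\sum(0,\theta_0)$ together with the exponential bound (\ref{eq2.2Pr}) to show that $\lambda\mapsto\int_0^\infty e^{-\lambda t}S(t)x\,dt$ extends holomorphically to $\sum(\omega_0,\theta_0+\pi/2)$ — rotating the ray of integration into the sector of analyticity is what buys the extra angle $\pi/2$, and Cauchy's theorem plus (\ref{eq2.2Pr}) justify the rotation and give the bound $\|\widehat S(\lambda)\|\le C/|\lambda-\omega|$. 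From $\widehat S(\lambda)=H(\lambda)$ one then reads off that $1/\hat a(\lambda)\in\varrho(A)$ and that $\hat a$ has the required meromorphic extension (poles of $\hat a$ correspond to the points where $1/\hat a=0$, i.e. where $H$ reduces to $-A^{-1}/(\lambda\hat a)$, which must still be bounded), yielding 1)--3).

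\emph{Sufficiency.} Conversely, assume 1)--3). I would \emph{define} the candidate resolvent by the inversion integral
\begin{equation*}
 S(t)x := \frac{1}{2\pi i}\int_{\Gamma} e^{\lambda t}\,H(\lambda)x\,d\lambda,
 \qquad t>0,\quad S(0)x:=x,
\end{equation*}
where $\Gamma$ is a suitable contour in $\sum(\omega_0,\theta_0+\pi/2)$ running from $\infty e^{-i(\theta+\pi/2)}$ to $\infty e^{i(\theta+\pi/2)}$ for some $\omega>\omega_0$, $\theta<\theta_0$. The estimate (\ref{e2.10Pr}) makes the integral norm-convergent and, by deforming $\Gamma$ and differentiating under the integral sign, shows that $t\mapsto S(t)$ extends analytically to $\sum(0,\theta_0)$ with the bound (\ref{eq2.2Pr}); the standard parametrization (splitting $\Gamma$ into two rays and a small arc, optimizing the arc radius as $1/t$) gives the type estimate. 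Then I must verify the three defining properties of Definition~\ref{DVEd1}: strong continuity with $S(0)=I$ follows from a contour-shift argument together with $H(\lambda)\sim \lambda^{-1}I$ as $|\lambda|\to\infty$ (since $\hat a(\lambda)A\,H(\lambda)\to 0$); commutation with $A$ is immediate from the commutation of $H(\lambda)=(1/\hat a(\lambda)-A)^{-1}/(\lambda\hat a(\lambda))$ with $A$ on $D(A)$; and the resolvent equation (\ref{DVEe2}) is obtained by Laplace-transforming both sides — one checks $\widehat{S}(\lambda)=H(\lambda)$ satisfies $\widehat S(\lambda)=\frac1\lambda I+\hat a(\lambda)A\widehat S(\lambda)$, which is just the algebraic identity $(1/\hat a - A)H = (1/(\lambda\hat a))I$, and then invokes uniqueness of the Laplace transform.

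\emph{Main obstacle.} The delicate point is the interchange between ``$S$ analytic and exponentially bounded on a sector'' and ``$H=\widehat S$ holomorphic with a $1/|\lambda-\omega|$ bound on the enlarged sector,'' i.e.\ making the contour rotations rigorous in both directions: in the necessity part one must justify rotating the Laplace integral into the sector of analyticity (controlling the contributions from the connecting arcs at infinity using only (\ref{eq2.2Pr})), and in the sufficiency part one must justify the deformations of $\Gamma$ used to prove analyticity, strong continuity at $t=0^+$, and the resolvent equation, all of which hinge on the sharp decay (\ref{e2.10Pr}) and on the behavior of $H(\lambda)$ as $|\lambda|\to\infty$ along rays of argument close to $\pm(\theta+\pi/2)$. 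Handling the \emph{meromorphic} (rather than holomorphic) nature of $\hat a$ — so that $H$ may have isolated singularities coming from zeros of $\hat a$ — requires checking that at such points $H$ nonetheless stays bounded (the factor $1/\hat a(\lambda)$ blows up but $(1/\hat a(\lambda)-A)^{-1}\sim -\hat a(\lambda)A^{-1}\hat a(\lambda)^{-1}\cdot(\dots)$ compensates), so that $\Gamma$ can be chosen to avoid them and the contour integral is unaffected. Everything else is routine once these contour-analytic estimates are in place; the full details are in \cite[Theorem~2.1]{Pr93}.
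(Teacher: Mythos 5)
The paper does not prove this theorem at all: it is quoted verbatim from Pr\"uss \cite[Theorem 2.1]{Pr93} as background material, so there is no in-paper argument to compare against. Your sketch — necessity via rotating the Laplace integral of the analytic, exponentially bounded $S(\cdot)$ into the sector $\sum(0,\theta_0)$ to gain the extra opening $\pi/2$ and read off $\widehat S=H$ with the $C/|\lambda-\omega|$ bound, and sufficiency via the complex inversion integral over a contour in $\sum(\omega,\theta+\pi/2)$, checking $S(0)=I$ from $\lambda H(\lambda)\to I$ and the resolvent equation from the algebraic identity for $H$ — is exactly the standard argument of the cited source and is correct in outline.
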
 

Typical examples of the kernel functions $a(t)$ and the operator $A$ fulfilling
conditions of Theorem \ref{t2.1Pr} are the following.\\

\noindent{\bf Examples} 1. 
Let kernels be $a(t)= t^{\beta-1}/\Gamma(\beta)$, $t>0$, where $\beta\in
(0,2)$ and $\Gamma$ denotes the gamma function.
The pair $(t^{\beta-1}/\Gamma(\beta),A)$ generates a bounded analytic
resolvent iff $\varrho(A)\supset \sum (0,\beta\pi/2)$ and 
$||\mu(\mu-A)^{-1}||\le M$ for all $\mu\in\sum (0,\beta\pi/2)$.

2. An important class of kernels $a(t)$ which satisfy the above conditions
 is the class of completely monotonic kernels.
By \cite[Corollary 2.4]{Pr93} if additionally $a\in C(0,\infty)\cap L^1(0,1)$ 
and $A$ generates
an analytic semigroup $T(t)$ such that $||T(t)||\le M$ on $\Sigma (0,\theta)$
then (\ref{DVEe1}) admits an analytic resolvent $S(t)$ of type $(0,\theta)$.\\

Parabolic Volterra equations appear in a context of Volterra equations 
admiting analytical resolvents.

\begin{definition}\label{d3.1Pr}
 Equation (\ref{DVEe1}) is called {\tt parabolic},\index{equation!parabolic} 
 if the following conditions hold:
 \begin{enumerate}
 \item $\hat{a}(\lambda)\neq 0$, $1/\hat{a}(\lambda)\in\varrho(A)$ for all
 $\mathrm{Re}\, \lambda >0$.
 \item There is a constant $M\ge 1$ such that 
 $H(\lambda)=(I-\hat{a}(\lambda)A)^{-1}/\lambda$ satisfies 
 $||H(\lambda)||\le M/|\lambda|$ for all $\mathrm{Re}\, \lambda >0$. 
 \end{enumerate}
\end{definition}
From the resolvent point of view, the
concept of parabolicity is between the bounded and the analytic
resolvents: if (\ref{DVEe1}) admits an analytic resolvent $S(t)$
then (\ref{DVEe1}) is parabolic. On the other hand, 
if the equation (\ref{DVEe1}) is parabolic and the kernel function $a(t)$  has some
properties, like convexity, then the resolvent corresponding to (\ref{DVEe1})
has, roughly speaking, similar properties like analytic resolvent.

\begin{definition}\label{d3.2Pr}
 Let $a\in L_\mathrm{loc}^1(\mathbb{R}_+)$ be of subexponential growth and
 suppose $\hat{a}(\lambda)\neq 0$ for all $\mathrm{Re}\, \lambda >0$.
 The function  
 $a(t)$ is called {\tt sectorial with angle} $\theta >0$ (or merely 
 $\theta$-{\tt sectorial})\index{function!sectorial} 
 if $|\arg \hat{a}(\lambda)|\le \theta$ for all  $\mathrm{Re}\, \lambda >0$.
\end{definition}
The standard situation leading to parabolic equations is provided by sectorial 
kernels and some closed linear densely defined operators $A$.
 
 The following criteria provide parabolic equations. 
 
\begin{proposition}\label{pr3.1Pr} (\cite[Proposition 3.1]{Pr93}) 
Let $a\in L_\mathrm{loc}^1(\mathbb{R}_+)$ be
$\theta$-sectorial for some $\theta <\pi$, suppose $A$ is closed linear densely
defined, such that $\varrho(A)\supset \sum (0,\theta)$, and 
$||(\mu-A)^{-1}||\le M/|\mu|$ for all $\mu\in \sum (0,\theta)$.
Then (\ref{DVEe1}) is parabolic.
\end{proposition}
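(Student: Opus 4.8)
The plan is to verify directly the two conditions in Definition \ref{d3.1Pr} of parabolicity, namely that $\hat a(\lambda)\neq 0$ with $1/\hat a(\lambda)\in\varrho(A)$ for all $\mathrm{Re}\,\lambda>0$, and that $H(\lambda)=(I-\hat a(\lambda)A)^{-1}/\lambda$ satisfies $\|H(\lambda)\|\le M/|\lambda|$ on the same half-plane. The starting observation is that $\theta$-sectoriality of $a$ means precisely that for every $\lambda$ with $\mathrm{Re}\,\lambda>0$ the complex number $\hat a(\lambda)$ is nonzero and lies in the open sector $\Sigma(0,\theta)=\{z\neq 0:|\arg z|<\theta\}$ (here one uses $\theta<\pi$ so that the sector is a genuine proper subsector of $\mathbb{C}\setminus\{0\}$). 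Consequently $1/\hat a(\lambda)$ is also a nonzero complex number with $|\arg(1/\hat a(\lambda))|=|\arg\hat a(\lambda)|\le\theta$, i.e.\ $1/\hat a(\lambda)\in\Sigma(0,\theta)$ as well. By the hypothesis $\varrho(A)\supset\Sigma(0,\theta)$, this already gives $1/\hat a(\lambda)\in\varrho(A)$, which is condition 1.

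For condition 2, I would write $\mu:=1/\hat a(\lambda)$ and express $H(\lambda)$ in terms of the resolvent of $A$ at $\mu$. Since
\[
 I-\hat a(\lambda)A=\hat a(\lambda)\bigl(\mu I-A\bigr),
\]
one gets $(I-\hat a(\lambda)A)^{-1}=\hat a(\lambda)^{-1}(\mu-A)^{-1}=\mu\,(\mu-A)^{-1}$, and therefore
\[
 H(\lambda)=\frac{1}{\lambda}\,(I-\hat a(\lambda)A)^{-1}=\frac{\mu}{\lambda}\,(\mu-A)^{-1}.
\]
Now apply the resolvent bound $\|(\mu-A)^{-1}\|\le M/|\mu|$, valid because $\mu\in\Sigma(0,\theta)$: this yields $\|H(\lambda)\|\le \frac{|\mu|}{|\lambda|}\cdot\frac{M}{|\mu|}=\frac{M}{|\lambda|}$ for all $\mathrm{Re}\,\lambda>0$. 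That is exactly the estimate required in Definition \ref{d3.1Pr}, with the same constant $M\ge 1$, so \eqref{DVEe1} is parabolic.

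The argument is essentially bookkeeping once the right identifications are made; the only genuine point requiring care — the ``main obstacle,'' such as it is — is the first step: correctly reading off from Definition \ref{d3.2Pr} that $\theta$-sectoriality places $\hat a(\lambda)$ (hence its reciprocal) inside the sector $\Sigma(0,\theta)$, so that both hypotheses on $A$ (the location of the resolvent set and the resolvent estimate) are applicable at the point $\mu=1/\hat a(\lambda)$. One should also note in passing that the assumption that $a$ is of subexponential growth guarantees $\hat a(\lambda)$ is well defined and analytic on $\mathrm{Re}\,\lambda>0$, so the manipulations above make sense; no further regularity of $a$ is needed.
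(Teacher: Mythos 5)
Your argument is the standard one (the paper itself gives no proof of this proposition --- it is quoted verbatim from Pr\"uss, Proposition 3.1 --- so there is nothing in the text to compare against), and the algebra is right: with $\mu=1/\hat a(\lambda)$ one has $I-\hat a(\lambda)A=\hat a(\lambda)(\mu-A)$, hence $H(\lambda)=\mu(\mu-A)^{-1}/\lambda$ and the bound $\|H(\lambda)\|\le M/|\lambda|$ follows at once from $\|(\mu-A)^{-1}\|\le M/|\mu|$.

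The one place where you overstate is the opening claim that $\theta$-sectoriality ``means precisely'' that $\hat a(\lambda)$ lies in the \emph{open} sector. Definition \ref{d3.2Pr} only gives the non-strict inequality $|\arg\hat a(\lambda)|\le\theta$, whereas the hypotheses on $A$ (both $\varrho(A)\supset\Sigma(0,\theta)$ and the resolvent bound) are stated on the open sector $\{|\arg\mu|<\theta\}$. If $|\arg\hat a(\lambda_0)|=\theta$ for some $\lambda_0$, your argument as written does not apply at $\mu=1/\hat a(\lambda_0)$. This is a one-line repair, by either of two routes: (i) since $a$ is of subexponential growth, $\hat a(\lambda)\to0$ as $\lambda\to+\infty$, so $\hat a$ is a non-constant analytic function on the open right half-plane; by the open mapping theorem its range is open, and an interior point of the range with $|\arg|=\theta$ would force range points with $|\arg|>\theta$, contradicting sectoriality --- hence in fact $|\arg\hat a(\lambda)|<\theta$ strictly; or (ii) the estimate $\|(\mu-A)^{-1}\|\le M/|\mu|$ on the open sector extends to its closure minus the origin, because for $\mu_n\to\mu_0\ne0$ inside the sector one has $\mathrm{dist}(\mu_n,\sigma(A))\ge|\mu_n|/M$, which keeps $\mu_0$ out of $\sigma(A)$ and preserves the bound in the limit. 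With either observation inserted, your proof is complete.
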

The particular case is when  $A$ is the generator of a bounded analytic 
$C_0$-semigroup and the function $a(t)$ is $\pi/2$-sectorial.
Because $a(t)$ is $\pi/2$-sectorial if and only if $a(t)$ is of positive type,
we obtain the following class of parabolic equations.

\begin{corollary}\label{c3.1Pr} (\cite[Corollary 3.1]{Pr93})
Let $a\in L_\mathrm{loc}^1(\mathbb{R}_+)$ be of subexponential growth and of
positive type, and let $A$ generate a bounded analytical $C_0$-semigroup in $B$.
Then (\ref{DVEe1}) is parabolic.

\end{corollary}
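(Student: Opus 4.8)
The plan is to deduce this directly from Proposition \ref{pr3.1Pr} by checking its two hypotheses. First I would verify the kernel condition: I claim that $a(t)$ being of positive type is equivalent to $a(t)$ being $\pi/2$-sectorial, as already noted in the text just before the statement. Indeed, $a$ of positive type means $\mathrm{Re}\,\hat a(\lambda)\ge 0$ for all $\mathrm{Re}\,\lambda>0$; since $a$ is of subexponential growth, $\hat a(\lambda)$ is well-defined and holomorphic on the right half-plane, and the condition $\mathrm{Re}\,\hat a(\lambda)\ge 0$ for all such $\lambda$ says precisely that $\hat a$ maps the right half-plane into the closed right half-plane, i.e. $|\arg\hat a(\lambda)|\le \pi/2$. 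So $a$ is $\theta$-sectorial with $\theta=\pi/2<\pi$, which is the kernel hypothesis of Proposition \ref{pr3.1Pr}.

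Next I would check the operator condition. By assumption $A$ generates a bounded analytic $C_0$-semigroup in $B$. It is a standard fact from semigroup theory (see e.g.\ \cite{EN00}) that the generator of a bounded analytic $C_0$-semigroup has resolvent set containing a sector $\Sigma(0,\pi/2+\delta)$ for some $\delta>0$, and that $\|(\mu-A)^{-1}\|\le M/|\mu|$ holds uniformly on every subsector $\Sigma(0,\theta)$ with $\theta<\pi/2+\delta$; in particular this holds on $\Sigma(0,\pi/2)$. Thus, taking $\theta=\pi/2$, we have $\varrho(A)\supset\Sigma(0,\theta)$ and $\|(\mu-A)^{-1}\|\le M/|\mu|$ for all $\mu\in\Sigma(0,\theta)$, which is exactly the operator hypothesis of Proposition \ref{pr3.1Pr} for this value of $\theta$.

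With both hypotheses of Proposition \ref{pr3.1Pr} verified for the common angle $\theta=\pi/2$, that proposition applies and yields that equation (\ref{DVEe1}) is parabolic, completing the proof. The only genuinely non-mechanical point is the equivalence "positive type $\Leftrightarrow$ $\pi/2$-sectorial" together with the precise sector of analyticity available for a bounded analytic semigroup generator; both are classical, and once they are in hand the corollary is an immediate specialization of Proposition \ref{pr3.1Pr}. I do not expect any serious obstacle here — the main care needed is simply to match the angle $\theta=\pi/2$ consistently in the kernel condition, the resolvent sector, and the hypothesis of Proposition \ref{pr3.1Pr}.
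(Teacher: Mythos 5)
Your proof is correct and follows exactly the route the paper indicates: the text immediately preceding the corollary already observes that positive type is equivalent to $\pi/2$-sectoriality, so the corollary is the specialization of Proposition \ref{pr3.1Pr} to $\theta=\pi/2$, with the resolvent estimate on $\Sigma(0,\pi/2)$ supplied by the bounded analytic semigroup hypothesis. Nothing further is needed.
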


In the sequel we will need some regular kernels.

\begin{definition}\label{d3.3Pr}
Let $a\in L_\mathrm{loc}^1(\mathbb{R}_+)$ be of subexponential growth and
$k\in\mathbb{N}$. The function $a(t)$ is called 
$k$-{\tt regular} if there is a constant
$c>0$ such that $|\lambda^n\,a^{(n)}(\lambda)|\le c|\hat{a}(\lambda)|$ for all 
$\mathrm{Re}\, \lambda >0$, $0\le n\le k$.
\end{definition}
\index{function!$k$-regular}

\noindent
{\bf Comment} Any $k$-regular kernel $a(t),~k\ge 1$ has the property that 
$\hat{a}(\lambda)$ has no zeros in the open right halfplane. 

We would like to emphasize that
convolutions of $k$-regular kernels are again $k$-regular what follows 
from the product rule of convolutions. The integration and
differentiation preserve $k$-regularity, as well. Unfortunately, 
sums and differences of $k$-regular kernels need not be $k$-regular.
We may check it taking  
\index{$b(t)$}
$a(t)=1$ and $b(t)=t^2$. However, if $a(t)$ and $b(t)$ are 
$k$-regular and $|\arg \hat{a}(\lambda)-\arg \hat{b}(\lambda)|\le\theta\le\pi$,
$\mathrm{Re}\,\lambda >0$ then $a(t)+b(t)$ is $k$-regular.

If $a(t)$ is real-valued and 1-regular then $a(t)$ is sectorial. The converse of
this is not true. As the counterexample we can take 
$a(t)=1$ for $t\in [0,1]$, $a(t)=0$ for $t>1$.

\begin{proposition}\label{pr3.2Pr}
(\cite[Proposition 3.2]{Pr93}) Suppose $a\in L_\mathrm{loc}^1(\mathbb{R}_+)$ 
is such that $\hat{a}(\lambda)$ admits analytic extension to $\sum (0,\varphi)$,
where $\varphi >\pi/2$, and there is $\theta\in (0,\infty)$ such that 
$|\arg \hat{a}(\lambda)|\le \theta$ for all $\lambda\in\sum (0,\varphi)$.
Then $a(t)$ is $k$-regular  for every $k\in\mathbb{N}$.
\end{proposition}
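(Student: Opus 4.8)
The statement to prove is Proposition~\ref{pr3.2Pr}: if $\hat a(\lambda)$ extends analytically to the sector $\Sigma(0,\varphi)$ with $\varphi>\pi/2$ and $|\arg\hat a(\lambda)|\le\theta$ there, then $a(t)$ is $k$-regular for every $k$.

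\medskip

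\noindent\textbf{Proof plan.}
The plan is to control the derivatives $\hat a^{(n)}(\lambda)$ on the right half-plane by a Cauchy-integral argument that exploits the extra room given by $\varphi>\pi/2$. First I would fix $\lambda$ with $\mathrm{Re}\,\lambda>0$ and choose a circle $C$ centred at $\lambda$ of radius $r=\delta|\lambda|$ for a small fixed $\delta=\delta(\varphi)>0$; because $\varphi>\pi/2$, one can pick $\delta$ so that the entire disk $\overline{B(\lambda,\delta|\lambda|)}$ stays inside $\Sigma(0,\varphi)$, uniformly in $\lambda$ (this is just elementary geometry: the angular gap between the ray through $\lambda$ and the boundary rays of $\Sigma(0,\varphi)$ is bounded below since $\varphi>\pi/2\ge|\arg\lambda|$). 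By Cauchy's formula for derivatives,
\begin{equation}\label{e:cauchy}
 \hat a^{(n)}(\lambda)=\frac{n!}{2\pi i}\int_{C}\frac{\hat a(z)}{(z-\lambda)^{n+1}}\,dz,
\end{equation}
so $|\lambda^n\hat a^{(n)}(\lambda)|\le n!\,\delta^{-n}\,|\lambda|^{\,0}\cdot\sup_{z\in C}|\hat a(z)|$ after inserting $|z-\lambda|=\delta|\lambda|$ and the length $2\pi\delta|\lambda|$ of $C$. Thus it remains to bound $\sup_{z\in C}|\hat a(z)|$ by a constant times $|\hat a(\lambda)|$.

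\medskip

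\noindent\textbf{The sectoriality bound on the circle.}
The key step — and the one I expect to be the main obstacle — is the estimate $|\hat a(z)|\le c\,|\hat a(\lambda)|$ for $z\in C$. Here I would use that the bound $|\arg\hat a|\le\theta$ on the sector forces $\hat a$ to be, up to bounded factors, comparable at nearby points. The clean way is to pass to the logarithm: $g(z):=\log\hat a(z)$ is well-defined and analytic on $\Sigma(0,\varphi)$ (since $\hat a$ has no zeros there, by the bound on the argument and $\hat a\not\equiv0$), with $|\mathrm{Im}\,g(z)|=|\arg\hat a(z)|\le\theta$. So $\mathrm{Im}\,g$ is a bounded harmonic function on $\Sigma(0,\varphi)$; applying the Cauchy estimate or a Harnack/Borel–Carathéodory type argument to $g$ on the slightly larger disk $B(\lambda,2\delta|\lambda|)\subset\Sigma(0,\varphi)$, one gets $|g'(\lambda)|\le C/|\lambda|$, hence $|\mathrm{Re}\,g(z)-\mathrm{Re}\,g(\lambda)|\le C'$ for $z\in C$ by integrating $g'$ along a path inside the disk. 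Exponentiating, $|\hat a(z)|=e^{\mathrm{Re}\,g(z)}\le e^{C'}e^{\mathrm{Re}\,g(\lambda)}=e^{C'}|\hat a(\lambda)|$, with $C'$ depending only on $\theta$ and $\varphi$. (Alternatively, one avoids logarithms: since $\arg\hat a$ varies by at most $2\theta$, $\hat a$ maps $\Sigma(0,\varphi)$ into a sector, and a conformal-map / Schwarz-lemma argument on $\hat a$ restricted to the disk gives the same comparability; I would use whichever is cleaner to write up, probably the logarithm.)

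\medskip

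\noindent\textbf{Conclusion.}
Combining the two bounds: for every $n$ with $0\le n\le k$ (indeed for all $n\in\mathbb N$),
\begin{equation}\label{e:final}
 |\lambda^n\hat a^{(n)}(\lambda)|\le n!\,\delta^{-n}\,e^{C'}\,|\hat a(\lambda)|\qquad(\mathrm{Re}\,\lambda>0),
\end{equation}
which is exactly the $k$-regularity estimate with $c=\max_{0\le n\le k}n!\,\delta^{-n}e^{C'}$; the constant depends only on $\theta$, $\varphi$ and $k$, not on $\lambda$. One should also note that $a$ being of subexponential growth guarantees $\hat a$ exists on $\mathrm{Re}\,\lambda>0$ to begin with, and the hypothesis already supplies $\hat a\neq0$ there, so Definition~\ref{d3.3Pr} applies verbatim. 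The only genuinely delicate point is making the comparability constant in the circle estimate uniform in $\lambda$; everything else is a routine Cauchy-estimate bookkeeping once the geometric fact ``$\overline{B(\lambda,\delta|\lambda|)}\subset\Sigma(0,\varphi)$ uniformly'' is in hand, and that fact is where $\varphi>\pi/2$ is used decisively.
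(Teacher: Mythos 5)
Your proof is correct and is essentially the standard argument for \cite[Proposition 3.2]{Pr93}, which this monograph cites without reproducing: the Cauchy estimate on $\overline{B(\lambda,\delta|\lambda|)}\subset\sum(0,\varphi)$ (possible uniformly in $\lambda$ precisely because $\varphi>\pi/2$) reduces the claim to the comparability $\sup_{|z-\lambda|\le\delta|\lambda|}|\hat a(z)|\le e^{C'}|\hat a(\lambda)|$, and your logarithm step delivers this with $C'$ depending only on $\theta$ and $\varphi$. The one point to tighten in a write-up is that integrating $g'$ requires the bound $|g'(w)|\le C/|\lambda|$ at every point $w$ of the path, not just at $w=\lambda$ (it holds there by running the same bounded-harmonic-gradient estimate centred at each such $w$); more cleanly, apply Borel--Carath\'eodory to $-i\bigl(g(z)-g(\lambda)\bigr)$ on $B(\lambda,2\delta|\lambda|)$ to get $|g(z)-g(\lambda)|\le 4\theta$ on the smaller disk directly, with no integration at all.
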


So, nonnegative and nonicreasing kernels are in general not 1-regular but if the
kernel is also convex, then it is 1-regular.

\begin{definition}\label{d3.4Pr}
 Let $a\in L_\mathrm{loc}^1(\mathbb{R}_+)$ and $k\ge 2$. 
 The function $a(t)$ is called \linebreak
 $k$-{\tt monotone}\index{function!$k$-monotone} 
 if $a\in C^{k-2}(0,\infty),~(-1)^n\,a^{(n)}(t)\ge 0$ for all
 $t>0,~ 0\le n\le k-2$, and $(-1)^{k-2}\,a^{(k-2)}(t)$ is nonincreasing and
 convex.
\end{definition}
\index{$C^{k-2}(0,\infty)$}
By definition, a 2-monotone kernel $a(t)$ is nonnegative, nonicreasing and
convex, and  $a(t)$ is completely monotonic if and only if $a(t)$ is
$k$-monotone for all $k\ge 2$.
 
\begin{proposition}\label{pr3.3Pr}
(\cite[Proposition 3.3]{Pr93}) Suppose $a\in L_\mathrm{loc}^1(\mathbb{R}_+)$ 
is $(k+1)$-monotone, $k\ge 1$. Then $a(t)$ is $k$-regular and of positive type.
\end{proposition}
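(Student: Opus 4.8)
The plan is to reduce the statement to an explicit analysis of the Laplace transforms of the ``elementary'' $(k+1)$-monotone kernels and their superpositions. By Williamson's representation of multiply monotone functions, a $(k+1)$-monotone $a$ (necessarily of subexponential growth, being nonnegative and nonincreasing) can be written as $a(t)=\int_{[0,\infty)}(1-ts)_+^{\,k}\,d\alpha(s)$ with $\alpha\ge0$ a Borel measure, the mass at $s=0$ producing $\lim_{t\to\infty}a(t)$. Termwise Laplace transform gives, for $\mathrm{Re}\,\lambda>0$, $\hat a(\lambda)=\int_{[0,\infty)}\frac1s\,\psi_k(\lambda/s)\,d\alpha(s)$, where $\psi_k(z):=\int_0^1(1-v)^k e^{-zv}\,dv$ and $\frac1s\psi_k(\lambda/s):=1/\lambda$ at $s=0$. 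Two integrations by parts produce
\[
  z\,\psi_k(z)=1-k\,\psi_{k-1}(z),\qquad \psi_k'(z)=\psi_{k+1}(z)-\psi_k(z),
\]
hence $z\,\psi_k'(z)=k\,\psi_{k-1}(z)-(k+1)\,\psi_k(z)$; from these one reads that $\psi_k$ is entire, $\psi_k(z)\sim1/z$ as $\mathrm{Re}\,z\to+\infty$, and (for $k\ge1$) $\psi_k$ has no zeros on $\{\mathrm{Re}\,z\ge0\}$, since $\psi_k(z)=0$ would force $\psi_{k-1}(z)=1/k$, while $|\psi_{k-1}(z)|\le\int_0^1(1-v)^{k-1}e^{-\mathrm{Re}(z)v}dv\le\frac1k$ with equality throughout only at $z=0$, where $\psi_k(0)=\frac1{k+1}\neq0$.

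Positive type is the easy half and does not use the representation. A $(k+1)$-monotone kernel with $k\ge1$ is in particular $2$-monotone, i.e.\ nonnegative, nonincreasing and convex, and for every $x\ge0$ the function $t\mapsto a(t)e^{-xt}$ retains these three properties, a product of nonnegative, nonincreasing, convex functions being convex ($(fg)''=f''g+2f'g'+fg''\ge0$). By P\'olya's positivity criterion its cosine transform is nonnegative, so $\mathrm{Re}\,\hat a(x+iy)=\int_0^\infty a(t)e^{-xt}\cos(yt)\,dt\ge0$ for all $y$; thus $\mathrm{Re}\,\hat a(\lambda)\ge0$ on $\{\mathrm{Re}\,\lambda>0\}$, which is $\pi/2$-sectoriality, i.e.\ positive type.

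For $k$-regularity I would not estimate moduli inside the integral, but use the recursions to write $\lambda^n\hat a^{(n)}(\lambda)$ as a finite combination of transforms of kernels of the same family. Differentiating under the integral and using $z\psi_k'(z)=k\psi_{k-1}(z)-(k+1)\psi_k(z)$ gives $\lambda\hat a'(\lambda)=\widehat{A_2}(\lambda)-(k+1)\hat a(\lambda)$ with $A_2(t):=k\int_{[0,\infty)}(1-ts)_+^{\,k-1}d\alpha(s)$; iterating (each kernel obeying the identity for its own index) yields, for $0\le n\le k$,
\[
  \lambda^n\hat a^{(n)}(\lambda)=\sum_{j=1}^{n+1}c_{n,j}\,\widehat{A_j}(\lambda),\qquad
  A_j(t)=\kappa_j\int_{[0,\infty)}(1-ts)_+^{\,k-j+1}d\alpha(s),
\]
with $A_1=a$ and explicit constants $c_{n,j},\kappa_j$; the bound $n\le k$ is exactly what keeps the exponents $k-j+1$ nonnegative. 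So it remains to prove the comparison $|\widehat{A_j}(\lambda)|\le C|\hat a(\lambda)|$ uniformly for $\mathrm{Re}\,\lambda>0$, that is, $\bigl|\int\frac1s\psi_{k-j+1}(\lambda/s)\,d\alpha(s)\bigr|\le C\bigl|\int\frac1s\psi_k(\lambda/s)\,d\alpha(s)\bigr|$.

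This comparison is the crux and the step I expect to be hard. On $(0,\infty)$ it is immediate because the ratios $\psi_m(x)/\psi_k(x)$ stay bounded for $x>0$, but for complex $\lambda$ the naive estimate $\bigl|\int\frac1s\psi_{k-j+1}(\lambda/s)\,d\alpha\bigr|\le\int\frac1s|\psi_{k-j+1}(\lambda/s)|\,d\alpha$ is useless, pointing the wrong way relative to $|\hat a(\lambda)|$; moreover $\arg\psi_k(z)$ does attain $\pm\pi/2$ on the imaginary axis, so the integrand directions $\psi_k(\lambda/s)$ need not lie in a common subsector. The route I would take is to split $(0,\infty)$ according to the size of $|\lambda|/s$: when $|\lambda|/s\le1$, one has $\mathrm{Re}\,\psi_m(\lambda/s)\ge c>0$ and $|\psi_m(\lambda/s)|\le\psi_m(0)$, so those contributions cluster near the positive real direction; when $|\lambda|/s$ is large, the expansion $z\psi_k(z)=1+O(|z|^{-1})$ confines $\arg\psi_k(\lambda/s)$ to a small cone about $-\arg\lambda$; and on the bounded intermediate range — where $\lambda/s$ may approach a purely imaginary value of $\psi_k$ — one must use the algebraic identities, replacing $\psi_{k-j+1}(z)$ by $(1-z\psi_{k-j+2}(z))/(k-j+2)$ and exploiting the cancellation between the polynomial factors and the $\psi$-terms, rather than crude absolute values. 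Combining the comparison with the displayed identity then gives $|\lambda^n\hat a^{(n)}(\lambda)|\le c\,|\hat a(\lambda)|$ for $0\le n\le k$, i.e.\ $k$-regularity.
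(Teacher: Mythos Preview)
The paper does not prove this proposition; it merely quotes it from Pr\"uss's monograph \cite[Proposition 3.3]{Pr93} as background material, so there is no ``paper's own proof'' to compare against.

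On the merits of your attempt: your positive-type argument is clean and correct --- $(k+1)$-monotone with $k\ge1$ implies $2$-monotone, multiplication by $e^{-xt}$ preserves nonnegativity, monotonicity and convexity, and P\'olya's criterion gives $\mathrm{Re}\,\hat a(\lambda)\ge0$ on the right half-plane. The $k$-regularity half, however, is not a proof but a programme. You correctly identify the obstacle: after writing $\lambda^n\hat a^{(n)}(\lambda)$ as a combination of transforms $\widehat{A_j}$ via the recursions for $\psi_k$, you need $|\widehat{A_j}(\lambda)|\le C|\hat a(\lambda)|$ uniformly on $\{\mathrm{Re}\,\lambda>0\}$, and you yourself note that the naive modulus bound points the wrong way and that the integrand arguments need not lie in a common sector. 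Your proposed three-zone split (small, intermediate, large $|\lambda|/s$) is plausible heuristics, but the intermediate zone --- where $\psi_k(\lambda/s)$ may have argument $\pm\pi/2$ --- is exactly where cancellation in the $s$-integral must be exploited, and you offer no mechanism for that beyond ``use the algebraic identities.'' As written this is a genuine gap: nothing in your outline prevents $\hat a(\lambda)$ from being small by cancellation while some $\widehat{A_j}(\lambda)$ is not. Pr\"uss's own argument (in \cite{Pr93}) proceeds differently, via direct integration-by-parts estimates on $\hat a$ and its derivatives rather than through the Williamson representation, which sidesteps the comparison problem you have set yourself.
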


Now, we recall the main theorem on resolvents for parabolic Volterra equations.

\begin{theorem}\label{th3.1Pr}
(\cite[Theorem 3.1]{Pr93}) Let $B$ be a Banach space, $A$ a closed linear
operator in $B$ with dense domain $D(A)$, $a\in L_\mathrm{loc}^1(\mathbb{R}_+)$.
Assume (\ref{DVEe1}) is parabolic, and $a(t)$ is $k$-regular, for some $k\ge 1$.
Then there is a resolvent $S\in C^{k-1}((0,\infty);L(B))$ for (\ref{DVEe1}), and
there is a constant $M\ge 1$ such that estimates 
\begin{equation}\label{eq3.12Pr}
 ||t^n\,S^{(n)}(t)||\le M, \quad \mbox{for all} \quad t\ge 0,~~n\le k-1,
\end{equation}
and
\begin{equation}\label{eq3.13Pr}
 ||t^k\,S^{(k-1)}(t)-s^k\,S^{(k-1)}(s)|| \le M|t-s|[1+\log \frac{t}{t-s}],
 ~~0\le s<t<\infty,
\end{equation}
are valid.
\index{$C^{k-1}((0,\infty);L(B))$}
\end{theorem}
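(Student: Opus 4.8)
The plan is to construct the resolvent $S(t)$ explicitly as the inverse Laplace transform of $H(\lambda)=(I-\hat a(\lambda)A)^{-1}/\lambda$ and to read off both the regularity class $C^{k-1}$ and the estimates (\ref{eq3.12Pr}), (\ref{eq3.13Pr}) from contour estimates. The first and most essential step is to turn the hypotheses into pointwise bounds on $H$ and its $\lambda$-derivatives on the open right half-plane $\mathrm{Re}\,\lambda>0$. Writing $G(\lambda):=(I-\hat a(\lambda)A)^{-1}=\lambda H(\lambda)$, parabolicity (Definition~\ref{d3.1Pr}) gives $\|G(\lambda)\|\le M$ there; differentiating $G(\lambda)^{-1}=I-\hat a(\lambda)A$ and using the algebraic identity $\hat a(\lambda)AG(\lambda)=G(\lambda)-I$ one finds $G'(\lambda)=\frac{\hat a'(\lambda)}{\hat a(\lambda)}\,(G(\lambda)^{2}-G(\lambda))$, and iterating this with Leibniz' rule while inserting the $k$-regularity bounds $|\lambda^{n}\hat a^{(n)}(\lambda)|\le c\,|\hat a(\lambda)|$, $0\le n\le k$, yields by induction $\|\lambda^{n}G^{(n)}(\lambda)\|\le C_{n}$, hence
$$\|\lambda^{n}H^{(n)}(\lambda)\|\le \frac{M_{n}}{|\lambda|},\qquad \mathrm{Re}\,\lambda>0,\quad 0\le n\le k.$$

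Next I would define, for $t>0$, $\displaystyle S(t):=\frac{1}{2\pi i}\int_{\Gamma_{t}}e^{\lambda t}H(\lambda)\,d\lambda$ with $\Gamma_{t}=\{\lambda:\mathrm{Re}\,\lambda=1/t\}$, the integral being made absolutely convergent by one integration by parts in the factor $e^{\lambda t}$ (using $\|H'(\lambda)\|\le M_{1}/|\lambda|^{2}$); by Cauchy's theorem together with the decay of $H$, its value is independent of which vertical line in $\mathrm{Re}\,\lambda>0$ is used, and I set $S(0)=I$. Then I would verify the three axioms of Definition~\ref{DVEd1}: $S(0)=I$ by construction; $S(t)$ commutes with $A$ because $H(\lambda)$ does and $A$ is closed; the resolvent equation (\ref{DVEe2}) holds because $(I-\hat a(\lambda)A)H(\lambda)=\lambda^{-1}I$ shows that $S(\cdot)x$ and $t\mapsto x+(a\star AS(\cdot)x)(t)$ have the same Laplace transform, so they coincide by uniqueness; and strong continuity, in particular $S(t)x\to x$ as $t\downarrow0$, follows by a standard density argument exploiting for $x\in D(A)$ the gain of the vanishing factor $\hat a(\lambda)$ in $(H(\lambda)-\lambda^{-1})x=\lambda^{-1}\hat a(\lambda)G(\lambda)Ax$, together with the uniform bound $\sup_{t\le T}\|S(t)\|<\infty$. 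By uniqueness of resolvents this $S$ is the resolvent of (\ref{DVEe1}).

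For the bounds (\ref{eq3.12Pr}) I would differentiate $S$ in $t$ and integrate by parts in $\lambda$ on the exponential factor; differentiating $n$ times brings down $\lambda^{n}$, and absorbing it requires integrations by parts up to order $n+1$, hence the derivatives $H^{(j)}$ for $j\le n+1$, which is exactly why the range $0\le n\le k-1$ is imposed. This produces a representation of $S^{(n)}(t)$ by an absolutely convergent integral over $\Gamma_{t}$; since $|e^{\lambda t}|\equiv e$ and $|\lambda|\ge1/t$ there, the rescaling $\lambda=\mu/t$ (which freezes the contour to $\mathrm{Re}\,\mu=1$) together with the Step~1 estimates and $\int_{\{\mathrm{Re}\,\mu=1\}}|\mu|^{-2}|d\mu|<\infty$ gives $\|t^{n}S^{(n)}(t)\|\le M$; continuity of each $S^{(n)}$ on $(0,\infty)$ follows from dominated convergence, so $S\in C^{k-1}((0,\infty);L(B))$.

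Finally, for the logarithmic modulus of continuity (\ref{eq3.13Pr}) one cannot simply integrate $\frac{d}{d\tau}(\tau^{k}S^{(k-1)}(\tau))$ because $S^{(k)}$ need not exist. Instead, using the representation from the previous step, I would write $t^{k}S^{(k-1)}(t)-s^{k}S^{(k-1)}(s)$ as a finite linear combination of differences $\int_{\Gamma_{t}}e^{\lambda t}F(\lambda)\,d\lambda-\int_{\Gamma_{s}}e^{\lambda s}F(\lambda)\,d\lambda$ with $F$ holomorphic and $\|F(\lambda)\|\le C/|\lambda|^{2}$; pulling both integrals onto the common contour $\Gamma_{t}$ by Cauchy's theorem (analyticity and decay of $e^{\lambda s}F(\lambda)$ in the strip $1/t\le\mathrm{Re}\,\lambda\le1/s$) reduces this to $\int_{\Gamma_{t}}(e^{\lambda t}-e^{\lambda s})F(\lambda)\,d\lambda$, which is estimated with $|e^{\lambda t}-e^{\lambda s}|\le C\min\{|\lambda|\,|t-s|,\,1\}$ after splitting the integration variable at $|\lambda|\sim1/(t-s)$: on $|\lambda|\lesssim1/(t-s)$ the surviving factor $|\lambda|^{-1}$ integrates to $\log\frac{t}{t-s}$, while on $|\lambda|\gtrsim1/(t-s)$ the factor $|\lambda|^{-2}$ contributes a bare $|t-s|$. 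I expect this last estimate to be the main technical obstacle --- making the cancellation in $e^{\lambda t}-e^{\lambda s}$, the weights $t^{k}$, $s^{k}$ and the $|\lambda|^{-2}$-decay conspire so that the final bound is exactly $M|t-s|[1+\log\frac{t}{t-s}]$ --- whereas everything preceding it is bookkeeping with the resolvent identity and Leibniz' rule.
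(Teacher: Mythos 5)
This theorem is quoted in the monograph directly from \cite[Theorem 3.1]{Pr93} without proof, so there is no in-text argument to compare against; your outline is essentially the standard complex-inversion proof given by Pr\"uss. The key components --- the bounds $\|\lambda^n H^{(n)}(\lambda)\|\le M_n/|\lambda|$ obtained from parabolicity, the identity $G'=\frac{\hat a'}{\hat a}(G^2-G)$ and $k$-regularity; the contour representation on $\mathrm{Re}\,\lambda=1/t$ with $n+1$ integrations by parts (which is exactly why $n\le k-1$); and the split of $\int_{\Gamma_t}(e^{\lambda t}-e^{\lambda s})F(\lambda)\,d\lambda$ at $|\lambda|\sim 1/(t-s)$ for the logarithmic modulus of continuity --- are all correct and match the original argument.
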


\section{Approximation theorems} \label{DVEsec:5}

In this paper the following results contained in \cite{KL07c} 
concerning convergence of
resolvents for the equation (\ref{DVEe1}) in Banach space $B$ will
play the key role. They extend some
results of Cl\'{e}ment and Nohel obtained in \cite{CN79} for
contraction semigroups. Theorems \ref{DVEt1}-\ref{DVEt1a} and
Proposition \ref{AScom} are not yet published.

\begin{theorem} \label{DVEt1}
Let $A$ be the generator of a $C_0$-semigroup in $B$ and suppose
the kernel function $a(t)$ is completely positive. Then $(A,a)$
admits an exponentially bounded resolvent $S(t)$. Moreover, there
exist bounded operators $A_n$ such that $(A_n,a)$ admit resolvent
families $S_n(t)$ satisfying $ ||S_n(t) || \leq Me^{w_0 t}~ (M\geq
1,~w_0\geq 0)$ for all $t\geq 0,~n\in \mathbb{N},$ and
\begin{equation} \label{DVEe10}
S_n(t)x \to S(t)x \quad \mbox{as} \quad n\to +\infty
\end{equation}
for all $x \in B,\; t\geq 0.$ Additionally, the convergence is
uniform in $t$ on every compact subset of $ \mathbb{R}_+$.
\end{theorem}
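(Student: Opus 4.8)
The plan is to reduce everything to the classical theory of $C_0$-semigroups via the Yosida approximation of $A$, and then to transfer convergence from the semigroups $T_n(t)=e^{tA_n}$ to the resolvents $S_n(t)$ through the scalar subordination formula that expresses a completely positive kernel's resolvent as an average of the semigroup. First I would recall that, since $A$ generates a $C_0$-semigroup $T(t)$ with $\|T(t)\|\le Me^{\omega t}$, the Yosida approximants $A_n:=nA(n-A)^{-1}=n^2(n-A)^{-1}-n$ are bounded operators, generate uniformly exponentially bounded semigroups $T_n(t)=e^{tA_n}$ (with a common type, after replacing $\omega$ by $\omega_0\ge 0$ and $M$ by a common constant), and satisfy $T_n(t)x\to T(t)x$ for every $x\in B$, uniformly for $t$ in compact sets — this is the Hille–Yosida/Trotter–Kato machinery and may be quoted. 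Because $a(t)$ is completely positive, by Definition~\ref{DVEd3} the scalar functions $s(t;\mu)$ solving \eqref{DVEe5} are nonnegative (and $0\le s(t;\mu)\le 1$), and $-r'(t;\mu)/\mu$, resp.\ the measures generated by $r(t;\mu)$, are nonnegative; the key structural fact (due to Pr\"uss, via the Cl\'ement–Nohel subordination principle for completely positive kernels) is that there is a family of probability measures so that for a bounded generator $C$,
\[
 S_C(t)x=\int_0^\infty T_C^{(C)}(\sigma)\,x\,\,w(t,d\sigma),
\]
i.e.\ the resolvent of $(C,a)$ is obtained by subordinating the semigroup $e^{tC}$ against a kernel $w$ that depends only on $a$, not on $C$. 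Equivalently, at the level of one-dimensional equations, $s(t;\mu)=\int_0^\infty e^{-\sigma\mu}\,w(t,d\sigma)$ with $w(t,\cdot)$ a probability measure for each $t$.

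The main steps, in order, are then: (1) Establish existence of the exponentially bounded resolvent $S(t)$ for $(A,a)$: since $a$ is completely positive and $A$ generates a $C_0$-semigroup, invoke the subordination theorem to define $S(t)x=\int_0^\infty T(\sigma)x\,w(t,d\sigma)$ and verify it satisfies Definition~\ref{DVEd1} (strong continuity and $S(0)=I$ from properties of $w(t,\cdot)$; commutation with $A$ from commutation of $T(\sigma)$ with $A$; the resolvent equation \eqref{DVEe2} from the scalar identity \eqref{DVEe5} integrated against the spectral behaviour, or directly by Laplace transform). The bound $\|S(t)\|\le\int_0^\infty Me^{\omega_0\sigma}w(t,d\sigma)$ is controlled because $w(t,\cdot)$ concentrates suitably; one gets $\|S(t)\|\le Me^{\omega_0 t}$ after noting $s(t;-\omega_0)$ grows at most like $e^{\omega_0 t}$ when $\omega_0\ge 0$ and $a\ge 0$ is completely positive — this uses that $\int_0^\infty e^{\omega_0\sigma}w(t,d\sigma)=s(t;-\omega_0)$. (2) Define $A_n$ to be precisely the Yosida approximants; since each $A_n$ is bounded, $(A_n,a)$ automatically admits a resolvent $S_n(t)$ (solve the resolvent equation \eqref{DVEe2} by Neumann/Picard iteration, or again by subordination $S_n(t)x=\int_0^\infty T_n(\sigma)x\,w(t,d\sigma)$). (3) Uniform bound: $\|S_n(t)\|\le\int_0^\infty\|T_n(\sigma)\|\,w(t,d\sigma)\le M\int_0^\infty e^{\omega_0\sigma}w(t,d\sigma)=M\,s(t;-\omega_0)\le Me^{\omega_0 t}$, giving the required $M,\omega_0$ independent of $n$. (4) Convergence: write
\[
 S_n(t)x-S(t)x=\int_0^\infty\bigl(T_n(\sigma)x-T(\sigma)x\bigr)\,w(t,d\sigma),
\]
and estimate. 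On a compact $t$-interval $[0,\tau]$, split the integral at a large $R$: the tail is bounded by $2M\int_R^\infty e^{\omega_0\sigma}w(t,d\sigma)$, which is small uniformly in $t\le\tau$ because $\sup_{t\le\tau}s(t;-\omega_0)<\infty$ and the family $\{e^{\omega_0\cdot}w(t,\cdot):t\le\tau\}$ is tight (here one needs a tightness/uniform-integrability property of the subordination kernel, which follows from continuity of $t\mapsto s(t;\mu)$ for complex $\mu$ and a Helly-type argument); on $[0,R]$ use $\sup_{\sigma\le R}\|T_n(\sigma)x-T(\sigma)x\|\to 0$ together with $w(t,[0,R])\le 1$.

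The hard part will be Step~(1)/(4): making the subordination principle for completely positive kernels precise and, in particular, proving the \emph{uniform} tightness of the subordination measures $w(t,\cdot)$ on compact $t$-intervals, which is what upgrades pointwise convergence of the $T_n$ to pointwise (and locally uniform in $t$) convergence of the $S_n$. If one prefers to avoid the measure-theoretic subordination machinery, an alternative for Step~(4) is the Laplace-transform/Trotter–Kato route: show $\widehat{S_n}(\lambda)x=\bigl(I-\widehat a(\lambda)A_n\bigr)^{-1}\frac{1}{\lambda}x\to\bigl(I-\widehat a(\lambda)A\bigr)^{-1}\frac{1}{\lambda}x=\widehat S(\lambda)x$ for $\operatorname{Re}\lambda$ large (this convergence is immediate from $A_n(n-A)^{-1}$-type identities and the second resolvent identity, since $\widehat a(\lambda)\to 0$), combined with the uniform bound from Step~(3) and a Trotter–Kato theorem for resolvent families (an Arendt–Widder / vector-valued real-inversion argument); this yields \eqref{DVEe10} with local uniformity in $t$. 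I expect the cleanest write-up to use subordination for existence and the uniform estimate, and Trotter–Kato in Laplace-transform form for the convergence, with the tightness issue being the only genuinely delicate point.
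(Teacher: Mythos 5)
Your proposal is correct in outline and shares the paper's skeleton: the Yosida approximants $A_n=nA(n-A)^{-1}$, the uniform bound $\|e^{tA_n}\|\le Me^{2wt}$ for $n>2w$, and --- in the ``alternative'' you offer for Step (4) --- the convergence of the Laplace transforms $H_n(\lambda)\to H(\lambda)$ via the second resolvent identity combined with a Trotter--Kato theorem for resolvent families, which is exactly how the paper concludes (citing \cite{Li90}). Where you genuinely diverge is in how the uniform bounds and the convergence are obtained on your primary route: the paper stays in the Laplace domain, deriving uniform-in-$n$ Widder-type estimates on $[H_n(\lambda)]^{(k)}$ ``following the same steps as in \cite{Pr87}'' and then invoking the generation theorem to get $\|S_n(t)\|\le M_1e^{w_1t}$, whereas you work in the time domain with the propagation-function representation $s(t;\mu)=\int_0^\infty e^{-\mu\sigma}w(t,d\sigma)$ of the completely positive kernel and obtain both the uniform bound and the convergence (\ref{DVEe10}) by splitting $\int_0^\infty\bigl(T_n(\sigma)-T(\sigma)\bigr)x\,w(t,d\sigma)$. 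This buys a self-contained, quantitative argument that avoids Trotter--Kato altogether; its cost is that the subordination representation and its uniform integrability must be justified, which the paper sidesteps by quoting Pr\"uss. Two small repairs to your route. First, the inequality $s(t;-\omega_0)\le e^{\omega_0t}$ is not true for general completely positive kernels: for $a(t)=t^{\alpha-1}/\Gamma(\alpha)$, $\alpha\in(0,1)$, one has $s(t;-\omega_0)=E_\alpha(\omega_0t^\alpha)\le Ce^{\omega_0^{1/\alpha}t}$ (compare (\ref{eq8f})), because $w(t,\cdot)$ is then the Wright density supported on all of $\mathbb{R}_+$, not on $[0,t]$. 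This does not hurt the theorem, which only asks for \emph{some} common type $(M,w_0)$: since $\widehat{s}(\lambda;-\omega_0)=1/\bigl(\lambda(1-\omega_0\hat a(\lambda))\bigr)$ and $\hat a(\lambda)\to0$, the scalar solution $s(\cdot;-\omega_0)$ is always exponentially bounded, and that exponent serves as $w_0$. Second, the tightness you flag as delicate is in fact easy once this is in place: for any $\epsilon>0$, $\int_R^\infty e^{\omega_0\sigma}w(t,d\sigma)\le e^{-\epsilon R}\,s(t;-(\omega_0+\epsilon))$ and $\sup_{t\le\tau}s(t;-(\omega_0+\epsilon))<\infty$, so the tails are uniformly small on compact $t$-intervals. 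With these adjustments your argument closes.
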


\begin{proof}
The first assertion follows directly from \cite[Theorem
5]{Pr87} (see also \cite[Theorem 4.2]{Pr93}). Since $A$ generates a
$C_0$-semigroup $T(t),~t\geq 0$, the resolvent set $\rho(A) $ of
$A$ contains the ray $ [w,\infty)$ and
$$
||R(\lambda,A)^k || \leq \frac{M}{(\lambda -w)^k } \qquad
\mbox{for } \lambda > w, \qquad k\in \mathbb{N},
$$
where $R(\lambda,A)=(\lambda I-A)^{-1}$, $~\lambda\in \rho(A)$.
\index{$R(\lambda,A)$}

Define
\begin{equation} \label{DVEe11}
A_n := n AR(n,A) = n^2 R(n,A) - nI, \qquad n> w
\end{equation}
the {\it Yosida approximation} of $A$.

\pagebreak
Then
\begin{eqnarray*}
||e^{t A_n} || &=& e^{-nt} || e^{n^2 R(n,A)t} || \leq
e^{-nt} \sum_{k=0}^{\infty} \frac{n^{2k} t^k}{k!} ||R(n,A)^k|| \\
&\leq& M e^{(-n + \frac{n^2}{n-w})t} = M e^{ \frac{nwt}{ n-w}}.
\end{eqnarray*}
Hence, for $n > 2w$ we obtain
\begin{equation}{\label{eSW6}}
|| e^{A_n t} || \leq M e^{2wt}.
\end{equation}
Taking into account the above estimate  and the complete
positivity of the kernel function $a(t)$, we can follow the same
steps as in \cite[Theorem 5]{Pr87} to obtain that there exist
constants $M_1
>0 $ and $ w_1 \in \mathbb{R} $ (independent of $n$, due to
(\ref{eSW6})) such that
$$
||[H_n(\lambda)]^{(k)} || \leq \frac{M_1}{ \lambda - w_1} \quad
\mbox{ for } \lambda > w_1,
$$
where $H_n(\lambda):= (\lambda - \lambda \hat
a(\lambda)A_n)^{-1}.$ Here and in the sequel the hat indicates the
Laplace transform. Hence, the generation theorem for resolvent
families implies that for each  $ n \in \mathbb{N}$, the pair
$(A_n,a)$ admits resolvent family $S_n(t)$ such that
\begin{equation}{\label{DVEe12}}
||S_n(t) || \leq M_1 e^{w_1 t} \quad \mbox{ for all } n \in
\mathbb{N}.
\end{equation}
In particular, the Laplace transform $ \hat S_n(\lambda) $ exists
and satisfies
$$
\hat S_n(\lambda ) = H_n(\lambda) = \int_0^{\infty} e^{-\lambda t}
S_n(t)dt, \qquad \lambda > w_1.
$$
Now recall from semigroup theory that for all $\mu$ sufficiently
large we have
$$ R(\mu,A_n)= \int_0^\infty e^{-\mu t} \,e^{A_nt}\,dt $$
as well as,
$$ R(\mu,A)= \int_0^\infty e^{-\mu t} \,T(t)\,dt\,. $$
\index{$R(\mu,A_n)$}

Since $\hat a(\lambda) \to 0$ as $ \lambda \to \infty$, we deduce
that for all $\lambda$ sufficiently large, we have
$$ 
H_n(\lambda) = \frac{1}{\lambda \hat a (\lambda) } R(
\frac{1}{\hat a (\lambda) }, A_n) = \frac{1}{\lambda \hat a
(\lambda)} \int_0^{\infty} e^{(-1/\hat a(\lambda))t} e^{A_n t}
dt\,,
$$ 
and
$$ 
H(\lambda) = \frac{1}{\lambda \hat a (\lambda) } R( \frac{1}{\hat
a (\lambda) }, A) = \frac{1}{\lambda \hat a (\lambda)}
\int_0^{\infty} e^{(-1/\hat a(\lambda))t} T(t) dt\,.
$$ 
\index{$H(\lambda)$}
\index{$H_n(\lambda)$}

Hence, from the identity
$$ H_n(\lambda) - H(\lambda)  = \frac{1}{\lambda \hat a (\lambda) }
[R(\frac{1}{\hat a (\lambda) }, A_n)-R(\frac{1}{\hat a (\lambda)
}, A)]
$$
and the fact that $R(\mu,A_n)\to R(\mu,A)$ as $n\to\infty$ for all
$\mu$ sufficiently large (see e.g.\ \cite[Lemma~7.3]{Pa83}, we
obtain that
\begin{equation}{ \label{DVEe13}}
H_n(\lambda) \to H(\lambda) \quad \mbox{as } n \to \infty\;.
\end{equation}
Finally, due to (\ref{DVEe12}) and (\ref{DVEe13}) we can use the
Trotter-Kato theorem for resolvent families of operators (cf.
\cite[Theorem 2.1]{Li90}) and the conclusion follows. 
\qed
\end{proof}

Let us recall, e.g.\ from \cite{Fa83}, that a family $\mathcal{C}(t)$,
 $t\ge 0$, of linear bounded operators on $H$ is called {\tt cosine family}
 if for every $t,s \ge 0$, $t>s$: \linebreak
 $\mathcal{C}(t+s)+\mathcal{C}(t-s)=
 2 \mathcal{C}(t)\mathcal{C}(s)$. \medskip

Theorem \ref{DVEt1} may be reformulated in the following version.
\begin{theorem} \label{DVEt1a}
Let $A$ generate a cosine family $T(t)$ in $B$ such that 
$||T(t)||\le M e^{\omega t}$ for $t>0$ holds, and suppose
the kernel function $a(t)$ is completely positive. Then $(A,a)$
admits an exponentially bounded resolvent $S(t)$. Moreover, there
exist bounded operators $A_n$ such that $(A_n,a)$ admit resolvent
families $S_n(t)$ satisfying $ ||S_n(t) || \le \widetilde{M}e^{w_0 t}~ 
(\widetilde{M}\geq 1,~w_0\geq 0)$ for all $t\geq 0,~n\in \mathbb{N},$ and
$$ 
S_n(t)x \to S(t)x \quad \mbox{as} \quad n\to +\infty
$$ 
for all $x \in B,\; t\geq 0.$ Additionally, the convergence is
uniform in $t$ on every compact subset of $ \mathbb{R}_+$.
\end{theorem}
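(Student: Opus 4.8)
The plan is to mimic the proof of Theorem~\ref{DVEt1} essentially verbatim, replacing the role of the $C_0$-semigroup generated by $A$ with that of the cosine family $T(t)$, and correspondingly using the second-order (cosine-family) version of the Hille--Yosida generation theorem and of the Yosida approximation. First I would invoke the characterization of cosine family generators: since $A$ generates a cosine family $T(t)$ with $\|T(t)\|\le Me^{\omega t}$, the resolvent set $\varrho(A)$ contains a ray $(\omega^2,\infty)$ (or $[w,\infty)$ for suitable $w$) and the powers of the resolvent of $A$ satisfy Hille--Yosida-type estimates; what is actually needed is only that the Yosida approximation $A_n := nAR(n,A) = n^2R(n,A) - nI$ again generates a cosine family, and that these cosine families $T_n(t)$ are uniformly exponentially bounded, $\|T_n(t)\|\le \widetilde M e^{2\omega t}$ for $n$ large (this is the cosine-family analogue of the estimate~(\ref{eSW6}), and follows from the standard perturbation/series estimate for $\cosh$-type expansions of $T_n$ in terms of $R(n,A)$, exactly as the exponential series was estimated in the proof of Theorem~\ref{DVEt1}). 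That the first assertion --- $(A,a)$ admits an exponentially bounded resolvent when $a$ is completely positive and $A$ generates a cosine family --- again holds by \cite[Theorem~5]{Pr87} or \cite[Theorem~4.2]{Pr93}, since those generation results cover the cosine-family case as well.

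Next I would run the subordination/Laplace-transform argument identically. For each $n$, the complete positivity of $a(t)$ together with the uniform bound on $T_n(t)$ yields, via the steps of \cite[Theorem~5]{Pr87}, uniform estimates $\|[H_n(\lambda)]^{(k)}\|\le M_1/(\lambda-w_1)$ for $\lambda>w_1$, where $H_n(\lambda) = (\lambda - \lambda\hat a(\lambda)A_n)^{-1}$, with $M_1,w_1$ independent of $n$. Hence the generation theorem for resolvent families gives resolvents $S_n(t)$ for $(A_n,a)$ with $\|S_n(t)\|\le M_1 e^{w_1 t}$ uniformly in $n$, and $\hat S_n(\lambda) = H_n(\lambda)$ for $\lambda$ large. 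Then, writing $H_n(\lambda) = \frac{1}{\lambda\hat a(\lambda)}R(\tfrac{1}{\hat a(\lambda)},A_n)$ and $H(\lambda) = \frac{1}{\lambda\hat a(\lambda)}R(\tfrac{1}{\hat a(\lambda)},A)$ --- legitimate since $\hat a(\lambda)\to 0$ as $\lambda\to\infty$ so that $1/\hat a(\lambda)$ lands in the common resolvent ray --- the convergence $R(\mu,A_n)\to R(\mu,A)$ for $\mu$ large (a property of the Yosida approximation, independent of whether $A$ generates a semigroup or a cosine family; see \cite[Lemma~7.3]{Pa83}) gives $H_n(\lambda)\to H(\lambda)$ pointwise for $\lambda$ large. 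Finally the Trotter--Kato theorem for resolvent families \cite[Theorem~2.1]{Li90}, applied with the uniform bound $\|S_n(t)\|\le M_1e^{w_1t}$ and the transform convergence $H_n(\lambda)\to H(\lambda)$, delivers $S_n(t)x\to S(t)x$ for all $x\in B$, $t\ge 0$, uniformly on compact $t$-intervals, with the limit being precisely the resolvent of $(A,a)$.

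The point I would need to be careful about --- and which I expect to be the only real obstacle --- is verifying that the Yosida approximation $A_n = nAR(n,A)$ of a cosine family generator is itself a cosine family generator with a uniform exponential bound. For semigroups this is textbook; for cosine families one must check that the bounded operator $A_n$ generates the cosine family $T_n(t) = \sum_{k\ge 0}\frac{t^{2k}}{(2k)!}A_n^k$ and bound $\|T_n(t)\|$ using $A_n = n^2 R(n,A) - nI$, the functional-calculus/series estimate $\|R(n,A)^k\|\le M/(n-w)^k$, and the identity relating $\cosh$ of a shifted operator. Concretely, since $A_n$ is bounded, $T_n(t)=\cosh(t\sqrt{A_n})$ in the functional-calculus sense, and one estimates this through the power series exactly as the exponential was estimated in Theorem~\ref{DVEt1}, arriving at a bound of the form $\widetilde M e^{2\omega t}$ for $n>2w$. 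Once that uniform bound is in hand, every subsequent step is word-for-word the same as the proof of Theorem~\ref{DVEt1}, so I would simply say ``following the same steps as in the proof of Theorem~\ref{DVEt1}, with the cosine family $T_n(t)$ in place of the semigroup $e^{A_n t}$ and the cosine-family generation theorem in place of the Hille--Yosida theorem, the conclusion follows.''
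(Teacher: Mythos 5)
Your overall strategy is the correct one and is exactly what the paper intends: Theorem~\ref{DVEt1a} is stated as a reformulation of Theorem~\ref{DVEt1} with no separate proof, so the argument is to be repeated with the cosine family in place of the semigroup. You also correctly isolate the single genuinely new ingredient, namely the uniform exponential bound for the cosine families generated by the Yosida approximants $A_n=nAR(n,A)$. (Two small citation points: the generation result covering the cosine-family case is \cite[Theorem 6]{Pr87}, equivalently \cite[Theorem 4.3]{Pr93}, not Theorem~5 / Theorem~4.2, which are the semigroup versions; the paper's own Remark following the theorem points to the former. Note also that $nAR(n,A)=\lambda^2AR(\lambda^2,A)$ with $\lambda=\sqrt n$, so your $A_n$ are indeed the standard second-order Yosida approximants, reindexed.)

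The genuine gap is in your proposed verification of that key uniform bound. You claim that $T_n(t)=\cosh\bigl(t\sqrt{A_n}\bigr)=\sum_{k\ge0}\frac{t^{2k}}{(2k)!}A_n^k$ can be estimated ``through the power series exactly as the exponential was estimated.'' It cannot. The estimate (\ref{eSW6}) rests entirely on the factorization $e^{tA_n}=e^{-nt}e^{tn^2R(n,A)}$, which extracts the large negative multiple of the identity \emph{before} the series is expanded; $\cosh$ admits no such factorization. Applying the triangle inequality directly to the series, the best one gets from $\|R(n,A)^k\|\le M/(n-w)^k$ is $\|A_n^k\|\le M\bigl(\tfrac{n^2}{n-w}+n\bigr)^k\le M(cn)^k$, whence only $\|T_n(t)\|\le M\cosh(ct\sqrt n)$, which blows up as $n\to\infty$. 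This is not an artifact of a crude bound: for $A=d^2/dx^2$ on $L^2(\mathbb{R})$ one has $\|A_n^k\|=n^k$ while $\|\cosh(t\sqrt{A_n})\|\le1$, so the uniform bound holds only because of cancellation ($\cos$ versus $\cosh$) that the termwise estimate destroys. The uniform bound $\|T_n(t)\|\le\widetilde M e^{w_0t}$ is true, but it is the content of the approximation theorem for cosine operator functions (Sova; see \cite{Fa83}), proved via the second-order resolvent identity
$$(\mu^2-A_n)^{-1}=\frac{1}{\mu^2+n}\Bigl(\nu^2-A\Bigr)^{-1}(n-A),\qquad \frac{1}{\nu^2}=\frac{1}{\mu^2}+\frac{1}{n},$$
combined with the Widder-type derivative estimates on $\mu(\mu^2-A)^{-1}$ from the generation theorem — not via the power series. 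Once this result is cited or reproduced, the remainder of your argument (uniform estimates on $H_n(\lambda)$ from complete positivity, convergence $H_n(\lambda)\to H(\lambda)$, Trotter--Kato for resolvent families) goes through as you describe.
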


\noindent {\bf Remarks} \label{comment}

\noindent 1. By \cite[Theorem 4.3]{Pr93} or \cite[Theorem 6]{Pr87}
Theorem \ref{DVEt1a} holds also in two other cases:\\
a) $a(t)$ is a creep function with the function $a_1(t)$ log-convex;\\
b) $a=c\star c$ with some completely positive $c\in
L_\mathrm{loc}^1(\mathbb{R}_+)$.

(Let us recall the definition \cite[Definition 4.4]{Pr93}:\\
A function $a: \mathbb{R}_+\mapsto\mathbb{R}$ is called a {\tt creep function}
if $a(t)$ is nonnegative, nondecreasing, and concave. A creep function $a(t)$
has a {\tt standard form} 
$$a(t)=a_0 +a_\infty t +\int_0^t a_1(\tau)d\tau,~ t>0,$$
where $a_0=a(0+)\ge 0$, $a_\infty=\lim_{t\to\infty}a(t)/t=\inf_{t>0} a(t)/t 
\ge 0$, and $a_1(t)= \dot{a}(t)-a_\infty$ is nonnegative, nonincreasing,
 $\lim_{t\to\infty} a_1(t)=0$.)\\

\noindent 2. Other examples of the convergence (\ref{DVEe10}) for
the resolvents are given, e.g., in \cite{CN79} and \cite{Fr69}. In
the first paper, the operator $A$ generates a linear continuous
contraction semigroup. In the second one, $A$
belongs to some subclass of sectorial operators and the kernel $a(t)$
is an absolutely continuous function fulfilling some technical
assumptions.\\

\noindent{\bf Comment} 
The above theorem gives a partial answer to the following open
problem for a resolvent family $S(t)$ generated by a pair $(A,a)$:
do exist bounded linear operators $A_n$ generating resolvent
families $S_n(t)$ such that $S_n (t)x \to S(t)x$~? Note that in
case $a(t)\equiv 1$ the answer is yes, namely $A_n$ are provided
by the Hille-Yosida approximation of $A$ and $S_n(t) = e^{t A_n}.$\\

The following result will be used in the sequel.

\begin{proposition}\label{AScom}
 Let $A, A_n$ and $ S_n( t)$ be given as in Theorem \ref{DVEt1} or Theorem
 \ref{DVEt1a}. Then the operators $ S_n(t)$ commute with  the operator $A$, 
 for every $n$ sufficiently large and $t\ge 0$.
\end{proposition}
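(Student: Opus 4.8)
The plan is to trace through the construction of $A_n$ in the proof of Theorem \ref{DVEt1} (and likewise Theorem \ref{DVEt1a}) and show that commutation of $S_n(t)$ with $A$ is inherited from commutation of $A_n$ with $A$. First I would observe that $A_n = nAR(n,A)$ is, by construction, a rational expression in the resolvent $R(n,A) = (nI-A)^{-1}$; explicitly $A_n = n^2R(n,A) - nI$. Since $R(n,A)$ maps $D(A)$ into $D(A)$ and commutes with $A$ on $D(A)$ (this is the standard elementary fact: $AR(n,A)x = R(n,A)Ax$ for $x\in D(A)$, because $(nI-A)$ and $A$ commute on $D(A)$), it follows immediately that $A_n(D(A))\subset D(A)$ and $A_nAx = AA_nx$ for all $x\in D(A)$, for every $n>w$ — in particular for all $n$ sufficiently large.

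Next I would push this commutation from $A_n$ to the resolvent family $S_n(t)$. The cleanest route is via the resolvent equation and a uniqueness/approximation argument, but since $A_n$ is bounded, the most transparent approach is through the Laplace transform: from the proof of Theorem \ref{DVEt1} we have $\widehat{S_n}(\lambda) = H_n(\lambda) = (\lambda - \lambda\hat a(\lambda)A_n)^{-1}$ for $\lambda$ large. Because $A_n$ is bounded and commutes with $A$ on $D(A)$ (with $A$ closed), the operator $(\lambda - \lambda\hat a(\lambda)A_n)^{-1}$ also commutes with $A$ on $D(A)$: indeed for $x\in D(A)$, applying $A$ to the identity $(\lambda - \lambda\hat a(\lambda)A_n)H_n(\lambda)x = x$ and using that $A$ is closed together with $A_nAy = AA_ny$, one gets $AH_n(\lambda)x = H_n(\lambda)Ax$. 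Inverting the Laplace transform (the integral $S_n(t)x = \frac{1}{2\pi i}\int e^{\lambda t}H_n(\lambda)x\,d\lambda$, or simply uniqueness of Laplace transforms applied to $t\mapsto AS_n(t)x$ versus $t\mapsto S_n(t)Ax$, both of which have Laplace transform $H_n(\lambda)Ax$ and are continuous), one concludes $S_n(t)(D(A))\subset D(A)$ and $AS_n(t)x = S_n(t)Ax$ for all $x\in D(A)$, $t\ge 0$.

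Alternatively — and perhaps this is the version I would actually write to avoid fussing over contour integrals — since $A_n$ is bounded, $S_n(t)$ is given by the norm-convergent power series / Picard iteration for the resolvent equation $S_n(t) = I + A_n\int_0^t a(t-\tau)S_n(\tau)\,d\tau$; each Picard iterate is a polynomial in $A_n$ (with scalar convolution coefficients) and hence commutes with $A$ on $D(A)$, and the commutation passes to the uniform limit $S_n(t)$ using closedness of $A$ once more. The main obstacle is really just the bookkeeping around $A$ being unbounded: one must be careful that the relevant vectors stay in $D(A)$ throughout, and invoke closedness of $A$ at each passage to a limit (in $n$ for the Picard iterates, or in the inverse Laplace integral). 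None of this is deep, but the closedness arguments are where an incautious proof would have a gap; I would make them explicit. The hypothesis "$n$ sufficiently large" enters only to guarantee $n\in\varrho(A)$ so that $A_n$ is defined, exactly as in Theorem \ref{DVEt1}.
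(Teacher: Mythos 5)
Your proposal is correct and follows essentially the same route as the paper: you show $A_n=nAR(n,A)$ commutes with $A$ on $D(A)$, deduce that $H_n(\lambda)=(\lambda-\lambda\hat a(\lambda)A_n)^{-1}$ does too, and then pass to $S_n(t)$ via the complex inversion formula for the Laplace transform, using closedness of $A$ to interchange $A$ with the contour integral. The paper's proof is exactly this argument (your Picard-iteration alternative is a reasonable variant but is not what the paper does), and your extra care in justifying the commutation of $H_n(\lambda)$ with $A$ only makes explicit what the paper leaves implicit.
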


\begin{proof}
 For each $n$ sufficiently large the bounded operators
$A_n$ admit a resolvent family $S_n(t)$, so by the complex
inversion formula for the Laplace transform we have
$$S_n(t)=\frac{1}{2\pi i}\int_{\Gamma_n} e^{\lambda t}H_n(\lambda) d\lambda$$
where $\Gamma_n$ is a simple closed rectifiable curve surrounding
the spectrum of $A_n$ in the positive sense.

 On the other hand, $H_n(\lambda) := (\lambda - \lambda\hat
a (\lambda) A_n)$ where $ A_n := nA(n -A)^{-1},$ so each $A_n$
commutes with $A$ on $D(A)$ and then each $H_n(\lambda)$ commutes
with $A$, on $D(A)$, too.

Finally, because $A$ is closed and all the following integrals
are convergent (exist), for all $n$ sufficiently large and
$x \in D(A)$ we have
\begin{eqnarray*}
AS_n(t)x & = & A \int_{\Gamma_n} e^{\lambda t} H_n(\lambda)x
d\lambda
 = \int_{\Gamma_n} e^{\lambda t} AH_n(\lambda)x d\lambda \\
 & = & \int_{\Gamma_n} e^{\lambda t} H_n(\lambda)Ax d\lambda
  = S_n(t) Ax \,.  \qquad\qquad\qquad \qed
\end{eqnarray*}
\end{proof}

\chapter{Probabilistic background} \label{PBch:2} 

In this chapter we recall from \cite{CP78}, \cite{Ic82}, \cite{DZ92} 
and \cite{GT95} basic  and important concepts and results of the 
infinite dimensional 
stochastic analysis needed in the sequel. In particular, we present 
construction of stochastic integral with respect to a cylindrical 
Wiener process, published in \cite{Ka98}.

\section{Notations and conventions} \label{PBsec:1}

Assume that $(\Omega,\mathcal{F},P)$ is a probability space equipped with an 
increasing family of $\sigma$-fields $(\mathcal{F}_t),~t\in I$, where 
$I=[0,T]$ or $I=[0,+\infty )$, called {\em filtration}. We shall denote by
$\mathcal{F}_{t^+}$ the intersection of all $\sigma$-fields $\mathcal{F}_s$,
$s>t$. We say that filtration is {\em normal} if $\mathcal{F}_0$ contains 
all sets $B\in\mathcal{F}$ with measure $P(B)=0$ and if 
$\mathcal{F}_t=\mathcal{F}_{t^+}$ for any $t\in I$, that is, the filtration is 
right continuous. 
\index{$(\Omega,\mathcal{F},P)$}
\index{$(\mathcal{F}_t)$}

In the paper we assume that filtration 
$(\mathcal{F}_t)_{t\in I}$ is normal. This assumption enables to choose 
modifications of considered stochastic processes with required measurable 
properties.

Let $H$ and $U$ be two separable Hilbert spaces.
\index{$H$}\index{$U$}
In the whole paper we write explicitely
indexes indicating the appropriate space in norms $|\cdot |_{(\cdot)}$
and inner products $\langle\cdot ,\cdot\rangle_{(\cdot)}$ .

\begin{definition}\label{PBd:1}
 The $H$-valued process $X(t),~t\in I$, is {\tt adapted} to the family 
 $(\mathcal{F}_t)_{t\in I}$, if
 for arbitrary $t\in I$ the random variable $X(t)$ is 
 $\,\mathcal{F}_t$-measurable.
\end{definition}
\begin{definition}\label{PBd:2}
 The $H$-valued process $X(t),~t\in [0,T]$, is {\tt progressively measurable}
 if for every $t\in [0,T]$ the mapping
 $[0,t]\times\Omega\rightarrow H,~(s,w)\mapsto X(s,w)$ is 
 $\mathcal{B}([0,t])\times \mathcal{F}_t$-measurable.
\end{definition}
\index{process!adapted}
\index{process!progressively measurable}

We will use the following well-known result, see e.g.\ \cite{DZ92}.

\begin{proposition} \label{PBp1} (\cite[Proposition 3.5]{DZ92})
Let $X(t),~t\in [0,T]$, be a stochastically  
continuous and adapted process with values
in $H$. Then $X$ has a progressively measurable modification.
\end{proposition}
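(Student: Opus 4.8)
\textbf{Proof proposal for Proposition~\ref{PBp1} (progressive measurability of a stochastically continuous adapted process).}

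The plan is to construct an explicit sequence of \emph{simple} (piecewise-constant-in-time) processes that are manifestly progressively measurable, and to show that stochastic continuity forces this sequence to converge, in probability and uniformly on $[0,T]$, to the given process $X$; the progressively measurable modification is then obtained as a pointwise (a.s.) limit of the approximants. First I would fix $T>0$ and, for each $n\in\mathbb{N}$, partition $[0,T]$ into subintervals $I^n_k=(t^n_{k-1},t^n_k]$ with $t^n_k=kT/2^n$, $k=1,\dots,2^n$ (and $t^n_0=0$). Define $X_n(0)=X(0)$ and $X_n(s)=X(t^n_{k-1})$ for $s\in I^n_k$. Each $X_n$ is adapted because $X(t^n_{k-1})$ is $\mathcal{F}_{t^n_{k-1}}$-measurable hence $\mathcal{F}_s$-measurable for $s\ge t^n_{k-1}$, and for fixed $t\in[0,T]$ the map $(s,\omega)\mapsto X_n(s,\omega)$ on $[0,t]\times\Omega$ is a finite sum of terms of the form $\mathbf{1}_{I^n_k\cap[0,t]}(s)\,X(t^n_{k-1},\omega)$, each of which is $\mathcal{B}([0,t])\otimes\mathcal{F}_t$-measurable; so $X_n$ is progressively measurable.

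Next I would use stochastic continuity. Since $s\mapsto X(s)$ is stochastically continuous on the compact interval $[0,T]$, it is in fact \emph{uniformly} stochastically continuous there: for every $\varepsilon>0$ and $\eta>0$ there is $\delta>0$ such that $|s-r|\le\delta$ implies $P(|X(s)-X(r)|_H>\varepsilon)\le\eta$. Hence for $2^{-n}T\le\delta$ we get
\begin{equation}\label{PBp1aux}
\sup_{0\le s\le T} P\bigl(|X_n(s)-X(s)|_H>\varepsilon\bigr)\le\eta,
\end{equation}
because for $s\in I^n_k$ the pair $(s,t^n_{k-1})$ is within $\delta$. From \eqref{PBp1aux} one extracts, by a routine diagonal/subsequence argument, a subsequence $(X_{n_j})$ together with a set $\Omega_0$ of full measure such that $X_{n_j}(s,\omega)\to X(s,\omega)$ for all $s$ in a countable dense subset $D\subset[0,T]$ and all $\omega\in\Omega_0$; combined with stochastic continuity one upgrades this, after passing to a further subsequence if necessary, to convergence for \emph{all} $s\in[0,T]$ on a full-measure set. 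Define $\widetilde X(s,\omega)$ to be this limit when it exists and $0$ otherwise. Then $\widetilde X$ is a modification of $X$ (for each fixed $s$, $X_{n_j}(s)\to X(s)$ in probability by \eqref{PBp1aux}, so the a.s.\ limit $\widetilde X(s)$ equals $X(s)$ a.s.), and $\widetilde X$ is progressively measurable as a pointwise limit of the progressively measurable processes $X_{n_j}$ (the set $[0,t]\times\Omega$ on which the limit exists is in $\mathcal{B}([0,t])\otimes\mathcal{F}_t$, and on it $\widetilde X$ is $\mathcal{B}([0,t])\otimes\mathcal{F}_t$-measurable).

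The main obstacle, and the step deserving the most care, is passing from convergence on a countable dense set of times to convergence at \emph{every} $t\in[0,T]$ while keeping the exceptional null set independent of $t$: stochastic continuity only controls probabilities, not sample paths, so one cannot simply invoke uniform continuity of trajectories. The remedy is to estimate $P(\sup_{s}|X_{n}(s)-X_{m}(s)|_H>\varepsilon)$ is \emph{not} directly available, so instead I would argue at the level of fixed times: for each fixed $t$, $X_{n_j}(t)\to X(t)$ in probability, hence a further subsequence converges a.s.; a careful bookkeeping (choosing the subsequence once, via Borel--Cantelli applied to $\sum_j P(|X_{n_j}(s)-X(s)|_H>2^{-j})<\infty$ uniformly in $s$, which \eqref{PBp1aux} permits) yields a single null set outside of which $X_{n_j}(s,\omega)$ is Cauchy simultaneously for all $s\in[0,T]$. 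This is the delicate point; everything else is bookkeeping with the measurability of simple processes and the definition of progressive measurability.
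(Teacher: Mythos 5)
The paper states this proposition without proof, citing \cite[Proposition 3.5]{DZ92}, so there is no in-text argument to compare against; your construction (left-endpoint piecewise-constant approximants $X_n$ over dyadic partitions, uniform stochastic continuity on the compact interval $[0,T]$, passage to the limit along a subsequence) is the standard route and is essentially correct. The approximants are adapted and progressively measurable for exactly the reason you give, and the bound $\sup_{0\le s\le T}P(|X_n(s)-X(s)|_H>\varepsilon)\le\eta$ for small enough mesh is right.

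The genuine problem is in how you justify that the limit is a modification. The "upgrade" in your middle paragraph, from convergence on a countable dense set of times to convergence at \emph{every} $s\in[0,T]$ on a single full-measure set, is not available: stochastic continuity gives no pathwise control, and the claim is false in general. (Take $X(t,\omega)=\mathbf{1}_{\{t=U(\omega)\}}$ with $U$ uniform on $[0,T]$: this is stochastically continuous, every $X_n$ vanishes a.s., so the limit is the zero process, yet for a.e.\ $\omega$ there is a time at which $X(\cdot,\omega)\neq 0$.) Your final paragraph correctly diagnoses this difficulty but then overreaches again: Borel--Cantelli applied to $\sum_j\sup_{s}P(|X_{n_j}(s)-X(s)|_H>2^{-j})<\infty$ produces, for each \emph{fixed} $s$, a null set $N_s$ off which $X_{n_j}(s)\to X(s)$; it does not produce one null set valid simultaneously for all $s$, since $\bigcup_{s\in[0,T]}N_s$ is an uncountable union. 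The repair is to observe that no such uniform null set is needed. Set $\widetilde X(s,\omega)=\lim_j X_{n_j}(s,\omega)$ on the set $A$ where the limit exists and $\widetilde X=0$ off $A$; for each $t$, the trace of $A$ on $[0,t]\times\Omega$ lies in $\mathcal{B}([0,t])\otimes\mathcal{F}_t$ and $\widetilde X$ restricted there is a pointwise limit of $\mathcal{B}([0,t])\otimes\mathcal{F}_t$-measurable maps, so $\widetilde X$ is progressively measurable. Being a modification only requires $P(\widetilde X(s)=X(s))=1$ \emph{separately} for each fixed $s$, with the null set allowed to depend on $s$ --- and that is precisely what fixed-time Borel--Cantelli delivers. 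With this adjustment your argument closes; as written, the asserted single null set working simultaneously for all $s\in[0,T]$ is both unobtainable by your method and unnecessary for the conclusion.
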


By $\mathcal{P}_\infty$ we denote a $\sigma$-field of subsets of
 $[0,+\infty)\times\Omega$ defined as follows: $\mathcal{P}_\infty$ is
 the $\sigma$-field generated by sets of the form: $(s,t]\times F$,
 where $0\le s\le t<+\infty$, $F\in \mathcal{F}_s$ and $\{0\}\times F$,
 when $F\in \mathcal{F}_0$. The restriction of the $\sigma$-field 
 $\mathcal{P}_\infty$ to $[0,T]\times\Omega$ will be denoted by 
 $\mathcal{P}_T$.
\index{$\mathcal{P}_\infty$}
\index{$\mathcal{P}_T$}
 
\begin{definition}\label{PBd:3}
 An arbitrary measurable mapping from
 $([0,+\infty)\times\Omega,\mathcal{P}_\infty)$ or
 $([0,T]\times\Omega,\mathcal{P}_T)$ into $(H;\mathcal{B}(H))$ is called
 a {\tt predictable process}.
\end{definition}
\index{process!predictable}

\noindent{\bf Comment} A predictable process is an adapted one.
 
\begin{proposition}\label{PBp1a} (\cite[Proposition 3.6]{DZ92})
 Assume that $X(t),~t\in [0,T]$, is an adapted and stochastically continuous
 process. Then the process $X$ has a predictable version on $[0,T]$.
\end{proposition}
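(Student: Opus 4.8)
The statement to prove is Proposition \ref{PBp1a}: an adapted, stochastically continuous process $X(t)$, $t\in[0,T]$, has a predictable version. Since this is quoted from Da Prato--Zabczyk, I'll sketch the standard proof.

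---

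The plan is to construct an explicit predictable process by a piecewise-constant approximation in time and to show it converges, along a suitable subsequence, to a version of $X$. The key tool is that stochastic continuity on the compact interval $[0,T]$ upgrades to \emph{uniform} stochastic continuity: for every $\varepsilon>0$ one has $\sup\{P(|X(t)-X(s)|_H>\varepsilon): |t-s|\le\delta,\ s,t\in[0,T]\}\to 0$ as $\delta\to 0$. This is a routine compactness argument (cover $[0,T]$ by finitely many intervals on which the oscillation in probability is small).

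First I would fix $n\in\mathbb{N}$ and partition $[0,T]$ by $t_k^{(n)}=kT/2^n$, $k=0,\dots,2^n$, and define
\begin{equation*}
 X_n(t) = X(0)\,\mathbf{1}_{\{0\}}(t) + \sum_{k=0}^{2^n-1} X\!\left(t_k^{(n)}\right)\mathbf{1}_{(t_k^{(n)},\,t_{k+1}^{(n)}]}(t).
\end{equation*}
Each $X_n$ is predictable: on $(t_k^{(n)},t_{k+1}^{(n)}]$ it equals the fixed $\mathcal{F}_{t_k^{(n)}}$-measurable random variable $X(t_k^{(n)})$, so $X_n$ is a finite sum of processes of the form $\xi\,\mathbf{1}_{(s,t]}$ with $\xi$ being $\mathcal{F}_s$-measurable, which generate $\mathcal{P}_T$; measurability with respect to $\mathcal{P}_T$ into $(H,\mathcal{B}(H))$ follows. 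Next, using uniform stochastic continuity, for $t\in(t_k^{(n)},t_{k+1}^{(n)}]$ we have $|X_n(t)-X(t)|_H=|X(t_k^{(n)})-X(t)|_H$ with $|t-t_k^{(n)}|\le T/2^n$, hence $X_n(t)\to X(t)$ in probability, uniformly in $t\in[0,T]$, as $n\to\infty$. Therefore, passing to the product measure $dt\otimes dP$ on $[0,T]\times\Omega$ (which is finite), $X_n\to X$ in $(dt\otimes dP)$-measure; extracting a subsequence $X_{n_j}$ converging $(dt\otimes dP)$-a.e., the limit
\begin{equation*}
 \widetilde X(t,\omega) := \limsup_{j\to\infty} X_{n_j}(t,\omega)\quad\text{(coordinatewise, on a separable basis of }H\text{), interpreted as }0\text{ where the limit fails,}
\end{equation*}
is $\mathcal{P}_T$-measurable as a pointwise limit of $\mathcal{P}_T$-measurable maps, and agrees $(dt\otimes dP)$-a.e.\ with $X$.

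Finally I would promote this "almost-everywhere equality" to "$\widetilde X(t)=X(t)$ $P$-a.s.\ for every fixed $t$", i.e.\ a genuine \emph{version}. By Fubini, the set $N=\{(t,\omega): \widetilde X(t,\omega)\ne X(t,\omega)\}$ has $(dt\otimes dP)(N)=0$, so for Lebesgue-a.e.\ $t$ the section $N_t$ is $P$-null; this gives equality at a.e.\ $t$, and then stochastic continuity of both $X$ and (a check on) the approximants lets one conclude equality at \emph{every} $t$ by taking $t_m\to t$ with $P(X(t_m)=\widetilde X(t_m))=1$ and using $X(t_m)\to X(t)$ in probability. Alternatively, and more cleanly, one replaces $\widetilde X$ on the null set of bad times by $X$ itself --- but since $X$ need not be predictable this reintroduces the problem, so the stochastic-continuity limiting argument is the right route. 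The main obstacle is precisely this last step: converting the $(dt\otimes dP)$-a.e.\ identification into an everywhere-in-$t$ version, which is where stochastic continuity is used a second time and must be invoked carefully. Everything else --- predictability of $X_n$, uniform stochastic continuity, the subsequence extraction --- is routine.
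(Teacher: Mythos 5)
The paper gives no proof of this proposition --- it is quoted verbatim from \cite[Proposition 3.6]{DZ92} --- so there is nothing to compare route against; your outline (predictable piecewise-constant approximants from a dyadic partition, uniform stochastic continuity on the compact interval $[0,T]$, passage to a limit) is indeed the standard one. The predictability of each $X_n$ and the uniform stochastic continuity are fine.

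However, the final identification step, which you yourself flag as the main obstacle, contains a genuine gap as written. Passing through convergence in $dt\otimes dP$-measure and Fubini only yields $P(\widetilde X(t)=X(t))=1$ for Lebesgue-almost every $t$, and your proposed upgrade to \emph{every} $t$ does not work: taking $t_m\to t$ with $P(\widetilde X(t_m)=X(t_m))=1$ and using stochastic continuity of $X$ gives $\widetilde X(t_m)\to X(t)$ in probability, but says nothing about $\widetilde X(t)$ itself, since $\widetilde X$ --- defined as a $\limsup$ along a subsequence --- is not known to be stochastically continuous. (For a bad $t$, one only knows $X_{n_j}(t)\to X(t)$ in probability, which does not force the a.s.\ $\limsup$ along the whole subsequence to equal $X(t)$.) The correct fix, and the argument actually used in \cite{DZ92}, avoids the product measure entirely: use uniform stochastic continuity to choose the partition meshes so fine that $P\bigl(|X_n(t)-X(t)|_H>2^{-n}\bigr)\le 2^{-n}$ for all $t\in[0,T]$ simultaneously; then by Borel--Cantelli, for \emph{each fixed} $t$ one has $X_n(t)\to X(t)$ $P$-a.s.\ (with a $t$-dependent null set, which is harmless for being a version). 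Defining $\widetilde X=\lim_n X_n$ on the $\mathcal{P}_T$-measurable set where the limit exists and $\widetilde X=0$ elsewhere then gives a predictable process with $\widetilde X(t)=X(t)$ $P$-a.s.\ for every $t$, with no exceptional time set to repair.
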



By $L(U,H)$, $L(U)$ we denote spaces of linear bounded operators
\index{$L(U,H)$, $L(U)$} 
from $U$ into $H$ and in $U$, respectively.
As previously, the operator norm is denoted by $||\cdot||$.

An important role will be played by the space of 
Hilbert-Schmidt operators. 
Let us recall the following definition.
\vskip2mm
\begin{definition} \label{def2} 
(\cite{Ba81} or \cite{DZ92}) 
Assume that $\{e_k\}\subset U$ and  $\{f_j\}\subset H$ are orthonormal
bases of $U$ and $H$, respectively.
 A linear bounded operator $T:\ U\to H$ is called
{\tt ~Hilbert-Schmidt operator~}\index{operator!Hilbert-Schmidt} 
if $\sum_{k=1}^{\infty}|Te_k|_H^2<+\infty$.
\end{definition}
\vskip2mm
Because
$$
\sum_{k=1}^{\infty}|Te_k|_H^2=\sum_{k=1}^{\infty}\sum_{j=1}^{\infty}
(Te_k,f_j)_H^2=\sum_{j=1}^{\infty}|T^*f_j|_U^2,
$$
where $T^*$ denotes the operator adjoint to $T$, then the definition of
Hilbert-Schmidt operator and the number
$||T||_{HS}=\left(\sum_{k=1}^{\infty} |Te_k|_H^2\right)^{\frac12}$
do not depend on the basis $\{e_k\}$, $k\in\mathbb{N}$.
Moreover $||T||_{HS}=||T^*||_{HS}.$
\index{$\parallel T\parallel_{HS}$}

Additionally, $L_2(U,H)$ -- the set of all Hilbert-Schmidt operators
from $U$ into $H$, endowed with the norm $||\cdot ||_{HS}$ defined
above, is a separable Hilbert space. \label{H-Sn}
\index{$L_2(U,H)$} 
\vskip2mm

\section{Classical infinite dimensional Wiener process} \label{PBsec:3}

Here we recall from \cite{CP78} and \cite{Ic82} the definition of
Wiener process with values in a real separable Hilbert space $U$
and the stochastic integral with respect to this process.

\begin{definition} \label{def1} Let $Q:\ U \to U$ be a linear symmetric
non-negative nuclear operator\index{operator!nuclear} 
($\mathrm{Tr}\, Q<+\infty$).  A square integrable
$U$--valued stochastic process $W(t)$, $t\ge0$,
defined on a probability space $(\Omega,\mathcal{F},
(\mathcal{F}_t)_{t\ge0},P)$,
where $\mathcal{F}_t$ denote $\sigma$-fields such that
$\mathcal{F}_t\subset \mathcal{F}_s \subset \mathcal{F}$ for $t<s$, 
is called classical or genuine
{\tt Wiener process}\index{Wiener process!classical or genuine} 
with covariance operator $Q$ if:
\index{$Q$}
\index{covariance operator}
\begin{enumerate}
\item $W(0)=0$,
\item $EW(t)=0$, $\mathrm{Cov} [W(t)-W(s)]=(t-s)Q$ for all $s,t\ge 0$,
\item $W$ has independent increments, that is $W(s_4)-W(s_3)$ and 
 $W(s_2)-W(s_1)$ are independent whenever $0\le s_1\le s_2\le s_3\le s_4$, 
\item $W$ has continuous trajectories,
\item $W$ is adapted with respect to the filtration $(\mathcal{F}_t)_{t\ge 0}$.
\end{enumerate}
\end{definition}

If we choose $\mathcal{F}_t$ to be the $\sigma$-field generated by 
$\{W(s); 0\le s\le t\}$, then $W(t)-W(s)$ is independent of $\mathcal{F}_s$
for all $t>s$ from condition 3.~of the above definition. Then 
$\mathbb{E}\{W(t)-W(s)|\mathcal{F}_s\}=\mathbb{E}\{W(t)-W(s)\}=0$ by
condition 2.
Hence, $\mathbb{E}\{W(t)|\mathcal{F}_s\}=W(s)$ w.p.1 and 
$\{W(t),\mathcal{F}_t\}$ is a martingale on $[0,+\infty)$.

We remark that an alternative definition is to replace condition 4.~of 
definition \ref{def1} by assuming that $W(t)$ is Gaussian for all $t\ge 0$,
see \cite{CP78} for details.\\

In the light of the above, Wiener process is Gaussian and has the
following expansion (see e.g. \cite[Lemma 5.23]{CP78}).
Let $\{e_i\}\subset U$ be an orthonormal set of eigenvectors of $Q$ with
corresponding eigenvalues $\zeta_i$ (so 
$\mathrm{Tr}\, Q=\sum_{i=1}^{\infty}\zeta_i$),
then $$W(t)=\sum_{i=1}^{\infty}\beta_i(t)e_i\;,$$ 
where $\beta_i$ are mutually independent
real Wiener processes with $E(\beta_i^2(t))=\zeta_it$.\\

\noindent{\bf Remark} If $W(t)$ is a Wiener process in $U$ with covariance
operator $Q$, then $\mathbb{E}|W(t)-W(s)|_U^{2n}\le (2n-1)!(t-s)^n
(\mathrm{Tr\,Q})^n$, where the equality holds for $n=1$.\vskip1mm

The above type of structure of Wiener process will be used in the definition 
of the stochastic integral.
\vskip2mm
For any Hilbert space $H$ we denote by $M(H)$ the space of all stochastic
processes
\index{$M(H)$}
$g:\ [0,T]\times \Omega \rightarrow L(U,H)$ such that
$$
E\left(\int_0^T\|g(t)\|_{L(U,H)}^2dt\right)<+\infty
$$
and for all $u\in U$, $(g(t)u), t\in [0,T]$ is an $H$--valued and 
$\mathcal{F}_t$-adapted stochastic process. 
\vskip1mm
For each $t\in [0,T]$,
the stochastic integral $\int_0^tg(s)dW(s)\in H$ is defined for all
$g\in M(H)$ by
$$
\int_0^tg(s)dW(s)=\lim_{m\to\infty}\sum_{i=1}^m\int_0^tg(s)e_id\beta_i(s)
$$
in $L^2(\Omega;H)$.
\index{$L^2(\Omega;H)$}

We shall show that the series in the above formula is
convergent.
\vskip5mm
Let
$W^{(m)}(t)=\sum_{i=1}^me_i\beta_i(t).$
Then, the integral
$$ \int_0^tg(s)dW^{(m)}(s)=\sum_{i=1}^m\int_0^tg(s)e_id\beta_i(s)$$
is well defined for $g\in M(H)$ and additionally
$$ \int_0^tg(s)dW^{(m)}(s) ~{\longrightarrow}_{{\hspace{-4ex}}_{m\to\infty}}
\int_0^tg(s)dW(s) $$
in $L^2(\Omega;H)$.

This convergence comes from the fact that the sequence
$$
y_m=\int_0^tg(s)dW^{(m)}(s),\qquad m\in \mathbb{N}
$$
is Cauchy sequence in the space of square integrable random variables.
Using properties of stochastic integrals with respect to
$\beta_i(s)$, for any $m,n\in \mathbb{N}$, $m<n$, we have:
\begin{eqnarray}\label{Ch2e1}
E\left(\left|y_n-y_m\right|_H^2\right) &=&
\sum_{i=m+1}^n\zeta_iE\int_0^t\left(g(s)e_i,g(s)e_i\right)_Hds \\
&\leq& \left(\sum_{i=m+1}^n\zeta_i\right)E\int_0^t\|g(s)\|_{L(U,H)}^2ds
 ~~\longrightarrow_{{\hspace{-4ex}}_{m,n\to\infty}}~ 0. \nonumber
\end{eqnarray}
Hence, there exists a limit of the sequence $(y_m)$ which defines the stochastic
integral $\int_0^tg(s)dW(s)$.

The stochastic integral defined above has the following properties (see e.g.\
\cite{Ic82}).

\begin{proposition} \label{CH2pr1}
 Let $g\in M(H)$. Then
 \begin{enumerate}
  \item $\displaystyle \mathbb{E}\left( \int_0^T g(t)dW(t)\right)=0 $ ;
  \vskip1.5mm
  \item $\displaystyle \mathbb{E}\left| \int_0^T g(t)dW(t)\right|_H^2 =
        \int_0^T \mathbb{E} \left( \mathrm{Tr} (g(t)Qg^*(t))\right) dt =$\\
	  $\displaystyle = \int_0^T \mathbb{E} \left( \mathrm{Tr} (g^*(t)g(t)Q)
	  \right)dt
	  \le \mathrm{Tr} Q\int_0^T \mathbb{E} |g(t)|_H^2 dt$ ;
  \vskip1.5mm
  \item $\displaystyle 	\mathbb{E}\left[ \sup_{t\in [0,T]} \left|\int_0^t\!\!
        g(s) dW(s)\right|_H^2 \right] \le 4 \mathbb{E}\left| \int_0^T\!\!\!
	  g(s) dW(s)\right|_H^2\!\!\le 4 \mathrm{Tr} Q \!\int_0^T \!\!\!
	  \mathbb{E} |g(s)|_H^2 ds$ ;
  \vskip1.5mm
  \item $\displaystyle \mathbb{E} \left[ \sup_{t\in [0,T]} \left|\int_0^t
        g(s) dW(s)\right|_H \right] \le 3 \mathbb{E}\left[\int_0^T
	  \mathrm{Tr}\left( g(s)Qg^*(s)\right)ds \right]^{\frac{1}{2}}$ .
 \end{enumerate}
\end{proposition}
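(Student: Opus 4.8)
The plan is to obtain each of the four assertions first for the finite partial sums $\int_0^t g(s)\,dW^{(m)}(s)=\sum_{i=1}^m\int_0^t g(s)e_i\,d\beta_i(s)$ — where the $\beta_i$ are the independent real Wiener processes with $\mathbb{E}\beta_i^2(t)=\zeta_i t$ — and then to let $m\to\infty$, the $L^2(\Omega;H)$-convergence of these partial sums to $\int_0^t g(s)\,dW(s)$ having already been proved in (\ref{Ch2e1}). A first step I would record is that $M_t:=\int_0^t g(s)\,dW(s)$, being an $L^2$-limit of continuous square-integrable $H$-valued $(\mathcal{F}_t)$-martingales, is itself a continuous square-integrable $(\mathcal{F}_t)$-martingale (continuity coming via Doob's inequality applied to $M^{(m)}-M^{(n)}$); statements (3) and (4) will be read off from this.

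For (1): each scalar It\^{o} integral of an adapted $L^2$-integrand has zero mean, so $\mathbb{E}\int_0^t g(s)\,dW^{(m)}(s)=0$; since $L^2(\Omega;H)$-convergence implies $L^1(\Omega;H)$-convergence and $\mathbb{E}$ is $L^1$-continuous, $\mathbb{E}M_T=0$. For (2): the scalar It\^{o} isometry together with the pairwise independence of the $\beta_i$ (which annihilates the cross terms) gives, exactly as in (\ref{Ch2e1}), $\mathbb{E}\big|\int_0^t g(s)\,dW^{(m)}(s)\big|_H^2=\sum_{i=1}^m\zeta_i\,\mathbb{E}\int_0^t|g(s)e_i|_H^2\,ds$, whence, on letting $m\to\infty$, $\mathbb{E}|M_T|_H^2=\sum_{i=1}^\infty\zeta_i\,\mathbb{E}\int_0^T|g(s)e_i|_H^2\,ds$. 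Using $Qe_i=\zeta_i e_i$, the summand is identified as $\sum_i\zeta_i|g(s)e_i|_H^2=\sum_i\zeta_i\,(g^*(s)g(s)e_i,e_i)_U=\mathrm{Tr}\big(g^*(s)g(s)Q\big)=\mathrm{Tr}\big(g(s)Qg^*(s)\big)$, the last step by cyclicity of the trace (valid since $Q$ is nuclear and $g(s)$ bounded); interchanging the nonnegative sum with $\mathbb{E}\int_0^T$ by Tonelli's theorem produces the two displayed equalities, and the final bound is $\mathrm{Tr}(g^*(s)g(s)Q)\le\|g(s)\|^2\,\mathrm{Tr}\,Q$ (reading $|g(s)|_H$ in the statement as the operator norm).

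For (3): applying Doob's $L^2$ maximal inequality to the continuous square-integrable martingale $M$ gives $\mathbb{E}[\sup_{t\le T}|M_t|_H^2]\le 4\,\mathbb{E}|M_T|_H^2$, and (2) finishes it. For (4): the natural statement is the Burkholder--Davis--Gundy inequality $\mathbb{E}[\sup_{t\le T}|M_t|_H]\le 3\,\mathbb{E}\big[\langle M\rangle_T^{1/2}\big]$, where the scalar quadratic variation of $M$ is identified — by passing to the limit from $\langle\int_0^\cdot g\,dW^{(m)}\rangle_t=\sum_{i=1}^m\zeta_i\int_0^t|g(s)e_i|_H^2\,ds$ — as $\langle M\rangle_T=\int_0^T\mathrm{Tr}(g(s)Qg^*(s))\,ds$. (If the right-hand side of (4) is read with $\mathbb{E}$ outside the square root, it follows at once by combining the Cauchy--Schwarz inequality with (3) and (2), even with constant $2$.)

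The steps I expect to require the most care are: confirming that continuity and the martingale property genuinely pass to the $L^2(\Omega;H)$-limit, so that the vector-valued Doob and Burkholder--Davis--Gundy inequalities legitimately apply to $M$; the rigorous identification of the quadratic variation of $M$ from its finite-rank approximations, on which the sharp form of (4) rests; and the trace manipulations — cyclicity, and the interchange with $\mathbb{E}\int_0^T$ — which rely on the nuclearity of $Q$ and on Tonelli's theorem. Everything else is routine one-dimensional It\^{o} calculus.
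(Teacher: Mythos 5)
Your proposal is correct and follows essentially the route the paper intends: the paper only remarks that (3) follows from Doob's inequality applied to the submartingale $|\int_0^t g(s)\,dW(s)|_H^2$ and that (4) is a consequence of a general martingale (Burkholder--Davis--Gundy) inequality, citing \cite{Ic82} for the rest, and your limit-from-partial-sums derivation of (1)--(2) together with the trace identities is the standard argument behind that citation. Your reading of $|g(t)|_H$ in (2) as the operator norm $\|g(t)\|_{L(U,H)}$ is also the intended one.
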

\vskip2mm
(Since $|\int_0^t g(s) dW(s)|_H^2$ is a submartinagale, property 3 follows from
Doob's inequality. Property 4 is also a consequence of a general inequality for
martingales.)

\section{Stochastic integral with respect to cylindrical Wiener process}
\label{PBs:3}
 
The construction of the stochastic integral in section \ref{PBsec:3}
required that $Q$ was a nuclear operator. 
In some cases, this assumption seems to be artificial. 
For instance, all processes stationary with respect to space variable, have
non-nuclear covariance operator.  So, we shall  extend the 
definition of the stochastic integral to the case of general 
bounded self-adjoint, non-negative operator $Q$ on Hilbert space $U$. 
In this section we provide a construction, published in \cite{Ka98},
of stochastic integral with respect to an infinite dimensional cylindrical 
Wiener process alternative to that given in \cite{DZ92}.
The construction is based 
on the stochastic integrals with respect to real-valued Wiener processes.
The advantage
of using of such a construction is that we can use basic results and
arguments of the  finite dimensional case.
To avoid trivial complications we shall assume that $Q$ is strictly
positive, that is: $Q$ is non-negative and $Qx\ne 0$ for $x\ne 0$.\\

Let us introduce the subspace $U_0$ of the space $U$ defined by
\index{$U_0$}
$U_0=Q^{\frac12}(U)$ with the norm
$$
|u|_{U_0}=\left| Q^{-\frac12}u\right|_U,\qquad u\in U_0.
$$

Assume that $U_1$ is an arbitrary Hilbert space such that $U$ is
continuously embedded
into $U_1$ and the embedding of $U_0$ into $U_1$ is a Hilbert-Schmidt operator.
\vskip2mm
In particular,
when $Q=I$, then $U_0=U$ and the embedding of $U$ into $U_1$ is
Hilbert-Schmidt operator.
When $Q$ is a nuclear operator, that is, $\mathrm{Tr}\,Q<+\infty$, then
$U_0=Q^{\frac12}(U)$ and we can take $U_1=U$. Because in this case
$Q^{\frac12}$ is Hilbert-Schmidt operator then the embedding $U_0\subset U$ is
Hilbert-Schmidt operator.

\vskip2mm
We denote by $L_2^0=L_2(U_0,H)$ the space of Hilbert-Schmidt 
\index{$L_2^0, L_2(U_0,H)$}
operators acting from $U_0$ into $H$. 

\vskip1mm
Let us consider the norm of the operator $\psi \in L_2^0$:
\begin{eqnarray*}
\Vert \psi\Vert_{L_2^0}^2 &=&
\sum_{h,k=1}^{\infty}\left(\psi g_h,f_k\right)_H^2
=\sum_{h,k=1}^{\infty}\lambda_h\left(\psi e_h,f_k\right)_H^2\\
&=&\Vert \psi Q^{\frac12}\Vert_{HS}^2=\mathrm{Tr}(\psi Q\psi^*),
\end{eqnarray*}
where $g_j=\sqrt{\lambda_j}e_j$, and $\{\lambda_j\}$, $\{e_j\}$ are
eigenvalues and eigenfunctions of the operator $Q$;
\index{$\Vert \psi\Vert_{L_2^0}^2$}
\newline
$\{g_j\}$, $\{e_j\}$ and
$\{f_j\}$ are orthonormal bases of spaces $U_0$, $U$ and $H$,
respectively.
\vskip2mm
The space $L_2^0$ is a separable Hilbert space with the norm
$\Vert\psi\Vert_{L_2^0}^2=\mathrm{Tr}\left(\psi Q\psi^*\right)$.\\

Particular cases:
\begin{enumerate}
\item If $Q=I$ then $U_0=U$ and the space $L_2^0$ becomes $L_2(U,H)$.
\item When $Q$ is a nuclear operator then $L(U,H)\subset L_2(U_0,H)$. 
Assume that $K\in L(U,H)$ and let us consider the operator
$\psi =K|_{U_0}$, that is the restriction of operator $K$ to the space $U_0$.
Because $Q$ is nuclear operator, then $Q^{\frac12}$
is Hilbert-Schmidt operator. So, the embedding $J$ of the space $U_0$ into
$U$ is Hilbert-Schmidt operator. We have to compute the norm
$\Vert\psi\Vert_{L_2^0}$ of the operator $\psi:\ U_0\to H$. We obtain
$\Vert\psi\Vert_{L_2^0}^2\equiv \Vert KJ\Vert_{L_2^0}^2=
\mathrm{Tr} KJ(KJ)^*,$
where $J:\ U_0\to U$.
\vskip2mm
Because $J$ is Hilbert--Schmidt operator and $K$ is linear bounded operator
then, basing on the theory of Hilbert--Schmidt operators (e.g. \cite{GV61},
Chapter I), $KJ$ is Hilbert--Schmidt operator, too. Next, $(KJ)^*$ is
Hilbert--Schmidt operator. In consequence, $KJ(KJ)^*$ is nuclear operator, so
$\mathrm{Tr} KJ(KJ)^*<+\infty$. Hence, $\psi =K|_{U_0}$ is Hilbert-Schmidt operator
on the space $U_0$, that is $K\in L_2(U_0,H)$.
\end{enumerate}

Although Propositions \ref{prop1} and \ref{prop2} introduced below are known 
(see e.g.\
Proposition 4.11 in the monograph \cite{DZ92}), because of their importance
we formulate them again.
In both propositions, $\{g_j\}$ denotes an orthonormal basis in $U_0$ and 
$\{\beta_j\}$ is
a family of independent standard real-valued Wiener processes.

\begin{proposition} \label{prop1}
The formula
\begin{equation}\label{PBe2}
W_c(t)=\sum_{j=1}^{\infty}g_j\beta_j(t),\qquad t\ge 0
\end{equation}
defines Wiener process in $U_1$ with the covariance operator $Q_1$ such that
$\mathrm{Tr}\,Q_1<+\infty$.
\end{proposition}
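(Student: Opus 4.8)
The plan is to verify the five defining properties of a Wiener process (Definition \ref{def1}) directly for the series $W_c(t)=\sum_{j=1}^\infty g_j\beta_j(t)$, once one knows the series converges in an appropriate sense in $U_1$. First I would settle the convergence. Since the embedding $J\colon U_0\hookrightarrow U_1$ is Hilbert--Schmidt and $\{g_j\}$ is an orthonormal basis of $U_0$, we have $\sum_{j=1}^\infty |g_j|_{U_1}^2=\|J\|_{HS}^2<+\infty$. For fixed $t$, the partial sums $W_c^{(m)}(t)=\sum_{j=1}^m g_j\beta_j(t)$ form a Cauchy sequence in $L^2(\Omega;U_1)$, because for $m<n$, by independence of the $\beta_j$ and $\mathbb{E}\beta_j(t)^2=t$,
\begin{equation*}
\mathbb{E}\,\bigl|W_c^{(n)}(t)-W_c^{(m)}(t)\bigr|_{U_1}^2
= t\sum_{j=m+1}^n |g_j|_{U_1}^2 \longrightarrow 0 \quad\text{as } m,n\to\infty.
\end{equation*}
Hence $W_c(t)$ is well defined as an $L^2$-limit (and one gets a continuous-in-$t$ modification by a standard Kolmogorov-type or maximal-inequality argument applied to the martingale tail of the series, exactly as in the construction of the classical Wiener process in Section \ref{PBsec:3}).

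Next I would identify the covariance. Define $Q_1\colon U_1\to U_1$ by $Q_1 = J\,J^*$, where $J^*\colon U_1\to U_0$ is the adjoint of the Hilbert--Schmidt embedding. Equivalently, for $a,b\in U_1$,
\begin{equation*}
\langle Q_1 a, b\rangle_{U_1} = \sum_{j=1}^\infty \langle g_j,a\rangle_{U_1}\langle g_j,b\rangle_{U_1}.
\end{equation*}
Then $\mathrm{Tr}\,Q_1 = \sum_{j=1}^\infty \langle Q_1 e_j',e_j'\rangle_{U_1}$ for any orthonormal basis $\{e_j'\}$ of $U_1$, and this equals $\|J\|_{HS}^2<+\infty$, so $Q_1$ is nuclear, symmetric and non-negative. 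A direct computation with the series gives $\mathbb{E}\langle W_c(t),a\rangle_{U_1}=0$ and $\mathbb{E}\bigl[\langle W_c(t)-W_c(s),a\rangle_{U_1}\langle W_c(t)-W_c(s),b\rangle_{U_1}\bigr]=(t-s)\langle Q_1a,b\rangle_{U_1}$, which is property 2, and $W_c(0)=0$ is immediate.

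Finally, the remaining properties are inherited termwise: $W_c$ has independent increments because each $\beta_j$ does and the $\beta_j$ are mutually independent (so increments of $W_c$ over disjoint intervals are limits of sums of independent Gaussian vectors, hence independent); $W_c(t)$ is Gaussian in $U_1$ as an $L^2$-limit of Gaussian vectors, which together with the computed mean and covariance pins down the law; and adaptedness with respect to $(\mathcal{F}_t)$ holds because each $\beta_j$ is. I expect the only genuinely delicate point to be producing the continuous modification of $W_c$ in $U_1$ — i.e.\ justifying that the series converges not just in $L^2(\Omega;U_1)$ for each fixed $t$ but uniformly on compact time intervals almost surely; this is handled, as in the classical case, by applying Doob's maximal inequality to the $U_1$-valued martingale tails $\sum_{j=m+1}^n g_j\beta_j(\cdot)$ and invoking the Borel--Cantelli lemma, and I would simply cite \cite{CP78} or \cite{DZ92} for this standard step rather than reproduce it.
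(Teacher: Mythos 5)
Your proposal is correct and follows essentially the same route as the paper: convergence of the partial sums in $L^2(\Omega;U_1)$ via the Hilbert--Schmidt property of the embedding $J\colon U_0\to U_1$ (giving $\sum_j|g_j|_{U_1}^2<+\infty$), identification of the covariance operator $Q_1$ through $\langle Q_1a,b\rangle_{U_1}=\sum_j\langle a,g_j\rangle_{U_1}\langle b,g_j\rangle_{U_1}$, finiteness of $\mathrm{Tr}\,Q_1$ by the same sum, and continuity of trajectories from Gaussianity via a Kolmogorov-type argument. The only cosmetic difference is that you introduce $Q_1=JJ^*$ already here, which the paper defers to Proposition \ref{prop2}; this changes nothing of substance.
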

\proof{
This comes from the fact that the series (\ref{PBe2}) is convergent in space
$L^2(\Omega,\mathcal{F}, P;U_1)$. We have
\begin{eqnarray*}
&~& E\left(\left|\sum_{j=1}^ng_j\beta_j(t)-\sum_{j=1}^mg_j\beta_j(t)
\right|_{U_1}^2
\right)=E\left(\left| \sum_{j=m+1}^n g_j\beta_j(t)
\right|_{U_1}^2\right) = \\
&=& E\left(\sum_{j=m+1}^ng_j\beta_j(t),\ \sum_{k=m+1}^ng_k\beta_k(t)\right)
_{U_1}
=E\sum_{j=m+1}^n\left(g_j\beta_j(t),g_j\beta_j(t)\right)_{U_1}\\
&=&E\left(\sum_{j=m+1}^n(g_j,g_j)_{U_1}\beta_j^2(t)\right)
=t\sum_{j=m+1}^n| g_j|_{U_1}^2,\qquad n\ge m\ge 1.
\end{eqnarray*}
From the  assumption, the embedding $J:\ U_0\to U_1$ is Hilbert--Schmidt operator,
then for the basis $\{g_j\}$, complete and orthonormal in $U_0$, we have
$\sum_{j=1}^{\infty}\left| Jg_j\right|_{U_1}^2<+\infty.$
Because $Jg_j=g_j$ for any $g_j\in U_0$, then
$\sum_{j=1}^{\infty}\left| g_j\right|_{U_1}^2\!<\!+\!\infty$
which means
$\sum_{j=m+1}^n\left| g_j\right|_{U_1}^2 \to 0$
when $m,n\to \infty.$
\vskip1mm
Conditions 1), 2), 3) and 5) of the definition of Wiener process are
obviously satisfied. The process defined by (\ref{PBe2}) is Gaussian because
$\beta_j(t),\ j\in\mathbb{N}$, are independent Gaussian processes.
By Kolmogorov test theorem (see e.g.\ \cite{DZ92}, Theorem 3.3), trajectories of
the process $W_c(t)$ are continuous (condition 4) of the definition of Wiener
process) because $W_c(t)$ is Gaussian.

Let $Q_1:\ U_1\to U_1$ denote the covariance operator of the process $W_c(t)$
defined by (\ref{PBe2}). From the definition of covariance, for $a,b\in U_1$ we have:
\begin{eqnarray*}
\left( Q_1 a,b \right)_{_{U_1}}&=&E(a,W_c(t))_{_{U_1}}(b,W_c(t))_{_{U_1}}
=E\left(\sum_{j=1}^{\infty}(a,g_j)_{_{U_1}}(b,g_j)_{_{U_1}}
\beta_j^2(t)\right)\\
&=&t\sum_{j=1}^{\infty}(a,g_j)_{_{U_1}}(b,g_j)_{_{U_1}}
=t\left(\sum_{j=1}^{\infty}g_j(a,g_j)_{_{U_1}},b\right)_{U_1}.
\end{eqnarray*}
Hence $Q_1a=t\sum_{j=1}^{\infty}g_j(a,g_j)_{_{U_1}}.$
\vskip2mm
Because the covariance operator $Q_1$ is non--negative, then
(by Proposition C.3 in \cite{DZ92}) $Q_1$ is a nuclear operator if and only if
$\sum_{j=1}^{\infty}(Q_1h_j,h_j)_{_{U_1}}<+\infty,$
where $\{h_j\}$ is an orthonormal basis in $U_1$.
\vskip1mm
From the above considerations
$$
\sum_{j=1}^{\infty}(Q_1h_j,h_j)_{_{U_1}}
\le t\sum_{j=1}^{\infty}|g_j|_{U_1}^2 \quad \mbox{and then}\quad
\sum_{j=1}^{\infty}(Q_1h_j,h_j)_{_{U_1}}\equiv \mathrm{Tr}\,Q_1<+\infty.
$$
\qed  }

\begin{proposition} \label{prop2}
For any $a\in U$ the process
\begin{equation}\label{PBe3}
\left(a,W_c(t)\right)_U=\sum_{j=1}^{\infty}\left(a,g_j\right)_U\beta_j(t)
\end{equation}
is real-valued Wiener process and
$$
E\left(a,W_c(s)\right)_U\left(b,W_c(t)\right)_U=(s\land t)\left(Qa,b\right)_U
\qquad\mbox{for }\ a,b\in U.
$$
Additionally, $\mathrm{Im}\, Q_1^{\frac12}=U_0$ and $| u|_{U_0}=
\left| Q_1^{-\frac12}u\right|_{U_1}$.
\end{proposition}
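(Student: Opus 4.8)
The plan is to verify the three assertions of Proposition~\ref{prop2} directly from the definition~(\ref{PBe3}) and the properties of the basis $\{g_j\}$ established along the way. First I would show that the series $\sum_{j=1}^\infty (a,g_j)_U\beta_j(t)$ converges in $L^2(\Omega)$: since $a\in U\subset U_1$ and $\{g_j\}$ is orthonormal in $U_0$, one computes, exactly as in the proof of Proposition~\ref{prop1}, that the partial sums form a Cauchy sequence, using $E|\sum_{j=m+1}^n(a,g_j)_U\beta_j(t)|^2 = t\sum_{j=m+1}^n(a,g_j)_U^2$ and the fact that $\sum_j (a,g_j)_U^2 = \sum_j \lambda_j (a,e_j)_U^2 = (Qa,a)_U < +\infty$ because $Q$ is bounded. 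Being an $L^2$-limit of a sum of independent centered Gaussian processes with continuous increments, the limit is itself a real-valued Gaussian process with independent increments and continuous trajectories (Kolmogorov test, as in the previous proof), hence a real Wiener process; its variance is $t(Qa,a)_U$, which gives the scalar covariance structure.

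Next I would compute the covariance: for $a,b\in U$ and $s\le t$, by independence of the $\beta_j$ and $E\beta_j(s)\beta_k(t)=\delta_{jk}(s\wedge t)$,
\[
E(a,W_c(s))_U(b,W_c(t))_U=\sum_{j=1}^\infty (a,g_j)_U(b,g_j)_U\,(s\wedge t)=(s\wedge t)\sum_{j=1}^\infty (a,g_j)_U(b,g_j)_U.
\]
It remains to identify $\sum_j (a,g_j)_U(b,g_j)_U$ with $(Qa,b)_U$. Writing $g_j=\sqrt{\lambda_j}\,e_j$ with $\{e_j\},\{\lambda_j\}$ the eigensystem of $Q$, the sum becomes $\sum_j \lambda_j (a,e_j)_U(b,e_j)_U = \sum_j (Qa,e_j)_U(b,e_j)_U = (Qa,b)_U$ by Parseval. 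This is the step that uses the explicit relation between $\{g_j\}$ and $Q$ rather than just abstract orthonormality, so it is worth stating carefully.

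Finally, for $\mathrm{Im}\,Q_1^{1/2}=U_0$ and $|u|_{U_0}=|Q_1^{-1/2}u|_{U_1}$, I would use the formula for $Q_1$ derived in Proposition~\ref{prop1}, namely (after rescaling time to $t=1$) $Q_1 a = \sum_j g_j (a,g_j)_{U_1}$. This shows $Q_1 = JJ^*$, where $J:U_0\hookrightarrow U_1$ is the Hilbert--Schmidt embedding and $J^*:U_1\to U_0$ its adjoint: indeed $(JJ^*a,b)_{U_1}=(J^*a,J^*b)_{U_0}=\sum_j (J^*a,g_j)_{U_0}(J^*b,g_j)_{U_0}=\sum_j (a,Jg_j)_{U_1}(b,Jg_j)_{U_1}=\sum_j(a,g_j)_{U_1}(b,g_j)_{U_1}$. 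For a Hilbert--Schmidt (hence bounded) injective $J$ with dense range adjustments, the standard factorization lemma gives $\mathrm{Im}\,(JJ^*)^{1/2}=\mathrm{Im}\,J=U_0$ together with the isometry $|Jx|_{(JJ^*)^{1/2},U_1}$-type identity; concretely, $Q_1^{1/2}$ and $J$ have the same range and $|Q_1^{-1/2}u|_{U_1}=|J^{-1}u|_{U_0}=|u|_{U_0}$ for $u\in U_0$.

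The main obstacle is the last part: one must be careful that $J^*$ and $J^{-1}$ are only densely/partially defined and that the factorization $Q_1=JJ^*$ with the range identity $\mathrm{Im}\,Q_1^{1/2}=\mathrm{Im}\,J$ requires the general Hilbert-space lemma (as in Proposition~B.1 / Appendix~C of \cite{DZ92}) that for bounded operators $R,T$ between Hilbert spaces with $RR^*=TT^*$ one has $\mathrm{Im}\,R=\mathrm{Im}\,T$ and $\|R^{-1}x\|=\|T^{-1}x\|$ on that common range. Invoking that lemma with $R=Q_1^{1/2}$ and $T=J$ closes the argument cleanly; the first two assertions are routine once the $L^2$-convergence and the eigen-expansion of $Q$ are in place.
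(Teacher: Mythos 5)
Your proof is correct and follows essentially the same route as the paper's: $L^2$-convergence of the series via independence of the $\beta_j$, the covariance computation, and the factorization $Q_1=JJ^*$ combined with the range/isometry lemma (Proposition B.1 of \cite{DZ92}) to conclude $\mathrm{Im}\,Q_1^{1/2}=\mathrm{Im}\,J=U_0$ and $|Q_1^{-1/2}u|_{U_1}=|u|_{U_0}$. The only real difference is that you make explicit the identification $\sum_j(a,g_j)_U(b,g_j)_U=(Qa,b)_U$ through the eigen-expansion $g_j=\sqrt{\lambda_j}e_j$, a step the paper's proof leaves implicit.
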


\proof{
We shall prove that the series (\ref{PBe3}) defining the process
$\left(a,W_c(t)\right)_U$
is convergent in the space $L^2(\Omega;\mathbb{R})$.
\index{$L^2(\Omega;\mathbb{R})$}
\vskip2mm
Let us notice that the series (\ref{PBe3}) is the sum of independent random 
variables with zero mean. Then the series does converge in 
$L^2(\Omega;\mathbb{R})$ if and
only if the following series
$\sum_{j=1}^{\infty}E\left(\left(a,g_j\right)_U\beta_j(t)\right)^2
$ converges.
\vskip1mm
Because $J$ is Hilbert--Schmidt operator, we obtain
\begin{eqnarray*}
\sum_{j=1}^{\infty}E\left(\left(a,g_j\right)_U^2\beta_j^2(t)\right)&=&
\sum_{j=1}^{\infty}\left(a,g_j\right)_U^2
\le |a|_U^2\sum_{j=1}^{\infty}\left|g_j\right|_U^2\\
&\le& C|a|_U^2\sum_{j=1}^{\infty}\left|Jg_j\right|_{U_1}^2<
+\infty.
\end{eqnarray*}
Hence, the series (\ref{PBe3}) does converge.
Moreover, when $t\ge s\ge 0$, we have
\begin{eqnarray*}
E\left(\left(a,W_c(t)\right)_U\left(b,W_c(s)\right)_U\right)&=&
E\left(\left(a,W_c(t)-W_c(s)\right)_U\left(b,W_c(s)\right)_U\right)\\
&+&E\left(\left(a,W_c(s)\right)_U\left(b,W_c(s)\right)_U\right)\\
&=&E\left(\left(a,W_c(s)\right)_U\left(b,W_c(s)\right)_U\right)\\
&=&E\left(\left[\sum_{j=1}^{\infty}\left(a,g_j\right)_U\beta_j(s)\right]
\left[\sum_{k=1}^{\infty}\left(b,g_k\right)_U\beta_k(s)\right]\right).
\end{eqnarray*}
Let us introduce
$$
S^a:=\sum_{j=1}^{\infty}(a,g_j)_{_U}\beta_j(t),\quad 
S^b:=\sum_{k=1}^{\infty}(b,g_k)_{_U}\beta_k(t),\ \mbox{for}\ a,b\in U.
$$
Next, let $S_N^a$ and $S_N^b$ denote the partial sums of the series $S^a$
and $S^b$, respectively.
From the above considerations the series $S^a$ and $S^b$ are convergent
in $L^2(\Omega;\mathbb{R})$.
Hence
$E(S^aS^b)=\lim_{N\to\infty}E(S_N^aS_N^b).$
In fact,
\begin{eqnarray*}
E|S_N^aS_N^b-S^aS^b|&=&E|S_N^aS_N^b-S_n^aS^b+S^bS_n^a-S^bS^a|\\
&\le& E|S_N^a||S_N^b-S^b|+E|S^b||S_N^a-S^a|\\
&\le&\left(E|S_N^a|^2\right)^{\frac12}\left(E|S_N^b-S^b|^2\right)^{\frac12}\\
&+&\left(E|S^b|^2\right)^{\frac12}\left(E|S_N^a-S^a|^2\right)^{\frac12}
~\longrightarrow_{\hspace{-4ex}_{N\to\infty}} 0
\end{eqnarray*}
because $S_N^a$ converges to $S^a$ and $S_N^b$ converges to $S^b$ in
quadratic mean.
\vskip1mm
Additionally,
$E(S_N^aS_N^b)=t\sum_{j=1}^{N}(a,g_j)_{_U}(b,g_j)_{_U}$
and when $N\to +\infty$
$$
E(S^aS^b)=t\sum_{j=1}^{\infty}(a,g_j)_{_U}(b,g_j)_{_U}.
$$
Let us notice that
\begin{eqnarray*}
\left(Q_1a,b\right)_{U_1}&=&E\left(a,W_c(1)\right)_{U_1}\left(b,W_c(1)
\right)_{U_1}
=\sum_{j=1}^{\infty}\left(a,g_j\right)_{U_1}\left(b,g_j\right)_{U_1}\\
&=&\sum_{j=1}^{\infty}\left(a,Jg_j\right)_{U_1}\left(b,Jg_j\right)_{U_1}
=\sum_{j=1}^{\infty}\left(J^*a,g_j\right)_{U_0}\left(J^*b,g_j\right)_{U_0}\\
&=&\left(J^*a,\ \sum_{j=1}^{\infty}\left(J^*b,g_j\right)g_j\right)_{U_0}
=\left(J^*a,J^*b\right)_{U_0}=\left(JJ^*a,b\right)_{U_1}.
\end{eqnarray*}
That gives
$Q_1=JJ^*.$
In particular
\begin{equation}\label{PBe4}
\left|Q_1^{\frac12}a\right|_{U_1}^2=\left(JJ^*a,a\right)_{U_1}
=\left|J^*a\right|_{U_0}^2,\qquad a\in U_1.
\end{equation}
\vskip2mm
Having (\ref{PBe4}), we can use theorems on images of linear operators, e.g.
\cite[Appendix B.2, Proposition B.1 (ii)]{DZ92}.
\vskip1mm
By that proposition
$\mathrm{Im}\, Q_1^{\frac12}=\mathrm{Im}\, J.$
But for any $j\in\mathbb{N}$, and $g_j\in U_0$, $Jg_j=g_j$, 
that is $\mathrm{Im}\, J=U_0$.
Then $\mathrm{Im}\, Q_1^{\frac12}=U_0.$
\vskip1mm
Moreover, the operator $G=Q_1^{-\frac12}J$ is a bounded operator from $U_0$
on $U_1$. From (\ref{PBe4}) the adjoint operator
$G^*=J^*Q_1^{-\frac12}$
is an isometry, so $G$ is isometry, too. Thus
$$
\left|Q_1^{-\frac12}u\right|_{U_1}=\left|Q_1^{-\frac12}Ju\right|_{U_1}=
|u|_{_{U_0}}.
$$
\qed
}\vskip2mm

In the case when $Q$ is nuclear operator, $Q^{\frac12}$ is Hilbert-Schmidt
operator. Taking $U_1=U$, the process $W_c(t)$, $t\ge 0$, defined by (\ref{PBe2}) is
the classical Wiener process introduced in Definition \ref{def1}. 
\vskip5mm
\begin{definition} \label{def3}
The process $W_c(t)$, $t\ge 0$, defined in (\ref{PBe2}), 
is called {\tt cylin\-drical Wiener process \index{Wiener process!cylindrical}
on $U$} when $\mathrm{Tr}\,Q=+\infty$.
\end{definition}

The stochastic integral with respect to cylindrical Wiener process is defined
as follows.
\vskip2mm
As we have already written above, the process $W_c(t)$ defined by (\ref{PBe2})
is a Wiener process in
the space $U_1$ with the covariance operator $Q_1$ such
that $\mathrm{Tr}\,Q_1<+\infty$. Then the stochastic integral
$\int_0^tg(s)dW_c(s)\in H$ is well defined in $U_1$, 
where $g(s)\in L(U_1,H)$. 
\vskip2mm
Let us notice that $U_1$ is not uniquely determined. The space $U_1$ can be
an arbitrary Hilbert space such that $U$ is continuously embedded into $U_1$
and the embedding of $U_0$ into $U_1$ is a Hilbert-Schmidt operator. We would
like to define the stochastic integral with respect to cylindrical Wiener
proces $W_c(t)$ (given by (\ref{PBe2})) in such a way that the integral is well
defined on the space $U$ and does not depend on the choice of the space
$U_1$.
\vskip5mm
We denote by $\mathcal{N}^2(0,T;L_2^0)$ the space of all stochastic processes
\index{$\mathcal{N}^2(0,T;L_2^0)$}
\begin{equation}\label{PBe5}
\Phi:\ [0,T]\times \Omega \to L_2(U_0,H)
\end{equation}
such that
\begin{equation}\label{PBe6}
||\Phi||_T^2 := 
E\left(\int_0^T ||\Phi (t) ||_{L_2(U_0,H)}^2dt\right)<+\infty
\end{equation}
and for all $u\in U_0$, $(\Phi (t)u)$, $t\in [0,T]$, is an 
$H$--valued and $\mathcal{F}_t$--adapted stochastic process.
\vskip1mm
The stochastic integral $\int_0^t\Phi (s)dW_c(s)\in H$ with respect to
cylindrical Wiener process, given by (\ref{PBe2}) for any process 
$\Phi \in \mathcal{N}^2(0,T;L_2^0)$,
can be defined as the limit
\begin{equation}\label{PBe7}
\int_0^t\Phi (s)dW_c(s)=\lim_{m\to \infty}\sum_{j=1}^m
\int_0^t\Phi (s)g_jd\beta_j(s)
\end{equation}
in $L^2(\Omega;H)$. \vskip1mm

\noindent
{\bf Comment} Before we prove that the stochastic integral given
by the formula (\ref{PBe7}) is well defined, let us recall properties of the
operator $Q_1$. From Proposition \ref{prop1}, 
cylindrical Wiener process $W_c(t)$
given by (\ref{PBe2}) has the covariance operator $Q_1:\ U_1 \to U_1$, which
is a nuclear operator in the space $U_1$, that is $\mathrm{Tr}\,Q_1<+\infty$.
Next, basing on Proposition \ref{prop2}, $Q_1^{\frac12}:\ U_1 \to U_0$,
$\mathrm{Im}\, Q_1^{\frac12}=U_0$ and 
$|u|_{U_0}=\left|Q_1^{-\frac12}u\right|_{U_1}$
for $u\in U_0$.

 Moreover, from the above considerations and properties of the operator
$Q_1$  we may deduce that $L(U_1,H)\subset L_2(U_0,H)$.
This means that each
operator $\Phi \in L(U_1,H)$, that is linear and bounded from $U_1$ into $H$,
is Hilbert-Schmidt operator acting from $U_0$ into $H$, that is
$\Phi \in L_2(U_0,H)$ when $\mathrm{Tr}\,Q_1<+\infty$ in $U_1$. This means that
conditions (\ref{PBe5}) and (\ref{PBe6}) for the family $\mathcal{N}^2(0,T;L_2^0)$ 
of integrands are natural
assumptions for the stochastic integral given by (\ref{PBe7}).

Now, we shall prove that the series from the right hand side of (\ref{PBe7}) is
convergent.
\vskip2mm
Denote
$$
W_c^{(m)}(t):=\sum_{j=1}^mg_j\beta_j(t)
$$
and
$$
Z_m:=\int_0^t\Phi (s)W_c^{(m)}(s),\qquad t\in [0,T].
$$
Then, we have
\begin{eqnarray*}
E\left(\left|Z_n-Z_m\right|_H^2\right)
& = & E\left|\sum_{j=m+1}^n\int_0^t\Phi (s)g_jd\beta_j
(s)\right|_H^2\qquad\mbox{for }\ n\ge m\ge 1\\
&\le & E\sum_{j=m+1}^n\int_0^t\left|\Phi(s)g_j\right|_H^2ds
~~\longrightarrow_{\hspace{-4.5ex}_{m,n\to\infty}}  0,
\end{eqnarray*}
because from the assumption (\ref{PBe6})
$$
E\int_0^t\left(\sum_{j=1}^{\infty}\left|\Phi (s)g_j\right|_H^2\right)ds<
+\infty.
$$
\vskip5mm
Then, the sequence $(Z_m)$ is Cauchy sequence in the space of
square--integrable random variables. So, the stochastic integral with respect
to cylindrical Wiener process given by (\ref{PBe7}) is well defined.
\vskip2mm
As we have already mentioned, the space $U_1$ is not uniquely determined.
Hence, the cylindrical Wiener proces $W_c(t)$ defined by (\ref{PBe2})
is not uniquely determined either.
\vskip2mm
Let us notice that the stochastic integral defined by (\ref{PBe7}) does not depend
on the choice of the space $U_1$. Firstly, in the formula (\ref{PBe7}) there are not
elements of the space $U_1$ but only $\{g_j\}$--basis of $U_0$.
Additionally, in (\ref{PBe7}) there are not eigenfunctions of the covariance operator
$Q_1$.
Secondly, the class $\mathcal{N}^2(0,T;L_2^0)$ of integrands  does not depend on the choice of
the space $U_1$ because (by Proposition \ref{prop2}) the spaces $Q_1^{\frac12}(U_1)$
are identical for any spaces $U_1$:
$$
Q_1^{\frac12}:\ U_1\to U_0\qquad\mbox{and}\qquad 
\mathrm{Im}\, Q_1^{\frac12}=U_0.
$$

\subsection{Properties of the stochastic integral}\label{Ch2s3ss1}

In this subsection we recall from \cite{DZ92} some properties of stochastic 
integral used in the paper.

\begin{proposition}\label{Ch2p2}
 Assume that $\Phi \in \mathcal{N}^2(0,T;L_0^2)$. Then the stochastic integral
 $\Phi\bullet W(t) := \int_0^t \Phi(s) dW(s) $ is a continuous square integrable
 martingale and its quadratic variation is of the form 
 $<\!<\Phi\bullet W(t)>\!> = \int_0^t Q_\Phi (s) ds$, where 
 $Q_\Phi (s)= (\Phi(s) Q^{1/2})(\Phi(s) Q^{1/2})^*, ~s,t\in[0,T]$.
\end{proposition}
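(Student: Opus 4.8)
The plan is to reduce everything to the finite-dimensional approximations $W_c^{(m)}$ already used to construct the integral, establish the three assertions (continuity, square-integrable martingale property, quadratic variation) for the partial sums, and then pass to the limit in $L^2(\Omega;H)$ uniformly in $t$. First I would fix $\Phi\in\mathcal{N}^2(0,T;L_2^0)$ and recall from the construction above that $\Phi\bullet W(t)=\lim_{m\to\infty}\sum_{j=1}^m\int_0^t\Phi(s)g_j\,d\beta_j(s)$ in $L^2(\Omega;H)$, with the Cauchy estimate $E|Z_n-Z_m|_H^2\le E\sum_{j=m+1}^n\int_0^T|\Phi(s)g_j|_H^2\,ds\to 0$. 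By Doob's maximal inequality applied to the $H$-valued martingale $Z_n-Z_m$ (each partial sum is a genuine finite sum of real stochastic integrals against independent $\beta_j$, hence a continuous $H$-valued martingale), this convergence is in fact uniform in $t\in[0,T]$ in the sense that $E\sup_{t\le T}|Z_n(t)-Z_m(t)|_H^2\to 0$. Consequently a subsequence converges $P$-a.s.\ uniformly in $t$, so the limit process $\Phi\bullet W$ has a continuous modification, and it inherits the martingale property and square-integrability from the $Z_m$ (the conditional expectation $E[\,\cdot\mid\mathcal{F}_s]$ is $L^2$-continuous).

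Next I would identify the quadratic variation. For each fixed $m$, $Z_m(t)=\sum_{j=1}^m\int_0^t\Phi(s)g_j\,d\beta_j(s)$, and since the $\beta_j$ are independent standard real Wiener processes, the tensor quadratic variation is $\langle\!\langle Z_m\rangle\!\rangle_t=\sum_{j=1}^m\int_0^t \big(\Phi(s)g_j\big)\otimes\big(\Phi(s)g_j\big)\,ds$. One then observes that $\sum_{j=1}^\infty (\Phi(s)g_j)\otimes(\Phi(s)g_j)=\Phi(s)Q\Phi(s)^* = \big(\Phi(s)Q^{1/2}\big)\big(\Phi(s)Q^{1/2}\big)^*=Q_\Phi(s)$, using that $\{g_j=\sqrt{\lambda_j}e_j\}$ is an orthonormal basis of $U_0$ and the identity $\|\Phi\|_{L_2^0}^2=\mathrm{Tr}(\Phi Q\Phi^*)$ recalled just before the proposition; the convergence of this operator series in trace-norm, uniformly integrably in $s$, follows from (\ref{PBe6}). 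Passing $m\to\infty$ then gives $\langle\!\langle \Phi\bullet W(t)\rangle\!\rangle = \int_0^t Q_\Phi(s)\,ds$. I would justify the interchange of limit and quadratic variation either by the characterization of $\langle\!\langle M\rangle\!\rangle$ as the unique predictable increasing process making $M\otimes M - \langle\!\langle M\rangle\!\rangle$ a martingale, combined with the $L^2$-convergence $Z_m\to\Phi\bullet W$, or simply by citing the corresponding statement in \cite{DZ92}, since the proposition is explicitly flagged as recalled from there.

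The main obstacle I anticipate is the trace-norm convergence and uniform integrability needed to pass to the limit in the quadratic variation: one must check that $\sum_{j>m}\int_0^T\|(\Phi(s)g_j)\otimes(\Phi(s)g_j)\|_{\mathrm{tr}}\,ds=\sum_{j>m}\int_0^T|\Phi(s)g_j|_H^2\,ds\to 0$, which is exactly the tail of the finite quantity $\|\Phi\|_T^2$ in (\ref{PBe6}), so in fact this is routine rather than genuinely hard. Everything else is bookkeeping: the martingale and continuity properties transfer along $L^2$-limits via Doob's inequality, and the explicit form of $Q_\Phi$ is just the operator identity $\Phi Q\Phi^* = (\Phi Q^{1/2})(\Phi Q^{1/2})^*$. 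Since the paper presents this as a recalled result, I would keep the proof short, emphasize the reduction to the partial sums $Z_m$, invoke Doob's inequality for the uniform-in-$t$ $L^2$-convergence and hence the continuous modification, and refer to \cite[Chapter 4]{DZ92} for the identification of the quadratic variation.
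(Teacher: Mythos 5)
Your argument is correct, but there is nothing in the paper to compare it against: Proposition \ref{Ch2p2} is stated without proof, being explicitly recalled from \cite{DZ92}. Your sketch is the standard proof of that recalled result — reduction to the partial sums $Z_m$, Doob's maximal inequality to upgrade the $L^2$-convergence to uniform-in-$t$ convergence (hence a continuous modification and the inherited square-integrable martingale property), and identification of the quadratic variation via the tensor quadratic variation of the $Z_m$ together with the operator identity $\sum_j(\Phi(s)g_j)\otimes(\Phi(s)g_j)=(\Phi(s)Q^{1/2})(\Phi(s)Q^{1/2})^*$ and the trace-norm tail estimate controlled by $\|\Phi\|_T^2$ — and each of these steps is sound.
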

\index{$\Phi\bullet W(t)$}

\begin{proposition}\label{Ch2p3}
 If  $\Phi \in \mathcal{N}^2(0,T;L_0^2)$, then $\mathbb{E}(\Phi\bullet W(t))=0$
 and\\ $\mathbb{E}|\Phi\bullet W(t)|_H^2<+\infty, ~t\in [0,T]$.
\end{proposition}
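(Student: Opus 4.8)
The plan is to read both statements off the defining approximation (\ref{PBe7}) of the integral together with the elementary properties of scalar It\^{o} integrals. Fix $t\in[0,T]$ and write, as in the construction preceding Proposition \ref{Ch2p2},
$$
Z_m:=\sum_{j=1}^m\int_0^t\Phi(s)g_j\,d\beta_j(s),\qquad m\in\mathbb{N},
$$
so that $Z_m\to\Phi\bullet W(t)$ in $L^2(\Omega;H)$ by (\ref{PBe7}). The hypothesis $\Phi\in\mathcal{N}^2(0,T;L_2^0)$ guarantees, via (\ref{PBe5})--(\ref{PBe6}), that for each $j$ the process $s\mapsto\Phi(s)g_j$ is $H$-valued, $\mathcal{F}_s$-adapted and square integrable; hence each $\int_0^t\Phi(s)g_j\,d\beta_j(s)$ is a genuine It\^{o} integral with respect to the real Wiener process $\beta_j$ and therefore has mean zero, so $\mathbb{E}Z_m=0$ for every $m$. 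Passing to the limit, and using that convergence in $L^2(\Omega;H)$ forces convergence of expectations,
\begin{eqnarray*}
\left|\mathbb{E}Z_m-\mathbb{E}\bigl(\Phi\bullet W(t)\bigr)\right|_H
&\le& \mathbb{E}\left|Z_m-\Phi\bullet W(t)\right|_H\\
&\le& \left(\mathbb{E}\left|Z_m-\Phi\bullet W(t)\right|_H^2\right)^{1/2}\longrightarrow 0\quad(m\to\infty),
\end{eqnarray*}
we obtain $\mathbb{E}(\Phi\bullet W(t))=0$, the first assertion.

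For the second assertion I would first bound the second moments of the $Z_m$ uniformly in $m$. The mutual independence and zero means of the $\beta_j$ kill the cross terms, and the scalar It\^{o} isometry gives
\begin{eqnarray*}
\mathbb{E}\left|Z_m\right|_H^2
&=&\sum_{j=1}^m\mathbb{E}\int_0^t\left|\Phi(s)g_j\right|_H^2\,ds
\;\le\;\mathbb{E}\int_0^T\sum_{j=1}^{\infty}\left|\Phi(s)g_j\right|_H^2\,ds\\
&=&\mathbb{E}\int_0^T\|\Phi(s)\|_{L_2^0}^2\,ds=\|\Phi\|_T^2<+\infty,
\end{eqnarray*}
where I used $\sum_j|\Phi(s)g_j|_H^2=\|\Phi(s)\|_{L_2^0}^2$ (the Hilbert--Schmidt norm of $\Phi(s)$ on $U_0$) and the finiteness hypothesis (\ref{PBe6}). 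Since $Z_m\to\Phi\bullet W(t)$ in $L^2(\Omega;H)$, the norms converge, and therefore $\mathbb{E}|\Phi\bullet W(t)|_H^2=\lim_{m\to\infty}\mathbb{E}|Z_m|_H^2\le\|\Phi\|_T^2<+\infty$.

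I do not expect a real obstacle here; the statement is essentially bookkeeping on top of the construction of the integral. The only mild technical points are the passage of the expectation and of the $L^2(\Omega;H)$-norm through the limit $Z_m\to\Phi\bullet W(t)$ --- both immediate from the convergence already established in (\ref{PBe7}) --- and the vanishing of the cross terms in $\mathbb{E}|Z_m|_H^2$, which rests on the independence of the scalar Wiener processes $\beta_j$ built into (\ref{PBe2}). Alternatively, both claims are immediate consequences of Proposition \ref{Ch2p2}: the zero-mean property is part of the martingale assertion there, while $\mathbb{E}|\Phi\bullet W(t)|_H^2=\mathbb{E}\int_0^t\mathrm{Tr}\,Q_\Phi(s)\,ds=\mathbb{E}\int_0^t\|\Phi(s)\|_{L_2^0}^2\,ds\le\|\Phi\|_T^2$, using the identity $\mathrm{Tr}(\psi Q\psi^*)=\|\psi\|_{L_2^0}^2$ recorded before Proposition \ref{prop1}.
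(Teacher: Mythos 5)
Your argument is correct, and it is exactly the standard one implicit in the paper's own construction of the integral in Section \ref{PBs:3} (the orthogonality/isometry computation used there to show $(Z_m)$ is Cauchy is the same identity you use to bound $\mathbb{E}|Z_m|_H^2$ by $\|\Phi\|_T^2$); the paper itself states Proposition \ref{Ch2p3} without proof, recalling it from \cite{DZ92}. Nothing to add beyond noting that the vanishing of the cross terms rests, as you say, on the independence (hence zero quadratic covariation) of the $\beta_j$, which is built into the representation (\ref{PBe2}).
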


\begin{proposition}\label{Ch2p4}
 Assume that $\Phi_1, \Phi_2 \in \mathcal{N}^2(0,T;L_0^2)$. Then the correlation
 operators $V(s,t):= \mathrm{Cor}(\Phi_1\bullet W(s),\Phi_2\bullet W(t))$,
 $~s,t\in [0,T]$ are given by the formula 
 $V(s,t)=\mathbb{E}\int_0^{s\wedge t} (\Phi_2(r)Q^{\frac{1}{2}})
 (\Phi_1(r)Q^{\frac{1}{2}})^* dr$.
\end{proposition}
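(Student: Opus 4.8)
The plan is to establish the correlation operator formula for stochastic integrals with respect to the cylindrical Wiener process by reducing everything to the finite-dimensional approximations used in the construction (\ref{PBe7}) and then passing to the limit. First I would recall that by Proposition~\ref{Ch2p2} (applied to $\Phi_1$ and $\Phi_2$ separately) each $\Phi_i \bullet W(t)$ is a continuous square-integrable martingale, so the correlation operators $V(s,t)=\mathrm{Cor}(\Phi_1\bullet W(s),\Phi_2\bullet W(t))$ are well defined as the operators satisfying $(V(s,t)a,b)_H = \mathbb{E}\,(a,\Phi_1\bullet W(s))_H\,(b,\Phi_2\bullet W(t))_H$ for all $a,b\in H$. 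The aim is to identify this bilinear form with $(a,b)$-pairing of the claimed operator $\mathbb{E}\int_0^{s\wedge t}(\Phi_2(r)Q^{1/2})(\Phi_1(r)Q^{1/2})^*\,dr$.

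Next I would work with the partial sums $\Phi_i\bullet W^{(m)}(t) = \sum_{j=1}^m \int_0^t \Phi_i(r)g_j\,d\beta_j(r)$, which converge in $L^2(\Omega;H)$ to $\Phi_i\bullet W(t)$ as shown in the construction preceding the subsection. For the finite sums everything reduces to scalar It\^o isometry: since the $\beta_j$ are mutually independent standard real Wiener processes, for fixed $a,b\in H$ one has
\begin{eqnarray*}
\mathbb{E}\,(a,\Phi_1\bullet W^{(m)}(s))_H\,(b,\Phi_2\bullet W^{(m)}(t))_H
&=& \sum_{j=1}^m \mathbb{E}\int_0^{s\wedge t} (a,\Phi_1(r)g_j)_H\,(b,\Phi_2(r)g_j)_H\,dr \\
&=& \mathbb{E}\int_0^{s\wedge t} \sum_{j=1}^m (\Phi_1(r)^*a,g_j)_{U_0}\,(\Phi_2(r)^*b,g_j)_{U_0}\,dr,
\end{eqnarray*}
using that cross terms with $j\neq k$ vanish by independence and that the increments of $\beta_j$ beyond $s\wedge t$ are mean-zero and independent of the past. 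Letting $m\to\infty$, the inner sum converges to $(\Phi_1(r)^*a,\Phi_2(r)^*b)_{U_0} = (\Phi_2(r)\Phi_1(r)^*a \text{ restricted appropriately}, b)$; more precisely, writing the $L_2^0$-inner product in terms of the $Q^{1/2}$-decomposition as on page~\pageref{H-Sn}, this equals $((\Phi_2(r)Q^{1/2})(\Phi_1(r)Q^{1/2})^*a,b)_H$. Then I would pass to the limit on the left-hand side using the $L^2(\Omega;H)$-convergence of the partial sums together with the Cauchy–Schwarz bound $|\mathbb{E}(a,X_m)_H(b,Y_m)_H - \mathbb{E}(a,X)_H(b,Y)_H| \le |a|_H|b|_H(\|X_m-X\|_{L^2}\|Y_m\|_{L^2} + \|X\|_{L^2}\|Y_m-Y\|_{L^2})$, exactly as in the convergence argument given for $E(S_N^aS_N^b)\to E(S^aS^b)$ in the proof of Proposition~\ref{prop2}. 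On the right-hand side, dominated convergence (justified by the integrability hypothesis (\ref{PBe6}) on $\Phi_1,\Phi_2$ via Cauchy–Schwarz) lets me interchange limit and $\mathbb{E}\int_0^{s\wedge t}$.

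The main obstacle I anticipate is purely bookkeeping: keeping the adjoints and the $U_0$- versus $U_1$-inner products straight, since $\Phi_i(r)^* : H \to U_0$ and the Hilbert–Schmidt structure of $L_2^0$ is what makes the series $\sum_j (\Phi_1(r)^*a,g_j)_{U_0}(\Phi_2(r)^*b,g_j)_{U_0}$ absolutely convergent and equal to $(\Phi_1(r)^*a,\Phi_2(r)^*b)_{U_0}$. Once this identification is in place, rewriting $(\Phi_1(r)^*a,\Phi_2(r)^*b)_{U_0}$ as $((\Phi_2(r)Q^{1/2})(\Phi_1(r)Q^{1/2})^*a,b)_H$ is the same computation that produced the identity $\|\psi\|_{L_2^0}^2=\mathrm{Tr}(\psi Q\psi^*)$ earlier, applied to the bilinear rather than quadratic form. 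Since $a,b\in H$ are arbitrary, this yields $V(s,t)=\mathbb{E}\int_0^{s\wedge t}(\Phi_2(r)Q^{1/2})(\Phi_1(r)Q^{1/2})^*\,dr$, which is the assertion. Note the case $\Phi_1=\Phi_2$ recovers the quadratic variation formula of Proposition~\ref{Ch2p2}, providing a consistency check.
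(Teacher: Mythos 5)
Your argument is correct, and it is essentially the standard proof of this statement (the paper itself states Proposition 2.3.4 without proof, recalling it from Da Prato--Zabczyk): polarize the It\^o isometry on the finite-rank approximations $\Phi_i\bullet W^{(m)}$, use independence of the $\beta_j$ to kill the cross terms, identify the resulting sum $\sum_j(\Phi_1(r)^*a,g_j)_{U_0}(\Phi_2(r)^*b,g_j)_{U_0}$ with $\bigl((\Phi_2(r)Q^{1/2})(\Phi_1(r)Q^{1/2})^*a,b\bigr)_H$, and pass to the limit exactly as in the convergence argument for $E(S_N^aS_N^b)$ in the proof of Proposition 2.3.2. The bookkeeping of adjoints you flag does close up as claimed, and your dominated-convergence justification via the $\mathcal{N}^2(0,T;L_2^0)$ hypothesis is the right one; no gap.
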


\begin{corollary}\label{Ch2co1}
 From the definition of the correlation operator we have 
 $\mathbb{E}(\Phi_1\bullet W(s), \Phi_2\bullet W(t))_H=
 \mathbb{E}\int_0^{s\wedge t} \mathrm{Tr} [(\Phi_2(r)Q^{\frac{1}{2}})
 (\Phi_1(r)Q^{\frac{1}{2}})^*] dr $.
\end{corollary}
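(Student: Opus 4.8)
\textbf{Proof proposal for Corollary \ref{Ch2co1}.}

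The plan is to derive this directly from Proposition \ref{Ch2p4} by taking traces, using the elementary fact that for a trace-class operator $V$ on $H$ one has $(Vf,f)_H$ summed over an orthonormal basis equal to $\mathrm{Tr}\,V$, together with the polarization-type identity that relates the correlation operator $V(s,t)$ to the scalar quantity $\mathbb{E}(\Phi_1\bullet W(s),\Phi_2\bullet W(t))_H$. Concretely, recall that the correlation operator $V(s,t)=\mathrm{Cor}(\Phi_1\bullet W(s),\Phi_2\bullet W(t))$ is characterized by $(V(s,t)a,b)_H=\mathbb{E}[(a,\Phi_1\bullet W(s))_H(b,\Phi_2\bullet W(t))_H]$ for $a,b\in H$; hence choosing $a=b=f_k$ running over an orthonormal basis $\{f_k\}$ of $H$ and summing gives
\[
\mathrm{Tr}\,V(s,t)=\sum_{k=1}^\infty (V(s,t)f_k,f_k)_H
=\sum_{k=1}^\infty \mathbb{E}\big[(f_k,\Phi_1\bullet W(s))_H(f_k,\Phi_2\bullet W(t))_H\big]
=\mathbb{E}(\Phi_1\bullet W(s),\Phi_2\bullet W(t))_H,
\]
where the last equality is Parseval's identity applied inside the expectation.

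First I would justify the interchange of the infinite sum and the expectation in the display above: this is legitimate because, by Proposition \ref{Ch2p3} (or the isometry estimate underlying the construction of the integral), both $\Phi_1\bullet W(s)$ and $\Phi_2\bullet W(t)$ lie in $L^2(\Omega;H)$, so by Cauchy--Schwarz the family of scalar random variables $(f_k,\Phi_1\bullet W(s))_H(f_k,\Phi_2\bullet W(t))_H$ is dominated in $L^1(\Omega)$ by a convergent series of $L^1$-norms, and dominated convergence (or Fubini for the counting measure times $P$) applies. Next I would invoke Proposition \ref{Ch2p4} to replace $V(s,t)$ by $\mathbb{E}\int_0^{s\wedge t}(\Phi_2(r)Q^{1/2})(\Phi_1(r)Q^{1/2})^*\,dr$, and then take $\mathrm{Tr}$ through the Bochner integral and the expectation. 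Pulling $\mathrm{Tr}$ through is allowed because the integrand is an $L_1(H)$-valued (trace-class-valued) Bochner-integrable map and $\mathrm{Tr}$ is a bounded linear functional on $L_1(H)$; combining this with the first computation yields exactly
\[
\mathbb{E}(\Phi_1\bullet W(s),\Phi_2\bullet W(t))_H
=\mathbb{E}\int_0^{s\wedge t}\mathrm{Tr}\big[(\Phi_2(r)Q^{1/2})(\Phi_1(r)Q^{1/2})^*\big]\,dr,
\]
which is the claim.

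The main obstacle, such as it is, is purely a matter of bookkeeping about integrability: ensuring that each of the operator $V(s,t)$ and the pointwise integrands $(\Phi_i(r)Q^{1/2})(\Phi_j(r)Q^{1/2})^*$ is genuinely trace class so that "$\mathrm{Tr}$" is meaningful, and that the three interchanges (sum vs.\ expectation, $\mathrm{Tr}$ vs.\ Bochner integral, $\mathrm{Tr}$ vs.\ expectation) are each justified. All of this follows from $\Phi_1,\Phi_2\in\mathcal{N}^2(0,T;L_2^0)$: for Hilbert--Schmidt operators $\psi_i$, the product $\psi_2\psi_1^*$ is trace class with $\|\psi_2\psi_1^*\|_{L_1}\le\|\psi_1\|_{HS}\|\psi_2\|_{HS}$, and $\|\Phi_i(\cdot)Q^{1/2}\|_{HS}=\|\Phi_i(\cdot)\|_{L_2^0}$ is in $L^2([0,T]\times\Omega)$ by \eqref{PBe6}, so the relevant $L^1$-bounds hold by Cauchy--Schwarz. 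Since the statement explicitly says it follows "from the definition of the correlation operator," I expect the intended proof to be exactly this short trace computation, and I would keep the integrability remarks brief.
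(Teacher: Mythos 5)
Your argument is correct and is exactly the computation the paper intends: the corollary is stated without proof as an immediate consequence of the definition of the correlation operator and Proposition \ref{Ch2p4}, and your trace-plus-Parseval derivation, with the Hilbert--Schmidt bounds justifying the interchanges, fills in precisely that short step. Nothing is missing.
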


\section{The stochastic Fubini theorem\index{theorem!stochastic Fubini}
 and the It\^o formula\index{formula!It\^o}}\label{PBs:4}
 
The below theorems are recalled directly from the book by Da Prato and Zabczyk
\cite{DZ92}.

Assume that $(\Omega,\mathcal{F},(\mathcal{F}_t)_{t\ge 0},P)$ is 
a probability space, 
$\Omega_T:=[0,T]\times\Omega$ and recall that 
$\mathcal{P}_T$ is the $\sigma$-field defined in section \ref{PBsec:1}, 
that is $\mathcal{P}_T$ is the $\sigma$-field generated by sets of the form:
$(s,t]\times F$, where $0\le s\le t\le T$, $F\in \mathcal{F}_s$ and
$\{0\}\times F$, when $F\in \mathcal{F}_0$.\vskip1mm

Let $(E,\mathcal{E})$ be a measurable space and let 
\begin{eqnarray}\label{DZ433}
 & ~&\Phi : (t,\omega,x) \rightarrow \Phi(t,\omega,x)~~ be~a~measurable~
  mapping~from \nonumber\\
 &~& (\Omega_T\times E, \mathcal{P}_T\times\mathcal{B}(E))~~ into~~
 (L_2^0,\mathcal{B}(L_2^0))\;,\end{eqnarray}
 where $\mathcal{B}(E)$ and $\mathcal{B}(L_2^0)$ denote Borel $\sigma$-fields 
 on $E$ and $L_2^0$, respectively.
Thus, in particular, for arbitrary $x\in E,~\Phi(\cdot,\cdot,x)$ is a
predictable $L_2^0$-valued process. Let in addition $\mu$ be a finite positive
measure on $(E,\mathcal{E})$.

\index{theorem!stochastic Fubini} 
\begin{theorem}\label{PBtFub} (The stochastic Fubini theorem)\\
Assume (\ref{DZ433}) and that
$$ 
  \int_E ||\Phi(\cdot,\cdot,x)||_T \,\mu(dx) <+\infty 
$$ 
 Then P-a.s.
$$ 
 \int_E\left[ \int_0^T \Phi(t,x)\, dW(t)\right] \mu(dx) =
 \int_0^T \left[\int_E \Phi(t,x)\, \mu(dx)\right] dW(t)\;.
$$ 
\end{theorem}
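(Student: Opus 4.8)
The plan is to prove the stochastic Fubini theorem by first establishing it for a dense, elementary subclass of integrands and then passing to the limit, exploiting the isometry built into the definition of the stochastic integral in the preceding sections.

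\medskip

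\textbf{Step 1: Reduction to simple integrands.} First I would treat the case of an elementary process of the form $\Phi(t,\omega,x) = \sum_{k} \mathbf{1}_{(t_k,t_{k+1}]}(t)\,\Phi_k(\omega,x)$, where $0 = t_0 < \cdots < t_N = T$ and each $\Phi_k(\cdot,x)$ is $\mathcal{F}_{t_k}$-measurable with values in $L_2^0$, depending measurably on $x$ and bounded. For such a $\Phi$, both iterated integrals reduce to \emph{finite} sums of the form $\sum_k \Phi_k(x)\,(W(t_{k+1}) - W(t_k))$ integrated against $\mu(dx)$, and the identity $\int_E \sum_k \Phi_k(x)(W(t_{k+1})-W(t_k))\,\mu(dx) = \sum_k \left(\int_E \Phi_k(x)\,\mu(dx)\right)(W(t_{k+1})-W(t_k))$ is just the ordinary (Bochner) Fubini theorem applied pathwise, which is justified by the integrability hypothesis $\int_E \|\Phi(\cdot,\cdot,x)\|_T\,\mu(dx) < +\infty$. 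One also needs that $x \mapsto \int_0^T \Phi(t,x)\,dW(t)$ is measurable and that $\int_E \Phi(t,x)\,\mu(dx)$ defines an element of $\mathcal{N}^2(0,T;L_2^0)$; the latter follows from Minkowski's integral inequality, $\|\int_E \Phi(\cdot,\cdot,x)\,\mu(dx)\|_T \le \int_E \|\Phi(\cdot,\cdot,x)\|_T\,\mu(dx) < +\infty$.

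\medskip

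\textbf{Step 2: Approximation.} Next I would take a general $\Phi$ satisfying (\ref{DZ433}) together with $\int_E \|\Phi(\cdot,\cdot,x)\|_T\,\mu(dx) < +\infty$, and approximate it by elementary processes $\Phi_n$. The natural thing is to approximate in the norm $\Phi \mapsto \int_E \|\Phi(\cdot,\cdot,x)\|_T\,\mu(dx)$, i.e.\ to choose $\Phi_n$ elementary with $\int_E \|\Phi(\cdot,\cdot,x) - \Phi_n(\cdot,\cdot,x)\|_T\,\mu(dx) \to 0$; existence of such approximants is a standard density argument for predictable processes, carried out first for fixed $x$ and then patched together using the joint measurability in (\ref{DZ433}). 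Then:
\begin{itemize}
\item For each fixed $x$, $\int_0^T \Phi_n(t,x)\,dW(t) \to \int_0^T \Phi(t,x)\,dW(t)$ in $L^2(\Omega;H)$ by the It\^o isometry (Proposition \ref{Ch2p3} and the isometry implicit in the construction of the integral), and passing to a subsequence one gets the convergence $\mu$-a.e.\ in $x$, with an $L^1(\mu)$-dominating function coming from $\|\Phi(\cdot,\cdot,x)\|_T$; hence $\int_E \int_0^T \Phi_n(t,x)\,dW(t)\,\mu(dx) \to \int_E \int_0^T \Phi(t,x)\,dW(t)\,\mu(dx)$ in $L^1(\Omega;H)$ (or a subsequence, P-a.s.).
\item On the other side, $\int_E \Phi_n(\cdot,\cdot,x)\,\mu(dx) \to \int_E \Phi(\cdot,\cdot,x)\,\mu(dx)$ in $\mathcal{N}^2(0,T;L_2^0)$ by Minkowski's integral inequality applied to the difference, so by the isometry the stochastic integrals $\int_0^T \left(\int_E \Phi_n(t,x)\,\mu(dx)\right)dW(t)$ converge in $L^2(\Omega;H)$ to $\int_0^T \left(\int_E \Phi(t,x)\,\mu(dx)\right)dW(t)$.
\end{itemize}
Since the two sides agree for every $\Phi_n$ by Step 1, they agree in the limit, and P-a.s.\ equality follows after passing to a common subsequence.

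\medskip

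\textbf{Main obstacle.} The routine parts are the elementary-process case and the isometry estimates; the delicate point is the measurability bookkeeping — ensuring that $x \mapsto \int_0^T \Phi(t,x)\,dW(t)$ is genuinely $\mathcal{E}$-measurable (so that its $\mu$-integral makes sense) and that one can choose the approximating simple processes $\Phi_n$ to be \emph{jointly} measurable in $(t,\omega,x)$ while still converging in the mixed norm. This is handled by a monotone-class / functional-completion argument starting from indicator integrands of the form $\mathbf{1}_{(s,t]\times F}(t,\omega)\,h(x)$ with $h$ bounded measurable on $E$, for which everything is explicit, and then extending to the $\sigma$-field $\mathcal{P}_T \times \mathcal{B}(E)$ by the usual functional monotone class theorem. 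I would also remark, as is standard, that the finiteness of $\mu$ is used only to guarantee that bounded measurable functions of $x$ are $\mu$-integrable and hence that the elementary case closes; the general statement then follows by the density argument above.
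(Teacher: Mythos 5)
Your proposal is correct, and it is essentially the standard proof of this theorem as given in Da Prato and Zabczyk \cite{DZ92}, which is precisely the source this monograph cites: the paper itself only recalls the statement and offers no proof to compare against. The two-step scheme — verify the interchange for elementary integrands by linearity of the Bochner integral, then pass to general $\Phi$ by approximating in the mixed norm $\int_E \|\cdot\|_T\,\mu(dx)$ and using the It\^o isometry on one side and Minkowski's integral inequality on the other, with the measurability of $x\mapsto\int_0^T\Phi(t,x)\,dW(t)$ secured by a monotone class argument — is exactly the argument of \cite[Theorem 4.18]{DZ92}.
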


\vspace{4mm}
Assume that $\Phi$ is an $L_2^0$-valued process stochastically integrable in
$[0,T]$, $\phi$ is an $H$-valued predictable process Bochner integrable on
$[0,T]$, P-a.s., and $X(0)$ is an $\mathcal{F}_0$-measurable $H$-valued random
variable. Then the following process
$$ 
  X(t)=X(0)+\int_0^t \phi(s)\,ds + \int_0^t \Phi(s)\,dW(s), \qquad t\in [0,T], 
$$ 
is well defined. Assume that a function $F: [0,T]\times H\rightarrow R^1$ and
its partial derivatives $F_t,F_x,F_{xx}$, are uniformly continuous on bounded
subsets of  \linebreak $[0,T]\times H$. 

\pagebreak
\begin{theorem}\label{PBtIto} (The It\^o formula)\index{formula!It\^o}\\
Under the above conditions
 \begin{eqnarray*} 
  F(t,X(t)) & = & F(0,X(0))  
  + \int_0^t \langle F_x(s,X(s)),\Phi(s)\,dW(s)\rangle_H \\
&&+ \int_0^t \left\{ F_t(s,X(s))+ \langle F_x(s,X(s)), \phi(s)\rangle_H \right. \\
&&+ \frac{1}{2} \mathrm{Tr} [ F_{xx}(s,X(s))(\Phi(s)Q^{\frac{1}{2}})
  (\Phi(s)Q^{\frac{1}{2}})^*]\}ds.
 \end{eqnarray*}
holds P-a.s., for all $t\in [0,T]$. 
\end{theorem}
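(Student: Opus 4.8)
The plan is to reduce the Hilbert-space statement to the classical finite-dimensional It\^o formula by projecting onto finite-dimensional subspaces, after first localizing so that everything in sight is bounded. Throughout I use the properties of the stochastic integral already recorded in Propositions \ref{CH2pr1}, \ref{Ch2p2}, \ref{Ch2p3} and the martingale inequalities there.

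\textbf{Step 1: localization and mollification.} Set $\tau_N=\inf\{t\le T:\ |X(t)|_H>N \text{ or } \int_0^t(|\phi(s)|_H+\|\Phi(s)\|_{L_2^0}^2)\,ds>N\}$. Since $X$ is continuous and $\phi,\Phi$ are integrable in the required sense, $\tau_N\uparrow T$ a.s. It suffices to prove the formula with $t$ replaced by $t\wedge\tau_N$ and then let $N\to\infty$, using continuity in $t$ of both sides and dominated convergence for the It\^o and Bochner integrals. On $[0,\tau_N]$ the process $X$ stays in a fixed ball of $H$, so by the hypothesis on $F$ we may and do assume that $F,F_t,F_x,F_{xx}$ are bounded and uniformly continuous on $[0,T]\times H$ (if needed, replace $F$ by $F\circ(\mathrm{id}\times\chi)$ with $\chi$ a smooth radial cut-off, which does not change $F(t,X(t))$ for $t\le\tau_N$). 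Also $\mathbb{E}\int_0^T(|\phi(s)|_H+\|\Phi(s)\|_{L_2^0}^2)\,ds<\infty$.

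\textbf{Step 2: finite-dimensional reduction.} Fix an orthonormal basis $\{f_j\}$ of $H$, let $P_m$ be the orthogonal projection onto $\mathrm{span}\{f_1,\dots,f_m\}$, and put $X_m(t)=P_mX(0)+\int_0^t P_m\phi(s)\,ds+\int_0^t P_m\Phi(s)\,dW(s)$, an $m$-dimensional continuous semimartingale whose martingale part is the ordinary It\^o integral $\sum_j\int_0^t(P_m\Phi(s)g_j)\,d\beta_j(s)$ driven by the independent real Wiener processes $\beta_j$. Applying the classical (finite-dimensional) It\^o formula to the function $y\mapsto F(t,y)$ on $[0,T]\times P_mH$ and using $P_m=P_m^*=P_m^2$ to re-express the gradient and trace terms on $H$, one gets
\begin{eqnarray*}
F(t, X_m(t)) & = & F(0, P_m X(0)) + \int_0^t \langle F_x(s, X_m(s)), P_m\Phi(s)\,dW(s)\rangle_H \\
& & + \int_0^t \left\{ F_t(s, X_m(s)) + \langle F_x(s, X_m(s)), P_m\phi(s)\rangle_H \right. \\
& & \left. + \tfrac12 \mathrm{Tr}\,[F_{xx}(s, X_m(s))(P_m\Phi(s)Q^{1/2})(P_m\Phi(s)Q^{1/2})^*] \right\} ds .
\end{eqnarray*}
Now let $m\to\infty$: by Proposition \ref{CH2pr1}(3) and dominated convergence $X_m\to X$ in $L^2(\Omega;C([0,T];H))$ along a subsequence, so boundedness and uniform continuity of $F,F_t,F_x,F_{xx}$ together with $P_m\Phi(s)Q^{1/2}\to\Phi(s)Q^{1/2}$ in the Hilbert--Schmidt norm (dominated by $\|\Phi(s)Q^{1/2}\|_{HS}^2\in L^1(\Omega\times[0,T])$) let one pass to the limit termwise: the left side and the $F_t$, $\langle F_x,\phi\rangle$ and trace integrands converge by dominated convergence, and the stochastic integral converges in $L^2(\Omega)$ by the It\^o isometry of Propositions \ref{Ch2p2}--\ref{Ch2p3} applied to the difference $\langle F_x(s,X_m(s)),P_m\Phi(s)\,\cdot\,\rangle-\langle F_x(s,X(s)),\Phi(s)\,\cdot\,\rangle$. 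This yields the claimed identity P-a.s. for each $t$, and by continuity for all $t\in[0,T]$ simultaneously.

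\textbf{Main obstacle.} The delicate point is the second-order term. One has to be sure that the quadratic-variation correction produced by the finite-dimensional It\^o formula is genuinely the trace expression above and nothing more: the contribution of the Bochner-integrable drift $\phi$ to the quadratic variation must vanish, and the cross term between drift and martingale part must vanish as well. This is exactly where the structural hypothesis $\Phi\in\mathcal{N}^2(0,T;L_2^0)$ enters — it guarantees that $Q_\Phi(s)=(\Phi(s)Q^{1/2})(\Phi(s)Q^{1/2})^*$ is trace class with $\mathbb{E}\int_0^t\mathrm{Tr}\,Q_\Phi(s)\,ds<\infty$ (Proposition \ref{Ch2p2}), so that the trace term is well defined and the $L_2^0$-convergence $P_m\Phi Q^{1/2}\to\Phi Q^{1/2}$ can be upgraded to convergence in trace norm of the relevant products; combined with uniform continuity of $F_{xx}$ on bounded sets and the a priori bound $\mathbb{E}|X(t)-X(s)|_H^2=O(|t-s|)$, it also kills the partition remainder terms when one establishes the finite-dimensional It\^o formula in Step 2. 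Handling this term carefully, uniformly in $m$, is the crux of the argument; the rest is routine approximation.
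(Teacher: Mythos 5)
Your argument is correct in strategy, but note first that the paper itself offers no proof of Theorem \ref{PBtIto}: Section \ref{PBs:4} states explicitly that the result is recalled directly from the monograph \cite{DZ92}, so the only meaningful comparison is with the proof given there. That proof, after the same localization step as your Step 1, reduces to \emph{elementary} (piecewise constant, finitely valued) processes $\phi$ and $\Phi$, establishes the identity by a direct Taylor expansion of $F$ along a partition of $[0,t]$ with moment estimates killing the remainder terms, and then approximates general $\phi,\Phi$ by elementary ones. You instead keep $\phi,\Phi$ general and perform a Galerkin reduction in the state space: project onto $P_mH$, invoke the finite-dimensional It\^o formula for $X_m$, and pass to the limit using the isometry and the trace-norm bound $\|ST^*\|_{\mathrm{tr}}\le\|S\|_{HS}\|T\|_{HS}$. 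Both routes work. Yours buys a cleaner limit passage (once $X_m\to X$ in $L^2(\Omega;C([0,T];H))$, every term converges by dominated convergence), but the cost is that the ``classical'' It\^o formula you invoke must be the one for continuous $\mathbb{R}^m$-valued semimartingales driven by \emph{infinitely many} independent Brownian motions $\beta_j$, with brackets $\langle M^i,M^k\rangle_t=\int_0^t\langle Q_\Phi(s)f_i,f_k\rangle_H\,ds$; that is precisely where the Taylor-expansion argument of \cite{DZ92} does its real work, so you have relocated the difficulty into a black box rather than removed it — legitimate, but worth saying. Two minor repairs: the smooth cut-off $\chi:H\to H$ should be exhibited (e.g.\ $\chi(x)=\theta(|x|_H^2)\,x$ with $\theta$ smooth, compactly supported, $\equiv 1$ near $0$), and the bound $\mathbb{E}|X(t)-X(s)|_H^2=O(|t-s|)$ you quote for the drift contribution requires $\phi$ bounded, not merely Bochner integrable; neither point affects the main line of the argument, since your limit passage does not actually use that bound.
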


\chapter{Stochastic Volterra equations in Hilbert space} \label{SVEHSch:1} 

The aim of this chapter is to study some fundamental questions related to 
the linear convolution type stochastic Volterra equations of the form 
\begin{equation} \label{SVEHe1} 
 X(t) = X(0) + \int_0^t a(t-\tau) AX(\tau)\, d\tau 
 + \int_0^t \Psi(\tau)\,dW(\tau), \quad  t\geq 0,
\end{equation}
in a separable Hilbert space $H$. 
Particularly, we provide sufficient conditions for the existence of 
strong solutions to 
some classes of the  equation (\ref{SVEHe1}), which is a stochastic
version of the equation (\ref{DVEe1}). 

Let $(\Omega,\mathcal{F},(\mathcal{F}_t)_{t\ge 0},P)$ be a probability
space. In (\ref{SVEHe1}), the kernel function $a(t)$ and the operator 
$A$ are the same as
previously, $X(0)$ is an $H$-valued $\mathcal{F}_0$-measurable random variable,
$W$ is a cylindrical Wiener process on a separable Hilbert space $U$ and $\Psi$
is an appropriate process defined below.

This chapter is organized as follows. In section \ref{SVEHSsec:1} we give
definitions of solutions to (\ref{SVEHe1}) and some introductory results
concerning stochastic convolution arising in (\ref{SVEHe1}). 
Additionally, we show that under some
conditions a weak solution to (\ref{SVEHe1}) is a mild solution and vice
versa. These results have been recalled from \cite{Ka05}.

Section \ref{SVEHSsec:2} deals with strong solution to (\ref{SVEHe1}). We 
formulate  sufficient conditions for a stochastic convolution to be 
a strong solution to (\ref{SVEHe1}). The above results come from the 
paper \cite{KL07c}, not yet published.

In section \ref{SVEHSsec:3}, based on \cite{KL07d}, we study particular class
of equations (\ref{SVEHe1}), that is, so-called fractional Volterra equations.
We decided to consider that class of equations separately
because of specific problems appearing during the study of such equations.
First, we formulate the deterministic results which play the key role for
stochastic results. We study in detail $\alpha$-times resolvent
families corresponding to fractional Volterra equations. Next, we consider
mild, weak and strong solutions to those equations.\\[-1mm]

In the whole chapter we shall use the following 
{\sc Volterra Assumptions} \index{{\sc Volterra Assumptions}, (VA)}
(abbr. ({\sc VA})): 
\begin{enumerate}
\item $A:D(A)\subset H\rightarrow H$, is a closed linear operator
with the dense domain $D(A)$ equipped with the graph norm $|\cdot|_{D(A)}$;
\item $a\in L_\mathrm{loc}^1(\mathbb{R}_+)$ is a scalar kernel;
\item $S(t),~t\geq 0$, are resolvent operators for the Volterra equation
(\ref{DVEe1}) 
determined by the operator $A$ and the function $a(t), ~t\ge 0$.
\end{enumerate}

The domain $D(A)$ is equipped with the graph norm defined as follows:
$|h|_{D(A)}:= (|h|_H^2+|Ah|_H^2)^{\frac{1}{2}}$ for $h\in D(A)$, where 
$|\cdot|_H$ denotes a norm in $H$.
Because $H$ is a separable Hilbert space and $A$ is a closed operator, the
space $(D(A),|\cdot|_{D(A)})$ is a separable Hilbert space, too.

$W(t),~t\geq 0$, is 
a cylindrical Wiener process on $U$ with the covariance operator $Q$
and $\mathrm{Tr}\,Q=+\infty$.

By $L_2^0:=L_2(U_0,H)$, as previously, we denote the set of all 
Hilbert-Schmidt operators acting from $U_0$ into $H$, where
$U_0=Q^{\frac{1}{2}}(U)$. \\

For shortening, we introduce 
{\sc Probability  Assumptions} (abbr. ({\sc PA})):
\index{{\sc Probability  Assumptions}, (PA)}
\begin{enumerate}
\item $X(0)$ is an $H$-valued, $\mathcal{F}_0$-measurable random variable;
\item $\Psi$ belongs to the space $\mathcal{N}^2(0,T;L_2^0)$, where the finite
interval $[0,T]$ is fixed.
\end{enumerate}

\section{Notions of solutions to stochastic Volterra equations} 
\label{SVEHSsec:1}
\sectionmark{Notions of solutions to sVe}

In this section we introduce the definitions of solutions to the stochastic 
Volterra equation (\ref{SVEHe1}) and then formulate some results, 
not yet published, setting a framework for further research.

\begin{definition} \label{def5}
Assume that conditions (VA) and (PA) hold. 
An $H$-valued predictable process $X(t),~t\in [0,T]$, is said to be a 
{\tt strong solution}\index{solution!strong} to  (\ref{SVEHe1}), if $X$ 
has a version such that $P(X(t)\in D(A))=1$ for almost all $t\in [0,T]$;
for any $t\in [0,T]$
 \begin{equation} \label{eSW3.1}
 \int_0^t |a(t-\tau)AX(\tau)|_H \,d\tau<+\infty,\quad P-a.s.
 \end{equation}
and for any $t\in [0,T]$ the equation (\ref{SVEHe1}) holds $P-a.s$.
\end{definition}

\noindent{\bf Comment} Because the integral
$\int_0^t \Psi(\tau)\,dW(\tau)$ is a continuous $H$-valued process
then the above definition yields continuity of the strong solution.\\

\index{$A^*$}
\index{$\vert\cdot \vert_{D(A^*)}$}
\index{$D(A^*)$}
Let $A^*$ denotes the adjoint of the operator $A$, with dense domain 
$D(A^*)\subset H$ and the graph norm $|\cdot |_{D(A^*)}$ defined as follows:
$|h|_{D(A^*)}:= (|h|_H^2+|A^*h|_H^2)^{\frac{1}{2}}$, for $h\in D(A^*)$. The
space $(D(A^*),|\cdot|_{D(A^*)})$ is a separable Hilbert space.

\begin{definition} \label{def6}
Let conditions (VA) and (PA) hold.
An $H$-valued predictable process $X(t),~t\in [0,T]$, is said to be a 
{\tt weak solution}\index{solution!weak} to  (\ref{SVEHe1}), if \linebreak
$P(\int_0^t|a(t-\tau)X(\tau)|_H d\tau<+\infty)=1$
and if for all $\xi\in D(A^*)$ and all 
$t\in [0,T]$ the following equation holds
\begin{eqnarray*}
 \langle X(t),\xi\rangle_H = \langle X(0),\xi\rangle_H & + &
 \langle \int_0^t a(t-\tau)X(\tau)\,d\tau, A^*\xi\rangle_H\\ 
 & + &  \langle \int_0^t \Psi(\tau)\,dW(\tau),\xi\rangle_H, 
 \quad P\mathrm{-a.s.}
\end{eqnarray*}
\end{definition}
\begin{definition} \label{def7}
Assume that (VA) are satisfied and $X(0)$ is an $H$-valued 
$\mathcal{F}_0$-measurable random variable.
An $H$-valued predictable process $X(t),~t\in [0,T]$, is said to be a 
{\tt mild solution}\index{solution!mild} to the stochastic 
Volterra equation (\ref{SVEHe1}), if
\begin{equation}\label{deq40}
\mathbb{E}\left(\int_0^t ||S(t-\tau)\Psi(\tau)||_{L_2^0}^2 d\tau \right)
< +\infty \quad for \quad t\leq T
\end{equation}
and, for arbitrary $t\in [0,T]$,
\begin{equation}\label{deq4}
 X(t) = S(t)X(0) + \int_0^t S(t-\tau)\Psi(\tau)\,dW(\tau),\quad P-a.s.
\end{equation}
\end{definition}

We will show that 
in some cases weak solution to the equation (\ref{SVEHe1}) 
coincides with mild solution to (\ref{SVEHe1}) (see, subsection \ref{ss313}). 
In consequence, having results for the convolution 
\begin{equation}\label{deq5}
 W^\Psi(t) :=  \int_0^t S(t-\tau)\Psi(\tau)\,dW(\tau),\quad t\in [0,T],
\end{equation}
where $S(t)$ and $\Psi$ are the same as in (\ref{deq4}),
we will obtain results for weak solution to (\ref{SVEHe1}), too.
\index{$W^\Psi(t)$}


\subsection{Introductory results}\label{ss312}

In this section we collect some properties of the stochastic convolution
 of the form \index{stochastic convolution}
\begin{equation} \label{deq6} 
 W^B(t) := \int_0^t S(t-\tau)B\,dW(\tau)
\end{equation} 
in the case when $B\in L(U,H)$ and $W$ is a cylindrical Wiener process.

\begin{lemma} \label{dl1} 
 Assume that the operators $S(t),~t\geq 0$, and $B$ are as above,
 $S^*(t), B^*$ are their adjoints, and
\begin{equation} \label{deq7} 
  \int_0^T ||S(\tau)B||_{L_2^0}^2\,d\tau =  \int_0^T \mathrm{Tr} 
  [S(\tau) BQB^*S^*(\tau)]\,d\tau < +\infty.
\end{equation} 
Then we have:
\begin{enumerate}
 \item the process $W^B$ is Gaussian, mean-square continuous on [0,T] 
 and then has a predictable version;
 \item 
\begin{equation} \label{deq8} 
  \mathrm{Cov}~W^B(t) =  \int_0^t [S(\tau) BQB^*S^*(\tau)]\,d\tau, \quad 
  t\in [0,T];
\end{equation} 
 \item trajectories of the process $W^B$ are P-a.s.\ square integrable
 on [0,T].
\end{enumerate}
\end{lemma}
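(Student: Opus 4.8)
The plan is to exploit the fact that $W^B$ is the stochastic convolution $W^\Psi$ of Definition~\ref{def7} with the constant integrand $\Psi(\tau)\equiv B$, so that hypothesis (\ref{deq7}) is precisely condition (\ref{deq40}) for this $\Psi$, and hence the integral in (\ref{deq6}) is well defined as an element of $L^2(\Omega;H)$ for each $t\in[0,T]$. I would organise the argument around the three assertions in the natural order.

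For assertion~1, I would first observe that $W^B(t)$ is Gaussian: it is an $L^2$-limit of the partial sums $\sum_{j=1}^m\int_0^t S(t-\tau)Bg_j\,d\beta_j(\tau)$, each of which is a Gaussian random variable (a sum of Wiener integrals of deterministic $H$-valued integrands), and an $L^2$-limit of Gaussians is Gaussian. Mean-square continuity on $[0,T]$ is the main analytic point: for $0\le s\le t\le T$ I would split $W^B(t)-W^B(s)=\int_s^t S(t-\tau)B\,dW(\tau)+\int_0^s\bigl[S(t-\tau)-S(s-\tau)\bigr]B\,dW(\tau)$ and estimate the two pieces using the isometry $\mathbb{E}\,|{\textstyle\int_0^r}\Phi(\tau)\,dW(\tau)|_H^2=\int_0^r\|\Phi(\tau)\|_{L_2^0}^2\,d\tau$ (Proposition~\ref{Ch2p3} together with Proposition~\ref{Ch2p2}). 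The first piece is bounded by $\int_s^t\|S(t-\tau)B\|_{L_2^0}^2\,d\tau$, which tends to $0$ as $t-s\to0$ by absolute continuity of the integral in (\ref{deq7}). For the second piece I would use strong continuity of $S(\cdot)$ (Definition~\ref{DVEd1}(1)) together with the uniform bound $\sup_{t\le T}\|S(t)\|<+\infty$ from (\ref{DVEe3}): the integrand $\|[S(t-\tau)-S(s-\tau)]B\|_{L_2^0}^2$ converges to $0$ pointwise in $\tau$ as $t-s\to 0$ and is dominated by $4(\sup_{r\le T}\|S(r)\|^2)\|B\|_{L(U,H)}^2\|Q^{1/2}\|_{HS,\,U_0\to U_0}^2$-type quantities finite under (\ref{deq7}); actually one must be a little careful and instead dominate by $C\sum_j|S(t-\tau)Bg_j-S(s-\tau)Bg_j|_H^2$ and invoke dominated convergence on the series and the integral simultaneously, which is the step I expect to require the most care. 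Once mean-square continuity is established, the existence of a predictable version follows from Proposition~\ref{PBp1a} (an adapted, stochastically continuous process has a predictable modification), since $W^B$ is adapted by construction of the stochastic integral.

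For assertion~2, the covariance formula is a direct application of Proposition~\ref{Ch2p2}: the quadratic variation of $\Phi\bullet W$ is $\int_0^t Q_\Phi(\tau)\,d\tau$ with $Q_\Phi(\tau)=(\Phi(\tau)Q^{1/2})(\Phi(\tau)Q^{1/2})^*$, and taking $\Phi(\tau)=S(t-\tau)B$ and using $\mathbb{E}\,W^B(t)=0$ (Proposition~\ref{Ch2p3}) gives $\mathrm{Cov}\,W^B(t)=\mathbb{E}\,\langle W^B(t),\cdot\rangle W^B(t)=\int_0^t(S(t-\tau)BQ^{1/2})(S(t-\tau)BQ^{1/2})^*\,d\tau$; after the change of variable $\tau\mapsto t-\tau$ and writing $BQ^{1/2}(BQ^{1/2})^*=BQB^*$ on the relevant space this is exactly (\ref{deq8}). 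For assertion~3, square integrability of trajectories, I would apply the stochastic Fubini theorem (Theorem~\ref{PBtFub}) or, more simply, Tonelli: $\mathbb{E}\int_0^T|W^B(t)|_H^2\,dt=\int_0^T\mathbb{E}\,|W^B(t)|_H^2\,dt=\int_0^T\Bigl(\int_0^t\|S(t-\tau)B\|_{L_2^0}^2\,d\tau\Bigr)dt\le T\int_0^T\|S(\sigma)B\|_{L_2^0}^2\,d\sigma<+\infty$ by (\ref{deq7}) and (\ref{DVEe3}); hence $\int_0^T|W^B(t)|_H^2\,dt<+\infty$ $P$-a.s. This completes the proof.
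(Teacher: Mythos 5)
Your proposal is correct and follows essentially the same route as the paper's proof: the same decomposition of the increment $W^B(t)-W^B(s)$ into two independent stochastic integrals, the It\^o isometry combined with strong continuity of $S(\cdot)$, the bound (\ref{DVEe3}) and dominated convergence for the first piece, absolute continuity of the integral in (\ref{deq7}) for the second, Proposition~\ref{PBp1a} for the predictable version, the standard covariance identity for part~2, and Tonelli plus the isometry for part~3. The domination step you flag as delicate is treated in exactly the same (somewhat terse) way in the paper, so there is nothing to add.
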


\proof{\begin{enumerate}
 \item Gaussianity of the process $W^B$ follows from the definition and
 properties of stochastic integral. 
 Let us fix $0\leq t < t+h \leq T$. Then
\begin{eqnarray*} 
 W^B(t+h)-W^B(t) &=& \int_0^t [S(t+h-\tau)-S(t-\tau)]BdW(\tau)\\ &+&
 \int_t^{t+h} \!\!  S(t+h-\tau)BdW(\tau).
\end{eqnarray*}
Let us note that the above integrals are stochastically independent.
Using the extension of the process $W$ (mentioned in section \ref{SVEHSsec:1})
and properties of stochastic integral with respect to real Wiener processes
(see e.g.\ \cite{Ic82}), we have
\begin{eqnarray*} 
 \mathbb{E} | W^B(t+h)-W^B(t) |_H^2 & =& \sum_{k=1}^{+\infty} \lambda_k
 \int_0^t |[ S(t+h-\tau)-S(t-\tau)]Be_k |_H^2 \,d\tau \\
 & + & \sum_{k=1}^{+\infty} \lambda_k \int_t^{t+h} 
 | S(t+h-\tau)Be_k|_H^2 \,d\tau\\
 & := & I_1(t,h) + I_2(t,h) \;.
\end{eqnarray*}
Then, invoking (\ref{DVEe3}), the strong continuity of $S(t)$ and the 
Lebesgue dominated convergence theorem, we can pass in $I_1(t,h)$ with 
$h\rightarrow 0$ under the sum and integral signs. 
Hence, we obtain $I_1(t,h) \to 0$ as $h\to 0$.  

Observe that
$$
 I_2(t,h) = \int_t^{t+h} || S(t+h-\tau)BQ^{\frac{1}{2}} ||_{HS}^2 \,d\tau \;, 
$$
where $||\cdot||_{HS}$ denotes the norm of Hilbert-Schmidt operator.
By the condition (\ref{deq7}) we have 
$$
 \int_0^T || S(t) B Q^{\frac{1}{2}} ||_{HS}^2 \,dt < +\infty \;,
$$
what implies that $\lim_{h\rightarrow 0} I_2(t,h) =0$.

The proof for the case $0 \leq t-h < t \leq T$ is similar.
Existence of a predictable version is a consequence of the continuity 
and Proposition~\ref{PBp1a}. 
\item The shape of the
covariance (\ref{deq8}) follows from theory of stochastic integral,
see e.g.\ \cite{DZ92}.
\item From the definition (\ref{deq6}) and assumption (\ref{deq7}) we have 
 the following estimate
\begin{eqnarray*}
\mathbb{E} \int_0^T |W^B(\tau)|_H^2\,d\tau =
  \int_0^T \! \mathbb{E}|W^B(\tau)|_H^2\,d\tau & = &\\
 = \int_0^T \!\mathbb{E}\, \left|\int_0^\tau \!S(\tau-r)BdW(r)\right|_H^2\!\!d\tau  
 \!& = \!& \!\int_0^T\!\! \int_0^\tau \!\!
 ||S(r) B||_{L_2^0}^2 \,dr\,d\tau \!<\! +\infty\,.
\end{eqnarray*}
Hence, the function $W^B(\cdot)$ may be regarded like random variable with values
in the space $L^2(0,T;H)$.
\qed\\
\end{enumerate}
}

Now, we formulate an auxiliary result which will be used in the next section.

\begin{lemma} \label{dl2}
~Let\/ {\sc Volterra assumptions} hold with the function 
$a \in \,W^{1,1}(\mathbb{R}_+)$. 
Assume that $X$ is a weak solution to (\ref{SVEHe1}) in the case when 
 $\Psi(t)=B$, where $B\in L(U,H)$ and trajectories of $X$
are integrable P-a.s.\ on $[0,T]$. Then, for any function 
$\xi\in C^1([0,t];D(A^*))$, $t\in [0,T]$, the following formula holds
\begin{eqnarray} \label{deq10} 
  \langle X(t),\xi(t)\rangle_H & = & \langle X(0), \xi(0) \rangle_H +
  \int_0^t  \langle (\dot{a}\star X)(\tau)
  + a(0)X(\tau),A^*\xi(\tau)\rangle_H  d\tau  \nonumber \\ & + &
  \int_0^t \langle \xi(\tau),BdW(\tau)\rangle_H  
  +\int_0^t \langle X(\tau),\dot{\xi}(\tau)\rangle_H d\tau,
\end{eqnarray} 
where dots above $a$ and $\xi$ mean time derivatives and $\;\star\,$ means the
convolution.
\end{lemma}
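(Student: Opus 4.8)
The plan is to prove the identity \eqref{deq10} by an integration-by-parts / product-rule argument in the time variable, using the weak-solution identity from Definition \ref{def6} as the starting point. First I would record that, since $X$ is a weak solution with $\Psi(t)=B$, for every $\eta\in D(A^*)$ and every $s\in[0,T]$ one has
\begin{equation*}
\langle X(s),\eta\rangle_H=\langle X(0),\eta\rangle_H+\langle (a\star X)(s),A^*\eta\rangle_H+\langle (B\bullet W)(s),\eta\rangle_H\quad P\text{-a.s.}
\end{equation*}
The right-hand side, as a function of $s$, is a sum of three terms: a constant, the absolutely continuous function $s\mapsto\langle(a\star X)(s),A^*\eta\rangle_H$ (absolutely continuous because $a\in W^{1,1}(\mathbb{R}_+)$ and $X$ is $P$-a.s.\ integrable on $[0,T]$, so that $\frac{d}{ds}(a\star X)(s)=a(0)X(s)+(\dot a\star X)(s)$ a.e.), and the continuous martingale $s\mapsto\langle(B\bullet W)(s),\eta\rangle_H$. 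Hence $s\mapsto\langle X(s),\eta\rangle_H$ is a continuous semimartingale for each fixed $\eta\in D(A^*)$, with the explicit decomposition above.

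Next I would fix $\xi\in C^1([0,t];D(A^*))$ and apply the one-dimensional It\^o product formula (or, since the bounded-variation part here is absolutely continuous, the classical integration-by-parts formula for a semimartingale against a $C^1$ deterministic curve) to the product $\langle X(s),\xi(s)\rangle_H$. Concretely, write $Y(s):=\langle X(s),\xi(s)\rangle_H$ and note $\dot{}\,\langle X(s),\xi(s)\rangle_H$ splits into a ``$dX$ against $\xi$'' piece and an ``$X$ against $\dot\xi$'' piece; because $\xi$ is deterministic and of bounded variation there is no cross (quadratic covariation) term. Using the decomposition of $\langle X(s),\xi(s)\rangle_H$ obtained from the weak formulation — taking $\eta=\xi(s)$ at each time, which is legitimate since $\xi(s)\in D(A^*)$ — integrating by parts gives
\begin{equation*}
\langle X(t),\xi(t)\rangle_H=\langle X(0),\xi(0)\rangle_H+\int_0^t\langle a(0)X(\tau)+(\dot a\star X)(\tau),A^*\xi(\tau)\rangle_H\,d\tau+\int_0^t\langle \xi(\tau),B\,dW(\tau)\rangle_H+\int_0^t\langle X(\tau),\dot\xi(\tau)\rangle_H\,d\tau,
\end{equation*}
which is exactly \eqref{deq10}. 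The stochastic term $\int_0^t\langle\xi(\tau),B\,dW(\tau)\rangle_H$ arises from integrating the martingale part $d\langle(B\bullet W)(s),\xi(s)\rangle_H$ together with the contribution of $\dot\xi$ to that same martingale; one checks that $\int_0^t\langle(B\bullet W)(\tau),\dot\xi(\tau)\rangle_H d\tau$ combines with $\langle(B\bullet W)(t),\xi(t)\rangle_H$, via the stochastic Fubini theorem (Theorem \ref{PBtFub}), to produce precisely $\int_0^t\langle\xi(\tau),B\,dW(\tau)\rangle_H$.

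The main obstacle, and the step I would treat most carefully, is justifying the ``substitute $\eta=\xi(s)$ at a moving time $s$'' manoeuvre rigorously: the weak-solution identity holds for each fixed deterministic $\eta$ on a set of full probability depending on $\eta$, so one must pass to a version valid simultaneously for all $s$ and all $\xi$. The standard remedy is to first establish the semimartingale decomposition of $s\mapsto\langle X(s),\eta\rangle_H$ for $\eta$ ranging over a countable dense subset $D_0$ of $(D(A^*),|\cdot|_{D(A^*)})$, obtaining a single exceptional null set, then extend to all $\eta\in D(A^*)$ by continuity (both sides are continuous in $\eta$ for the graph norm, using $A\in L(D(A^*)^{*},\cdot)$-type duality and the $L^2$-bound on $B\bullet W$), and finally apply the deterministic integration-by-parts formula pathwise, piecewise, approximating the $C^1$ curve $\xi$ by step functions with values in $D_0$. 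The absolute continuity of $a\star X$ — hence the a.e.\ identity $\frac{d}{ds}(a\star X)(s)=a(0)X(s)+(\dot a\star X)(s)$, which needs $a\in W^{1,1}$ and $X\in L^1(0,T;H)$ $P$-a.s.\ — and the integrability needed to invoke the stochastic Fubini theorem for the $\dot\xi$–martingale interaction are the remaining technical points, but each is routine given the hypotheses.
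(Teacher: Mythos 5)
Your proposal is correct and follows essentially the same route as the paper: both start from the weak-solution identity, differentiate the convolution term using $a\in W^{1,1}$ to get $\frac{d}{ds}(a\star X)(s)=a(0)X(s)+(\dot a\star X)(s)$, and then apply the It\^o product rule to $\langle X(s),\xi(s)\rangle_H$ against the deterministic $C^1$ curve. The only difference is how the general $\xi$ is reached: the paper proves \eqref{deq10} first for separated test functions $\xi(\tau)=\xi_0\varphi(\tau)$ with $\xi_0\in D(A^*)$ fixed and $\varphi\in C^1[0,T]$ scalar (so the ``moving test function'' issue you spend the most effort on never arises) and then concludes by density of such functions in $C^1([0,T];D(A^*))$, which is a somewhat lighter way to dispose of the technicalities than your countable-dense-subset and step-function approximation.
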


\proof{First, we consider functions of the form 
$\xi(\tau):=\xi_0\varphi(\tau)$,
$\tau\in [0,T]$, where $\xi_0\in D(A^*)$ and $\varphi\in C^1[0,T]$.
For simplicity we omit index $_H$ in the inner product. 
Let us denote 
$ F_{\xi_0}(t) := \langle X(t),\xi_0 \rangle,~ t\in[0,T]. $
 
Using It\^o's formula to the process $ F_{\xi_0}(t)\varphi(t)$, we have
\begin{equation} \label{deq11} 
 d[F_{\xi_0}(t)\varphi(t)] = \varphi(t)dF_{\xi_0}(t) 
 + \dot{\varphi}(t) F_{\xi_0}(t)dt, \quad\quad t\in[0,T].
\end{equation} 
Because $X$ is weak solution to (\ref{SVEHe1}), we have 
\begin{eqnarray} \label{deq12} 
 dF_{\xi_0}(t) & = &\langle\int_0^t \dot{a}(t-\tau)X(\tau)d\tau + a(0)X(t),
 A^*\xi_0\rangle dt +\langle BdW(t),\xi_0\rangle \nonumber \\
 & = & \langle (\dot{a}\star X)(t) +a(0)X(t),A^*\xi_0\rangle dt + 
 \langle BdW(t),\xi_0 \rangle .
\end{eqnarray} 
From (\ref{deq11}) and (\ref{deq12}), we obtain
\begin{eqnarray*}  
 F_{\xi_0}(t)\varphi(t) & = &  F_{\xi_0}(0)\varphi(0) +
 \int_0^t \varphi(s) \langle (\dot{a}\star X)(s) 
 + a(0)X(s), A^*\xi_0\rangle ds \\ &&  + \int_0^t 
 \langle\varphi(s)BdW(s),\xi_0\rangle + \int_0^t \dot{\varphi}(s)
 \langle X(s), \xi_0\rangle ds \\
  & = & \langle X(0), \xi(0) \rangle_H + 
  \int_0^t \langle (\dot{a}\star X)(s)
 + a(0)X(s), A^*\xi(s)\rangle ds \\ 
 && + \int_0^t \langle BdW(s),\xi(s)\rangle 
 +\int_0^t \langle X(s),\dot{\xi}(s)\rangle ds .
\end{eqnarray*} 
Hence, we proved the formula (\ref{deq10}) for functions $\xi$ of the form 
$\xi(s)=\xi_0\varphi(s)$, $s\in [0,T]$. Because such functions form a dense 
subspace in the space  $C^1([0,T];D(A^*))$, the proof is completed.
\qed}

\subsection{Results in general case}\label{ss313}

In this subsection we consider weak and mild solutions to the equation 
(\ref{SVEHe1}). 

First we study the stochastic convolution defined by (\ref{deq5}),
that is,   \index{stochastic convolution}
$$
 W^\Psi(t) :=  \int_0^t S(t-\tau)\Psi(\tau)\,dW(\tau),\quad t\in [0,T].
$$

\begin{proposition} \label{pr3}
Assume that $S(t),~t\geq 0$, are (as earlier) the resolvent operators corresponding
to the Volterra equation (\ref{DVEe1}). Then, for arbitrary process 
$\Psi\in\mathcal{N}^2(0,T;L_2^0)$, the process $W^\Psi(t),~t\geq 0$, given by 
(\ref{deq5}) has a predictable version.
\end{proposition}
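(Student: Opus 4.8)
The plan is to show that $W^\Psi$ has a predictable version by approximating the resolvent family $S(t)$ by the bounded operators $S_n(t)$ from Theorem~\ref{DVEt1} (or Theorem~\ref{DVEt1a}), for which the convolution is manifestly well-behaved, and then passing to the limit. First I would fix $T>0$ and $\Psi\in\mathcal{N}^2(0,T;L_2^0)$, and recall from the construction of the stochastic integral that for each fixed $t$ the random variable $W^\Psi(t)=\int_0^t S(t-\tau)\Psi(\tau)\,dW(\tau)$ is well defined in $L^2(\Omega;H)$, since the strong continuity of $S$ together with the uniform bound (\ref{DVEe3}) gives
\begin{equation*}
\mathbb{E}\int_0^t\|S(t-\tau)\Psi(\tau)\|_{L_2^0}^2\,d\tau
\le \Big(\sup_{s\le T}\|S(s)\|\Big)^2\,\mathbb{E}\int_0^T\|\Psi(\tau)\|_{L_2^0}^2\,d\tau<+\infty.
\end{equation*}
By Proposition~\ref{PBp1a} it then suffices to prove that $W^\Psi$ is stochastically continuous and adapted on $[0,T]$; adaptedness is immediate from the definition of the stochastic integral, so the real content is mean-square (hence stochastic) continuity.

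For the continuity estimate I would fix $0\le t<t+h\le T$ and split, exactly as in the proof of Lemma~\ref{dl1},
\begin{equation*}
W^\Psi(t+h)-W^\Psi(t)=\int_0^t[S(t+h-\tau)-S(t-\tau)]\Psi(\tau)\,dW(\tau)+\int_t^{t+h}S(t+h-\tau)\Psi(\tau)\,dW(\tau),
\end{equation*}
the two summands being stochastically independent. Taking $\mathbb{E}|\cdot|_H^2$ and using the isometry formula for the stochastic integral (Proposition~\ref{Ch2p3} and the identity $\|\Phi\|_{L_2^0}^2=\mathrm{Tr}(\Phi Q\Phi^*)$), the second term is bounded by $(\sup_{s\le T}\|S(s)\|)^2\,\mathbb{E}\int_t^{t+h}\|\Psi(\tau)\|_{L_2^0}^2\,d\tau$, which tends to $0$ as $h\to0$ by absolute continuity of the integral. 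For the first term I would use that for each fixed $\tau$ and each $\omega$ one has $[S(t+h-\tau)-S(t-\tau)]\Psi(\tau)\to 0$ in $L_2^0$ as $h\to 0$: this follows because $\Psi(\tau)$ is Hilbert-Schmidt, so $\Psi(\tau)Q^{1/2}$ maps an orthonormal basis $\{g_k\}$ into an $\ell^2$-summable sequence in $H$, and strong continuity of $S$ gives convergence coordinatewise, while the uniform bound (\ref{DVEe3}) provides the $L_2^0$-integrable majorant $2\sup_{s\le T}\|S(s)\|\cdot\|\Psi(\tau)Q^{1/2}\|_{HS}$; the dominated convergence theorem then kills the integral over $\tau\in[0,t]$ and over $\Omega$. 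The case $0\le t-h<t\le T$ is handled the same way. This yields mean-square continuity, and Proposition~\ref{PBp1a} finishes the argument.

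I expect the main obstacle to be the careful justification of passing the limit $h\to 0$ inside the double (sum over $k$, integral over $\tau$, expectation over $\Omega$) expression for $I_1$: one must produce an honest integrable dominating function uniformly in $h$, and argue that pointwise convergence in the Hilbert-Schmidt norm holds for a.e.\ $\tau$ — this is where the Hilbert-Schmidt hypothesis on $\Psi(\tau)$ and the uniform operator bound (\ref{DVEe3}) are both essential, and it is exactly the point where the resolvent case mirrors, without extra difficulty, the semigroup computation in \cite{DZ92}. An alternative, slightly more structural route would be to invoke Theorem~\ref{DVEt1} to write $W^{\Psi}$ as an $L^2(\Omega;H)$-limit of the convolutions $W_n^{\Psi}(t)=\int_0^t S_n(t-\tau)\Psi(\tau)\,dW(\tau)$ built from the bounded, strongly continuous families $S_n$, for which mean-square continuity is routine, and then check that the convergence is uniform in $t\in[0,T]$ so that continuity is inherited; but since the estimates above are self-contained I would present the direct argument and only mention the approximation viewpoint as a remark.
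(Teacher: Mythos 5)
Your argument is correct, but it follows a genuinely different route from the paper. The paper does \emph{not} prove mean-square continuity of $W^\Psi$ for general $\Psi$; instead it uses the Chebyshev-type estimate (\ref{g1})--(\ref{g2}), approximates $\Psi$ by elementary processes $\Psi_n$ (dense in $\mathcal{N}^2(0,T;L_2^0)$), observes that each $W^{\Psi}_n$ has a predictable version, and then passes to the limit: $W_n^\Psi(t)\to W^\Psi(t)$ in probability, a subsequence converges almost surely, and predictability survives a.s.\ limits. You instead prove directly that $W^\Psi$ is adapted and mean-square continuous and invoke Proposition \ref{PBp1a}; this is self-contained, avoids the density argument, and yields the stronger by-product of $L^2$-continuity in $t$ (which the paper only records for the constant-integrand case in Lemma \ref{dl1}). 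Your dominated-convergence treatment of the term $\int_0^t\|[S(t+h-\tau)-S(t-\tau)]\Psi(\tau)\|_{L_2^0}^2\,d\tau$ is sound: pointwise convergence of each summand $|[S(t+h-\tau)-S(t-\tau)]\Psi(\tau)g_k|_H^2$ follows from strong continuity of $S$, and the majorant $4M_T^2\|\Psi(\tau)\|_{L_2^0}^2$ is integrable over $[0,T]\times\Omega$ by hypothesis. One small correction: unlike in Lemma \ref{dl1}, where the integrand $B$ is deterministic, the two summands in your decomposition need not be independent for a general predictable $\Psi$; but independence is not used anywhere — the elementary bound $\mathbb{E}|X+Y|_H^2\le 2\mathbb{E}|X|_H^2+2\mathbb{E}|Y|_H^2$ suffices — so this does not affect the validity of the proof.
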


\proof{Because proof of Proposition \ref{pr3} is analogous to some schemes 
in the theory of stochastic integrals (see e.g.\ \cite[Chapter 4]{LS73}) 
we provide only an outline of the proof.

First, let us notice that the process $S(t-\tau)\Psi(\tau)$, where 
$\tau\in[0,t]$,
belongs to $\mathcal{N}^2(0,T;L_2^0)$, because
$\Psi\in\mathcal{N}^2(0,T;L_2^0)$.
Then we may use the apparently well-known estimate
(see e.g.\ Proposition 4.16 in \cite{DZ92}):  
for arbitrary $a>0,~b>0$ and $t\in[0,T]$ 
\begin{equation} \label{g1}
P(|W^\Psi(t)|_H>a) \leq \frac{b}{a^2}+P\left( \int_0^t  
 ||S(t-\tau)\Psi(\tau)||_{L_2^0}^2 d\tau >b\right)\;.
\end{equation}
Because the resolvent operators $S(t),~t\geq 0$, are uniformly bounded on compact
itervals (see \cite{Pr93}), there exists a constant $C>0$ such that 
$||S(t)||\leq C$ for $t\in[0,T]$. So, we have 
$||S(t-\tau)\Psi(\tau)||_{L_2^0}^2\leq C^2||\Psi(\tau)||_{L_2^0}^2$, 
$\tau\in[0,T]$.

Then the estimate (\ref{g1}) may be rewritten as 
\begin{equation} \label{g2}
P(|W^\Psi(t)|_H>a) \leq \frac{b}{a^2}+P\left( \int_0^t  
||\Psi(\tau)||_{L_2^0}^2 d\tau >\frac{b}{C^2}\right)\;.
\end{equation}

Let us consider predictability of the process $W^\Psi$ in two steps. 
In the first step we assume that $\Psi$ is an elementary process understood 
in the sense given in section 4.2 in \cite{DZ92}. In this case 
the process $W^\Psi$ has a predictable version by Proposition \ref{PBp1a}. 

In the second step $\Psi$ is an arbitrary process belonging to 
$\mathcal{N}^2(0,T;L_2^0)$. Since elementary processes form a dense 
set in the space $\mathcal{N}^2(0,T;L_2^0)$, there exists a sequence 
$(\Psi_n)$ of elementary processes such that for arbitrary $c>0$
\begin{equation} \label{g3}
P\left( \int_0^T ||\Psi(\tau)-\Psi_n(\tau)||_{L_2^0}^2 d\tau 
 >c\right)\stackrel{n\rightarrow +\infty}{\longrightarrow} 0\;.
\end{equation}

By the previous part of the proof the sequence $W_n^\Psi$ of convolutions
$$ W_n^\Psi (t):= \int_0^t S(t-\tau)\Psi_n(\tau)dW(\tau)$$
converges in probability. Hence, it has a subsequence converging almost
surely. This implies the predictability of the convolution 
$W^\Psi (t), ~t\in [0,T]$.
\hfill\qed}\\

\begin{proposition} \label{pr3a}
Assume that $\Psi\in\mathcal{N}^2(0,T;L_2^0)$. Then the process 
$W^\Psi(t)$, $t\in [0,T]$, defined by (\ref{deq5}) has square integrable 
trajectories.
\end{proposition}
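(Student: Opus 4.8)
The plan is to compute the expected $L^2(0,T;H)$-norm of $W^\Psi$ and show it is finite; square integrability of trajectories $P$-a.s.\ follows immediately. First I would note that, exactly as in the proof of Proposition~\ref{pr3}, the process $(\tau,t)\mapsto S(t-\tau)\Psi(\tau)$ belongs to $\mathcal{N}^2(0,T;L_2^0)$ whenever $\Psi$ does, because the resolvent family $S(t)$, $t\ge 0$, is strongly continuous and hence uniformly bounded on $[0,T]$ by (\ref{DVEe3}): there is $C>0$ with $\|S(t)\|\le C$ for $t\in[0,T]$, so $\|S(t-\tau)\Psi(\tau)\|_{L_2^0}\le C\,\|\Psi(\tau)\|_{L_2^0}$. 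Consequently the stochastic integral defining $W^\Psi(t)$ is well defined for every $t\in[0,T]$, and by Proposition~\ref{Ch2p3} (applied to the integrand $S(t-\cdot)\Psi(\cdot)$) we have $\mathbb{E}|W^\Psi(t)|_H^2<+\infty$ for each $t$.

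Next I would use the isometry property of the stochastic integral (Proposition~\ref{Ch2p3} together with the identity for $\mathbb{E}|\Phi\bullet W(t)|_H^2$ from Proposition~\ref{CH2pr1}, item~2, transferred to the cylindrical setting via the norm $\|\cdot\|_{L_2^0}$) to write, for each fixed $t$,
\begin{equation*}
\mathbb{E}\,|W^\Psi(t)|_H^2 = \mathbb{E}\int_0^t \|S(t-\tau)\Psi(\tau)\|_{L_2^0}^2\,d\tau
\le C^2\,\mathbb{E}\int_0^t \|\Psi(\tau)\|_{L_2^0}^2\,d\tau .
\end{equation*}
Then I would integrate this in $t$ over $[0,T]$ and invoke the (ordinary, deterministic) Fubini theorem for the nonnegative measurable function $(t,\tau,\omega)\mapsto \|S(t-\tau)\Psi(\tau,\omega)\|_{L_2^0}^2 \mathbf{1}_{\{\tau\le t\}}$, obtaining
\begin{equation*}
\mathbb{E}\int_0^T |W^\Psi(t)|_H^2\,dt
= \int_0^T \mathbb{E}\int_0^t \|S(t-\tau)\Psi(\tau)\|_{L_2^0}^2\,d\tau\,dt
\le C^2\,T\,\mathbb{E}\int_0^T \|\Psi(\tau)\|_{L_2^0}^2\,d\tau = C^2 T\,\|\Psi\|_T^2 <+\infty,
\end{equation*}
where finiteness is precisely the defining condition (\ref{PBe6}) for membership in $\mathcal{N}^2(0,T;L_2^0)$. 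From $\mathbb{E}\int_0^T |W^\Psi(t)|_H^2\,dt<+\infty$ it follows that $\int_0^T |W^\Psi(t)|_H^2\,dt<+\infty$ $P$-a.s., i.e.\ the trajectories of $W^\Psi$ are square integrable on $[0,T]$, and $W^\Psi(\cdot)$ may be viewed as a random variable with values in $L^2(0,T;H)$.

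The only point requiring a little care — and the closest thing to an obstacle — is the justification that one may interchange $\mathbb{E}$ and $\int_0^T dt$ and then apply Fubini to the iterated $\tau,t$ integrals. This is legitimate because the integrand is nonnegative and jointly measurable: joint measurability in $(\tau,\omega)$ comes from $\Psi\in\mathcal{N}^2(0,T;L_2^0)$ (predictability, hence product-measurability), joint measurability in $t$ from the strong continuity of $S$, and Tonelli's theorem then applies on the product of $[0,T]^2$ with $(\Omega,\mathcal{F},P)$. No stochastic Fubini theorem is needed here since after taking expectations everything is a genuine Lebesgue--Bochner integral. With this observation the chain of inequalities above is rigorous and the proposition is proved. \qed
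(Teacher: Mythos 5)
Your proposal is correct and follows essentially the same route as the paper: compute $\mathbb{E}\int_0^T |W^\Psi(t)|_H^2\,dt$, interchange expectation and the $t$-integral by Fubini--Tonelli, apply the It\^o isometry to reduce to $\int_0^T\int_0^t \|S(t-\tau)\Psi(\tau)\|_{L_2^0}^2\,d\tau\,dt$, and bound this using the uniform boundedness of $S(t)$ on $[0,T]$ together with $\Psi\in\mathcal{N}^2(0,T;L_2^0)$. Your added remarks on joint measurability and the nonnegativity justifying Tonelli only make explicit what the paper leaves implicit.
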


\proof{We have to prove that $\mathbb{E}\int_0^T |W^\Psi(t)|_H^2 dt<+\infty$.
From Fubini's theorem and properties of stochastic integral
\begin{eqnarray*}
 \mathbb{E}\int_0^T \left| \int_0^t S(t-\tau)\Psi(\tau)dW(\tau)\right|_H^2 dt
  & =& \int_0^T \left[\mathbb{E}\left| 
 \int_0^t S(t-\tau)\Psi(\tau)dW(\tau)\right|_H^2\right] dt \\
 = \int_0^T\int_0^t ||S(t-\tau)\Psi(\tau)||_{L_2^0}^2\; d\tau dt & \leq &
 M \int_0^T\int_0^t ||\Psi(\tau)||_{L_2^0}^2 \; d\tau dt  < +\infty \\
   \mbox{(from~boundness~of operators~} S(t) &\mbox{and} &  
  \mbox{~because~} \Psi(\tau) \mbox{~are~Hilbert-Schmidt).} 
\end{eqnarray*}
\qed\\}

\begin{proposition} \label{pr4}
Assume that $a\in BV(\mathbb{R}_+)$, (VA) are satisfied and, additionally
$S\in C^1(0,+\infty;L(H))$.
Let $X$ be a predictable process with integrable trajectories.
Assume that $X$ has a version such that $P(X(t)\in D(A))=1$ 
for almost all $t\in [0,T]$ and (\ref{deq40}) holds.
If for any $t\in [0,T]$ and $\xi\in D(A^*)$
\begin{eqnarray}\label{deq9}
 \langle X(t),\xi\rangle_H = \langle X(0),\xi\rangle_H &+&
 \int_0^t \langle 
 a(t-\tau)X(\tau),A^*\xi\rangle_H d\tau \\
 &+& \int_0^t \langle\xi,\Psi(\tau) dW(\tau)\rangle_H,  ~~P-a.s.,
 \nonumber
\end{eqnarray}
then
\begin{equation}\label{deq9a}
X(t) = S(t)X(0) +
 \int_0^t S(t-\tau) \Psi (\tau) dW(\tau), \quad t\in[0,T].
\end{equation}
\end{proposition}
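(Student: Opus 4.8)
The plan is to show that the weak solution $X$, which satisfies the integrated identity \eqref{deq9} against test vectors $\xi\in D(A^*)$, must agree with the mild formula \eqref{deq9a}. The natural device is Lemma~\ref{dl2}, which upgrades the time-independent test vector in \eqref{deq9} to a time-dependent test function $\xi\in C^1([0,t];D(A^*))$; to invoke it I first note that under the hypothesis $a\in BV(\mathbb{R}_+)$ one has $a\in W^{1,1}(\mathbb{R}_+)$ locally (up to the usual identification with its absolutely continuous part), so Lemma~\ref{dl2} applies with $\Psi(\tau)$ in place of $B$ — strictly speaking Lemma~\ref{dl2} is stated for $\Psi(t)=B$ constant, so the first thing I would do is remark that its proof (an application of the It\^o formula to $F_{\xi_0}(t)\varphi(t)$) goes through verbatim for a general $\Psi\in\mathcal N^2(0,T;L_2^0)$, giving
\begin{eqnarray*}
\langle X(t),\xi(t)\rangle_H &=& \langle X(0),\xi(0)\rangle_H
+\int_0^t\langle (\dot a\star X)(\tau)+a(0)X(\tau),A^*\xi(\tau)\rangle_H\,d\tau\\
&& +\int_0^t\langle \xi(\tau),\Psi(\tau)\,dW(\tau)\rangle_H
+\int_0^t\langle X(\tau),\dot\xi(\tau)\rangle_H\,d\tau .
\end{eqnarray*}

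Next I would fix $t\in[0,T]$ and $\eta\in D(A^*)$, and insert into this identity the specific test function $\xi(\tau):=S^*(t-\tau)\eta$ for $\tau\in[0,t]$. For this to be admissible I need $\xi(\cdot)\in C^1([0,t];D(A^*))$: strong continuity of $S$ together with $S\in C^1(0,\infty;L(H))$ gives the $C^1$ regularity in $\tau$, and the commutation property $AS(t)=S(t)A$ on $D(A)$ (Definition~\ref{DVEd1}(2)), passed to adjoints, keeps the values in $D(A^*)$ with $A^*\xi(\tau)=S^*(t-\tau)A^*\eta$. Then $\dot\xi(\tau)=-\dot S^*(t-\tau)\eta$, and I must identify $\dot S^*(t-\tau)\eta$ in terms of $A^*$: differentiating the resolvent equation \eqref{DVEe2}, or rather its consequence \eqref{deq2Pr}, one gets (in the adjoint form) $\dot S^*(s)\eta = a(0)S^*(s)A^*\eta + (\dot a\star S^*(\cdot)A^*\eta)(s)$ for $s>0$. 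Substituting $\xi$ and this expression for $\dot\xi$ into the displayed formula, the two bulk integrals $\int_0^t\langle (\dot a\star X)(\tau)+a(0)X(\tau),S^*(t-\tau)A^*\eta\rangle\,d\tau$ and $\int_0^t\langle X(\tau),-\dot S^*(t-\tau)\eta\rangle\,d\tau$ should cancel after a Fubini/convolution rearrangement — this is the computational heart of the argument, and is exactly the resolvent-equation bookkeeping that makes the mild and weak formulations coincide. What survives is
$$
\langle X(t),\eta\rangle_H=\langle S(t)X(0),\eta\rangle_H+\Big\langle \int_0^t S(t-\tau)\Psi(\tau)\,dW(\tau),\ \eta\Big\rangle_H ,
$$
where the stochastic term has been recognized via the stochastic Fubini theorem (Theorem~\ref{PBtFub}), moving $S(t-\tau)$ inside, i.e. $\int_0^t\langle S^*(t-\tau)\eta,\Psi(\tau)dW(\tau)\rangle_H=\langle\int_0^tS(t-\tau)\Psi(\tau)dW(\tau),\eta\rangle_H$; condition \eqref{deq40} guarantees this integral is well defined.

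Finally, since $D(A^*)$ is dense in $H$ and both sides of the last identity are (for fixed $t$) well-defined elements of $L^2(\Omega;H)$ — the right side by Proposition~\ref{pr3a} and \eqref{deq40}, the left by the integrability hypothesis on $X$ — the equality of inner products against all $\eta\in D(A^*)$ forces \eqref{deq9a} to hold $P$-a.s.\ for each $t\in[0,T]$. I expect the main obstacle to be the rigorous justification of the cancellation step: one must interchange the order of integration in the deterministic double integrals (legitimate because $a\in BV$ and $X$ has integrable trajectories, so everything is absolutely integrable on the triangle $\{0\le\sigma\le\tau\le t\}$), and one must verify that the term-by-term differentiation of the resolvent equation is valid, which is where the extra hypothesis $S\in C^1(0,\infty;L(H))$ is used. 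A minor secondary point is the boundary behavior at $\tau=t$ (where $S(t-\tau)=I$) and at $\tau=0$, which is handled by the strong continuity \eqref{DVEe3} of $S$ and the fact that $\xi(t)=\eta$, $\xi(0)=S^*(t)\eta$.
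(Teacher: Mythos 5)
Your proposal is correct and follows essentially the same route as the paper's proof: extend \eqref{deq9} to time-dependent test functions via the argument of Lemma~\ref{dl2}, substitute $\xi(\tau)=S^*(t-\tau)\zeta$, use the differentiated resolvent equation to cancel the two deterministic bulk integrals, and conclude by density of $D(A^*)$ in $H$. Your explicit remark that Lemma~\ref{dl2} must be extended from constant $B$ to general $\Psi$ matches the paper's (terser) "analogously like in Lemma~\ref{dl2}" step, and the cancellation you identify as the computational heart is exactly the $I_1=-I_3$ computation carried out there.
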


\proof{For simplicity we omit index $_H$ in the inner product. 
Since $a\in BV(\mathbb{R}_+)$, we see, analogously like in Lemma \ref{dl2}, 
that if (\ref{deq9}) is satisfied, then 
\begin{eqnarray}\label{deq14}
 \langle X(t),\xi(t)\rangle &=& \langle X(0),\xi(0)\rangle +
 \int_0^t \langle (\dot{a}\star X)(\tau) +
 a(0)X(\tau),A^*\xi(\tau)\rangle d\tau  \\
 &+& \int_0^t \langle \Psi(\tau) dW(\tau),\xi(\tau)\rangle  
 + \int_0^t \langle X(\tau),\dot{\xi}(\tau)  \rangle d\tau, \quad
 ~~P-a.s. \nonumber
\end{eqnarray}
holds for any $\xi\in C^1([0,t],D(A^*))$ for any $t\in [0,T]$.

Now, let us take $\xi(\tau):=S^*(t-\tau)\zeta$ with $\zeta\in D(A^*)$, 
 $\tau\in [0,t]$. 
The equation (\ref{deq14}) may be written like
\begin{eqnarray*}  
 \langle X(t),S^*(0)\zeta\rangle & = & \langle X(0),S^*(t)\zeta\rangle
 + \int_0^t \langle (\dot{a}\star X)(\tau)
 + a(0)X(\tau),A^*S^*(t-\tau)\zeta\rangle d\tau \nonumber \\
  &+& \int_0^t \langle \Psi(\tau) dW(\tau),S^*(t-\tau)\zeta\rangle
 + \int_0^t \langle X(\tau),(S^*(t-\tau)\zeta)'\rangle d\tau, 
\end{eqnarray*}
where derivative ()' in the last term is taken over $\tau$. 

Next, using $S^*(0)=I$, we rewrite
\begin{eqnarray} \label{deq15}
 \langle X(t),\zeta\rangle \!&=& \!\langle S(t)X(0),\zeta\rangle \!+\!
 \int_0^t \!\! \langle S(t-\tau)A\left[\int_0^\tau \!\!
 \dot{a}(\tau-\sigma)X(\sigma)d\sigma +a(0)X(\tau)\right]\!,\zeta\rangle 
 d\tau \nonumber\\
   \!&+&\! \int_0^t \!\langle S(t-\tau)\Psi(\tau)dW(\tau),\zeta\rangle +
   \int_0^t \langle \dot{S}(t-\tau)X(\tau),\zeta\rangle d\tau. 
\end{eqnarray}
To prove  (\ref{deq9a}) it is enough to show that the sum of the first 
integral and the third one in the equation (\ref{deq15}) gives zero.

Because $S\in C^1(0,+\infty;L(H))$
we can use properties of resolvent operators and the derivative 
$\dot{S}(t-\tau)$ with respect to $\tau$. Then
\begin{eqnarray*}  
 I_3 &:=& \langle \int_0^t \dot{S}(t-\tau)X(\tau) d\tau,\zeta \rangle =
  \langle -\int_0^t    \dot{S}(\tau)X(t-\tau) d\tau ,\zeta \rangle \\
 & = & \langle -\int_0^t\!\left[ \!\int_0^\tau\! 
 a(\tau-s)AS(s)ds\right]'\!X(t-\tau) d\tau ,\zeta\rangle \\
 &=& \langle\left(
  \!-\!\int_0^t \! \left[ \int_0^\tau \!\!\dot{a}(\tau-s)AS(s)ds\right]\!
 X(t-\tau) d\tau 
 \!-\! \int_0^t \! a(0)AS(\tau) X(t-\tau) d\tau\! \right)\! 
 ,\zeta \rangle \\[2mm]
 & = & \langle (- [A(\dot{a}\star S)(\tau) \star X](t)-a(0)A(S\star X)(t)) 
 ,\zeta \rangle .
\end{eqnarray*}
Note that $a\in BV(\mathbb{R}_+)$ and hence the convolution 
$(a\star S)(\tau)$ has sense (see \cite[Section 1.6]{Pr93} or\cite{AK89}).
Since
$$ \int_0^t \langle a(0)AS(t-\tau)X(\tau),\zeta\rangle d\tau =
  \int_0^t \langle a(0)AS(\tau)X(t-\tau),\zeta\rangle d\tau $$
 and
\begin{eqnarray*} 
I_1 \!&:=&\! \int_0^t \! \langle S(t\!-\!\tau)A\left[\int_0^\tau \!\!
 \dot{a}(\tau\!-\!\sigma)X(\sigma)d\sigma\right],\zeta\rangle d\tau \!=\!
 \int_0^t \! \langle AS(t\!-\!\tau)(\dot{a}\star X)(\tau),\zeta\rangle d\tau 
  \\[2.5mm]
 &\!=\!& \langle A(S\star (\dot{a}\star X)(\tau))(t),\zeta\rangle \!=\!
 \langle A((S\star\dot{a})(\tau)\star X)(t),\zeta\rangle
  \mbox{~~for~any~~} \zeta\in D(A^*),
\end{eqnarray*} 
so $I_1=-I_3$. 

This means that (\ref{deq9a}) holds for any $\zeta\in D(A^*)$. Since 
$D(A^*)$ is dense in $H^*$, then (\ref{deq9a}) holds. 
\qed\\}

\noindent{\bf Remark}
If (\ref{SVEHe1}) is parabolic and the kernel $a(t)$ is 3-monotone, understood
in the sense defined by Pr\"uss \cite[Section 3]{Pr93}, then $S\in
C^1(0,+\infty;L(H))$, and $a \in BV(\mathbb{R}_+)$, respectively.\\

\noindent{\bf Comment} 
Proposition \ref{pr4} shows that under particular conditions a weak 
solution to (\ref{SVEHe1}) is a mild solution to the equation (\ref{SVEHe1}).

\begin{proposition} \label{pr5}
Let {\sc Volterra assumptions} be satisfied. If  \linebreak
$\Psi \in \mathcal{N}^2(0,T;L_2^0)$,  then the stochastic   convolution 
$W^\Psi$ fulfills the equation (\ref{deq9}).
\end{proposition}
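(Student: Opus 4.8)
The plan is to check directly that the process $X(t):=W^\Psi(t)$, together with $X(0)=0$, satisfies the weak identity (\ref{deq9}) for every $\xi\in D(A^*)$ and every $t\in[0,T]$. First I would record the preliminaries: $W^\Psi$ is predictable (Proposition \ref{pr3}) and its trajectories are $P$-a.s.\ square integrable, hence integrable, on $[0,T]$ (Proposition \ref{pr3a}); since $a\in L^1_{\mathrm{loc}}(\mathbb{R}_+)$, the convolution $\int_0^t a(t-\tau)W^\Psi(\tau)\,d\tau$ is then a well-defined $H$-valued Bochner integral $P$-a.s., so the right-hand side of (\ref{deq9}) makes sense. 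The deterministic engine of the proof is Proposition \ref{pr1Pr}: for every $x\in H$ and $s\ge 0$ one has $(a\star S)(s)x\in D(A)$ and $A\,(a\star S)(s)x=S(s)x-x$, i.e.\ (\ref{deq2Pr}).

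The first real step is to evaluate the convolution $a\star W^\Psi$. Writing $W^\Psi(\tau)=\int_0^\tau S(\tau-\sigma)\Psi(\sigma)\,dW(\sigma)$ and applying the stochastic Fubini theorem (Theorem \ref{PBtFub}) — with $E=[0,t]$, integrand $\Phi(\sigma,\omega,\tau)=\mathbf{1}_{\{\sigma\le\tau\}}\,S(\tau-\sigma)\Psi(\sigma)$, and the finite measure $|a(t-\tau)|\,d\tau$ on $E$, splitting $a=a^{+}-a^{-}$ to accommodate a possibly signed kernel — I would obtain, after the substitution $\tau\mapsto\tau-\sigma$ in the inner integral,
\begin{equation*}
\int_0^t a(t-\tau)\,W^\Psi(\tau)\,d\tau=\int_0^t\Big[\int_\sigma^t a(t-\tau)S(\tau-\sigma)\,d\tau\Big]\Psi(\sigma)\,dW(\sigma)=\int_0^t (a\star S)(t-\sigma)\,\Psi(\sigma)\,dW(\sigma).
\end{equation*}
The measurability of $\Phi$ on $(\Omega_t\times E,\mathcal{P}_t\times\mathcal{B}(E))$ and the integrability requirement $\int_0^t \|\Phi(\cdot,\cdot,\tau)\|_T\,|a(t-\tau)|\,d\tau<+\infty$ of Theorem \ref{PBtFub} follow from the strong continuity and uniform boundedness of $S$ on $[0,T]$ (estimate (\ref{DVEe3})), from $\Psi\in\mathcal{N}^2(0,T;L_2^0)$, and from $a\in L^1_{\mathrm{loc}}(\mathbb{R}_+)$.

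The second step is to pair the last identity with $A^*\xi$ for $\xi\in D(A^*)$. On the left I pull the bounded functional $\langle\,\cdot\,,A^*\xi\rangle_H$ inside the Bochner integral. On the right I use Proposition \ref{pr1Pr}: since $(a\star S)(t-\sigma)$ maps $U_0$ into $D(A)$ with $A\,(a\star S)(t-\sigma)\Psi(\sigma)=S(t-\sigma)\Psi(\sigma)-\Psi(\sigma)$, the $L_2^0$-valued integrand $\langle (a\star S)(t-\sigma)\Psi(\sigma)\,\cdot\,,A^*\xi\rangle_H$ coincides with $\langle (S(t-\sigma)-I)\Psi(\sigma)\,\cdot\,,\xi\rangle_H$, hence the corresponding scalar stochastic integrals agree. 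This yields
\begin{equation*}
\int_0^t \langle a(t-\tau)W^\Psi(\tau),A^*\xi\rangle_H\,d\tau=\Big\langle W^\Psi(t)-\int_0^t\Psi(\sigma)\,dW(\sigma),\,\xi\Big\rangle_H,
\end{equation*}
and rearranging, together with the symmetry $\langle\xi,\int_0^t\Psi(\tau)\,dW(\tau)\rangle_H=\langle\int_0^t\Psi(\tau)\,dW(\tau),\xi\rangle_H$, is precisely (\ref{deq9}) with $X=W^\Psi$ and $X(0)=0$.

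I expect the main obstacle to be the bookkeeping forced by the unboundedness of $A$: one cannot move $A$ (or $A^*$) through the stochastic integral, so the argument must be arranged, via Proposition \ref{pr1Pr}, so that $A$ acts only on the smoothing convolution $a\star S$, whose range lies in $D(A)$, after which everything is transferred to scalar pairings against test vectors $\xi\in D(A^*)$. The verification of the measurability hypothesis of the stochastic Fubini theorem and the reduction of a signed $a$ to its positive and negative parts are routine and I would only indicate them.
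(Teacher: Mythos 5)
Your proposal is correct and follows essentially the same route as the paper's proof: interchange the time-convolution with the stochastic integral via the stochastic Fubini theorem (the paper calls this the Dirichlet formula step), and then use the resolvent identity (\ref{deq2Pr}), $A(a\star S)(t-\sigma)x=(S(t-\sigma)-I)x$ for all $x\in H$, to transfer $A^{*}$ off the test vector $\xi$. The only cosmetic difference is that you perform the Fubini exchange at the level of the $H$-valued integral before pairing with $A^{*}\xi$, whereas the paper pairs first and exchanges afterwards; your extra remarks on the signed-kernel splitting and the measurability hypotheses are a welcome tightening of the same argument.
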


\proof{Let us notice that the process $W^\Psi$ has integrable 
trajectories. From the defintion of convolution (\ref{deq5}), using 
Dirichlet's formula and stochastic Fubini's theorem, for 
any $\xi\in D(A^*)$ we have
\begin{eqnarray*}
 \int_0^t \!\langle a(t-\tau)W^\Psi(\tau),A^*\xi\rangle_Hd\tau  
 &\!\equiv\!& \int_0^t\!\!  \langle a(t-\tau) \! 
 \int_0^\tau \! \!\!S(\tau-\sigma)\Psi(\sigma) 
 dW(\sigma),A^*\xi\rangle_Hd\tau \\[1mm]
 &\!=\!& \int_0^t \!\!\langle \left[\int_\sigma^t\!\!\! 
 a(t\!-\!\tau)S(\tau\!-\!\sigma)d\tau\right]
 \Psi(\sigma)  dW(\sigma),A^*\xi\rangle_H\\
 &\!=\!&  \int_0^t\!\!\! \langle A \left[\!\int_0^{t-\sigma}
 \!\!\!\!\!\! a(t\!-\!\sigma\!-\!z)S(z)dz\right]
 \!\Psi(\sigma) dW(\sigma),\xi\rangle_H\;.
\end{eqnarray*}
Next, using definition of convolution and the resolvent equation (\ref{deq2Pr}),
as \linebreak $A(a\star S)(t-\sigma)x= (S(t-\sigma)-I)x$, for $x\in H$, we can write 
\begin{eqnarray*} 
 \int_0^t \!\langle a(t-\tau)W^\Psi(\tau),A^*\xi\rangle_Hd\tau 
 & = & \langle \int_0^t \!\!A [ (a\star S)(t-\sigma)]\Psi(\sigma) dW(\sigma),
  \xi\rangle_H = \\
 & = & \langle \!\int_0^t\! [S(t-\sigma)-I]\Psi(\sigma) dW(\sigma),\xi\rangle_H = \\
   = \langle \!\int_0^t \!\!\!S(\!t-\!\sigma)\Psi(\sigma) dW(\sigma),\xi\rangle_H
 & - & \langle \int_0^t \Psi(\sigma) dW(\sigma),\xi\rangle_H .
\end{eqnarray*}

Hence, we obtained the following equation
$$
 \langle W^\Psi(t),\xi\rangle_H = \int_0^t \langle a(t-\tau)W^\Psi(\tau), 
 A^*\xi\rangle_Hd\tau + \int_0^t \langle \xi,\Psi(\tau)dW(\tau)\rangle_H
$$
for any $\xi\in D(A^*)$.
\qed}

\begin{corollary} \label{c1}
Let {\sc Volterra assumptions} hold with a bounded operator $A$. 
If $\;\Psi$ belongs to $\mathcal{N}^2(0,T;L_2^0)$  then
\begin{equation} \label{deq16}
 W^\Psi(t) = \int_0^t a(t-\tau)AW^\Psi(\tau)d\tau 
 + \int_0^t \Psi(\tau) dW(\tau), \quad P-a.s.
\end{equation}
\end{corollary}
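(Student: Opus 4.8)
The plan is to read off (\ref{deq16}) from Proposition \ref{pr5} by exploiting that a bounded $A$ passes freely through both the inner product and the Bochner integral. First I would apply Proposition \ref{pr5}: since {\sc Volterra assumptions} hold and $\Psi\in\mathcal{N}^2(0,T;L_2^0)$, the convolution $W^\Psi$ satisfies, for each fixed $t\in[0,T]$ and each $\xi\in D(A^*)$,
\[
\langle W^\Psi(t),\xi\rangle_H = \int_0^t \langle a(t-\tau)W^\Psi(\tau),A^*\xi\rangle_H\,d\tau + \int_0^t \langle \xi,\Psi(\tau)\,dW(\tau)\rangle_H ,\quad P\text{-a.s.}
\]
When $A\in L(H)$ one has $D(A^*)=H$ and $A^*\in L(H)$, so this identity is already available for every $\xi\in H$; no extension-by-density argument is needed at this stage.

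Next I would rewrite the two integral terms as inner products of fixed $H$-valued random variables against $\xi$. Since $A$ is bounded, $\langle a(t-\tau)W^\Psi(\tau),A^*\xi\rangle_H=\langle a(t-\tau)AW^\Psi(\tau),\xi\rangle_H$ for a.e.\ $\tau$. By Proposition \ref{pr3a} the trajectories of $W^\Psi$ lie in $L^2(0,T;H)$ $P$-a.s., hence so do those of $AW^\Psi$; as $a\in L^1_{\mathrm{loc}}(\mathbb{R}_+)$, Young's convolution inequality shows that $\tau\mapsto a(t-\tau)AW^\Psi(\tau)$ is Bochner integrable on $[0,t]$ for a.e.\ $t$, on a common set of full probability, so that $t\mapsto\int_0^t a(t-\tau)AW^\Psi(\tau)\,d\tau$ is a well-defined $H$-valued process and
\[
\int_0^t \langle a(t-\tau)W^\Psi(\tau),A^*\xi\rangle_H\,d\tau = \Big\langle \int_0^t a(t-\tau)AW^\Psi(\tau)\,d\tau,\ \xi\Big\rangle_H .
\]
Likewise $\int_0^t\langle \xi,\Psi(\tau)\,dW(\tau)\rangle_H=\langle \int_0^t\Psi(\tau)\,dW(\tau),\ \xi\rangle_H$ by definition of the stochastic integral. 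Substituting, for every $\xi\in H$ and every fixed $t\in[0,T]$ the random variable $W^\Psi(t)-\int_0^t a(t-\tau)AW^\Psi(\tau)\,d\tau-\int_0^t\Psi(\tau)\,dW(\tau)$ is $P$-a.s.\ orthogonal to $\xi$.

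Finally, using separability of $H$, I would let $\xi$ run over a fixed countable dense subset and intersect the corresponding $P$-null sets to conclude that this $H$-valued random variable vanishes $P$-a.s., which is exactly (\ref{deq16}). The only point requiring care — and the one I would state explicitly — is this bookkeeping of exceptional sets: Proposition \ref{pr5} yields the identity "for each $\xi$, $P$-a.s.", and one must upgrade it to "$P$-a.s., for all $\xi$", which is precisely where separability enters; one should also fix a single full-probability event on which the convolution $a\star(AW^\Psi)$ is simultaneously defined. Everything else — commuting the bounded operator $A$ with the Bochner integral and the pairing, and identifying the stochastic-integral term — is routine.
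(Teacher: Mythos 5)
Your proposal is correct and follows exactly the route the paper intends: Corollary \ref{c1} is stated as an immediate consequence of Proposition \ref{pr5}, obtained by noting that for bounded $A$ one has $D(A^*)=H$, moving $A^*$ across the inner product and the Bochner integral, and upgrading the pointwise-in-$\xi$ identity to an $H$-valued one via separability. Your explicit attention to the null-set bookkeeping is a welcome addition to what the paper leaves implicit.
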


\noindent{\bf Comment } The formula (\ref{deq16}) says that the 
convolution $W^\Psi$ is a strong solution to (\ref{SVEHe1}) 
if the operator $A$ is bounded.\vskip1.5mm

The below theorem is a consequence of the results obtained up to now.

\begin{theorem} \label{t3}
Suppose that (VA) and (PA) hold. 
Then a strong solution (if exists) is always a weak solution of (\ref{SVEHe1}). 
If, additionally, assumptions of Proposition \ref{pr4} are satisfied, 
a weak solution  is a mild solution to the 
Volterra equation (\ref{SVEHe1}). Conversely, under conditions of Proposition 
\ref{pr5}, a mild solution $X$ is also a weak solution to (\ref{SVEHe1}).
\end{theorem}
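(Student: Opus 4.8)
\textbf{Proof plan for Theorem \ref{t3}.}
The statement is a bookkeeping theorem: it collects the three implications already proved in the subsection into a single diagram. So the plan is to prove it by invoking Definitions \ref{def5}--\ref{def7} together with Lemma \ref{dl2}, Proposition \ref{pr4} and Proposition \ref{pr5}, in three successive steps.

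First I would show that a strong solution is a weak solution. Suppose $X$ is a strong solution in the sense of Definition \ref{def5}. Then $X$ is predictable, $P(X(t)\in D(A))=1$ for a.a.\ $t$, the integrability condition (\ref{eSW3.1}) holds, and (\ref{SVEHe1}) holds $P$-a.s.\ for every $t\in[0,T]$. Fix $\xi\in D(A^*)$ and take the inner product of both sides of (\ref{SVEHe1}) with $\xi$. For the deterministic convolution term I would use the closedness of $A$ together with (\ref{eSW3.1}) to pass $\langle\,\cdot\,,\xi\rangle_H$ under the Bochner integral and then move $A$ onto $\xi$, obtaining $\int_0^t\langle a(t-\tau)X(\tau),A^*\xi\rangle_H\,d\tau$; for the stochastic term one simply notes $\langle\int_0^t\Psi(\tau)\,dW(\tau),\xi\rangle_H=\int_0^t\langle\xi,\Psi(\tau)\,dW(\tau)\rangle_H$. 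Also $P(\int_0^t|a(t-\tau)X(\tau)|_H\,d\tau<+\infty)=1$ follows from (\ref{eSW3.1}) since $|X(\tau)|_H\le|X(\tau)|_{D(A)}$. This is exactly Definition \ref{def6}, so $X$ is a weak solution.

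Next, under the additional hypotheses of Proposition \ref{pr4} (namely $a\in BV(\mathbb{R}_+)$ and $S\in C^1(0,+\infty;L(H))$, on top of (VA)), a weak solution $X$ with integrable trajectories and satisfying (\ref{deq40}) satisfies precisely equation (\ref{deq9}) — which is the defining equation of a weak solution — hence Proposition \ref{pr4} applies verbatim and yields the representation (\ref{deq9a}), i.e.\ (\ref{deq4}). Together with (\ref{deq40}), which is assumed, this is Definition \ref{def7}, so $X$ is a mild solution. Conversely, assume the hypotheses of Proposition \ref{pr5}, i.e.\ (VA) and $\Psi\in\mathcal{N}^2(0,T;L_2^0)$, and let $X$ be a mild solution, so $X(t)=S(t)X(0)+W^\Psi(t)$. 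By Proposition \ref{pr5} the convolution $W^\Psi$ satisfies (\ref{deq9}); it remains to check that the deterministic part $S(t)X(0)$ also satisfies the corresponding identity, which is immediate from the resolvent equation in the form (\ref{deq2Pr}) applied to $x=X(0)$ after pairing with $\xi\in D(A^*)$ and using that $S(t)$ commutes with $A$. Adding the two contributions gives (\ref{deq9}) for $X$, and since (\ref{deq9}) is the weak-solution equation (with $\langle a(t-\tau)X(\tau),A^*\xi\rangle_H$ written out), $X$ is a weak solution.

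The only genuinely delicate point is the interchange of the pairing $\langle\cdot,\xi\rangle_H$ with $A$ and the Bochner integral in the first step, and the symmetric verification for $S(t)X(0)$ in the converse; both rest on closedness of $A$ (resp.\ the commutation property of the resolvent in Definition \ref{DVEd1}) plus the finiteness conditions (\ref{eSW3.1}) and (\ref{deq40}). Everything else is a direct citation of the three preceding results, so I expect the write-up to be short.
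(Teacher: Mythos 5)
Your proposal is correct and is essentially the argument the paper intends: the paper offers no written proof of Theorem \ref{t3} beyond declaring it "a consequence of the results obtained up to now", and your three steps (pair (\ref{SVEHe1}) with $\xi\in D(A^*)$ and move $A$ onto $\xi$; invoke Proposition \ref{pr4}; invoke Proposition \ref{pr5} plus the resolvent identity (\ref{deq2Pr}) for the $S(t)X(0)$ term) are exactly the intended assembly. One small slip: the condition $P(\int_0^t|a(t-\tau)X(\tau)|_H\,d\tau<+\infty)=1$ does \emph{not} follow from (\ref{eSW3.1}), which controls only $\int_0^t|a(t-\tau)AX(\tau)|_H\,d\tau$; it follows instead from the a.s.\ continuity (hence boundedness on $[0,T]$) of a strong solution, noted in the Comment after Definition \ref{def5}, together with $a\in L^1_{\mathrm{loc}}(\mathbb{R}_+)$.
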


%
%
%

Now, we provide two estimates for stochastic convolution 
(\ref{deq5}). 

\begin{theorem} \label{t4}
If $\; \Psi\in \mathcal{N}^2(0,T;L_2^0)$ then the following estimate holds
 \begin{equation} \label{deq18}
  \sup_{t\leq T} \;\mathbb{E} (|W^\Psi (t)|_H) \leq C\,
  M_T\,  \mathbb{E} \left( \int_0^T|| \Psi(t) ||_{L_2^0}^2\, dt 
  \right)^{\frac{1}{2}},
 \end{equation}
 where $C$ is a constant and $M_T=\sup_{t\leq T} ||S(t)||. $
\end{theorem}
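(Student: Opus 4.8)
The plan is to estimate the first moment of $|W^\Psi(t)|_H$ directly using the properties of the stochastic integral recalled in Chapter \ref{PBch:2}, in particular Proposition \ref{CH2pr1}(4) applied in the cylindrical setting. First I would fix $t\le T$ and write $W^\Psi(t)=\int_0^t S(t-\tau)\Psi(\tau)\,dW(\tau)$. The integrand $\tau\mapsto S(t-\tau)\Psi(\tau)$ belongs to $\mathcal N^2(0,T;L_2^0)$: indeed, since $S(t),\,t\ge 0$, is strongly continuous it is uniformly bounded on $[0,T]$ by $M_T=\sup_{t\le T}\|S(t)\|$ (this is (\ref{DVEe3})), so $\|S(t-\tau)\Psi(\tau)\|_{L_2^0}^2\le M_T^2\|\Psi(\tau)\|_{L_2^0}^2$, and the right side is integrable in the required sense because $\Psi\in\mathcal N^2(0,T;L_2^0)$. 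Hence $W^\Psi(t)$ is well defined and Proposition \ref{CH2pr1}(4) (or its cylindrical analogue via Proposition \ref{Ch2p2}, giving the quadratic variation $\int_0^t Q_\Phi(s)\,ds$) applies.

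Next I would apply the martingale inequality of Proposition \ref{CH2pr1}(4), which bounds $\mathbb E[\sup_{s\le t}|\int_0^s g(r)\,dW(r)|_H]$ by $3\,\mathbb E[\int_0^t \mathrm{Tr}(g(r)Qg^*(r))\,dr]^{1/2}$, with $g(r)=S(t-r)\Psi(r)$. This gives
\begin{equation*}
\mathbb E\,|W^\Psi(t)|_H\le 3\,\mathbb E\left(\int_0^t \|S(t-r)\Psi(r)\|_{L_2^0}^2\,dr\right)^{\frac12}.
\end{equation*}
Then I would insert the bound $\|S(t-r)\Psi(r)\|_{L_2^0}^2\le M_T^2\|\Psi(r)\|_{L_2^0}^2$ inside the integral, pull the constant $M_T$ out of the square root, and extend the domain of integration from $[0,t]$ to $[0,T]$ (the integrand being nonnegative). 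Taking the supremum over $t\le T$ yields
\begin{equation*}
\sup_{t\le T}\mathbb E\,|W^\Psi(t)|_H\le 3\,M_T\,\mathbb E\left(\int_0^T \|\Psi(r)\|_{L_2^0}^2\,dr\right)^{\frac12},
\end{equation*}
which is (\ref{deq18}) with $C=3$ (or an absolute constant $C$ absorbing the martingale-inequality constant).

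The only genuinely delicate point — and the one I would treat carefully rather than wave at — is that Proposition \ref{CH2pr1} is stated for a genuine ($Q$ nuclear) Wiener process, whereas here $W$ is cylindrical with $\mathrm{Tr}\,Q=+\infty$. The remedy is standard in this monograph's framework: realize the cylindrical process as a genuine Wiener process $W_c$ in a larger space $U_1$ with nuclear covariance $Q_1$ (Proposition \ref{prop1}), observe that $S(t-\tau)\Psi(\tau)\in L_2^0=L_2(U_0,H)$ so the integral against $W_c$ coincides with the one in (\ref{deq5}), and apply the martingale inequality there, noting that $\mathrm{Tr}(g(r)Q_1 g^*(r))=\|g(r)\|_{L_2^0}^2$ by the identity $\|\psi\|_{L_2^0}^2=\mathrm{Tr}(\psi Q\psi^*)$ established in Section \ref{PBs:3}. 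Alternatively one can apply the Doob/$L^1$-martingale estimate directly to the $L^2(\Omega;H)$-valued martingale $s\mapsto W_n^\Psi(s)$ obtained from finite truncations $W_c^{(m)}$ as in the construction of the integral, and pass to the limit; either route is routine once the bookkeeping of $U_0$, $U_1$, $Q$, $Q_1$ is in place, so I do not expect a real obstacle, only a need for care in citing the right form of the inequality.
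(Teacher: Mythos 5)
Your proposal is correct and follows essentially the same route as the paper, which (in the comment after Theorem \ref{t4} and in the displayed chain of inequalities proving Theorem \ref{t5}) obtains the estimate from the Davis-type $L^1$-martingale inequality for the stochastic integral, the bound $\|S(t-\tau)\Psi(\tau)\|_{L_2^0}\le M_T\|\Psi(\tau)\|_{L_2^0}$, and enlargement of the integration interval to $[0,T]$. Your extra care in transferring Proposition \ref{CH2pr1}(4) to the cylindrical setting via $U_1$, $Q_1$ and the identity $\mathrm{Tr}(\psi Q\psi^*)=\|\psi\|_{L_2^0}^2$ is sound and only makes explicit what the paper leaves implicit.
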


\noindent{\bf Comment} The estimate (\ref{deq18}) seems to be rather coarse.
It comes directly from the definition of stochastic integral. 
Since (\ref{deq18}) reduces to the Davis inequality for martingales if
$S(\cdot)=I$, the constant $C$ appears on the right hand side.
Unfortunately, we can 
not use more refined tools, for instance It\^o formula (see e.g.\ \cite{Tu84}
for Tubaro's estimate), because the process $W^\Psi$ is not enough regular.
\vskip1.5mm

The next result is a consequence of Theorem \ref{t4} 

\begin{theorem} \label{t5}
 Assume that $\Psi\in\mathcal{N}^2(0,T;L_2^0)$.  Then 
 $$\sup_{t\leq T}\, \mathbb{E}(|W^\Psi(t)|_H)
 \leq \widetilde{C}(T)\,|| \Psi ||_{\mathcal{N}^2(0,T;L_2^0)},$$
 where a constant $\widetilde{C}(T)$ depends on $T$.
\end{theorem}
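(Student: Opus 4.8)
The plan is to obtain Theorem \ref{t5} directly from Theorem \ref{t4}, combined with the definition of the norm on $\mathcal{N}^2(0,T;L_2^0)$ in (\ref{PBe6}) and Jensen's inequality. Recall that by (\ref{PBe6}) one has $||\Psi||_{\mathcal{N}^2(0,T;L_2^0)}^2 = \mathbb{E}\int_0^T ||\Psi(t)||_{L_2^0}^2\,dt$, so the factor appearing on the right-hand side of estimate (\ref{deq18}) in Theorem \ref{t4}, namely $\mathbb{E}\big(\int_0^T ||\Psi(t)||_{L_2^0}^2\,dt\big)^{1/2}$, differs from $||\Psi||_{\mathcal{N}^2(0,T;L_2^0)}$ only in the position of the expectation relative to the square root.

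First I would note that $x\mapsto\sqrt{x}$ is concave on $[0,\infty)$, so Jensen's inequality gives
$$\mathbb{E}\left(\int_0^T ||\Psi(t)||_{L_2^0}^2\,dt\right)^{1/2} \le \left(\mathbb{E}\int_0^T ||\Psi(t)||_{L_2^0}^2\,dt\right)^{1/2} = ||\Psi||_{\mathcal{N}^2(0,T;L_2^0)},$$
the right-hand side being finite precisely because $\Psi\in\mathcal{N}^2(0,T;L_2^0)$. Substituting this bound into (\ref{deq18}) yields
$$\sup_{t\le T}\mathbb{E}(|W^\Psi(t)|_H) \le C\,M_T\,||\Psi||_{\mathcal{N}^2(0,T;L_2^0)},$$
so it suffices to set $\widetilde{C}(T):=C\,M_T$, where $M_T=\sup_{t\le T}||S(t)||$ as in Theorem \ref{t4}.

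Finally I would record that $M_T<+\infty$: this is exactly (\ref{DVEe3}), a consequence of the strong continuity of the resolvent family $S(t)$, $t\ge 0$, on the compact interval $[0,T]$. Since $M_T$ depends on $T$ (and may grow with $T$ when $S(t)$ fails to be uniformly bounded on $\mathbb{R}_+$), the constant $\widetilde{C}(T)=C\,M_T$ has the dependence on $T$ claimed in the statement, which completes the argument.

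There is essentially no hard step here: the result is genuinely a corollary of Theorem \ref{t4}, with Jensen's inequality the only additional ingredient, and the sole point needing a word of justification is the finiteness of $M_T$, which is immediate from (\ref{DVEe3}).
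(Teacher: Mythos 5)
Your proof is correct and follows essentially the same route as the paper: the paper's own argument re-derives the estimate of Theorem \ref{t4} inline and then performs exactly your Jensen step $\mathbb{E}\bigl(\int_0^T \|\Psi(\tau)\|_{L_2^0}^2\,d\tau\bigr)^{1/2}\le\bigl(\mathbb{E}\int_0^T \|\Psi(\tau)\|_{L_2^0}^2\,d\tau\bigr)^{1/2}$ to reach $\widetilde{C}(T)=C\,M_T$. Citing Theorem \ref{t4} directly instead of repeating its derivation is exactly what the paper intends ("The next result is a consequence of Theorem \ref{t4}").
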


\proof{From (\ref{deq5}) and property of stochastic integral, 
writing out the Hilbert-Schmidt norm, we obain 
\begin{eqnarray*}
 \mathbb{E}(|W^\Psi(t)|_H) & = & \mathbb{E}\left(|\int_0^t 
 S(t-\tau)\Psi(\tau)dW(\tau)|_H\right) \\  & \le &
 C\,\mathbb{E}\left(\int_0^t || S(t-\tau)\Psi(\tau)||_{L_2^0}^2\,d\tau
 \right)^{\frac{1}{2}}  \\
 \le  
 C\,\mathbb{E}\left(\int_0^t ||S(t-\tau)||^2 \, ||\Psi(\tau) ||_{L_2^0}^2
  \,d\tau \right)^{\frac{1}{2}}
   & \le & C\, M_T\, \mathbb{E}\left(\int_0^t ||\Psi(\tau) ||_{L_2^0}^2
  \,d\tau \right)^{\frac{1}{2}}\\ 
  \le C\,M_T \left( \mathbb{E} \int_0^t ||\Psi(\tau) ||_{L_2^0}^2 d\tau
  \right)^{\frac{1}{2}}
  & \le & \widetilde{C}(T)\, ||\Psi ||_{\mathcal{N}^2(0,T;L_2^0)}\;,
\end{eqnarray*}
where $M_T$ is as above and $\widetilde{C}(T)= C\,M_T$.
\qed}\\

\section{Existence of strong solution} \label{SVEHSsec:2}

In this section $W$ is a cylindrical Wiener process, that is, $\mathrm{Tr}
Q =+\infty$ and the spaces $U_0,~ L_2^0,~
\mathcal{N}^2(0,T;L_2^0)$ are the same like previously (see
definitions in chapter~\ref{PBch:2}). The results from this section originate
from \cite{KL07c} and are not yet published.

Let us recall the stochastic convolution introduced in (\ref{deq5})
$$ 
W^\Psi(t) := \int_0^t S(t-\tau)\Psi(\tau)\,dW(\tau),
$$ 
where $\Psi$ belongs to the space $\mathcal{N}^2(0,T;L_2^0)$. In
consequence, because resolvent operators $S(t),~t\geq 0$, are
bounded, then $S(t-\cdot)\Psi (\cdot)\in
\mathcal{N}^2(0,T;L_2^0)$, too.\medskip

In the sequel, by $A\Phi(t)$, $t \ge 0$, we will denote the composition
of the operators $\Phi(t)$ and $A$. \medskip

We will use the following
well-known result, where the operator $A$ is, as
previously, a closed linear operator with the dense domain $D(A)$ 
equipped with the graph norm $|\cdot |_{D(A)}$
and $\Phi(t),~t\in [0,T]$ is an $L_2(U_0,H)$-predictable process.
\begin{proposition} \label{pSW1} (see e.g.\ \cite[Proposition 4.15]{DZ92})\\
If $~\Phi(t)(U_0)\subset D(A), ~P-a.s.$ for all $t\in [0,T]$ and
$$ P\left( \int_0^T || \Phi(t)||_{L_2^0}^2\,dt <\infty \right) =1,~~
P\left( \int_0^T ||A \Phi(t)||_{L_2^0}^2\,dt <\infty \right) =1,
$$
then $\quad \displaystyle P\left( \int_0^T \Phi(t)\,dW(t) \in D(A)
\right) =1~~$ and
$$ A \int_0^T \Phi(t)\,dW(t) = \int_0^T A\Phi(t)\,dW(t), \quad P-a.s.
$$
\end{proposition}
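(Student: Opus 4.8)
The statement is a standard closedness-under-the-integral result, so the plan is to exploit the closedness of $A$ together with the definition of the stochastic integral as an $L^2(\Omega;H)$-limit of integrals of elementary processes. First I would reduce to the case where the integrability bounds hold surely (not just $P$-a.s.\ with probability one): by a standard localization/stopping argument one may as well assume
$$
\int_0^T \|\Phi(t)\|_{L_2^0}^2\,dt <\infty, \qquad \int_0^T \|A\Phi(t)\|_{L_2^0}^2\,dt <\infty
$$
hold for every $\omega$, since the general case follows by exhausting $\Omega$ with the events $\Omega_k=\{\int_0^T\|\Phi\|_{L_2^0}^2\,dt\le k,\ \int_0^T\|A\Phi\|_{L_2^0}^2\,dt\le k\}$ and using that both the integral $\int_0^T\Phi\,dW$ and $\int_0^T A\Phi\,dW$ are (up to null sets) unchanged when $\Phi$ is replaced by $\Phi\mathbf{1}_{\Omega_k}$.

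Next I would consider the operator $A$ as a closed operator and introduce its graph space $(D(A),|\cdot|_{D(A)})$, which by the hypotheses recalled in the excerpt is a separable Hilbert space continuously embedded in $H$. The key observation is that $\Phi$, viewed as an $L_2(U_0,D(A))$-valued predictable process, satisfies $\int_0^T\|\Phi(t)\|_{L_2(U_0,D(A))}^2\,dt<\infty$ because $\|\Phi(t)\|_{L_2(U_0,D(A))}^2 \le \|\Phi(t)\|_{L_2^0}^2 + \|A\Phi(t)\|_{L_2^0}^2$. Hence the stochastic integral $Y:=\int_0^T\Phi(t)\,dW(t)$ exists as a $D(A)$-valued random variable, and in particular $P(Y\in D(A))=1$. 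Now I would identify this $D(A)$-valued integral with the $H$-valued one: the embedding $\iota\colon D(A)\hookrightarrow H$ is a bounded linear operator, and the stochastic integral commutes with bounded operators (this is immediate for elementary processes and passes to the limit in $L^2(\Omega;\cdot)$), so $\iota Y = \int_0^T \iota\Phi(t)\,dW(t) = \int_0^T\Phi(t)\,dW(t)$ in $H$. Similarly, the map $A\colon D(A)\to H$ is bounded as a map from the graph-normed space, so $AY = \int_0^T A\Phi(t)\,dW(t)$ in $H$. Combining these two identities gives $A\int_0^T\Phi(t)\,dW(t) = AY = \int_0^T A\Phi(t)\,dW(t)$, which is the claim.

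The main obstacle, and the step requiring the most care, is justifying that the stochastic integral of the $D(A)$-valued process really exists and agrees with the $H$-valued one — i.e.\ that one may legitimately ``upgrade'' the target Hilbert space from $H$ to $D(A)$. This rests on the facts that (i) $(D(A),|\cdot|_{D(A)})$ is itself a separable Hilbert space, so the construction of the stochastic integral from Chapter~\ref{PBch:2} applies verbatim with $H$ replaced by $D(A)$; (ii) for elementary processes the identities $\iota\int\Phi\,dW=\int\iota\Phi\,dW$ and $A\int\Phi\,dW=\int A\Phi\,dW$ are trivial; and (iii) both sides are continuous in the relevant $\mathcal{N}^2$-norms, so the identities survive the passage to the limit defining the integral for general $\Phi$. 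Once this is in place, the closedness of $A$ is not even needed as a separate ingredient beyond the fact that it makes $(D(A),|\cdot|_{D(A)})$ complete; everything else is bounded-operator bookkeeping. Alternatively, if one prefers to avoid changing the target space, one can argue directly: approximate $\Phi$ by elementary processes $\Phi_n$ with $\Phi_n(t)(U_0)\subset D(A)$, note $A\int_0^T\Phi_n\,dW=\int_0^T A\Phi_n\,dW$ trivially, pass $\int_0^T\Phi_n\,dW\to\int_0^T\Phi\,dW$ and $\int_0^T A\Phi_n\,dW\to\int_0^T A\Phi\,dW$ in $L^2(\Omega;H)$ (hence along a subsequence a.s.), and conclude by closedness of $A$ that the limit of $\int_0^T\Phi_n\,dW$ lies in $D(A)$ with $A$ applied to it equal to $\int_0^T A\Phi\,dW$.
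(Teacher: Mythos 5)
The paper gives no proof of this proposition at all --- it is quoted verbatim from Da Prato--Zabczyk \cite[Proposition 4.15]{DZ92} --- and your argument (regard $\Phi$ as an $L_2(U_0,D(A))$-valued process using the graph norm, integrate in the Hilbert space $D(A)$, then push forward by the bounded maps $\iota:D(A)\hookrightarrow H$ and $A:D(A)\to H$) is precisely the standard proof from that reference, so you have essentially reproduced the intended argument. The only imprecision is in your localization step: $\Phi\mathbf{1}_{\Omega_k}$ is not adapted because $\Omega_k$ is $\mathcal{F}_T$-measurable, so you should instead truncate with the stopping times $\tau_k=\inf\{t:\int_0^t(\|\Phi(s)\|_{L_2^0}^2+\|A\Phi(s)\|_{L_2^0}^2)\,ds\ge k\}\wedge T$ and use the local property of the stochastic integral on $\{\tau_k=T\}\uparrow\Omega$; with that routine fix the proof is complete.
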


Let us recall assumptions of approximation theorems (Theorems \ref{DVEt1}
and \ref{DVEt1a}) formulated for Hilbert space $H$:  
\begin{description}
\item[{\bf (AS1)}] The operator $A$ is the generator of a $C_0$-semigroup in $H$ and 
the kernel function $a(t)$ is completely positive.
\item[{\bf (AS2)}] $A$ generates an exponentialy bounded cosine family 
in $H$ and the function $a(t)$ is completely positive (or fulfills one 
of two other cases listed in the remark 1 on page~\pageref{comment}).
\end{description}
\index{(AS1)}
\index{(AS2)}

\begin{lemma} \label{pSW5}
Let assumptions (VA) be satisfied.
Suppose {\bf (AS1)} or {\bf (AS2)} hold. If $\Psi$ and
$A\Psi$ belong to $\mathcal{N}^2(0,T;L_2^0)$ and in addition
$\Psi(t)(U_0)\subset D(A),$ P-a.s., then the following
equality holds
\begin{equation} \label{eSW19}
W^\Psi(t) = \int_0^t a(t-\tau)A\, W^\Psi(\tau)\,d\tau
  + \int_0^t \Psi(\tau)\,dW(\tau) , \qquad P-a.s.
\end{equation}
\end{lemma}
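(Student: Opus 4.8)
The strategy is to pass to the limit in the approximating equations. By Corollary~\ref{c1}, for each bounded operator $A_n$ (from Theorem~\ref{DVEt1} or Theorem~\ref{DVEt1a}) the stochastic convolution
$$
W_n^\Psi(t) := \int_0^t S_n(t-\tau)\Psi(\tau)\,dW(\tau)
$$
satisfies the identity
$$
W_n^\Psi(t) = \int_0^t a(t-\tau)A_n W_n^\Psi(\tau)\,d\tau + \int_0^t \Psi(\tau)\,dW(\tau), \qquad P\text{-a.s.}
$$
Since $A_n$ is bounded, $A_n W_n^\Psi(\tau) = \int_0^\tau A_n S_n(\tau-\sigma)\Psi(\sigma)\,dW(\sigma) = \int_0^\tau S_n(\tau-\sigma)A_n\Psi(\sigma)\,dW(\sigma)$, using that $S_n$ commutes with $A_n$; here I also need that $A_n\Psi(\sigma)(U_0)$ makes sense, which is automatic as $A_n$ is bounded. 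So the plan is: first establish convergence $W_n^\Psi(t)\to W^\Psi(t)$ in $L^2(\Omega;H)$ (or along a subsequence, $P$-a.s.), then show the convolution term on the right converges to $\int_0^t a(t-\tau)A\,W^\Psi(\tau)\,d\tau$, and conclude by uniqueness of limits that (\ref{eSW19}) holds.

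For the first convergence, I would write
$$
\mathbb{E}\,|W_n^\Psi(t) - W^\Psi(t)|_H^2 = \mathbb{E}\int_0^t \|(S_n(t-\tau)-S(t-\tau))\Psi(\tau)\|_{L_2^0}^2\,d\tau,
$$
and invoke Theorem~\ref{DVEt1} (or \ref{DVEt1a}): $S_n(s)x\to S(s)x$ for every $x\in H$, uniformly for $s$ in compact sets, together with the uniform bound $\|S_n(s)\|\le Me^{w_0 s}$. Writing out the Hilbert--Schmidt norm over an orthonormal basis $\{g_j\}$ of $U_0$, the integrand $\sum_j |(S_n(t-\tau)-S(t-\tau))\Psi(\tau)g_j|_H^2$ is dominated by $(M e^{w_0 T}+\sup_{s\le T}\|S(s)\|)^2\|\Psi(\tau)\|_{L_2^0}^2$, which is integrable on $\Omega_T$ since $\Psi\in\mathcal{N}^2(0,T;L_2^0)$; and it tends to $0$ pointwise. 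Dominated convergence then gives $W_n^\Psi(t)\to W^\Psi(t)$ in $L^2(\Omega;H)$ for each $t$, hence a.s.\ along a subsequence.

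For the convolution term, I would use that $W^\Psi$ takes values in $D(A)$ $P$-a.s.\ for a.a.\ $t$: by Proposition~\ref{pSW1}, since $\Psi(t)(U_0)\subset D(A)$ and both $\Psi$ and $A\Psi$ lie in $\mathcal{N}^2(0,T;L_2^0)$, one has $W^\Psi(t)\in D(A)$ a.s.\ and $A W^\Psi(t) = \int_0^t S(t-\tau)A\Psi(\tau)\,dW(\tau) = W^{A\Psi}(t)$, using Proposition~\ref{AScom} (that $S(t)$ commutes with $A$). Now $A_n W_n^\Psi(\tau) = W_n^{A_n\Psi}(\tau)$; to show $\int_0^t a(t-\tau)A_n W_n^\Psi(\tau)\,d\tau \to \int_0^t a(t-\tau)A W^\Psi(\tau)\,d\tau$ in $L^2(\Omega;H)$, it suffices (by the $L^1$-bound on $a$ and Cauchy--Schwarz / Young's inequality in the convolution, together with $\sup_\tau\mathbb{E}|W_n^{A_n\Psi}(\tau)-W^{A\Psi}(\tau)|_H^2$) to show $W_n^{A_n\Psi}(\tau)\to W^{A\Psi}(\tau)$ in $L^2(\Omega;H)$ uniformly in $\tau\le t$. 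This is the analogue of the first step but with $A_n\Psi$ in place of $\Psi$, and here is the one point needing care: $A_n\Psi = nAR(n,A)\Psi = R(n,A)A\Psi$ on the range of $\Psi$ (which sits in $D(A)$), so $A_n\Psi(\tau)g_j = R(n,A)\cdot A\Psi(\tau)g_j \to A\Psi(\tau)g_j$ as $n\to\infty$ by the standard fact $nR(n,A)x\to x$; moreover $\|R(n,A)A\Psi(\tau)\|_{L_2^0}\le \frac{M}{n-w}\,n\cdot\text{(something)}$... more precisely $\|nR(n,A)\|\le \frac{Mn}{n-w}$ is bounded uniformly in $n$, giving the domination $\|A_n\Psi(\tau)\|_{L_2^0}\le C\|A\Psi(\tau)\|_{L_2^0}$. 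Then one estimates
$$
\mathbb{E}\,|W_n^{A_n\Psi}(\tau) - W^{A\Psi}(\tau)|_H^2 \le 2\,\mathbb{E}\int_0^\tau \|(S_n-S)(\tau-\sigma)A\Psi(\sigma)\|_{L_2^0}^2 d\sigma + 2\,\mathbb{E}\int_0^\tau \|S_n(\tau-\sigma)(A_n-A)\Psi(\sigma)\|_{L_2^0}^2 d\sigma,
$$
and both terms go to $0$ by dominated convergence (first term as before, second using $\|S_n\|$ uniformly bounded and $(A_n-A)\Psi(\sigma)g_j = (nR(n,A)-I)A\Psi(\sigma)g_j\to 0$ with the domination above). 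Passing to the limit in the approximating identity along a common subsequence then yields (\ref{eSW19}), and since the limit is $t$-independent, the full statement follows. The main obstacle is this uniform-in-$\tau$ control of $W_n^{A_n\Psi}-W^{A\Psi}$ together with correctly handling the two sources of error ($S_n\to S$ and $A_n\to A$) simultaneously; everything else is routine application of the stochastic Fubini theorem, Proposition~\ref{pSW1}, and dominated convergence.
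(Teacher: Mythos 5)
Your proposal is correct and follows essentially the same route as the paper's proof: Yosida approximation together with Corollary \ref{c1}, the resolvent convergence of Theorems \ref{DVEt1}/\ref{DVEt1a} with dominated convergence to get $W_n^\Psi(t)\to W^\Psi(t)$, Proposition \ref{pSW1} for $W^\Psi(t)\in D(A)$, and a two-term splitting of the error in $A_nW_n^\Psi-AW^\Psi$ that isolates the contributions of $S_n\to S$ and $A_n\to A$ exactly as in the paper (you group the split on the integrands, the paper groups it as $(A_nW_n^\Psi-A_nW^\Psi)+(A_n-A)W^\Psi$, but the estimates are the same). Only cosmetic slips: $A_n\Psi=nR(n,A)A\Psi=J_nA\Psi$ (you dropped the factor $n$ once), and the commutation $AS(t)x=S(t)Ax$ for the limiting resolvent is part of Definition \ref{DVEd1}, while Proposition \ref{AScom} covers the approximants $S_n$.
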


\noindent {\bf Comment} Let us emphasize that assumptions concerning
the operators $\Psi (t)$, $t\ge 0$, particularly requirement that
$\Psi (t)(U_0)\subset D(A), P-a.s.$, are the same like in semigroup
case, see e.g.\ \cite[Proposition 6.4]{DZ92}.

\begin{proof}
%
%
%
%
Because formula (\ref{deq16}) holds for any bounded
operator, then it holds for the Yosida approximation $A_n$ of the
operator $A$, too, that is
$$ W_n^\Psi(t) =
\int_0^t a(t-\tau) A_n W_n^\Psi(\tau)d\tau +
\int_0^t\Psi(\tau)dW(\tau), $$ where
$$ W_n^\Psi(t) := \int_0^t S_n(t-\tau)\Psi(\tau)dW(\tau)$$
and
$$ A_n W_n^\Psi(t) =
A_n \int_0^t S_n(t-\tau)\Psi(\tau)dW(\tau).
$$
Recall that by assumption $\Psi\in \mathcal{N}^2(0,T;L_2^0)$.
Because the operators $S_n(t)$ are deterministic and bounded for
any $t\in [0,T]$, $n\in\mathbb{N}$, then the operators
$S_n(t-\cdot )\Psi(\cdot)$ belong to $\mathcal{N}^2(0,T;L_2^0)$,
too. In consequence, the difference
\begin{equation}\label{eq21}
 \Phi_n(t-\cdot ) := S_n(t-\cdot )\Psi(\cdot)
  - S(t-\cdot )\Psi(\cdot)
\end{equation}
belongs to $\mathcal{N}^2(0,T;L_2^0)$ for any $t\in [0,T]$ and
$n\in\mathbb{N}$. This means that
\begin{equation}\label{eq22}
 \mathbb{E}\left(\int_0^t ||\Phi_n(t-\tau)||_{L_2^0}^2d\tau \right)
 < +\infty
\end{equation}
for any $t\in [0,T]$.

Let us recall that the cylindrical Wiener process $W(t)$, $t\ge
0$, can be written in the form
\begin{equation}\label{eq23}
 W(t) =\sum_{j=1}^{+\infty} g_j\,\beta_j(t),
\end{equation}
where $\{g_j\}$ is an orthonormal basis of $U_0$ and $\beta_j(t)$
are independent real Wiener processes. From (\ref{eq23}) we have
\begin{equation}\label{eq24}
 \int_0^t \Phi_n(t-\tau)\,dW(\tau) = \sum_{j=1}^{+\infty}
 \int_0^t \Phi_n(t-\tau)\,g_j\,d\beta_j(\tau).
\end{equation}
From (\ref{eq22}), we obtain
\begin{equation}\label{eq25}
 \mathbb{E}\left[\int_0^t \left( \sum_{j=1}^{+\infty}
 |\Phi_n(t-\tau)\,g_j|_H^2 \right) d\tau \right]
 < +\infty
\end{equation}
for any $t\in [0,T]$. Next, from (\ref{eq24}), properties of
stochastic integral and (\ref{eq25}) we obtain for any
$t\in[0,T]$,
\begin{eqnarray*}
 \mathbb{E}\left| \int_0^t \Phi_n(t-\tau)\,dW(\tau) \right|_H^2
 &=& \mathbb{E}\left| \sum_{j=1}^{+\infty}\int_0^t
 \Phi_n(t-\tau)\,g_j\,d\beta_j(\tau) \right|_H^2 \le \\
  \mathbb{E}\left[ \sum_{j=1}^{+\infty} \int_0^t
 |\Phi_n(t-\tau)\,g_j|_H^2 d\tau \right]
 &\le & \mathbb{E}\left[ \sum_{j=1}^{+\infty} \int_0^T
  |\Phi_n(T-\tau)\,g_j|_H^2 d\tau \right] <+\infty.
\end{eqnarray*}

By Theorem \ref{DVEt1} or \ref{DVEt1a}, the convergence (\ref{DVEe10})
of resolvent families is uniform in $t$ on every compact subset of
$\mathbb{R}_+$, particularly on the interval $[0,T]$. Now, we use
(\ref{DVEe10}) in the Hilbert space $H$, so (\ref{DVEe10}) holds for
every $x\in H$. Then, for any fixed $j$,
\begin{equation}\label{eq26}
 \int_0^T |[S_n(T-\tau)-S(T-\tau)]\,
 \Psi(\tau)\,g_j|_H^2 d\tau  \longrightarrow 0
\end{equation}
for $n\to +\infty$. Summing up our considerations,
particularly using (\ref{eq25}) and (\ref{eq26}) we can write
\begin{eqnarray*}
\sup_{t\in [0,T]} \! & \mathbb{E} & \!\left| \int_0^t \!\!
\Phi_n(t\!-\!\tau)dW(\tau)
 \right|_H^2 \! \!   \equiv\! \sup_{t\in [0,T]}\! \mathbb{E}\left| \int_0^t
 [S_n(t\!-\!\tau)\!-\!S(t\!-\!\tau)] \Psi(\tau)dW(\tau)\right|_H^2 \\ 
 \!& \le &
 \mathbb{E}\left[ \sum_{j=1}^{+\infty} \! \int_0^T \!\!\!
 | [ S_n(T\!-\!\tau)\!-\!S(T\!-\!\tau)]\Psi(\tau)\,g_j |_H^2
 d\tau \right]  \longrightarrow 0 \quad \mbox{as} \quad n \to +\infty\,.
\end{eqnarray*}

Hence, by the Lebesgue dominated convergence theorem
\begin{equation}\label{eq27}
 \lim_{n\to +\infty} \sup_{t\in [0,T]} \mathbb{E} \left|
 W_n^\Psi(t)- W^\Psi(t)\right|_H^2 =0.
\end{equation}

By assumption, $\Psi(t)(U_0)\subset D(A), ~P-a.s.~$
Because $~S(t)(D(A))\subset D(A)$, then 
$S(t-\tau)\Psi(\tau)(U_0) \subset D(A),
~P-a.s.$, for any $\tau\in [0,t],~t\ge 0$.
Hence, by Proposition \ref{pSW1}, $P(W^\Psi (t)\in D(A))=1$. 

For any $n\in\mathbb{N}$, $t\ge 0$, we have
$$ |A_n W_n^\Psi (t) - A W^\Psi (t)|_H \le
    N_{n,1}(t) + N_{n,2}(t), $$
where
\begin{eqnarray*}
 N_{n,1}(t) & := & |A_n W_n^\Psi (t) - A_n W^\Psi (t)|_H , \\
 N_{n,2}(t) & := & |A_n W^\Psi (t) - A W^\Psi (t)|_H =
             |(A_n-A)W^\Psi (t)|_H \,.
\end{eqnarray*}
Then
\begin{eqnarray}\label{eq28}
 |A_n W_n^\Psi (t) - A W^\Psi (t)|_H^2 & \le &
 N_{n,1}^2 (t) + 2 N_{n,1}(t) N_{n,2}(t) + N_{n,2}^2(t) \nonumber \\
 & < & 3[N_{n,1}^2 (t)+N_{n,2}^2(t)].
\end{eqnarray}

Let us study the term $N_{n,1}(t)$. Note that the unbounded
operator $A$ generates a semigroup. Then we have for the Yosida
approximation the following properties:
\begin{equation}\label{eq30}
 A_nx=J_nAx \quad \mbox{for~any~} x\in D(A), \quad \sup_n ||J_n||
 < \infty
\end{equation}
where $A_nx=nAR(n,A)x=AJ_nx$ for any $x\in H$, with
$J_n:=nR(n,A).$ Moreover (see \cite[Chapter II, Lemma 3.4]{EN00}):
\begin{eqnarray}
 \lim_{n\to\infty}J_nx &=& x \qquad \mbox{for~any~} x\in H, \nonumber \\
 \lim_{n\to\infty} A_nx &=& Ax \qquad \mbox{for~any~} x\in D(A).
 \label{eq31}
\end{eqnarray}
By Proposition \ref{AScom}, $AS_n(t)x = S_n(t)Ax$ for every $n$ sufficiently
large and for all $x \in D(A)$. 
So, by Propositions \ref{AScom} and \ref{pSW1} and the 
closedness of $A$ we can write
\begin{eqnarray*}
 A_n W_n^\Psi(t) &\equiv& A_n
 \int_0^t S_n(t-\tau)\Psi(\tau)dW(\tau) \\
 &=& J_n \int_0^t AS_n(t-\tau)\Psi(\tau)dW(\tau)
 = J_n \left[\int_0^t S_n(t-\tau)A\Psi(\tau)dW(\tau)\right].
\end{eqnarray*}
Analogously,
$$ A_n W^\Psi(t) =
  J_n \left[\int_0^t S(t-\tau)A\Psi(\tau)dW(\tau)\right].
$$
By (\ref{eq30}) we have
\begin{eqnarray*}
N_{n,1}(t) &=& |J_n\int_0^t [S_n(t-\tau)-S(t-\tau)]
  A\Psi(\tau)dW(\tau)|_H \\
  &\le & |\int_0^t [S_n(t-\tau)-S(t-\tau)]
  A\Psi(\tau)dW(\tau)|_H \;.
\end{eqnarray*}
Since from assumptions $A\Psi \in \mathcal{N}^2(0,T;L_2^0)$, then the
term  apearing above, 
$[S_n(t-\tau)-S(t-\tau)]A\Psi(\tau)$ may be treated like
the difference $\Phi_n$ defined by (\ref{eq21}).

Hence, from (\ref{eq30}) and (\ref{eq27}), for the first term of
the right hand side of (\ref{eq28}) we have
$$ \lim_{n\to +\infty}\;\; \sup_{t\in [0,T]}
 \mathbb{E}(N_{n,1}^2 (t)) \to 0. $$
For the second term of (\ref{eq28}), that is $N_{n,2}^2(t)$, we can 
follow the same steps as above for proving (\ref{eq27}).
\begin{eqnarray*}
N_{n,2}(t) & = & |A_n W^\Psi (t) - A W^\Psi (t)|_H \\
 &\equiv & 
  \left| A_n \int_0^t S(t-\tau)\Psi (\tau)dW(\tau) 
       - A   \int_0^t S(t-\tau)\Psi (\tau)dW(\tau) \right|_H =  \\
 & = & \left|\int_0^t [A_n-A] S(t-\tau)\Psi (\tau)dW(\tau)\right|_H\;.
\end{eqnarray*}
 
From assumptions, $\Psi, A\Psi \in \mathcal{N}^2(0,T;L_2^0)$. 
Because $A_n, S(t), ~t\ge 0$ are bounded, then 
$A_n S(t-\cdot)\Psi(\cdot)
\in \mathcal{N}^2(0,T;L_2^0)$, too. 

Analogously, 
$AS(t-\cdot)\Psi(\cdot)=S(t-\cdot)A\Psi(\cdot)
\in \mathcal{N}^2(0,T;L_2^0)$.

Let us note that the set of all Hilbert-Schmidt operators acting from 
one separable Hilbert space into another one, equipped with the
operator norm defined on page \pageref{H-Sn} is a separable 
Hilbert space. Particularly, sum of two Hilbert-Schmidt operators
is a Hilbert-Schmidt operator, see e.g.\ \cite{Ba81}. Therefore, 
we can deduce that the operator  $(A_n-A)$ $S(t-\cdot)\Psi(\cdot)\in 
\mathcal{N}^2(0,T;L_2^0)$, ~for any $~t\in [0,T]$. Hence, the term 
$[A_n-A]S(t-\tau)\Psi(\tau)$ may be treated like the difference 
$\Phi_n$ defined by (\ref{eq21}). So, we obtain 
\begin{eqnarray*}
 \mathbb{E} \left( N_{n,2}^2(t)\right) &=& 
 \mathbb{E} \left( \int_0^t \left[ \sum_{j=1}^{+\infty} \left| 
 [A_n-A] S(t-\tau )\Psi (\tau)\,g_j \right|_H^2  \right]
 d\tau \right) \\
 &\le & \mathbb{E} \left(\sum_{j=1}^{+\infty} \int_0^T
 \left| [A_n-A] S(t-\tau)\Psi(\tau)\,g_j\right|_H^2 d\tau\right)
 < +\infty,
\end{eqnarray*}
for any $t\in [0,T]$.

By the convergence (\ref{eq31}), for any fixed $j$, 
$$ \int_0^T |[A_n-A]S(t-\tau) \Psi(\tau)\,g_j |_H^2 d\tau \longrightarrow 0
\quad \mbox{for} \quad n\to +\infty.$$

Summing up our considerations, we have 
$$ \lim_{n\to +\infty} \;\;\sup_{t\in [0,T]} \mathbb{E}
(N_{n,2}^2 (t)) \to 0\;.$$

So, we can deduce that
$$ \lim_{n\to +\infty} \;\;\sup_{t\in [0,T]} \mathbb{E}
  |A_n W_n^\Psi(t)-AW^\Psi(t)|_H^2=0 , $$
and then (\ref{eSW19}) holds. \hfill 
\qed
\end{proof}

These considerations give rise to the following result.

\begin{theorem} \label{coSW4}
Suppose that assumptions of Lemma \ref{pSW5} hold. Then the
equation (\ref{SVEHe1}) has a strong solution. Precisely, the
convolution $W^\Psi$ defined by (\ref{deq5}) is the strong
solution to~(\ref{SVEHe1}).
\end{theorem}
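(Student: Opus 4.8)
The plan is to show that $W^\Psi$ satisfies all three requirements of Definition \ref{def5} for a strong solution to (\ref{SVEHe1}) with $X(0)=0$. The membership requirement $P(W^\Psi(t)\in D(A))=1$ and the integrability/identity requirements will follow from Lemma \ref{pSW5} together with the introductory results of Section \ref{SVEHSsec:1}.

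\textbf{Step 1: Predictability and regularity of $W^\Psi$.} First I would recall from Proposition \ref{pr3} that $W^\Psi$ has a predictable version, and from Proposition \ref{pr3a} that its trajectories are square integrable on $[0,T]$. Thus $W^\Psi$ is an $H$-valued predictable process of the type admissible in Definition \ref{def5}.

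\textbf{Step 2: $W^\Psi(t)\in D(A)$ a.s.} By hypothesis $\Psi(t)(U_0)\subset D(A)$ $P$-a.s.\ and $\Psi, A\Psi\in\mathcal{N}^2(0,T;L_2^0)$. Since each $S(t-\tau)$ maps $D(A)$ into $D(A)$ (property 2 of Definition \ref{DVEd1}) and commutes with $A$ on $D(A)$, the process $\tau\mapsto S(t-\tau)\Psi(\tau)$ takes values in $L_2^0$, maps $U_0$ into $D(A)$, and $AS(t-\tau)\Psi(\tau)=S(t-\tau)A\Psi(\tau)$ again lies in $\mathcal{N}^2(0,T;L_2^0)$ because $S(t-\cdot)$ is bounded uniformly on $[0,T]$ by (\ref{DVEe3}). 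Hence Proposition \ref{pSW1} applies and gives $P(W^\Psi(t)\in D(A))=1$ together with
\[
 A\,W^\Psi(t)=\int_0^t S(t-\tau)A\Psi(\tau)\,dW(\tau),\qquad P\text{-a.s.}
\]
This is in fact already contained in the proof of Lemma \ref{pSW5}, so I would simply cite it.

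\textbf{Step 3: the integrability condition (\ref{eSW3.1}) and the equation.} Using the representation of $AW^\Psi$ from Step 2, boundedness of $S$ on $[0,T]$, and $a\in L^1_{\mathrm{loc}}(\mathbb{R}_+)$, one estimates $\int_0^t|a(t-\tau)A W^\Psi(\tau)|_H\,d\tau$ by $M_T\int_0^t|a(t-\tau)|\,|\,{\cdot}\,|\,d\tau$ with a square-integrable integrand, so it is finite $P$-a.s.; this verifies (\ref{eSW3.1}). Finally, Lemma \ref{pSW5} gives exactly the identity
\[
 W^\Psi(t)=\int_0^t a(t-\tau)A\,W^\Psi(\tau)\,d\tau+\int_0^t\Psi(\tau)\,dW(\tau),\qquad P\text{-a.s.},
\]
which is (\ref{SVEHe1}) with $X(0)=0$. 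Therefore $W^\Psi$ is a strong solution. (If one wants $X(0)\neq0$, add $S(t)X(0)$ and use the resolvent equation (\ref{deq2Pr}) to handle the deterministic part; but since the theorem as stated concerns the convolution $W^\Psi$, the above suffices.) The only genuine work is the approximation argument already carried out in Lemma \ref{pSW5}; here there is essentially no remaining obstacle — the theorem is a direct packaging of that lemma together with Propositions \ref{pr3}, \ref{pr3a}, and \ref{pSW1}, so the main point is simply to check that each clause of Definition \ref{def5} is met.
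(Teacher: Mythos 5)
Your proof is correct and follows essentially the same route as the paper: the identity is supplied by Lemma \ref{pSW5}, predictability and trajectory integrability by Propositions \ref{pr3} and \ref{pr3a}, membership in $D(A)$ by Proposition \ref{pSW1}, and what remains is only to check (\ref{eSW3.1}). The sole (cosmetic) difference is how that last condition is verified: you use the explicit representation $AW^\Psi(t)=\int_0^t S(t-\tau)A\Psi(\tau)\,dW(\tau)$ together with boundedness of $S$ on $[0,T]$ and $a\in L^1_{\mathrm{loc}}$, whereas the paper invokes the boundedness of $A$ on $(D(A),|\cdot|_{D(A)})$ to conclude $AW^\Psi(\cdot)\in L^1([0,T];H)$ a.s.\ and then uses standard properties of convolution; both arguments deliver the same conclusion.
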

\begin{proof} 
Since Proposition \ref{pr3} and Lemma \ref{pSW5} hold, 
we have to show only 
the condition (\ref{eSW3.1}). Let us note that by Proposition \ref{pr3a},
the convolution 
$W^\Psi (t)$ has integrable trajectories.  Because the closed unbounded 
linear operator $A$  becomes bounded on ($D(A),|\cdot|_{D(A)}$), see e.g.\
\cite{We80}, 
we obtain that $AW^\Psi (\cdot )\in
L^1([0,T];H)$, P-a.s. Next, properties of convolution provide
integrability of the function $a(T-\cdot)AW^\Psi (\cdot)$, 
what finishes the proof. \qed
\end{proof}

\section{Fractional Volterra equations} \label{SVEHSsec:3}

Assume, as previously, that 
$H$ is a separable Hilbert space with a norm $|\cdot|_H$ and
$A$ is a closed linear operator with dense domain
$D(A)\subset H$ equipped with the graph norm $|\cdot|_{D(A)}$. The
purpose of this section is to study the existence of strong
solutions for a class of  stochastic Volterra equations of the
form
\begin{equation}{\label{eq4f}}
X(t) = X(0) + \int_0^t a_\alpha(t-\tau) AX(\tau)d\tau + \int_0^t
\Psi(\tau)\, dW(\tau), \quad t\geq 0,
\end{equation}
\index{$a_\alpha(t) :=\displaystyle \frac{t^{\alpha-1}}{\Gamma(\alpha)}$}
\index{$\Gamma(\alpha)$}
where $a_\alpha(t) :=\displaystyle \frac{t^{\alpha-1}}{\Gamma(\alpha)},~
\alpha>0$, $\Gamma(\alpha)$ is the gamma function 
and $W,\; \Psi$ are appropriate stochastic processes. 
There are several situations that can be
modeled by stochastic Volterra equations, see e.g. \cite[Section
3.4 ]{HO96} and references therein.  A similar equation
was studied in \cite{BT03}, too. 
Here we are interested in the study of strong
solutions when equation (\ref{eq4f}) is driven by a 
cylindrical Wiener process $W$. 
We give sufficient conditions for stochastic convolution to be a strong
solution to (\ref{eq4f}).

The equation (\ref{eq4f}) is a stochastic version of the deterministic 
Volterra equation
\begin{equation}{\label{eq3af}}
u(t) = \int_0^t a_\alpha(t-\tau) Au(\tau)d\tau + f(t) \,,
\end{equation}
where $f$ is an $H$-valued function. \pagebreak

In the case when $a_\alpha(t)$ is a completely positive function,
sufficient conditions for existence of strong solutions for
(\ref{eq4f}) may be obtained like in section \ref{SVEHSsec:2}, 
that is, using a
method which involves the use of a resolvent family associated to
the  deterministic version of equation (\ref{eq4f}).

However, there are two kinds of problems that arise when we study
(\ref{eq4f}). On the one hand, the kernels
$\frac{t^{\alpha-1}}{\Gamma(\alpha)} $ are $\alpha$-regular and
$\frac{\alpha \pi}{2}$-sectorial but not completely positive
functions for $\alpha >1$, so e.g. the results in \cite{KL07c}
can not be used directly for $\alpha >1$. On the other hand, for
$\alpha\in (0,1)$, we have a singularity of the kernel in $t=0.$
This fact strongly suggests the use of
 $\alpha$-times resolvent families associated to
equation (\ref{eq3af}). These new tools appeared in
\cite{Ba01} as well as their relationship with fractional
derivatives. For convenience of the reader, we
  provide below the main results on $\alpha$-times resolvent
families to be used in this paper.

Our second main ingredient to obtain strong solutions of
(\ref{eq4f}) relies on approximation of $\alpha$-times resolvent
families. This kind of result was very recently formulated by Li
and Zheng \cite{LZ04}. It enables us to prove a key result on
convergence of $\alpha$-times resolvent families (see Theorem \ref{th2f} 
below).  Then we can follow the methods employed in \cite{KL07c} to 
obtain existence of solutions - particularly strong - for the 
stochastic equation (\ref{eq4f}).

\subsection{Convergence of $\alpha$-times resolvent families}
\label{Sconv}

In this section we formulate the main deterministic results on
convergence of resolvents. 

\index{$S_{\alpha}(t)$}
By $S_{\alpha}(t),~t\geq 0$, we denote the family of
$\alpha$-times resolvent families corresponding to the Volterra
equation (\ref{eq3af}), if it exists, and defined analogously
like resolvent family, see Definition \ref{DVEd1}.

\begin{definition}\label{SVEHSd1} (see \cite{Ba01})\\
A family $(S_{\alpha}(t))_{t\geq 0}$ of bounded linear operators
in a Banach space $B$ is called {\tt $\alpha$-times resolvent
family}\index{resolvent family!$\alpha$-times} for (\ref{eq3af}) 
if the following conditions are satisfied:
\begin{enumerate}
\item $S_{\alpha}(t)$ is strongly continuous on $\mathbb{R}_+$ and
$S_{\alpha}(0)=I$;
\item $S_{\alpha}(t)$ commutes with the operator $A$, that is,
$S_{\alpha}(t)(D(A))\subset D(A)$ and
$AS_{\alpha}(t)x=S_{\alpha}(t)Ax$ for all $x\in D(A)$ and $t\geq
0$;
\item the following {\tt resolvent equation} holds
\begin{equation} \label{eq4af}
S_{\alpha}(t)x = x + \int_0^t a_{\alpha}(t-\tau)
AS_{\alpha}(\tau)x d\tau
\end{equation}
for all $x\in D(A),~t\geq 0$.
\end{enumerate}
\end{definition}

Necessary and sufficient conditions for existence of the
$\alpha$-times resolvent family have been studied in \cite{Ba01}.
Observe that the $\alpha$-times resolvent family corresponds to a
$C_0$-semigroup in case $\alpha =1$ and a cosine family in case
$\alpha=2.$ In consequence, when $1 < \alpha < 2$ such resolvent
families interpolate $C_0$-semigroups and cosine functions. In
particular, for $A = \Delta$, the integrodifferential equation
corresponding to such resolvent family interpolates the heat
equation and the wave equation, see \cite{Fu89} or \cite{SW89}.

\begin{definition}{\label{SVEHSd2}}
An $\alpha$-times resolvent family $(S_{\alpha}(t))_{t\geq 0}$ is
called {\tt exponen\-tially}  {\tt bounded}
\index{resolvent family!exponentially bounded} 
if there are constants $M\geq 1$ and $\omega \geq 0$ such that
\begin{equation}{\label{eq5f}}
\| S_{\alpha}(t) \| \leq Me^{\omega t}, \quad t\geq 0.
\end{equation}
\end{definition}

If there is the $\alpha$-times resolvent family
$(S_{\alpha}(t))_{t\geq 0}$ for $A$ and satisfying (\ref{eq5f}), we
write $A \in \mathcal{C}^{\alpha}(M,\omega). $ Also, set $
\mathcal{C}^{\alpha}(\omega) :=\cup_{M\geq 1}
\mathcal{C}^{\alpha}(M,\omega)$ and \linebreak
$ \mathcal{C}^{\alpha}
:=\cup_{\omega \geq 0} \mathcal{C}^{\alpha}(\omega).$
\vskip2mm

\noindent{\bf Remark} 
It was proved by Bazhlekova \cite[Theorem 2.6]{Ba01} that if $ A\in
\mathcal{C}^{\alpha}$ for some $\alpha > 2,$ then $A$ is bounded.\\

The following subordination principle is very important in the
theory of $\alpha$-times resolvent families (see \cite[Theorem
3.1]{Ba01}).

\begin{theorem}\label{th1f}
Let $0 < \alpha < \beta \leq 2, \gamma = \alpha /\beta, \omega\geq
0.$ If $A \in \mathcal{C}^{\beta}(\omega)$ then $A \in
\mathcal{C}^{\alpha}(\omega^{1/\gamma})$ and the following
representation holds
\index{$\varphi_{t,\gamma}(s)$}
\begin{equation}\label{eq6f}
S_{\alpha}(t)x = \int_0^{\infty} \varphi_{t,\gamma}(s)
S_{\beta}(s)xds, \quad t>0,
\end{equation}
where $\varphi_{t,\gamma}(s) := t^{-\gamma}
\Phi_{\gamma}(st^{-\gamma})$ and $\Phi_{\gamma}(z)$ is the Wright
function defined as
\index{function!Wright}
\begin{equation}\label{eq7f}
\Phi_{\gamma}(z):= \sum_{n=0}^{\infty} \frac{(-z)^n}
{n!\,\Gamma(-\gamma n + 1 - \gamma)}, \quad 0 < \gamma < 1.
\end{equation}
\index{$\Phi_{\gamma}(z)$}
\end{theorem}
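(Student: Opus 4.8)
The plan is to establish the representation \eqref{eq6f} by a Laplace transform argument, exploiting the fact that an $\alpha$-times resolvent family is uniquely determined by its Laplace transform, which equals $\lambda^{\alpha-1}(\lambda^\alpha - A)^{-1}$ whenever $A \in \mathcal{C}^\alpha(\omega)$ and $\operatorname{Re}\lambda > \omega$. First I would record the standard generation facts from \cite[Chapter 2]{Ba01}: $A \in \mathcal{C}^\alpha(\omega)$ is equivalent to $(\omega^{1/\alpha \cdot \alpha},\infty) \subset \varrho(A)$ together with the estimates on the derivatives of $\lambda \mapsto \lambda^{\alpha-1}(\lambda^\alpha-A)^{-1}$ that come from applying the real-variable Widder–Arendt theorem (generation theorem for resolvent families). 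So it suffices to show two things: (i) the integral on the right-hand side of \eqref{eq6f} defines a strongly continuous, exponentially bounded family $T(t)$ with $T(0)=I$; and (ii) its Laplace transform equals $\lambda^{\alpha-1}(\lambda^\alpha-A)^{-1}$ for $\operatorname{Re}\lambda$ large, whence by uniqueness $T(t) = S_\alpha(t)$ and simultaneously $A \in \mathcal{C}^\alpha(\omega^{1/\gamma})$.

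For step (i) I would use the known properties of the Wright function $\Phi_\gamma$ with $0<\gamma<1$: it is nonnegative on $\mathbb{R}_+$, entire, and satisfies the normalization and moment identities
\begin{equation*}
\int_0^\infty \Phi_\gamma(s)\,ds = 1, \qquad \int_0^\infty e^{-\lambda s}\Phi_\gamma(s)\,ds = E_\gamma(-\lambda) \text{-type relations},
\end{equation*}
but the key analytic input is the Laplace transform identity
\begin{equation*}
\int_0^\infty e^{-\lambda s}\,\varphi_{t,\gamma}(s)\,ds = e^{-t\lambda^\gamma}, \qquad \operatorname{Re}\lambda \ge 0,
\end{equation*}
where $\varphi_{t,\gamma}(s) = t^{-\gamma}\Phi_\gamma(st^{-\gamma})$ is, for each $t>0$, a probability density on $\mathbb{R}_+$ (this is the Mittag-Leffler / stable subordinator density). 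From $\|S_\beta(s)\| \le M e^{\omega s}$ and $\varphi_{t,\gamma} \ge 0$ one gets $\|T(t)x\| \le M\int_0^\infty \varphi_{t,\gamma}(s) e^{\omega s}\,ds\,\|x\| = M e^{t\omega^{1/\gamma}}\|x\|$ after the change of variables $s = t^\gamma r$, giving the claimed bound with the same $M$ and new exponent $\omega^{1/\gamma}$. Strong continuity, including $T(0+) = I$, follows from the scaling $\varphi_{t,\gamma}(s)\,ds \to \delta_0$ as $t \to 0+$ together with strong continuity of $S_\beta$ and dominated convergence; commutation of $T(t)$ with $A$ is inherited from that of $S_\beta(s)$ by closedness of $A$ and the integral representation.

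For step (ii), using the Fubini theorem for Bochner integrals (justified by the exponential bounds), for $\operatorname{Re}\lambda > \omega^{1/\gamma}$,
\begin{equation*}
\int_0^\infty e^{-\lambda t} T(t)x\,dt = \int_0^\infty\!\!\left(\int_0^\infty e^{-\lambda t}\varphi_{t,\gamma}(s)\,dt\right) S_\beta(s)x\,ds = \lambda^{\gamma-1}\!\int_0^\infty e^{-\lambda^\gamma s} S_\beta(s)x\,ds,
\end{equation*}
where the inner transform $\int_0^\infty e^{-\lambda t}\varphi_{t,\gamma}(s)\,dt = \lambda^{\gamma-1}e^{-\lambda^\gamma s}$ is the companion identity to the one above (obtained from it by the scaling relation between $\varphi_{t,\gamma}$ and $\Phi_\gamma$). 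Since $A \in \mathcal{C}^\beta(\omega)$, the last integral is $\widehat{S_\beta}(\lambda^\gamma) = (\lambda^\gamma)^{\beta-1}((\lambda^\gamma)^\beta - A)^{-1}$, so the whole expression becomes $\lambda^{\gamma-1}\lambda^{\gamma(\beta-1)}(\lambda^{\gamma\beta}-A)^{-1} = \lambda^{\alpha-1}(\lambda^\alpha-A)^{-1}$ because $\gamma\beta = \alpha$. This is exactly the Laplace transform characterizing the $\alpha$-times resolvent family generated by $A$; by uniqueness of Laplace transforms of strongly continuous exponentially bounded families, $T(t) = S_\alpha(t)$ and $A \in \mathcal{C}^\alpha(\omega^{1/\gamma})$, completing the proof.

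I expect the main obstacle to be the careful justification of the two Laplace transform identities for the Wright-type kernel $\varphi_{t,\gamma}$ and the interchange of integrals — that is, the analytic bookkeeping around the stable density $\Phi_\gamma$ (its nonnegativity, its Laplace transform $e^{-t\lambda^\gamma}$, and the domain of validity in $\lambda$). Everything else is soft functional analysis: uniqueness of resolvent families via Laplace transforms, closedness of $A$, and dominated convergence. Since this theorem is quoted from \cite[Theorem 3.1]{Ba01}, one could alternatively simply cite it, but the argument above is the natural self-contained route.
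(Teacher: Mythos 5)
Your overall strategy is the right one, and it is essentially the argument by which this result is established in \cite[Theorem 3.1]{Ba01} (the monograph itself offers no proof -- it simply cites Bazhlekova): characterize $S_\alpha$ through its Laplace transform $\lambda^{\alpha-1}(\lambda^\alpha-A)^{-1}$, verify that the candidate $T(t)x=\int_0^\infty\varphi_{t,\gamma}(s)S_\beta(s)x\,ds$ is strongly continuous, exponentially bounded and commutes with $A$, compute its Laplace transform via Fubini and the identity $\int_0^\infty e^{-\lambda t}\varphi_{t,\gamma}(s)\,dt=\lambda^{\gamma-1}e^{-s\lambda^\gamma}$, and conclude by uniqueness. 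Step (ii) as you wrote it is correct, including the exponent bookkeeping $\lambda^{\gamma-1}\lambda^{\gamma(\beta-1)}(\lambda^{\gamma\beta}-A)^{-1}=\lambda^{\alpha-1}(\lambda^\alpha-A)^{-1}$.

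There is, however, one concretely false identity in step (i). You assert
$\int_0^\infty e^{-\lambda s}\varphi_{t,\gamma}(s)\,ds=e^{-t\lambda^\gamma}$; in fact the substitution $z=st^{-\gamma}$ gives $\int_0^\infty e^{-\lambda s}\varphi_{t,\gamma}(s)\,ds=\int_0^\infty e^{-\lambda t^\gamma z}\Phi_\gamma(z)\,dz=E_\gamma(-\lambda t^\gamma)$, the Mittag--Leffler function, not the stretched exponential (the latter is the Laplace transform of the one-sided stable subordinator density, a different kernel). Consequently your estimate $\int_0^\infty\varphi_{t,\gamma}(s)e^{\omega s}\,ds=e^{t\omega^{1/\gamma}}$ is not an equality: the integral equals $E_\gamma(\omega t^\gamma)$, which is only \emph{bounded} by $C e^{\omega^{1/\gamma}t}$ via the asymptotics of $E_\gamma$ -- this is precisely estimate (\ref{eq8f}), and it is exactly how the bound $\|S_{\alpha,n}(t)\|\le MCe^{(2\omega)^{1/\alpha}t}$ is derived in the proof of Theorem \ref{th2f} in this chapter. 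The error is harmless for the conclusion, since $\mathcal{C}^\alpha(\omega^{1/\gamma})$ is the union over all constants $M\ge 1$, so the extra factor $C$ is absorbed; but the identity should be corrected, and the bound stated as an inequality. A related small point: the $t$-transform identity $\int_0^\infty e^{-\lambda t}\varphi_{t,\gamma}(s)\,dt=\lambda^{\gamma-1}e^{-s\lambda^\gamma}$ is not "obtained from" the $s$-transform identity by scaling -- it is an independent property of the Wright function and needs its own justification (term-by-term Laplace transform of the series (\ref{eq7f}), or the Hankel contour representation of $\Phi_\gamma$). With these repairs the proof is complete.
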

\vskip1mm

\noindent{\bf Remarks}  1.
\label{rem2f1}
 We recall that the Laplace transform of the Wright function
corresponds to $E_{\gamma}(-z)$ where $E_{\gamma}$ denotes the
Mittag-Leffler\index{function!Mittag-Leffler} function. 
In particular, $\Phi_{\gamma}(z)$ is a
probability density function.

\label{rem2f2}
 2. Also we recall from \cite[(2.9)]{Ba01} that the continuity in
$t\geq 0$ of the Mittag-Leffler function together with the
asymptotic behavior of it, imply that for $\omega \geq 0$ there
exists a constant $C>0$ such that
\begin{equation}\label{eq8f}
E_{\alpha}(\omega t^{\alpha}) \leq C e^{{\omega^{1/\alpha}} t},
\quad t \geq 0,\,\, \alpha \in (0,2).
\end{equation}

As we have already written, in this paper the results concerning
convergence of $\alpha$-times resolvent families in a Banach space
$B$ will play the key role. Using a very recent result due to Li
and Zheng \cite{LZ04} we are able to prove the following theorem.

\begin{theorem} \label{th2f}
Let $A$ be the generator of a $C_0$-semigroup $(T(t))_{t\geq 0}$
in a Banach space $B$ such that
\begin{equation}\label{eq9f}
\|T(t) \| \leq Me^{\omega t}, \quad t \geq 0.
\end{equation}
Then, for each $ 0 < \alpha < 1$ we have $A \in
\mathcal{C}^{\alpha}(M,\omega^{1/\alpha}). $ Moreover, there exist
bounded operators $A_n$ and $\alpha$-times resolvent families
$S_{\alpha,n}(t)$ for $A_n$ satisfying $ ||S_{\alpha,n}(t) || \leq
MCe^{(2\omega)^{1/\alpha} t},$ for all $t\geq 0,~n\in \mathbb{N},$
and
\begin{equation} \label{eq10f}
S_{\alpha,n}(t)x \to S_{\alpha}(t)x \quad \mbox{as} \quad n\to
+\infty
\end{equation}
for all $x \in B,\; t\geq 0.$ Moreover, the convergence is uniform
in $t$ on every compact subset of $ \mathbb{R}_+$.
\end{theorem}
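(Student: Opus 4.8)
The plan is to mirror the proof of Theorem \ref{DVEt1}, replacing the generation theorem and the Trotter--Kato theorem for ordinary resolvent families by the subordination principle (Theorem \ref{th1f}) and the Trotter--Kato type theorem for $\alpha$-times resolvent families of Li and Zheng \cite{LZ04}, which is the fractional analogue of \cite[Theorem 2.1]{Li90} used in the proof of Theorem \ref{DVEt1}.

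For the first assertion, since $A$ generates the $C_0$-semigroup $T(t)$ with $\|T(t)\|\le Me^{\omega t}$ we have $A\in\mathcal{C}^1(M,\omega)$, so Theorem \ref{th1f} with $\beta=1$, $\gamma=\alpha$ yields $A\in\mathcal{C}^\alpha(\omega^{1/\alpha})$ together with the representation $S_\alpha(t)x=\int_0^\infty\varphi_{t,\alpha}(s)T(s)x\,ds$. Because $\Phi_\alpha$ is a probability density whose Laplace transform is the Mittag--Leffler function, the change of variable $u=st^{-\alpha}$ gives $\int_0^\infty\varphi_{t,\alpha}(s)e^{\omega s}\,ds=E_\alpha(\omega t^\alpha)$, hence by (\ref{eq8f}) one obtains $\|S_\alpha(t)\|\le MC\,e^{\omega^{1/\alpha}t}$.

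Next I would introduce the Yosida approximation $A_n:=nAR(n,A)=n^2R(n,A)-nI$, which is bounded for $n>\omega$, and estimate $\|e^{tA_n}\|$ exactly as in the proof of Theorem \ref{DVEt1}, getting $\|e^{tA_n}\|\le Me^{n\omega t/(n-\omega)}$ and therefore $\|e^{tA_n}\|\le Me^{2\omega t}$ for all $n>2\omega$. Since each $A_n$ is bounded it generates a uniformly continuous semigroup, so the first step applied to $A_n$ produces an $\alpha$-times resolvent family $S_{\alpha,n}(t)=\int_0^\infty\varphi_{t,\alpha}(s)e^{sA_n}\,ds$ with $\|S_{\alpha,n}(t)\|\le M\int_0^\infty\varphi_{t,\alpha}(s)e^{2\omega s}\,ds=ME_\alpha(2\omega t^\alpha)\le MC\,e^{(2\omega)^{1/\alpha}t}$, uniformly in $t\ge0$ and $n>2\omega$; this is the asserted bound. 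Taking Laplace transforms in the resolvent equation (\ref{eq4af}) and using $\widehat{a_\alpha}(\lambda)=\lambda^{-\alpha}$, one has $\widehat{S_\alpha}(\lambda)=\lambda^{\alpha-1}(\lambda^\alpha-A)^{-1}$ and $\widehat{S_{\alpha,n}}(\lambda)=\lambda^{\alpha-1}(\lambda^\alpha-A_n)^{-1}$ for $\lambda$ real and large, so that $\lambda^\alpha\in\varrho(A)\cap\varrho(A_n)$; then $R(\mu,A_n)\to R(\mu,A)$ for $\mu$ large (\cite[Lemma~7.3]{Pa83}) forces $\widehat{S_{\alpha,n}}(\lambda)\to\widehat{S_\alpha}(\lambda)$. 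Having the uniform exponential bound and this convergence of Laplace transforms, the Trotter--Kato theorem for $\alpha$-times resolvent families \cite{LZ04} delivers $S_{\alpha,n}(t)x\to S_\alpha(t)x$ for all $x\in B$, $t\ge0$, uniformly on compact subsets of $\mathbb{R}_+$.

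The main obstacle is the convergence step: one has to verify that the hypotheses of the Li--Zheng theorem hold with constants independent of $n$ — which is precisely why the $n$-dependent estimate $Me^{n\omega t/(n-\omega)}$ must be sharpened to $Me^{2\omega t}$ for $n>2\omega$ — and that the Laplace transforms converge on a region large enough to apply that theorem. If one prefers to bypass \cite{LZ04}, the alternative is to pass to the limit directly in $S_{\alpha,n}(t)x-S_\alpha(t)x=\int_0^\infty\varphi_{t,\alpha}(s)[e^{sA_n}x-T(s)x]\,ds$ by dominated convergence, using that $e^{sA_n}x\to T(s)x$ uniformly for $s$ in compact sets and that the integrand is dominated by $2M\varphi_{t,\alpha}(s)e^{2\omega s}|x|$, integrable with integral $2M E_\alpha(2\omega t^\alpha)$; there the delicate point is upgrading the resulting pointwise-in-$t$ convergence to convergence uniform on compact $t$-intervals, which requires a tightness estimate for the family $\{\varphi_{t,\alpha}\}$ as $t$ ranges over a compact set.
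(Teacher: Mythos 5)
Your proposal is correct and follows essentially the same route as the paper's proof: subordination via Theorem \ref{th1f} for both $A$ and its Yosida approximations $A_n$, the uniform bound $\|e^{tA_n}\|\le Me^{2\omega t}$ for $n>2\omega$ leading to $\|S_{\alpha,n}(t)\|\le ME_\alpha(2\omega t^\alpha)\le MCe^{(2\omega)^{1/\alpha}t}$, and then the convergence $R(\lambda,A_n)\to R(\lambda,A)$ combined with the Li--Zheng Trotter--Kato theorem for $\alpha$-times resolvent families. The extra detail you supply on the Laplace transforms and the sketched dominated-convergence alternative are consistent with, but not needed beyond, the paper's argument.
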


\begin{proof} Since $A$ is the generator of a $C_0$ semigroup
satisfying (\ref{eq9f}), we have $A\in C^1(\omega)$. Hence, the
first assertion follows directly from Theorem \ref{th1f}, that is,
for each $ 0<\alpha < 1$ there is an $\alpha$-times resolvent
family $(S_{\alpha}(t))_{t\geq 0}$ for $A$ given by
\begin{equation}\label{eq11f}
S_{\alpha}(t)x = \int_0^{\infty} \varphi_{t,\alpha}(s) T(s)xds,
\quad t>0.
\end{equation}

Since $A$ generates a $C_0$-semigroup, the resolvent set $\rho(A)
$ of $A$ contains the ray $ [w,\infty)$ and
$$
||R(\lambda,A)^k || \leq \frac{M}{(\lambda -w)^k } \qquad
\mbox{for } \lambda > w, \qquad k\in \mathbb{N}.
$$

Define
\begin{equation} \label{eq12f}
A_n := n AR(n,A) = n^2 R(n,A) - nI, \qquad n> w,
\end{equation}
the {\it Yosida approximation} of $A$.

Then
\begin{eqnarray*}
||e^{t A_n} || &=& e^{-nt} || e^{n^2 R(n,A)t} || \leq
e^{-nt} \sum_{k=0}^{\infty} \frac{n^{2k} t^k}{k!} ||R(n,A)^k|| \\
&\leq& M e^{(-n + \frac{n^2}{n-w})t} = M e^{ \frac{nwt}{ n-w}}.
\end{eqnarray*}
Hence, for $n > 2w$ we obtain
\begin{equation}{\label{eq13f}}
|| e^{A_n t} || \leq M e^{2wt}.
\end{equation}
Next, since each $A_n$ is bounded, it follows also from Theorem
\ref{th1f} that for each $0< \alpha < 1$ there exists an
$\alpha$-times resolvent family $ (S_{\alpha,n}(t))_{t\geq 0}$ for
$A_n$ given as

\begin{equation}{\label{eq14f}}
S_{\alpha,n}(t) = \int_0^{\infty} \varphi_{t,\alpha}(s)
e^{sA_n}ds, \quad t>0.
\end{equation}

By  (\ref{eq13f}) and remark 1, page~\pageref{rem2f1}, it follows that
\begin{eqnarray*}
\| S_{\alpha,n}(t) \| &\leq & \int_0^{\infty}
\varphi_{t,\alpha}(s) \| e^{s A_n} \| ds \leq M
\int_0^{\infty} \varphi_{t,\alpha}(s) e^{2\omega s} ds \\
 & = & M
\int_0^{\infty} \Phi_{\alpha}(\tau) e^{2\omega t^{\alpha} \tau }
d\tau= M E_{\alpha}(2 \omega t^{\alpha}), \quad t \geq 0.
\end{eqnarray*}

This together with remark 2, page~\pageref{rem2f2}, gives
\begin{equation}
\| S_{\alpha,n}(t) \| \leq MCe^{(2\omega)^{1/\alpha}t}, \quad t
\geq 0.
\end{equation}

Now, we recall the fact that $ R(\lambda,A_n)x \to R(\lambda,A)x $
as $ n\to \infty$ for all $\lambda $ sufficiently large (see e.g.
\cite[Lemma~7.3]{Pa83}), so we can conclude from \cite[Theorem
4.2]{LZ04} that
\begin{equation}
S_{\alpha,n}(t)x \to S_{\alpha}(t)x \quad \mbox{as} \quad n\to
+\infty
\end{equation}
for all $x \in B,$ uniformly for $t$ on every compact subset of
$\mathbb{R}_+$. \qed \end{proof}

An analogous result can be proved in the case when $A$ is the
generator of a strongly continuous cosine family.

\begin{theorem} \label{th3af}
Let $A$ be the generator of a $C_0$-cosine family $(T(t))_{t\geq
0}$ in a Banach space $B$. Then, for each $0<\alpha<2$ we have $A
\in \mathcal{C}^{\alpha}(M,\omega^{2/\alpha}). $ Moreover, there
exist bounded operators $A_n$ and $\alpha$-times resolvent
families $S_{\alpha,n}(t)$ for $A_n$ satisfying $
||S_{\alpha,n}(t) || \leq MCe^{(2\omega)^{1/\alpha} t},$ for all
$t\geq 0,~n\in \mathbb{N},$ and
$$ 
S_{\alpha,n}(t)x \to S_{\alpha}(t)x \quad \mbox{as} \quad n\to
+\infty
$$ 
for all $x \in B,\; t\geq 0.$ Moreover, the convergence is uniform
in $t$ on every compact subset of $ \mathbb{R}_+$.
\end{theorem}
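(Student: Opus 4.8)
The plan is to run the argument of the proof of Theorem~\ref{th2f} with the $C_0$-semigroup there replaced throughout by the cosine family, now subordinating from $\beta=2$ with exponent $\gamma=\alpha/2\in(0,1)$. First I would record that a strongly continuous cosine family is automatically exponentially bounded: the d'Alembert relation together with uniform boundedness of $T(\cdot)$ on $[0,1]$ forces $||T(t)||\le Me^{\omega t}$ for some $M\ge1$, $\omega\ge0$ (cf.\ \cite{Fa83}). Moreover $(T(t))_{t\ge0}$ is exactly the $2$-times resolvent family generated by $A$, since for $\alpha=2$ one has $a_2(t)=t$ and (\ref{eq4af}) becomes the twice-integrated cosine relation $T(t)x=x+\int_0^t(t-\tau)AT(\tau)x\,d\tau$, $x\in D(A)$. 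Hence $A\in\mathcal{C}^2(M,\omega)$, and Theorem~\ref{th1f} with $\beta=2$, $\gamma=\alpha/2$ yields at once $A\in\mathcal{C}^\alpha(M,\omega^{2/\alpha})$ for every $0<\alpha<2$, together with the representation
$$ S_\alpha(t)x=\int_0^\infty\varphi_{t,\alpha/2}(s)\,T(s)x\,ds,\qquad t>0,\ x\in B, $$
which is the first assertion.

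Next I would introduce the Yosida approximation. Since $A$ generates a cosine family bounded by $Me^{\omega t}$, one has $(\omega^2,\infty)\subset\rho(A)$, and I set $A_n:=nAR(n,A)=n^2R(n,A)-nI$ for $n>\omega^2$. Each $A_n$ is bounded, hence generates a cosine family $C_n(t)$ and, by Theorem~\ref{th1f} applied to $A_n$, an $\alpha$-times resolvent family $S_{\alpha,n}(t)=\int_0^\infty\varphi_{t,\alpha/2}(s)\,C_n(s)\,ds$. The decisive step is to show that the $C_n$ are \emph{uniformly} exponentially bounded, $||C_n(t)||\le\widetilde M e^{\widetilde\omega t}$ with $\widetilde M,\widetilde\omega$ independent of $n$ (for $n$ large); this is the cosine analogue of the estimate $||e^{tA_n}||\le Me^{2\omega t}$ used in the proof of Theorem~\ref{th2f}, and I would get it by feeding the Yosida resolvent identity for $R(\mu,A_n)$ into the generation theorem for cosine families (the Sova--Fattorini characterization through the derivatives of $\lambda\mapsto\lambda R(\lambda^2,A)$). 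Granting the uniform bound, the subordination representation and the substitution $\tau=st^{-\alpha/2}$ give
$$ ||S_{\alpha,n}(t)||\le\widetilde M\int_0^\infty\varphi_{t,\alpha/2}(s)e^{\widetilde\omega s}\,ds=\widetilde M\int_0^\infty\Phi_{\alpha/2}(\tau)e^{\widetilde\omega t^{\alpha/2}\tau}\,d\tau=\widetilde M\,E_{\alpha/2}(\widetilde\omega\,t^{\alpha/2}), $$
and then (\ref{eq8f}), applied with exponent $\alpha/2\in(0,1)$, yields $||S_{\alpha,n}(t)||\le\widetilde MC\,e^{{\widetilde\omega}^{2/\alpha}t}$, which is the uniform bound claimed in the theorem with the appropriate constants.

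Finally, since $R(\lambda,A_n)x\to R(\lambda,A)x$ as $n\to\infty$ for all sufficiently large $\lambda$ and all $x\in B$ (cf.\ \cite[Lemma~7.3]{Pa83}), the uniform bounds just established allow one to invoke the Trotter--Kato theorem for $\alpha$-times resolvent families \cite[Theorem~4.2]{LZ04} and conclude that $S_{\alpha,n}(t)x\to S_\alpha(t)x$ as $n\to\infty$, uniformly in $t$ on compact subsets of $\mathbb{R}_+$. I expect the only genuine obstacle to be the uniform exponential bound for the approximating cosine families $C_n$: unlike in the semigroup case, the crude estimate $||C_n(t)||\le\cosh(t\sqrt{||A_n||})$ is useless because $||A_n||\to\infty$, so one really must route through the resolvent-power estimates of the cosine generation theorem rather than through a bare power-series computation, just as the semigroup argument rested on the Hille--Yosida resolvent estimates and not on $||A_n||$.
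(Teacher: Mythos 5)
Your overall route is exactly the one the paper intends: the paper offers no written proof of Theorem \ref{th3af} beyond the remark that formulae (\ref{eq11f}) and (\ref{eq14f}) persist for cosine families, so the content has to be supplied by rerunning the proof of Theorem \ref{th2f} with $\beta=2$, $\gamma=\alpha/2$, which is what you do. The subordination step, the identification of $(T(t))$ with the $2$-times resolvent family, the Mittag--Leffler estimate via (\ref{eq8f}) applied with exponent $\alpha/2$, and the appeal to the Trotter--Kato theorem of \cite{LZ04} are all correct; indeed your computation yields the bound $\widetilde M C e^{\widetilde\omega^{\,2/\alpha}t}$, which is consistent with the first assertion $A\in\mathcal{C}^{\alpha}(M,\omega^{2/\alpha})$, whereas the exponent $(2\omega)^{1/\alpha}$ printed in the theorem appears to have been carried over from Theorem \ref{th2f}; your constants are the right ones.

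The genuine gap is the step you flag and then assume: the uniform exponential bound $\|C_n(t)\|\le \widetilde M e^{\widetilde\omega t}$ for the cosine families generated by the approximants. In the semigroup case this is a two-line power-series computation from the Hille--Yosida estimates; in the cosine case there is no factorization analogous to $e^{tA_n}=e^{-nt}e^{n^2R(n,A)t}$, and you rightly discard $\cosh\bigl(t\sqrt{\|A_n\|}\bigr)$. But your proposed substitute is a program, not a proof: the Sova--Fattorini characterization requires bounding all derivatives of $\lambda\mapsto\lambda R(\lambda^2,A_n)$ uniformly in $n$, and with $A_n=nAR(n,A)$ one has
$$
R(\lambda^2,A_n)=\frac{n^2}{(n+\lambda^2)^2}\,R\Bigl(\frac{n\lambda^2}{n+\lambda^2},A\Bigr)+\frac{1}{n+\lambda^2}\,I ,
$$
so transporting the known estimates for $A$ (which are phrased at points $\kappa^2$ with $\kappa>\omega$) to the point $n\lambda^2/(n+\lambda^2)$ involves a Fa\`a di Bruno computation whose uniformity in $n$ is precisely what has to be checked; it is not automatic. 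This is why the classical cosine-family literature (see \cite{Fa83}) works with the adapted approximants $A_n=n^2AR(n^2,A)$, for which the convergence $C_n(t)x\to C(t)x$ together with a uniform exponential bound is a known theorem. Since Theorem \ref{th3af} only asserts the existence of \emph{some} bounded operators $A_n$, the cleanest repair is to adopt those approximants, quote the uniform bound and strong convergence for $C_n$, and then run your subordination and Trotter--Kato argument verbatim. To be fair, the paper itself never addresses this point either; but as written your proof delegates its only nontrivial step to a claim that still needs an argument.
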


Let us note that formulae (\ref{eq11f}) and (\ref{eq14f}) still hold
when $A$ is the $C_0$-cosine family and $0<\alpha <2$. \vskip1.5mm

In the following, we denote by $\Sigma_{\theta}(\omega)$ the open
sector with vertex $\omega \in \mathbb{R}$ and opening angle
$2\theta$ in the complex plane which is symmetric with respect to
the real positive axis, i.e.
$$ \Sigma_{\theta}(\omega) := \{ \lambda \in \mathbb{C}:
|arg(\lambda -\omega)| < \theta \}.$$

We recall from \cite[Definition 2.13]{Ba01} that an $\alpha$-times
resolvent family $S_{\alpha}(t)$ is called {\tt analytic} if
\index{resolvent family!analytic}
$S_{\alpha}(t)$ admits an analytic extension to a sector
$\Sigma_{\theta_0}$ for some $\theta_0 \in (0, \pi/2].$ An
$\alpha$-times analytic resolvent family is said to be of {\tt
analyticity type} \index{analyticity type}
$(\theta_0, \omega_0)$ if for each $\theta <
\theta_0$ and $\omega > \omega_0$ there is $M= M(\theta, \omega)$
such that
$$ \| S_{\alpha}(t)\| \leq Me^{\omega Re t}, \quad t \in
\Sigma_{\theta}.$$ The set of all operators $A \in
\mathcal{C}^{\alpha}$ generating $\alpha$-times analytic resolvent
families $S_{\alpha}(t)$ of type $(\theta_0, \omega_0)$ is denoted
by $ \mathcal{A}^{\alpha}(\theta_0, \omega_0).$ In addition,
denote \linebreak
$ \mathcal{A}^{\alpha}(\theta_0):= \bigcup \{
\mathcal{A}^{\alpha}(\theta_0, \omega_0); \omega_0 \in
\mathbb{R}_+ \}, \quad \mathcal{A}^{\alpha} := \bigcup \{
\mathcal{A}^{\alpha}(\theta_0); \theta_0 \in (0, \pi/2] \}.$ For
$\alpha =1$ we obtain the set of all generators of analytic
semigroups.

\vskip2mm
\label{rem3f}
\noindent{\bf Remark} 
We note that the spatial regularity condition
$\mathcal{R}(S_\alpha(t)) \subset D(A)$ for all $t>0$
is satisfied by $\alpha$-times resolvent families whose generator
$A$ belongs to the set $\mathcal{A}^{\alpha}(\theta_0, \omega_0)$
where $ 0<\alpha <2$ (see \cite[ Proposition 2.15]{Ba01}). In
particular, setting $\omega_0=0$ we have that $ A\in
\mathcal{A}^{\alpha}(\theta_0, 0)$ if and only if $-A$ is a
positive operator with spectral angle less or equal to $ \pi -
\alpha(\pi/2+\theta).$ Note that such condition is also equivalent
to the following
\begin{equation}{\label{eq29f}}
 \Sigma_{\alpha(\pi/2 +\theta)} \subset \rho(A) \mbox{ and } \|
 \lambda (\lambda I - A)^{-1} \| \leq M, \quad \lambda \in
 \Sigma_{\alpha(\pi/2 +\theta)}.
\end{equation}

The above considerations give us the following remarkable
corollary.

\begin{corollary} \label{cor5f}
Suppose $A$ generates an analytic semigroup of angle $\pi/2$ and
$\alpha \in (0,1)$. Then $A$ generates an $\alpha$-times analytic
resolvent family.
\end{corollary}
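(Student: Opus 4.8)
The plan is to deduce Corollary \ref{cor5f} directly from the subordination principle (Theorem \ref{th1f}) together with the characterization of analytic resolvent families recalled in the Remark on page~\pageref{rem3f}, in particular the equivalence with the spectral condition \eqref{eq29f}. First I would translate the hypothesis ``$A$ generates an analytic semigroup of angle $\pi/2$'' into a resolvent-set statement: by the standard characterization of analytic semigroups, this means $-A$ is a positive operator whose spectral angle is $0$, equivalently $\Sigma_{\pi/2+\theta}\subset\rho(A)$ with $\|\lambda(\lambda I-A)^{-1}\|\le M$ on that sector, for every $\theta<\pi/2$. In the language of the excerpt this says precisely that $A\in\mathcal{A}^{1}(\theta_0,0)$ for every $\theta_0<\pi/2$.

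Next I would apply the subordination principle. Since $A\in\mathcal{C}^{1}(0)\subset\mathcal{C}^{1}(\omega)$ for $\omega=0$, Theorem \ref{th1f} with $\beta=1$, any $\alpha\in(0,1)$, and $\gamma=\alpha$ gives $A\in\mathcal{C}^{\alpha}(0)$, and the $\alpha$-times resolvent family is given by the subordination formula $S_\alpha(t)x=\int_0^\infty\varphi_{t,\alpha}(s)T(s)x\,ds$. It remains to check that this $S_\alpha$ is in fact \emph{analytic} of some angle $\theta_0\in(0,\pi/2]$, i.e.\ that $A\in\mathcal{A}^{\alpha}(\theta_0,0)$. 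For this I would invoke the equivalence stated in the Remark: $A\in\mathcal{A}^{\alpha}(\theta_0,0)$ iff $-A$ is positive with spectral angle $\le\pi-\alpha(\pi/2+\theta)$, equivalently \eqref{eq29f} holds with the sector $\Sigma_{\alpha(\pi/2+\theta)}$. Because $-A$ has spectral angle $0$, its resolvent is bounded (after scaling) on \emph{every} proper subsector of the right half-plane, in particular on $\Sigma_{\alpha(\pi/2+\theta)}$ as long as $\alpha(\pi/2+\theta)<\pi$. Since $\alpha<1$, we may choose $\theta<\pi/2$ close enough to $\pi/2$ so that $\alpha(\pi/2+\theta)<\pi$; indeed $\alpha(\pi/2+\theta)\to\alpha\pi<\pi$ as $\theta\to\pi/2$. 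This produces a nontrivial opening angle $\theta_0=\min\{\pi/2,\ \pi/\alpha-\pi/2\}>0$ for which $A\in\mathcal{A}^{\alpha}(\theta_0,0)$, i.e.\ $A$ generates an $\alpha$-times analytic resolvent family.

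I would then simply conclude: for any $\alpha\in(0,1)$ the angle $\theta_0$ computed above is strictly positive (and can be taken equal to $\pi/2$ when $\alpha\le 1/2$), so $S_\alpha$ admits an analytic extension to the sector $\Sigma_{\theta_0}$ with the exponential bound $\|S_\alpha(z)\|\le M$ on proper subsectors, which is exactly the assertion of Corollary \ref{cor5f}.

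The step I expect to be the main obstacle is verifying the spectral/resolvent bound on the enlarged sector $\Sigma_{\alpha(\pi/2+\theta)}$ with a \emph{uniform} constant, i.e.\ making precise the claim that ``angle $\pi/2$ analytic semigroup'' forces $-A$ to have spectral angle exactly $0$ and resolvent bound on every proper subsector of the open right half-plane. This is essentially a careful bookkeeping with the standard analytic-semigroup resolvent estimates (and a possible harmless translation $\omega_0=0$ issue, since ``angle $\pi/2$'' is the borderline case), but once that bound is in hand, everything else is a direct citation of Theorem \ref{th1f} and the Remark preceding the corollary.
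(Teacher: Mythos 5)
Your argument is correct and follows essentially the same route as the paper: translate ``analytic semigroup of angle $\pi/2$'' into the resolvent bound $\|\lambda(\lambda I-A)^{-1}\|\le M$ on the sectors $\Sigma_{\pi-\epsilon}$ and then invoke the equivalence (\ref{eq29f}) to conclude $A\in\mathcal{A}^{\alpha}(\theta_0,0)$, which is exactly what the paper does (obtaining $\theta_0=\min\{\frac{2-\alpha}{2\alpha}\pi,\frac{\pi}{2}\}$). The only cosmetic slip is that $\pi/\alpha-\pi/2\ge\pi/2$ holds for all $\alpha\le 1$, so the angle can be taken equal to $\pi/2$ for every $\alpha\in(0,1)$, not only for $\alpha\le 1/2$; this does not affect the conclusion.
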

\begin{proof} Since $A$ generates an analytic semigroup of angle
$\pi/2$ we have
$$ \|
 \lambda (\lambda I - A)^{-1} \| \leq M, \quad \lambda \in
 \Sigma_{\pi -\epsilon}.$$

Then the condition (\ref{eq29f}) (see also \cite[Corollary
2.16]{Ba01}) implies $A \in \mathcal{A}^{\alpha}(\min
\{\frac{2-\alpha}{2\alpha}\pi, \frac{1}{2}\pi \},0)$, $\alpha \in
(0,2), $ that is $A$ generates an $\alpha$-times analytic
resolvent family.
\qed \end{proof}

In the sequel we will use the following assumptions concerning
Volterra equations:
\begin{description}
\item[{\bf (A1)}] $A$ is the generator of $C_0$-semigroup in $H$
 and $\alpha\in (0,1)$; ~~or
\item[{\bf (A2)}] $A$ is the generator of a strongly continuous
 cosine family in $H$ and $\alpha\in (0,2)$.
\end{description}

Observe that (A2) implies (A1) but not vice versa.

\subsection{Strong solution}\label{StrSol}

As previously $H$ and $U$ are separable Hilbert spaces and $W$ 
is a cylindrical Wiener process defined on a
stochastic basis $(\Omega,\mathcal{F},(\mathcal{F})_{t\geq 0},P)$,
with the positive symmetric covariance operator $Q\in L(U)$, 
$\mathrm{Tr}\,Q=+\infty$.
The spaces $U_0,~L_2^0=L_2(U_0,H)$ and $\mathcal{N}(0,T;L_2^0)$ 
are the same like in previous sections.

For the reader's convenience we formulate definitions of solutions to the
equation (\ref{eq4f}). 
We define solutions to the equation (\ref{eq4f}) analogously like in
section \ref{SVEHSsec:1}.

\begin{definition} \label{dSW4f}
Assume that (PA) hold. An $H$-valued predictable process
$X(t),~t\in [0,T]$, is said to be a 
~{\tt strong solution}\index{solution!strong}~ to
(\ref{eq4f}), if $X$ has a version such that $P(X(t)\in D(A))=1$,
for almost all $t\in [0,T]$; for any $t\in [0,T]$
\begin{equation} \label{eSW3.1f}
\int_0^t |a_\alpha(t-\tau)AX(\tau)|_H \,d\tau<+\infty,\quad
P-a.s., \quad \alpha >0,
\end{equation}
and for any $t\in [0,T]$ the equation (\ref{eq4f}) holds $P-a.s$.
\end{definition}

\begin{definition} \label{dSW5f}
Let (PA) hold. An $H$-valued predictable process $X(t),~t\in
[0,T]$, is said to be a {\tt weak solution}\index{solution!weak} 
to (\ref{eq4f}), if
$P(\int_0^t|a_\alpha(t-\tau)X(\tau)|_H d\tau<+\infty)=1,~ \alpha>0$, 
and if for all $\xi\in D(A^*)$ and all $t\in [0,T]$ the
following equation holds
\begin{eqnarray*}
\langle X(t),\xi\rangle_H  = \langle X(0),\xi\rangle_H &+& \langle 
\int_0^t  a_\alpha(t-\tau)X(\tau)\,d\tau, A^*\xi\rangle_H  \\
 &+&  \langle \
\int_0^t  \Psi(\tau)dW(\tau),\xi\rangle_H, \quad P-a.s.
\end{eqnarray*}
\end{definition}

\begin{definition} \label{dSW6f}
Assume that $X(0)$ is $\mathcal{F}_0$-measurable random variable.
An $H$-valued predictable process
$X(t),~t\in [0,T]$, is said to be a 
{\tt mild solution}\index{solution!mild} to the
stochastic Volterra equation (\ref{eq4f}), if~ $ \mathbb{E}(
\int_0^t ||S_\alpha(t-\tau) \Psi(\tau)||_{L_2^0}^2 \,d\tau
)<+\infty, ~ \alpha >0$, for $t\leq T$ and, for arbitrary $t\in
[0,T]$,
\begin{equation}\label{eSW9f}
X(t) = S_\alpha(t)X(0) + \int_0^t
S_\alpha(t-\tau)\Psi(\tau)\,dW(\tau), \quad P-a.s.
\end{equation}
where $S_\alpha(t)$ is the $\alpha$-times resolvent family.
\end{definition}

We define the stochastic convolution
\index{stochastic convolution}
\index{$W_\alpha^\Psi(t)$}
\begin{equation} \label{eSW18af}
W_\alpha^\Psi(t) := \int_0^t S_\alpha(t-\tau)\Psi(\tau)\,dW(\tau),
\end{equation}
where $\Psi\in\mathcal{N}^2(0,T;L_2^0)$. Because $\alpha$-times
resolvent families $S_\alpha(t),~t\geq 0$, are bounded, then
$S_\alpha(t-\cdot)\Psi (\cdot)\in \mathcal{N}^2(0,T;L_2^0)$, too.

\vskip1mm
Analogously like in section \ref{SVEHSsec:1}, we can formulate the 
following results.

\begin{proposition} \label{pro2}
 Assume that $S_\alpha (t), t\ge 0$, are the resolvent operators to
 (\ref{eq3af}). Then, for any process $\Psi \in \mathcal{N}^2(0,T;L_2^0)$,
 the convolution $W_\alpha^\Psi (t), ~t\ge 0, ~\alpha >0$, given by
 (\ref{deq5}) has a predictable version. Additionally, the process
 $W_\alpha^\Psi (t), ~t\ge 0, ~\alpha >0$, has square integrable trajectories.
\end{proposition}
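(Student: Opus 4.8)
The plan is to mimic, essentially verbatim, the reasoning used for the ordinary resolvent case in Section~\ref{SVEHSsec:1}, specifically Propositions~\ref{pr3} and~\ref{pr3a}, since the only structural facts about $S(t)$ used there are strong continuity and uniform boundedness on compact intervals, and the $\alpha$-times resolvent family $S_\alpha(t)$ shares both properties (condition~1 of Definition~\ref{SVEHSd1} gives strong continuity with $S_\alpha(0)=I$, and strong continuity on the compact interval $[0,T]$ together with the uniform boundedness principle yields $\sup_{t\le T}\|S_\alpha(t)\|=:C_\alpha<+\infty$). First I would record this boundedness remark explicitly, as it is the workhorse of both claims.

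For the predictability assertion, I would note that since $\Psi\in\mathcal{N}^2(0,T;L_2^0)$ and $S_\alpha(t-\tau)$ is deterministic, bounded and strongly continuous, the integrand $S_\alpha(t-\cdot)\Psi(\cdot)$ again lies in $\mathcal{N}^2(0,T;L_2^0)$; hence $W_\alpha^\Psi(t)$ is a well-defined $H$-valued stochastic integral. Then I would reproduce the two-step argument of Proposition~\ref{pr3}: use the Chebyshev-type estimate $P(|W_\alpha^\Psi(t)|_H>a)\le \tfrac{b}{a^2}+P(\int_0^t\|S_\alpha(t-\tau)\Psi(\tau)\|_{L_2^0}^2d\tau>b)$, bound $\|S_\alpha(t-\tau)\Psi(\tau)\|_{L_2^0}^2\le C_\alpha^2\|\Psi(\tau)\|_{L_2^0}^2$; for elementary $\Psi$ the convolution $W_\alpha^\Psi$ is continuous, hence has a predictable version by Proposition~\ref{PBp1a}; and for general $\Psi$, approximate by a sequence $(\Psi_n)$ of elementary processes dense in $\mathcal{N}^2(0,T;L_2^0)$, deduce $W_{\alpha,n}^\Psi\to W_\alpha^\Psi$ in probability uniformly via the estimate above, pass to an a.s.\ convergent subsequence, and conclude predictability.

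For the square-integrability of trajectories, I would copy the computation of Proposition~\ref{pr3a}: by the stochastic Fubini theorem and the isometry property of the stochastic integral,
\begin{eqnarray*}
\mathbb{E}\int_0^T |W_\alpha^\Psi(t)|_H^2\,dt
 &=& \int_0^T \int_0^t \|S_\alpha(t-\tau)\Psi(\tau)\|_{L_2^0}^2\,d\tau\,dt \\
 &\le& C_\alpha^2 \int_0^T \int_0^t \|\Psi(\tau)\|_{L_2^0}^2\,d\tau\,dt
 \;\le\; C_\alpha^2\, T\, \|\Psi\|_{\mathcal{N}^2(0,T;L_2^0)}^2 <+\infty,
\end{eqnarray*}
so that $t\mapsto W_\alpha^\Psi(t)$ lies in $L^2(0,T;H)$ $P$-a.s.

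I do not expect any genuine obstacle here: the statement is the exact analogue of the $\alpha=1$ results, and the proof transfers word for word once the uniform-boundedness remark for $S_\alpha(t)$ on $[0,T]$ is in place. The only point deserving a sentence of care is that this boundedness is not asserted in Definition~\ref{SVEHSd1} itself but follows from strong continuity plus the Banach--Steinhaus theorem — that is the step I would state explicitly rather than treat as routine.
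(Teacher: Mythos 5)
Your proposal is correct and follows essentially the same route as the paper, which states Proposition \ref{pro2} without a separate proof and simply appeals to the arguments of Propositions \ref{pr3} and \ref{pr3a}, exactly as you do. Your explicit remark that $\sup_{t\le T}\|S_\alpha(t)\|<+\infty$ follows from strong continuity via Banach--Steinhaus is the right (and only) point needing care, and it matches the paper's own observation that the $\alpha$-times resolvent families are bounded on $[0,T]$.
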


Under some conditions every mild solution to (\ref{eq4f}) is a weak
solution to (\ref{eq4f}).

\begin{proposition} \label{pSW4f} 
If $\Psi\in\mathcal{N}^2(0,T;L_2^0)$, then the
stochastic convolution $W_\alpha^\Psi$ fulfills the equation
\begin{equation}\label{epSW4f}
\langle W_\alpha^\Psi(t),\xi\rangle_H =
\int_0^t \langle a_\alpha(t-\tau)W_\alpha^\Psi(\tau),
A^*\xi\rangle_H + \int_0^t \langle
\xi,\Psi(\tau)dW(\tau)\rangle_H, 
\end{equation}
$\alpha >0$ for any $t\in [0,T]$ and $\xi\in D(A^*)$.
\end{proposition}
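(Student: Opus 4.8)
The plan is to mimic exactly the argument used for Proposition \ref{pr5}, replacing the resolvent family $S(t)$ by the $\alpha$-times resolvent family $S_\alpha(t)$ and the kernel $a(t)$ by $a_\alpha(t)=t^{\alpha-1}/\Gamma(\alpha)$. The starting point is that $W_\alpha^\Psi$ has integrable trajectories (this is the content of Proposition \ref{pro2}), so all the integrals below make sense $P$-a.s. Fix $\xi\in D(A^*)$ and $t\in[0,T]$. First I would insert the definition \eqref{eSW18af} into the left-hand side of the convolution term $\int_0^t\langle a_\alpha(t-\tau)W_\alpha^\Psi(\tau),A^*\xi\rangle_H\,d\tau$, obtaining a double integral $\int_0^t a_\alpha(t-\tau)\langle\int_0^\tau S_\alpha(\tau-\sigma)\Psi(\sigma)\,dW(\sigma),A^*\xi\rangle_H\,d\tau$.

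The key step is to interchange the order of integration. Here one uses Dirichlet's formula for the deterministic (time) integral together with the stochastic Fubini theorem (Theorem \ref{PBtFub}) to move $dW(\sigma)$ outside; the hypothesis $\Psi\in\mathcal{N}^2(0,T;L_2^0)$ and boundedness of $S_\alpha(\cdot)$ on $[0,T]$ guarantee the integrability condition $\int_E\|\Phi(\cdot,\cdot,x)\|_T\,\mu(dx)<\infty$ needed to apply that theorem (with the kernel $\Phi(t,\sigma)=\mathbf{1}_{\{\sigma<t\}}a_\alpha(t-\sigma)S_\alpha(\cdot-\sigma)\Psi(\cdot)$ playing the role of the integrand). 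After swapping, the inner $\tau$-integral becomes $\int_\sigma^t a_\alpha(t-\tau)S_\alpha(\tau-\sigma)\,d\tau$, which by a change of variables $z=\tau-\sigma$ equals $(a_\alpha\star S_\alpha)(t-\sigma)$. Then I would invoke the identity $A(a_\alpha\star S_\alpha)(s)x=(S_\alpha(s)-I)x$ for $x\in H$ — this is the $\alpha$-times analogue of Proposition \ref{pr1Pr}, i.e.\ the reformulation of the resolvent equation \eqref{eq4af} with $A$ pulled outside the convolution (valid since $R(a_\alpha\star S_\alpha(s))\subset D(A)$). This turns the expression into $\langle\int_0^t[S_\alpha(t-\sigma)-I]\Psi(\sigma)\,dW(\sigma),\xi\rangle_H$, which splits as $\langle\int_0^t S_\alpha(t-\sigma)\Psi(\sigma)\,dW(\sigma),\xi\rangle_H-\langle\int_0^t\Psi(\sigma)\,dW(\sigma),\xi\rangle_H = \langle W_\alpha^\Psi(t),\xi\rangle_H-\langle\int_0^t\Psi(\sigma)\,dW(\sigma),\xi\rangle_H$. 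Rearranging gives \eqref{epSW4f}.

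The main obstacle is the justification of the Fubini interchange and, more delicately, of pulling the closed unbounded operator $A$ through the stochastic integral in the step $A\int_0^t(a_\alpha\star S_\alpha)(t-\sigma)\Psi(\sigma)\,dW(\sigma)=\int_0^t A(a_\alpha\star S_\alpha)(t-\sigma)\Psi(\sigma)\,dW(\sigma)$. This requires Proposition \ref{pSW1}: one needs $(a_\alpha\star S_\alpha)(t-\sigma)\Psi(\sigma)(U_0)\subset D(A)$ $P$-a.s.\ — which holds because $R(a_\alpha\star S_\alpha(s))\subset D(A)$ — and the square-integrability of $A(a_\alpha\star S_\alpha)(t-\sigma)\Psi(\sigma)=(S_\alpha(t-\sigma)-I)\Psi(\sigma)$ in $L_2^0$, which follows from $\Psi\in\mathcal{N}^2(0,T;L_2^0)$ and the uniform boundedness of $S_\alpha$ on $[0,T]$. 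One small point specific to the fractional case that needs a remark: since $a_\alpha$ has an integrable singularity at $0$ when $\alpha\in(0,1)$, one should note that $a_\alpha\in L^1_{\mathrm{loc}}(\mathbb{R}_+)$ still makes the convolution $(a_\alpha\star S_\alpha)(s)$ well defined and continuous, so the computations go through unchanged. Finally, since $D(A^*)$ is dense in $H$, no further density argument is needed — the identity is asserted only for $\xi\in D(A^*)$ — so the proof concludes directly.
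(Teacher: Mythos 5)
Your proposal is correct and follows essentially the same route the paper intends: the paper omits the proof of Proposition \ref{pSW4f}, stating only that the results are obtained ``analogously like in section \ref{SVEHSsec:1}'', and the analogous Proposition \ref{pr5} is proved exactly by your scheme — Dirichlet's formula plus the stochastic Fubini theorem, followed by the identity $A(a_\alpha\star S_\alpha)(s)x=(S_\alpha(s)-I)x$ coming from the well-posedness form of the resolvent equation. Your additional remarks on justifying the interchange, on Proposition \ref{pSW1} for commuting $A$ with the stochastic integral, and on the integrable singularity of $a_\alpha$ at $0$ are exactly the points that need checking in the fractional case and are handled correctly.
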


Immediately from the equation (\ref{epSW4f}) we deduce the following result.

\begin{corollary} \label{cor2a}
 If $A$ is a bounded operator and $\Psi\in\mathcal{N}^2(0,T;L_2^0)$, then the
 following equality holds
 \begin{equation} \label{eq21a}
 W_\alpha^\Psi(t) =
 \int_0^t a_\alpha(t-\tau) A W_\alpha^\Psi(\tau)d\tau +
 \int_0^t\Psi(\tau)dW(\tau),
\end {equation}
for $t\in [0,T]$, $\alpha >0$.
\end{corollary}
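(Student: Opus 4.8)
The statement is Corollary \ref{cor2a}: if $A$ is bounded and $\Psi\in\mathcal{N}^2(0,T;L_2^0)$, then the stochastic convolution $W_\alpha^\Psi$ solves the integral equation (\ref{eq21a}). The natural route is to start from Proposition \ref{pSW4f}, which gives, for every $\xi\in D(A^*)$ and every $t\in[0,T]$,
\begin{equation*}
\langle W_\alpha^\Psi(t),\xi\rangle_H =
\int_0^t \langle a_\alpha(t-\tau)W_\alpha^\Psi(\tau), A^*\xi\rangle_H\,d\tau + \int_0^t \langle \xi,\Psi(\tau)dW(\tau)\rangle_H .
\end{equation*}
When $A$ is bounded, $A=A^*{}^*$ is everywhere defined, $D(A^*)=H$, and $A^*$ is bounded with $\|A^*\|=\|A\|$. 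Thus the pairing $\langle a_\alpha(t-\tau)W_\alpha^\Psi(\tau),A^*\xi\rangle_H$ may be rewritten as $\langle a_\alpha(t-\tau)AW_\alpha^\Psi(\tau),\xi\rangle_H$ for all $\xi\in H$, and similarly for the other terms.

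First I would check that the deterministic Bochner integral $\int_0^t a_\alpha(t-\tau)AW_\alpha^\Psi(\tau)\,d\tau$ is well-defined $P$-a.s.: by Proposition \ref{pro2} the trajectories of $W_\alpha^\Psi$ are square integrable on $[0,T]$, hence integrable, and since $A$ is bounded the map $\tau\mapsto AW_\alpha^\Psi(\tau)$ is also in $L^1([0,T];H)$ a.s.; convolution with the locally integrable kernel $a_\alpha$ then produces a well-defined $H$-valued process (this is the Young-inequality-type argument already invoked at the end of the proof of Theorem \ref{coSW4}). Next I would move $A$ inside: since $A$ is bounded (hence closed) it commutes with the Bochner integral, so $\int_0^t a_\alpha(t-\tau)AW_\alpha^\Psi(\tau)\,d\tau = A\int_0^t a_\alpha(t-\tau)W_\alpha^\Psi(\tau)\,d\tau$, and therefore
\begin{equation*}
\Big\langle \int_0^t a_\alpha(t-\tau)AW_\alpha^\Psi(\tau)\,d\tau,\ \xi\Big\rangle_H
= \Big\langle \int_0^t a_\alpha(t-\tau)W_\alpha^\Psi(\tau)\,d\tau,\ A^*\xi\Big\rangle_H
= \int_0^t \langle a_\alpha(t-\tau)W_\alpha^\Psi(\tau),A^*\xi\rangle_H\,d\tau,
\end{equation*}
where the last equality is again the commutation of the bounded functional $\langle\,\cdot\,,A^*\xi\rangle_H$ with the Bochner integral. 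Substituting into the identity from Proposition \ref{pSW4f} gives, for every $\xi\in H$,
\begin{equation*}
\Big\langle W_\alpha^\Psi(t) - \int_0^t a_\alpha(t-\tau)AW_\alpha^\Psi(\tau)\,d\tau - \int_0^t \Psi(\tau)\,dW(\tau),\ \xi\Big\rangle_H = 0 \qquad P\text{-a.s.}
\end{equation*}

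The final step is to pass from "the inner product with every $\xi$ vanishes" to "the vector itself vanishes $P$-a.s." Since $H$ is separable one fixes a countable dense set $\{\xi_k\}\subset H$; for each $k$ the inner product above vanishes on a set of full measure, the countable intersection of these sets still has full measure, and on that set density and continuity of $\langle\,\cdot\,,\cdot\,\rangle_H$ force the bracketed vector to be zero. This yields (\ref{eq21a}) $P$-a.s. for each fixed $t\in[0,T]$, which is exactly the asserted equality. I do not expect a genuine obstacle here: the only point requiring a little care is the interchange of the operator $A$ (and of the linear functionals) with the Bochner integrals, which is routine once one records that boundedness of $A$ makes everything integrable and that bounded operators commute with Bochner integrals; the measurability/predictability of all processes involved has already been supplied by Proposition \ref{pro2}.
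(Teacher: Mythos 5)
Your proposal is correct and follows essentially the same route as the paper, which deduces the corollary "immediately" from Proposition \ref{pSW4f} by noting that boundedness of $A$ gives $D(A^*)=H$ and allows $A$ to be pulled through the Bochner integral. The details you supply (integrability of $\tau\mapsto AW_\alpha^\Psi(\tau)$, commutation of $A$ and of the functionals $\langle\cdot,\xi\rangle_H$ with the integral, and the countable-dense-set argument to pass from the weak identity to the vector identity $P$-a.s.) are exactly the steps the paper leaves implicit.
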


\noindent{\bf Remark} The formula (\ref{eq21a}) says that the convolution
$W_\alpha^\Psi(t), ~t\ge 0, ~\alpha >0$, is a strong solution to
(\ref{eq4f}) if the operator $A$ is bounded.

\vskip1.5mm
We can formulate following result which plays a key role in this
subsection.

\begin{lemma} \label{pSW5f}
Let assumptions (VA) be satisfied. 
Suppose that {\bf (A1)} or {\bf (A2)} holds. If $\Psi$ and $A\Psi$
belong to $\mathcal{N}^2(0,T;L_2^0)$ and in addition
$\Psi(t)(U_0)\subset D(A),$ P-a.s., then the following
equality holds
\begin{equation} \label{eSW19f}
W_\alpha^\Psi(t) = \int_0^t a_\alpha(t-\tau)A\,
W_\alpha^\Psi(\tau)\,d\tau + \int_0^t \Psi(\tau)\,dW(\tau), \quad P-a.s.
\end{equation}
\end{lemma}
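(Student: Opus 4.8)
The plan is to mimic the proof of Lemma~\ref{pSW5} almost verbatim, replacing the resolvent family $S(t)$ and its Yosida-type approximants $S_n(t)$ by the $\alpha$-times resolvent family $S_\alpha(t)$ and the approximants $S_{\alpha,n}(t)$ furnished by Theorem~\ref{th2f} (under {\bf (A1)}) or Theorem~\ref{th3af} (under {\bf (A2)}). Concretely, first I would invoke Corollary~\ref{cor2a}: since each Yosida approximation $A_n = nAR(n,A)$ is bounded and admits the $\alpha$-times resolvent family $S_{\alpha,n}(t)$, the convolution $W_{\alpha,n}^\Psi(t):=\int_0^t S_{\alpha,n}(t-\tau)\Psi(\tau)dW(\tau)$ satisfies
\begin{equation*}
W_{\alpha,n}^\Psi(t) = \int_0^t a_\alpha(t-\tau) A_n W_{\alpha,n}^\Psi(\tau)d\tau + \int_0^t \Psi(\tau)dW(\tau), \quad P\text{-a.s.}
\end{equation*}
The task is then to pass to the limit $n\to\infty$ in each of the three terms.

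The second step is convergence of the convolutions themselves: $\sup_{t\in[0,T]}\mathbb{E}|W_{\alpha,n}^\Psi(t)-W_\alpha^\Psi(t)|_H^2 \to 0$. Expanding $W$ along the orthonormal basis $\{g_j\}$ of $U_0$ as in (\ref{eq23})--(\ref{eq27}), using the It\^o isometry, one bounds this by $\mathbb{E}\sum_j \int_0^T |[S_{\alpha,n}(T-\tau)-S_\alpha(T-\tau)]\Psi(\tau)g_j|_H^2 d\tau$. The uniform bound $\|S_{\alpha,n}(t)\|\le MC e^{(2\omega)^{1/\alpha}t}$ from Theorem~\ref{th2f}/\ref{th3af} together with $\Psi\in\mathcal{N}^2(0,T;L_2^0)$ provides an integrable dominating function, while the pointwise convergence $S_{\alpha,n}(t)x\to S_\alpha(t)x$ (uniform in $t$ on $[0,T]$) kills each summand; dominated convergence then gives the claim. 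The third step handles the drift term $A_n W_{\alpha,n}^\Psi(t) \to A W_\alpha^\Psi(t)$ in $L^2$, uniformly on $[0,T]$. Here I would need the analogue of Proposition~\ref{AScom} — that $S_{\alpha,n}(t)$ commutes with $A$ for $n$ large — which follows by the same complex-inversion/closedness argument since $H_{\alpha,n}(\lambda)$ is built from $A_n$ which commutes with $A$ on $D(A)$. Writing $A_n W_{\alpha,n}^\Psi = J_n\int_0^t S_{\alpha,n}(t-\tau)A\Psi(\tau)dW(\tau)$ (using $\Psi(t)(U_0)\subset D(A)$, $A\Psi\in\mathcal{N}^2$, Proposition~\ref{pSW1} and the spatial-regularity of $S_\alpha$, cf.\ the Remark on page~\pageref{rem3f}), one splits $|A_nW_{\alpha,n}^\Psi - AW_\alpha^\Psi| \le N_{n,1}+N_{n,2}$ exactly as in (\ref{eq28}): $N_{n,1}$ is controlled by the step-two estimate applied to $A\Psi$ in place of $\Psi$ (plus $\sup_n\|J_n\|<\infty$), and $N_{n,2}=|(A_n-A)W_\alpha^\Psi|$ is handled by $A_n S_\alpha(t-\cdot)\Psi = J_n S_\alpha(t-\cdot)A\Psi$ and $\|J_n\|$ boundedness together with $J_n x\to x$, again via dominated convergence over $\{g_j\}$.

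Finally, having $W_{\alpha,n}^\Psi(t)\to W_\alpha^\Psi(t)$ and $\int_0^t a_\alpha(t-\tau)A_nW_{\alpha,n}^\Psi(\tau)d\tau \to \int_0^t a_\alpha(t-\tau)AW_\alpha^\Psi(\tau)d\tau$ (the latter using that $a_\alpha\in L^1_{\mathrm{loc}}$, so convolution against $a_\alpha$ is continuous on $L^1([0,T];H)$, after passing to a subsequence converging a.e.), while $\int_0^t\Psi(\tau)dW(\tau)$ is independent of $n$, one passes to the limit in Corollary~\ref{cor2a}'s identity to obtain (\ref{eSW19f}). I expect the main obstacle to be the singularity of the kernel $a_\alpha(t)=t^{\alpha-1}/\Gamma(\alpha)$ at $t=0$ when $\alpha\in(0,1)$: one must be careful that $a_\alpha(T-\cdot)AW_\alpha^\Psi(\cdot)$ is genuinely integrable $P$-a.s.\ and that the convergence of the convolution integrals is justified despite $a_\alpha\notin L^\infty$ near $0$; this is where the integrability of trajectories of $W_\alpha^\Psi$ from Proposition~\ref{pro2} and the $L^1$-boundedness of convolution with $a_\alpha$ on finite intervals must be combined carefully, rather than the cruder $L^\infty$ estimates available in the $a\in W^{1,1}$ case of Section~\ref{SVEHSsec:2}.
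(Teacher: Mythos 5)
Your proposal follows essentially the same route as the paper's proof: start from Corollary~\ref{cor2a} applied to the Yosida approximants $A_n$, establish $\sup_{t\in[0,T]}\mathbb{E}|W_{\alpha,n}^\Psi(t)-W_\alpha^\Psi(t)|_H^2\to 0$ via the $\{g_j\}$-expansion, the It\^o isometry and the uniform convergence of $S_{\alpha,n}$ from Theorems~\ref{th2f}/\ref{th3af}, and then control the drift term through the same $N_{n,1}+N_{n,2}$ splitting using $\sup_n\|J_n\|<\infty$ and $A_nx\to Ax$ on $D(A)$. The only cosmetic difference is that the paper gets the commutation $AS_{\alpha,n}(t)x=S_{\alpha,n}(t)Ax$ directly from the subordination formula (\ref{eq14f}) (because $e^{sA_n}$ commutes with $A$ and $A$ is closed) rather than by a complex-inversion argument as in Proposition~\ref{AScom}, and your explicit attention to the integrability of $a_\alpha(T-\cdot)AW_\alpha^\Psi(\cdot)$ near the singularity of the kernel is a point the paper treats only implicitly (deferring it to Theorem~\ref{coSW4f}).
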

\noindent {\bf Remark } Let us emphasize that in {\bf (A1)}, $\alpha\in (0,1)$
and in {\bf (A2)}, $\alpha\in (0,2)$.\\

Although the proof is analogous to that given in section \ref{SVEHSsec:1},
we formulate it for the reader's convenience.
\begin{proof} 
Because formula (\ref{eq21a})
holds for any bounded operator, then it holds for the Yosida
approximation $A_n$ of the operator $A$, too, that is,
$$ W_{\alpha,n}^\Psi(t) =
\int_0^t a_\alpha(t-\tau) A_n W_{\alpha,n}^\Psi(\tau)d\tau +
\int_0^t\Psi(\tau)dW(\tau), $$ where
$$ W_{\alpha,n}^\Psi(t) := \int_0^t S_{\alpha,n}(t-\tau)\Psi(\tau)dW(\tau)$$
and
$$ A_n W_{\alpha,n}^\Psi(t) =
A_n \int_0^t S_{\alpha,n}(t-\tau)\Psi(\tau)dW(\tau).
$$
By assumption $\Psi\in \mathcal{N}^2(0,T;L_2^0)$. Because the
operators $S_{\alpha,n}(t)$ are deterministic and bounded for any
$t\in [0,T], ~\alpha >0, ~n\in\mathbb{N}$, then the operators
$S_{\alpha,n}(t-\cdot )\Psi(\cdot)$ belong to
$\mathcal{N}^2(0,T;L_2^0)$, too. In consequence, the difference
\begin{equation}\label{eq21b}
 \Phi_{\alpha,n}(t-\cdot ) := S_{\alpha,n}(t-\cdot )\Psi(\cdot)
  - S_\alpha(t-\cdot )\Psi(\cdot) 
\end{equation}
belongs to $\mathcal{N}^2(0,T;L_2^0)$ for any $t\in [0,T], ~\alpha >0$ and
$n\in\mathbb{N}$. This means that
\begin{equation}\label{eq22f}
 \mathbb{E}\left(\int_0^t ||\Phi_{\alpha,n}(t-\tau)||_{L_2^0}^2d\tau \right)
 < +\infty
\end{equation}
for any $t\in [0,T]$.

The cylindrical Wiener process $W(t)$, $t\ge 0$,  
can be expanded in the series 
\begin{equation}\label{eq23f}
 W(t) =\sum_{j=1}^{+\infty} g_j\,\beta_j(t),
\end{equation}
where $\{g_j\}$ is an orthonormal basis of $U_0$ and $\beta_j(t)$ are 
independent
real Wiener processes. From (\ref{eq23f}) we have
\begin{equation}\label{eq24f}
 \int_0^t \Phi_{\alpha,n}(t-\tau)\,dW(\tau) = \sum_{j=1}^{+\infty} 
 \int_0^t \Phi_{\alpha,n}(t-\tau)\,g_j\,d\beta_j(\tau).
\end{equation}
In consequence, from (\ref{eq22f})
\begin{equation}\label{eq25f}
 \mathbb{E}\left[\int_0^t \left( \sum_{j=1}^{+\infty}
 |\Phi_{\alpha,n}(t-\tau)\,g_j|_H^2 \right) d\tau \right]
 < +\infty
\end{equation}
for any $t\in [0,T]$. Next, from (\ref{eq24f}), properties of stochastic
integral and (\ref{eq25f}) we obtain for any $t\in[0,T]$,
\begin{eqnarray*}
 \mathbb{E}\left| \int_0^t \Phi_{\alpha,n}(t-\tau)\,dW(\tau) \right|_H^2
 &=& \mathbb{E}\left| \sum_{j=1}^{+\infty}\int_0^t 
 \Phi_{\alpha,n}(t-\tau)\,g_j\,d\beta_j(\tau) \right|_H^2  \le \\
  \mathbb{E}\left[ \sum_{j=1}^{+\infty} \int_0^t 
 |\Phi_{\alpha,n}(t-\tau)\,g_j|_H^2 d\tau \right] 
 &\le & \mathbb{E}\left[ \sum_{j=1}^{+\infty} \int_0^T 
  |\Phi_{\alpha,n}(T-\tau)\,g_j|_H^2 d\tau \right] <+\infty.
\end{eqnarray*}

By Theorem \ref{th2f} or Theorem \ref{th3af}, the convergence of
$\alpha$-times resolvent families is uniform in $t$ on the interval
$[0,T]$. So, for any fixed $\alpha$ and $j$,
\begin{equation}\label{eq26f}
 \int_0^T |[S_{\alpha,n}(T-\tau)-S_\alpha(T-\tau)]\,
 \Psi(\tau)\,g_j|_H^2 d\tau  \longrightarrow 0 \quad \mbox{for} \quad
 n\to +\infty\,.
\end{equation}
Then, using (\ref{eq25f}) and (\ref{eq26f}) we can write
\begin{eqnarray*}
\sup_{t\in [0,T]} & \mathbb{E} & \left|  \int_0^t 
\Phi_{\alpha,n}(t-\tau)dW(\tau) \right|_H^2  \\ & \equiv &
 \sup_{t\in [0,T]}\, \mathbb{E}\left| \int_0^t
 [S_{\alpha,n}(t-\tau)-S_\alpha(t-\tau)] \Psi(\tau)dW(\tau)\right|_H^2 \\
 & \le &
 \mathbb{E}\left[ \sum_{j=1}^{+\infty} \int_0^T 
 | [ S_{\alpha,n}(T-\tau)-S_\alpha(T-\tau)]\Psi(\tau)\,g_j |_H^2 
 d\tau \right] \longrightarrow 0
\end{eqnarray*}
as $n \to +\infty$ for any fixed $\alpha >0$.

Hence, by the Lebesgue dominated convergence theorem we obtained 
\begin{equation}\label{eq27f}
 \lim_{n\to +\infty} \sup_{t\in [0,T]} \mathbb{E} \left| 
 W_{\alpha,n}^\Psi(t)- W_\alpha^\Psi(t)\right|_H^2 =0.
\end{equation}
By Proposition \ref{pSW1}, $P(W_\alpha^\Psi (t)\in D(A))=1$.

For any $n\in\mathbb{N}$, $\alpha >0$, $t\ge 0$, we have 
$$ |A_n W_{\alpha,n}^\Psi (t) - A W_\alpha^\Psi (t)|_H \le 
    N_{n,1}(t) +  N_{n,2}(t), $$
where 
\begin{eqnarray*}    
 N_{n,1}(t) & := & |A_n W_{\alpha,n}^\Psi (t) - A_n W_\alpha^\Psi (t)|_H , \\
 N_{n,2}(t) & := & |A_n W_\alpha^\Psi (t) - A W_\alpha^\Psi (t)|_H =
           	 |(A_n-A)W_\alpha^\Psi (t)|_H  \,.
\end{eqnarray*}
Then 
\begin{equation}\label{eq28f}
 |A_n W_{\alpha,n}^\Psi (t) - A W_\alpha^\Psi (t)|_H^2  <
 3 [ N_{n,1}^2 (t) + N_{n,2}^2(t)] \,.
\end{equation}

Let us study the term $N_{n,1}(t)$. Note that, either in cases {\bf (A1)}
or {\bf (A1)} the unbounded operator $A$ generates a semigroup. Then we have
from the Yosida approximation the following properties:
\begin{equation}
A_nx=J_nAx \quad \mbox{for~any~~} x\in D(A), \quad \sup_n ||J_n||<\infty,
\end{equation}
where $A_nx=nAR(n,A)x=AJ_nx$ for any $x\in H$ with $J_n:=nR(n,A)$.
Moreover (see \cite[Chapter II, Lemma 3.4]{EN00}):
\begin{eqnarray}\label{eq30f}
 \lim_{n\to\infty} n\,R(n,A)x &=& x \qquad \mbox{for~any~} x\in H, \\
 \lim_{n\to\infty} A_nx &=& Ax \qquad \mbox{for~any~} x\in D(A).
 \label{eq31f}
\end{eqnarray}
Note that $AS_{\alpha,n}(t)x = S_{\alpha,n}(t)Ax$ for all $x \in
D(A),$  since $e^{t A_n}$ commutes with $A$ and $A$ is closed (see
(\ref{eq14f})). So, by Proposition \ref{pSW1} and again the
closedness of $A$  we can write
\begin{eqnarray*}
 A_n W_{\alpha,n}(t) & \equiv & 
 A_n \int_0^t S_{\alpha,n}(t-\tau)\Psi(\tau)dW(\tau) \\
 & = & nR(n,A) \left[\int_0^t S_{\alpha,n}(t-\tau)A\Psi(\tau)dW(\tau)\right].
\end{eqnarray*}
Analogously, 
$$  A_n W_\alpha(t) =
  nR(n,A) \left[\int_0^t S_\alpha(t-\tau)A\Psi(\tau)dW(\tau)\right].
$$
By (\ref{eq30f}) we have
\begin{eqnarray*}
N_{n,1}(t) &=& |J_n\int_0^t [S_{\alpha,n}(t-\tau)-S_\alpha(t-\tau)]
  A\Psi(\tau)dW(\tau)|_H \\
  &\le & |\int_0^t [S_{\alpha,n}(t-\tau)-S_\alpha(t-\tau)]
  A\Psi(\tau)dW(\tau)|_H \;.
\end{eqnarray*}
From assumptions, $A\Psi \in \mathcal{N}^2(0,T;L_2^0)$. Then the difference
$[S_{\alpha,n}(t-\tau)-S_\alpha(t-\tau)]A\Psi(\tau)$ may be estimated exactly
like the difference $\Phi_{\alpha,n}$ defined by (\ref{eq21b}).

Hence, from (\ref{eq30f}) and (\ref{eq27f})
for the first term of the right hand side of (\ref{eq28f}) we obtain
$$ \lim_{n\to +\infty}\;\; \sup_{t\in [0,T]} 
 \mathbb{E}(N_{n,1}^2 (t)) \to 0. $$
For the second and third terms of (\ref{eq28f}) 
we can follow the same steps as above for proving (\ref{eq27f}).
We have to use the properties of Yosida approximation, particularly the
convergence (\ref{eq31f}).
So, we can deduce that
$$ \lim_{n\to +\infty} \;\;\sup_{t\in [0,T]} \mathbb{E}
  |A_n W_{\alpha,n}^\Psi(t)-AW_\alpha^\Psi(t)|_H^2=0 , $$
what gives (\ref{deq5}).\qed \end{proof} 

Now, we are able to formulate the main result of this section.

\begin{theorem} \label{coSW4f}
Suppose that assumptions of Lemma \ref{pSW5f} hold. Then the
equation (\ref{eq4f}) has a strong solution.
Precisely, the convolution $W_\alpha^\Psi$ defined by
(\ref{eSW18af}) is the strong solution to~(\ref{eq4f}).
\end{theorem}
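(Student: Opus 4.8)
The plan is to assemble the statement from the pieces already established, exactly as in the proof of Theorem~\ref{coSW4}. By Lemma~\ref{pSW5f}, under assumptions \textbf{(A1)} or \textbf{(A2)} together with the hypotheses $\Psi,A\Psi\in\mathcal{N}^2(0,T;L_2^0)$ and $\Psi(t)(U_0)\subset D(A)$ $P$-a.s., the stochastic convolution $W_\alpha^\Psi$ defined by~(\ref{eSW18af}) satisfies the identity~(\ref{eSW19f}), which is precisely the equation~(\ref{eq4f}) with $X(0)=0$. (If $X(0)\neq 0$ one adds the deterministic term $S_\alpha(t)X(0)$, which by the resolvent equation~(\ref{eq4af}) contributes $X(0)+\int_0^t a_\alpha(t-\tau)AS_\alpha(\tau)X(0)\,d\tau$; note $S_\alpha(t)X(0)\in D(A)$ requires $X(0)\in D(A)$ or the spatial regularity from Remark on page~\pageref{rem3f}.) So it remains only to verify that $W_\alpha^\Psi$ meets the measurability and integrability requirements of Definition~\ref{dSW4f}.

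First I would invoke Proposition~\ref{pro2}: it gives that $W_\alpha^\Psi$ has a predictable version and square-integrable trajectories on $[0,T]$. Next, that $P(W_\alpha^\Psi(t)\in D(A))=1$ for a.a.\ $t\in[0,T]$ was already shown inside the proof of Lemma~\ref{pSW5f} via Proposition~\ref{pSW1} (using $S_\alpha(t)(D(A))\subset D(A)$ and $\Psi(t)(U_0)\subset D(A)$). Finally, for condition~(\ref{eSW3.1f}) one argues as in Theorem~\ref{coSW4}: the closed operator $A$ is bounded as a map from $(D(A),|\cdot|_{D(A)})$ to $H$, and the proof of Lemma~\ref{pSW5f} establishes $\sup_{t\le T}\mathbb{E}\,|AW_\alpha^\Psi(t)|_H^2<+\infty$ (this is the content of $\lim_{n}\sup_t\mathbb{E}|A_nW_{\alpha,n}^\Psi(t)-AW_\alpha^\Psi(t)|_H^2=0$ combined with the uniform bound on $A_nW_{\alpha,n}^\Psi$), so $AW_\alpha^\Psi(\cdot)\in L^1([0,T];H)$ $P$-a.s.; then $a_\alpha\in L^1_{\mathrm{loc}}(\mathbb{R}_+)$ and the standard estimate for convolutions (Young's inequality) give $\int_0^t|a_\alpha(t-\tau)AW_\alpha^\Psi(\tau)|_H\,d\tau<+\infty$ $P$-a.s.

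With these three points in place, $W_\alpha^\Psi$ is an $H$-valued predictable process, lies in $D(A)$ a.e., satisfies the integrability condition~(\ref{eSW3.1f}), and obeys~(\ref{eq4f}) $P$-a.s.\ for every $t\in[0,T]$ by~(\ref{eSW19f}); hence it is a strong solution in the sense of Definition~\ref{dSW4f}.

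I do not expect any serious obstacle, since the theorem is essentially a bookkeeping corollary of Lemma~\ref{pSW5f} and Proposition~\ref{pro2}; the only point requiring a little care is the distinction between the cases $\alpha\in(0,1)$ under \textbf{(A1)} and $\alpha\in(0,2)$ under \textbf{(A2)}, which however is already built into the hypotheses of Lemma~\ref{pSW5f}, and the integrability of the singular kernel $a_\alpha$ near $t=0$ when $\alpha\in(0,1)$, which is harmless because $a_\alpha\in L^1_{\mathrm{loc}}(\mathbb{R}_+)$ for all $\alpha>0$ and we only convolve it against an $L^1$ (indeed $L^2$) trajectory.
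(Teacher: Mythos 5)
Your proposal is correct and follows essentially the same route as the paper: everything except the integrability condition~(\ref{eSW3.1f}) is delegated to Proposition~\ref{pro2} and Lemma~\ref{pSW5f}, and that condition is then verified by showing $AW_\alpha^\Psi(\cdot)\in L^1([0,T];H)$ $P$-a.s.\ and convolving with the locally integrable kernel $a_\alpha$. The only (harmless) variation is that you justify $AW_\alpha^\Psi\in L^1$ via the second-moment bounds extracted from the proof of Lemma~\ref{pSW5f}, whereas the paper argues directly from the square-integrability of the trajectories together with the boundedness of $A$ on $\bigl(D(A),|\cdot|_{D(A)}\bigr)$; your parenthetical remark about the $X(0)\neq 0$ case is a reasonable clarification of a point the paper leaves implicit.
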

\begin{proof} We have to show only the condition (\ref{eSW3.1f}). The
convolution $W_\alpha^\Psi (t)$ has integrable trajectories (see
section \ref{SVEHSsec:1}), that is, $W_\alpha^\Psi (\cdot )\in L^1([0,T];H)$,
P-a.s. The closed linear unbounded operator $A$ becomes bounded on
($D(A),|\cdot|_{D(A)}$), see \cite[Chapter 5]{We80}. So, we obtain
$AW_\alpha^\Psi (\cdot )\in L^1([0,T];H)$, P-a.s. Hence, the
function $a_\alpha(T-\tau)AW_\alpha^\Psi (\tau)$ is integrable
with respect to $\tau$, what finishes the proof. \qed \end{proof}

The following result is an immediate consequence of Corollary
\ref{cor5f} and Theorem \ref{coSW4f}.

\begin{corollary} \label{cor6f}
Assume that (VA) hold, $A$ generates an analytic semigroup of 
angle $\pi/2$ and $\alpha \in (0,1)$.
If $\Psi$ and $A\Psi$ belong to $\mathcal{N}^2(0,T;L_2^0)$ and in addition
$\Psi(t)(U_0)\subset D(A), ~P-a.s.$,
then the equation (\ref{eq4f}) has a strong solution.
\end{corollary}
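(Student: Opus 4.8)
The plan is to derive Corollary \ref{cor6f} directly as an application of Corollary \ref{cor5f} followed by Theorem \ref{coSW4f}, reducing everything to verifying the hypotheses of the latter. First I would observe that, by Corollary \ref{cor5f}, if $A$ generates an analytic semigroup of angle $\pi/2$ and $\alpha\in(0,1)$, then $A$ generates an $\alpha$-times analytic resolvent family $S_\alpha(t)$, $t\ge 0$. In particular $A$ generates a $C_0$-semigroup, so assumption \textbf{(A1)} of Lemma \ref{pSW5f} is satisfied with this value of $\alpha$. Since the {\sc Volterra Assumptions} (VA) are assumed in the statement, and $a_\alpha(t)=t^{\alpha-1}/\Gamma(\alpha)$ is the relevant kernel here, the pair $(A,a_\alpha)$ indeed admits the resolvent family $S_\alpha(t)$, so (VA) holds in the form required.

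Next I would check the remaining hypotheses of Lemma \ref{pSW5f}: we are given that $\Psi$ and $A\Psi$ belong to $\mathcal{N}^2(0,T;L_2^0)$ and that $\Psi(t)(U_0)\subset D(A)$, $P$-a.s. These are precisely the extra conditions needed. Hence Lemma \ref{pSW5f} applies and yields
\begin{equation*}
 W_\alpha^\Psi(t) = \int_0^t a_\alpha(t-\tau)A\,W_\alpha^\Psi(\tau)\,d\tau + \int_0^t \Psi(\tau)\,dW(\tau), \quad P\text{-a.s.},
\end{equation*}
where $W_\alpha^\Psi$ is the stochastic convolution defined in (\ref{eSW18af}). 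At this point all the structural identities needed for a strong solution are in place.

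Finally I would invoke Theorem \ref{coSW4f}: since the assumptions of Lemma \ref{pSW5f} hold, the theorem asserts that $W_\alpha^\Psi$ is a strong solution to (\ref{eq4f}). Concretely, Proposition \ref{pro2} guarantees $W_\alpha^\Psi$ has a predictable version with square-integrable trajectories, Proposition \ref{pSW1} (via the argument in Lemma \ref{pSW5f}) gives $P(W_\alpha^\Psi(t)\in D(A))=1$ for a.e.\ $t$, and the integrability condition (\ref{eSW3.1f}) follows because $A$ restricted to $(D(A),|\cdot|_{D(A)})$ is bounded, so $AW_\alpha^\Psi(\cdot)\in L^1([0,T];H)$ $P$-a.s., and then convolution with the locally integrable kernel $a_\alpha$ preserves integrability. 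The equation itself holds by the displayed identity above. This completes the proof.

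The only mild subtlety — not really an obstacle — is making sure the value of $\alpha$ is consistent across the invocations: Corollary \ref{cor5f} produces the analytic $\alpha$-times resolvent family for $\alpha\in(0,1)$, which is exactly the range in which case \textbf{(A1)} of Lemma \ref{pSW5f} is stated to apply, so there is no gap. Everything else is a direct citation of the already-established results, so the corollary is genuinely immediate once the hypotheses are matched up.
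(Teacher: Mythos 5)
Your proposal is correct and follows essentially the same route as the paper, which simply notes that the corollary is an immediate consequence of Corollary \ref{cor5f} and Theorem \ref{coSW4f}; your verification that the analytic-semigroup hypothesis yields assumption \textbf{(A1)} (and hence the hypotheses of Lemma \ref{pSW5f} and Theorem \ref{coSW4f}) is exactly the intended matching of hypotheses.
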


\section{Examples}\label{c3s4Ex}
In this short section we give several examples fulfilling conditions 
of theorems providing existence of strong solutions. 
The class of such equations depends on where
the operator $A$ is defined, in particular, the domain of $A$
depends on each considered problem, and also depends on the
properties of the kernel function $a(t)$, $t\ge 0$.

Let $G$ be a bounded domain in $\mathbb{R}^n$ with smooth
boundary $\partial G$. Consider the differential operator of
order $2m$:
\begin{equation}{\label{eq1}}
A(x,D)= \sum_{|\alpha|\leq 2m } b_{\alpha}(x) D^{\alpha}
\end{equation}
where the coefficients $b_{\alpha}(x)$ are sufficiently smooth
complex-valued functions of $x$ in $\overline G.$ The
operator $A(x,D)$ is called {\tt strongly elliptic} if there
exists a constant $c>0$ such that
$$ 
Re(-1)^m \sum_{|\alpha|= 2m } b_{\alpha}(x) \xi^{\alpha} \geq c
|\xi|^{2m}
$$ 
for all $ x\in \overline G$ and $\xi \in \mathbb{R}^n.$

Let $A(x,D)$ be a given strongly elliptic operator on a bounded
domain $G \subset \mathbb{R}^n$ and set $D(A)= H^{2m}(G)
\cap H_0^{m}(G).$ For every $u \in D(A)$ define
$$ 
Au = A(x,D)u.
$$ 
Then the operator $-A$ is the infinitesimal generator of an
analytic semigroup of operators on $H= L^2(G)$ (cf.
\cite[Theorem 7.2.7]{Pa83}). We note that if the operator $A$ has
constant coefficients, the result remains true for the domain
$G = \mathbb{R}^n.$

The next example is the Laplacian
$$ 
\Delta u = \sum_{i=1}^n \frac{\partial^2 u}{\partial x_i^2},
$$ 
since $- \Delta$ is clearly strongly elliptic. It follows that
$\Delta u$ on $D(A) = H^2( G) \cap H_0^1(G)$ is the
infinitesimal generator of an analytic semigroup on $L^2(G)$.

In particular, by \cite[Corollary 2.4]{Pr93} the operator $A$
given by (\ref{eq1}) generates an analytic resolvent $S(t)$
whenever $a \in C(0,\infty) \cap L^1(0,1)$ is completely
monotonic.

This example fits in our results if $a(t)$ is
also completely positive. For example: $a(t) =
t^{\alpha-1}/\Gamma(\alpha)$ is both, completely positive and
completely monotonic for $0 < \alpha \leq 1$ (but not for $\alpha
>1$).

Another  class of examples is provided by the following: suppose
$a \in L^1_{loc}(\mathbb{R}_+ )$ is of subexponential growth and
$\pi/2$-sectorial, and let $A$ generate a bounded analytic
$C_0$-semigroup  in a complex Hilbert space $H$.  Then it follows
from \cite[Corollary 3.1]{Pr93} that the Volterra equation of scalar
type $ u= a*Au +f$ is parabolic. If, in addition, $a(t)$ is
$k$-regular for all $k\geq 1$ we obtain from \cite[Theorem 3.1
]{Pr93} the existence of a
 resolvent $S\in C^{k-1}((0,\infty),
\mathcal{B}(H))$ such that $ \mathcal{R}(S(t)) \subset D(A)$ for
$t>0$ (see \cite[ p.82 (f)]{Pr93}).

\chapter{Stochastic Volterra equations in spaces of 
distributions} \label{SEEMDch:4} 
\chaptermark{Stochastic Volterra equations in spaces of distributions}

In this chapter we study two classes of linear Volterra 
equations driven by spatially homogeneous Wiener process.
We consider existence of solutions to these equations in the space of 
tempered distributions and then derive conditions under which the solutions
are function-valued or even continuous. The conditions obtained are expressed 
in terms of spectral measure and the space correlation of the noise process,
as well. Moreover, we give description of asymptotic
properties of solutions.

The chapter is organized as follows. In section \ref{PizSS2} we introduce
generalized and classical homogeneous Gaussian random fields basing on 
\cite{GV64}, \cite{Ad81} and \cite{PZ97}. We recall some
facts which connect the generalized random fields with their space corralations
and spectral measures. Moreover, we recall some results used in the proofs of
the main theorems. Section \ref{SEEMTDsec:2} originates from \cite{KZ00a}.
Here we study regularity of solutions to the equation (\ref{PizE1.1}) and
provide some applications of these results. Section \ref{SEEMTDsec:3} is a
natural continuation of section \ref{SEEMTDsec:2}. In this section we give
necessary and sufficient conditions for the existence of a limit measure to the
stochastic equation under consideration. Results of section \ref{SEEMTDsec:3}
come from \cite{Ka03}. In section \ref{SEEMTDsec:4} we study an
integro-differential stochastic equation with infinite delay. We provide 
necessary and sufficient conditions under which weak solution to that 
equation takes values in a Sobolev space. Section \ref{SEEMTDsec:4} 
originates from \cite{KL07a}.


\section{Generalized and classical  homogeneous \protect \\Gaussian random
fields} \label{PizSS2}

We start from recalling several concepts needed in this chapter. 

\index{$S(\mathbb{R}^d)$}
\index{$S_c(\mathbb{R}^d)$}
\index{$S'(\mathbb{R}^d)$}
\index{$S_c'(\mathbb{R}^d)$}
\index{$\langle \xi,\psi\rangle$}
\index{$\psi_{(s)}$}
Let $S(\mathbb{R}^d)\,$, $S_{c}(\mathbb{R}^d)\,$, denote respectively the spaces of
all infinitely differentiable  rapidly
decreasing real and complex  functions on $\mathbb{R}^d$ and $S'(\mathbb{R}^d)$,\,
$S_{c}'(\mathbb{R}^d)$
denote the
spaces of real and complex, tempered distributions. The value of a
distribution
$\xi\in S_{c}'(\mathbb{R}^d)$ on a test function $\psi$ will be written as
$\langle \xi,\psi\rangle$.
For $\psi\in S(\mathbb{R}^d)$ we set $\psi_{(s)}(\theta)=\overline
{\psi(-\theta)}$, $\theta\in\mathbb{R}^d$. Denote by $S_{(s)}(\mathbb{R}^d)$
the space
of all $\psi\in S(\mathbb{R}^d)$ such that $\psi=\psi_{(s)}$, and by
$S'_{(s)}(\mathbb{R}^d)$ the space of all $\xi\in S'(\mathbb{R}^d)$ such that
$\langle\xi,\psi\rangle=\langle\xi,\psi_{(s)}\rangle$ for every
$\psi\in S(\mathbb{R}^d)$.
\index{$S_{(s)}'(\mathbb{R}^d)$}

We define the derivative $\dot{\xi}$ of the distribution 
$\xi\in S'(\mathbb{R}^d)$ by the
formula $\; \langle\dot{\xi},\varphi\rangle =
 -\langle\xi,\dot{\varphi}\rangle\;$ for $\varphi\in S(\mathbb{R}^d)$, 
 see \cite{GS64}. 

In the chapter we denote by ${{\cal F}}$ the 
Fourier transform\index{transform!Fourier} both on
$S_{c}(\mathbb{R}^d)$, and on $S_{c}'(\mathbb{R}^d)$. In particular,
\index{${\cal F}\psi(\theta)$}
$$
{{\cal F}}\psi(\theta)=\int_{\mathbb{R}^d}e^{-2\pi
i\langle\theta,\eta\rangle}
\psi(\eta)d\eta, \,\,\,\psi \in S_{c}(\mathbb{R}^d),
$$
and for the inverse Fourier transform ${{\cal F}}^{-1},$
\index{${\cal F}^{-1}\psi(\theta)$}
$$
{{\cal F}}^{-1}\psi(\theta)=\int_{\mathbb{R}^d}e^{2\pi
i\langle\theta,\eta\rangle}\psi(\eta)d\eta,\,\,\,\psi \in
S_{c}(\mathbb{R}^d) .
$$
Moreover, if $\xi\in S_{c}'(\mathbb{R}^d)$,
$$
\langle{{\cal F}}\xi,\psi\rangle=\langle\xi,{{\cal F}}^{-1}\psi\rangle 
$$
for all $\psi\in S_{c}(\mathbb{R}^d)$. Let us note that ${{\cal F}}$
transforms the space of tempered
distributions
$S'(\mathbb{R}^d)$ into $S'_{(s)}(\mathbb{R}^d)$.

For any $h\in \mathbb{R}^d$, $\psi \in S(\mathbb{R}^d)$, $\xi \in
S'(\mathbb{R}^d)$, the {\tt translations} $\tau_{h}\psi$, $\tau_{h}' \xi$
are defined by the formulas
\index{translations}
\index{$\tau_{h}\psi$}
\index{$\tau_{h}' \xi$}
$$
\tau_{h} \psi (x) = \psi (x-h), \qquad \langle \tau_{h}' \xi\,,\psi\rangle =
\langle \xi\,,\tau_{h}\psi\,\rangle, \quad x\in \mathbb{R}^d.
$$

\index{${\cal B}(S'(\mathbb{R}^d))$}
\index{${\cal B}(S_{c}'(\mathbb{R}^d))$}
By ${\cal B}(S'(\mathbb{R}^d))$ and ${\cal B}(S_{c}'(\mathbb{R}^d))$  
we denote the smallest
$\sigma$--algebras of  subsets of $S'(\mathbb{R}^d)$ and  
$S_{c}'(\mathbb{R}^d)$, respectively, such that for  any
test function $\varphi $ the mapping $\xi\to\langle\xi,\varphi\rangle$ is
measurable.
\vskip1mm

The below notions of generalized random fields, their space correlations and
spectral measures are recalled directly from \cite{GV64}.
\vskip1mm

\index{generalized random field}
Let $(\Omega,{{\cal F}},P)$ be a complete probability space. Any
measurable mapping  $Y:\
\Omega\to S'(\mathbb{R}^d)$ is called a {\tt generalized random field}\,.
\index{generalized random field!Gaussian}
\index{generalized random field!homogeneous}
A generalized random field  $Y \,$ is  called
{\tt Gaussian}  if $\langle Y,\varphi\rangle$ is a Gaussian random variable
for any $\varphi\in S(\mathbb{R}^d)$. The   definition implies  that for any
functions
$\varphi_1,\ldots, \varphi_n\in S(\mathbb{R}^d)$ the  random vector $(\langle
Y,\varphi_1\rangle, \ldots, \langle Y,\varphi_n\rangle)$ is also Gaussian.
One says that a generalized random field $Y \,$ is
{\tt homogeneous} or {\tt stationary}  if for all $h\in\mathbb{R}^d$, the
translation $\tau'_h(Y)$ of $Y$ has the same probability law as $Y$.
\index{generalized random field!stationary}

\index{distribution!positive-definite}
A distribution $\Gamma$ on the space $S(\mathbb{R}^d)$ is called {\tt 
positive-definite} if \linebreak 
$\langle \Gamma,\varphi\star\varphi_{(s)}\rangle\ge 0$
for every $\varphi\in S(\mathbb{R}^d)$, where $\varphi\star\varphi_{(s)}$
denotes the convolution of the functions $\varphi$ and $\varphi_{(s)}$.

If $Y$ is a homogeneous, Gaussian random field then
for each $\psi \in S(\mathbb{R}^d)$, $\langle Y,\psi\rangle$ is a Gaussian
random variable and the  bilinear  functional $q:\ S(\mathbb{R}^d)\times
S(\mathbb{R}^d)\to\mathbb{R}$ defined by the formula,
\index{$q(\varphi,\psi)$}
$$
q(\varphi,\psi)=\mathbb{E}(\langle Y,\varphi\rangle\langle
Y,\psi\rangle) \quad \mbox{for} \quad \varphi,\ \psi\in S(\mathbb{R}^d)\,,
$$
is continuous and positive definite.
Since
$q(\varphi,\psi)=q(\tau_h\varphi,\tau_h\psi)$ for all
$\varphi$, $\psi\in S(\mathbb{R}^d)$, $h\in\mathbb{R}^d$, there exists, see e.g.
\cite[Chapter II]{GV64}, a unique    positive-definite distribution
$\Gamma\in S'(\mathbb{R}^d)$ such that for all $\varphi,\ 
\psi\in S(\mathbb{R}^d)\,,$ one has
$$
q(\varphi,\psi)=\langle\Gamma,\varphi*\psi_{(s)}\rangle\,.
$$
The distribution $\Gamma$ is called the {\tt space correlation} of
\index{$\Gamma$}
\index{space correlation}
the field $Y$. By Bochner--Schwartz theorem the positive-definite
distribution $\Gamma$ is the inverse Fourier transform of a 
unique positive, symmetric, tempered measure $\mu$ on  $\mathbb{R}^d: 
\Gamma={\cal F}^{-1}(\mu)$. The measure $\mu$ is
called the {\tt spectral measure} of $\Gamma$ and of the field $Y$.
\index{spectral measure}
\index{$\mu$}

Summing up a generalized homogeneous Gaussian random field $Y$ is
characterized by the following  properties:
\begin{enumerate}
\item for any $\psi\in S(\mathbb{R}^d)$, $\langle Y,\psi\rangle$ is
a real--valued Gaussian random variable,
\item there exists a positve-definite
distribution $\Gamma\in S'(\mathbb{R}^d)$ such that
for all $\varphi,\ \psi\in S(\mathbb{R}^d)$
$$
\mathbb{E}\left(\langle Y,\varphi\rangle
\langle Y,\psi\rangle\right)=\langle\Gamma,\varphi*\psi_{(s)}\rangle,
$$
\item the distribution $\Gamma$ is the inverse Fourier transform of a
positive and symmetric {\tt tempered measure} 
\index{tempered measure} 
$\mu$ on $\mathbb{R}^d$, that is, such that
$$
\int_{\mathbb{R}^d}\left(1+|\lambda|\right)^r \,\mu(d\lambda)<+\infty,\quad
{\rm for~some~} r<0\,.
$$
\end{enumerate}

\index{random field or classical random field}
Let $Y:\ \Omega\to S'(\mathbb{R}^d)$ 
be a generalized random field. When the values of $Y$ are
functions, with probability $1$, then $Y$ is  called a  {\tt classical
random field} or shortly {\tt random field}. In this case, by Fubini's
theorem, for any $\theta\in\mathbb{R}^d$ the function $Y(\theta)$ is
well--defined and 
$$
\langle Y,\varphi\rangle
=\int_{\mathbb{R}^d} Y(\theta)\varphi(\theta)d\theta\,,
$$
for any $\varphi\in S(\mathbb{R}^d)$.
Thus any random field $Y$ may be identified with the family of random
variables $\{Y_{\theta}\}_{\theta\in\mathbb{R}^d}$
parametrized  by
$\theta\in\mathbb{R}^d$.
In particular a homogeneous (stationary), Gaussian random field is a family
of Gaussian random
variables $Y(\theta)$, $\theta \in \mathbb{R}^d$, with Gaussian laws invariant
with  respect to
all translations. That is,  for any  $\theta_1,\ldots,\theta_n
\in\mathbb{R}^d$ and $h\in\mathbb{R}^d$, the law of
$(Y(\theta+h),\ldots,Y(\theta_n+h))$
does not depend on $h\in\mathbb{R}^d$.

For the sake of completness we sketch now the proof of the following
result, (see  also  \cite{PZ97}).

\begin{proposition}\label{PizPth1}
A generalized, homogeneous, Gaussian random field $Y$ is classical  if and
only if the space
correlation $\Gamma$
of $Y$ is a bounded function and if and only if the
spectral measure $\mu$ of $Y$ is finite.
\end{proposition}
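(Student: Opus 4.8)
The plan is to prove the two equivalences by a cycle: $Y$ classical $\Rightarrow$ $\Gamma$ a bounded function $\Rightarrow$ $\mu$ finite $\Rightarrow$ $Y$ classical. The starting point is the characterization of a generalized homogeneous Gaussian random field recalled just above the statement: $Y$ is determined by the positive-definite distribution $\Gamma \in S'(\mathbb{R}^d)$ with $\mathbb{E}(\langle Y,\varphi\rangle \langle Y,\psi\rangle) = \langle \Gamma, \varphi * \psi_{(s)}\rangle$, and $\Gamma = \mathcal{F}^{-1}(\mu)$ for a positive symmetric tempered measure $\mu$.

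First I would treat the implication ``$\mu$ finite $\Rightarrow$ $Y$ classical.'' If $\mu$ is a finite measure, then $\Gamma = \mathcal{F}^{-1}(\mu)$ is a bounded continuous function, and moreover $\langle \Gamma, \varphi * \psi_{(s)}\rangle = \int_{\mathbb{R}^d} \mathcal{F}\varphi(\lambda)\,\overline{\mathcal{F}\psi(\lambda)}\,\mu(d\lambda)$ by Parseval. The idea is to build an explicit classical (ordinary) Gaussian random field $\widetilde{Y}(\theta)$, $\theta \in \mathbb{R}^d$, with covariance $\mathbb{E}(\widetilde{Y}(\theta)\overline{\widetilde{Y}(\theta')}) = \Gamma(\theta - \theta') = \int e^{2\pi i \langle \theta - \theta', \lambda\rangle}\mu(d\lambda)$ — for instance as a stochastic integral $\widetilde{Y}(\theta) = \int_{\mathbb{R}^d} e^{2\pi i \langle \theta,\lambda\rangle} M(d\lambda)$ against a complex Gaussian orthogonal measure with intensity $\mu$ (the spectral representation of a stationary field), taking the real part to land in $S'_{(s)}$. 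One then checks that $\theta \mapsto \widetilde{Y}(\theta)$ has a jointly measurable version and that, viewed as a generalized field $\varphi \mapsto \int \widetilde{Y}(\theta)\varphi(\theta)\,d\theta$, it has exactly the bilinear functional $q(\varphi,\psi) = \langle \Gamma, \varphi*\psi_{(s)}\rangle$; since a generalized Gaussian field is determined in law by $q$, this field is a version of $Y$, so $Y$ is classical.

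Next, ``$Y$ classical $\Rightarrow$ $\Gamma$ a bounded function.'' If $Y$ has function values a.s., then $\Gamma$ as a distribution is represented by the function $\theta \mapsto \mathbb{E}(Y(\theta)\,\overline{Y(0)})$ — more precisely, by homogeneity the covariance $\mathbb{E}(Y(\theta+h)\overline{Y(h)})$ is independent of $h$, defines a function $\gamma(\theta)$, and unwinding $q(\varphi,\psi) = \mathbb{E}(\langle Y,\varphi\rangle\langle Y,\psi_{(s)}\rangle)$ via Fubini shows $\langle \Gamma, \varphi*\psi_{(s)}\rangle = \int \gamma(\theta-\theta')\varphi(\theta)\overline{\psi(\theta')}\,d\theta\,d\theta'$, i.e. $\Gamma = \gamma$ as a distribution. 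Boundedness of $\gamma$ follows from Cauchy–Schwarz: $|\gamma(\theta)| = |\mathbb{E}(Y(\theta)\overline{Y(0)})| \le \mathbb{E}|Y(0)|^2 = \gamma(0) < \infty$ — here one needs that $\gamma(0) = \mathbb{E}|Y(0)|^2$ is finite, which is the delicate point and requires knowing $Y(\theta)$ is genuinely square-integrable pointwise; this is handled by a standard argument using measurability of $Y$ and local integrability of its trajectories to exhibit $\gamma$ as locally integrable, then positive-definiteness forces $\gamma \in L^\infty$. Finally ``$\Gamma$ bounded function $\Rightarrow$ $\mu$ finite'': $\Gamma$ being a bounded positive-definite (continuous, after modification) function is, by Bochner's theorem, the Fourier transform of a finite positive measure; by uniqueness of the spectral measure this measure is $\mu$, so $\mu(\mathbb{R}^d) = \Gamma(0) < \infty$.

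The main obstacle I anticipate is the measure-theoretic bookkeeping in ``$Y$ classical $\Rightarrow$ $\Gamma$ bounded'': passing from ``$Y$ has function values a.s.'' to ``the pointwise second moments $\mathbb{E}|Y(\theta)|^2$ are finite and constant in $\theta$'' is not automatic and needs Fubini applied to $\langle Y, \varphi\rangle$ together with an approximate-identity argument localizing $\varphi$ near a point, plus the positive-definiteness of $\Gamma$ to upgrade local integrability of $\gamma$ to boundedness. The stochastic-integral construction in the converse direction is standard but must be done carefully to ensure the resulting field is real (lands in $S'_{(s)}$) and measurable as a map $\Omega \to S'(\mathbb{R}^d)$; I would cite the spectral representation of stationary Gaussian fields from \cite{GV64} or \cite{PZ97} rather than reprove it.
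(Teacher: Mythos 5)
Your proposal is correct and follows essentially the same route as the paper: the same cycle of implications (classical $\Rightarrow$ $\Gamma$ bounded $\Rightarrow$ $\mu$ finite $\Rightarrow$ classical), with the paper constructing the classical version via Kolmogorov's existence theorem plus stochastic continuity rather than your spectral stochastic integral, and with the Cauchy--Schwarz bound $|\Gamma(\theta_1-\theta_2)|\le\Gamma(0)$ for the first implication. The only place where the paper supplies detail you defer is the step ``$\Gamma$ bounded $\Rightarrow$ $\mu$ finite'': instead of invoking Bochner's theorem ``after modification to continuity,'' it mollifies with the Gaussian densities $p_t$, bounds $\Gamma_t(0)=\int e^{-t|\lambda|^2}\mu(d\lambda)$ by $\sup|\Gamma|$, and lets $t\downarrow 0$ --- which is exactly the argument your parenthetical is implicitly relying on.
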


\begin{proof}
First, let us prove that if a positive definite distribution $\Gamma$
is a bounded function then
it  is continuous and its spectral measure $\mu$ is finite.
It is enough to show only that the spectral measure $\mu$ of the
distribution $\Gamma$ is finite.

Let $p_t(\cdot)$ denote the normal density with the Fourier transform
$e^{-t|\lambda|^2}$, $t>0$, $\lambda\in\mathbb{R}^d$. Define measures
$\mu_t$,
$t>0$, by the formula
$$
\mu_t(B)=\int_B e^{-t|\lambda|^2}\mu(d\lambda),\quad
B\subset\mathbb{R}^d.
$$
Since the measure $\mu$ is tempered, the measures $\mu_t$ are finite.

\noindent The Fourier transform ${{\cal F}}(\mu_t)$ of $\mu_t$, for any $t>0$,
is a continuous function and
$$
\Gamma_t(\theta) ={{\cal F}}^{-1}(\mu_t)(\theta)
=(\Gamma*p_t)(\theta),\quad\theta\in\mathbb{R}^d.
$$
But
$$
\Gamma_t(\theta) = \int_{\mathbb{R}^d}\Gamma(\theta-
\eta)p_t(\eta)d\eta
$$
and
$$
|\Gamma_t(\theta)|
\le \left[\sup_{\zeta\in\mathbb{R}^d}|\Gamma(\zeta)|\right]
\int_{\mathbb{R}^d}p_t(\eta)d\eta
\le \left[\sup_{\zeta\in\mathbb{R}^d}|\Gamma(\zeta)|\right]\,.
$$
In particular
$$
|\Gamma_t(0)|=\int_{\mathbb{R}^d}e^{-t|\lambda|^2}\mu(d\lambda)
\le\left[\sup_{\zeta\in\mathbb{R}^d}|\Gamma(\zeta)|\right]\,.
$$
Letting $t\downarrow 0$ one obtains that
$$
\int_{\mathbb{R}^d}\mu(d\lambda)\le
\left[\sup_{\zeta\in\mathbb{R}^d}|\Gamma(\zeta)|\right]<+\infty\,,
$$
so the measure $\mu$ is finite as required.

Let now $Y$ be a classical, homogeneous, Gaussian random field. It means
that the field $Y$ is function--valued. Moreover 
$\mathbb{E}(Y(\theta_1)Y(\theta_2)) 
=\Gamma(\theta_1-\theta_2)$, for
$\theta_1$, $\theta_2\in\mathbb{R}^d$, the space correlation $\Gamma$ is
positive--definite and
$|\Gamma(\theta_1-\theta_2)|\le \Gamma(0)<+\infty$.

Let now $\mu$ be the finite spectral measure of a homogeneous
Gaussian random field $Y$. Then  $\Gamma$ is   a positive definite
continuous function. By Kolmogorov's existence theorem, there exists a
family ${\tilde Y}(\theta)$, $\theta \in\mathbb{R}^d$, such that:
$$
\mathbb{E}({\tilde Y}(\theta_1)\, {\tilde Y}(\theta_2)) = 
   \Gamma (\theta_1 - \theta_2)\,,~~~~~~\theta_1, \theta_2\,\in \mathbb{R}^d\,.
$$
From the continuity of $\Gamma$, it follows that the family  ${\tilde
Y}$, is stochastically continuous and therefore has a measurable version.
Since the laws of the random fields $Y$, ${\tilde Y}$ coincide, the
result follows. 
\qed
\end{proof}

We  finish the section recalling a  continuity criterium which will be used
in the proof of the continuity results, see (\cite{Ad81}, Th. 3.4.3).
\begin{proposition}\label{PizPl1}
Let $Y(\theta)$, $\theta\in\mathbb{R}^d$, be a homogeneous,
Gaussian random field  with the spectral measure $\mu$.
If, for some $\varepsilon>0$,
$$
\int_{\mathbb{R}^d} (\ln(1+|\lambda|))^{1+\varepsilon}
\mu(d\lambda) < +\infty,
$$
then $Y$ has a version with almost surely continuous sample functions.
\end{proposition}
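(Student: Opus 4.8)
The plan is to reduce the assertion to Dudley's sufficient condition for sample-path continuity of a Gaussian process, expressed through the metric entropy of the parameter set relative to the field's intrinsic pseudometric. First I would observe that the hypothesis forces $\mu$ to be finite: the weight $(\ln(1+|\lambda|))^{1+\varepsilon}$ is bounded below by a positive constant on $\{|\lambda|\ge 2\}$, while $\mu$, being tempered, is finite on bounded sets, so $\mu(\mathbb{R}^d)<+\infty$. Hence by Proposition~\ref{PizPth1} the field $Y$ is classical and stochastically continuous, its covariance is the real-valued even function $\Gamma(\theta)=\int_{\mathbb{R}^d}\cos(2\pi\langle\theta,\lambda\rangle)\,\mu(d\lambda)$, and the canonical pseudometric satisfies $d(\theta_1,\theta_2)^2:=\mathbb{E}|Y(\theta_1)-Y(\theta_2)|^2=2\bigl(\Gamma(0)-\Gamma(\theta_1-\theta_2)\bigr)$. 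By stationarity it suffices to produce a version of $Y$ with uniformly continuous paths on the unit cube $[0,1]^d$ and then glue the resulting versions over a countable cover of $\mathbb{R}^d$ by unit cubes.

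The heart of the argument is a quantitative bound on the covariance increment. Using $1-\cos x\le\min\{2,x^2/2\}$ one gets, for every $R>0$,
\[
\Gamma(0)-\Gamma(\theta)=\int_{\mathbb{R}^d}\bigl(1-\cos(2\pi\langle\theta,\lambda\rangle)\bigr)\mu(d\lambda)\le 2\pi^2|\theta|^2\!\!\int_{|\lambda|\le R}\!\!|\lambda|^2\mu(d\lambda)+2\,\mu(\{|\lambda|>R\}).
\]
The first term is at most $2\pi^2 C_0R^2|\theta|^2$ with $C_0=\mu(\mathbb{R}^d)$, and, by Chebyshev's inequality against the logarithmic weight, the second is at most $2M(\ln(1+R))^{-(1+\varepsilon)}$ with $M=\int_{\mathbb{R}^d}(\ln(1+|\lambda|))^{1+\varepsilon}\mu(d\lambda)$. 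Choosing $R=|\theta|^{-1/2}$ for $0<|\theta|<1$ makes the first term $O(|\theta|)$ and the second $O\bigl((\ln(1/|\theta|))^{-(1+\varepsilon)}\bigr)$, so there is a constant $C$ with
\[
d(\theta_1,\theta_2)^2\le\frac{C}{\bigl(\ln(1/|\theta_1-\theta_2|)\bigr)^{1+\varepsilon}}
\]
for all $\theta_1,\theta_2\in[0,1]^d$ with $|\theta_1-\theta_2|$ sufficiently small.

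It then remains to convert this modulus of continuity into a finite entropy integral. From the last display a $d$-ball of radius $\delta$ contains a Euclidean ball of radius $\exp\bigl(-(C/\delta^2)^{1/(1+\varepsilon)}\bigr)$, so the covering number of $[0,1]^d$ in the metric $d$ obeys $N\bigl([0,1]^d,d,\delta\bigr)\le\exp\bigl(c_d(C/\delta^2)^{1/(1+\varepsilon)}\bigr)$ for small $\delta$, whence $\sqrt{\log N\bigl([0,1]^d,d,\delta\bigr)}\le c_d'\,\delta^{-1/(1+\varepsilon)}$. Since $\varepsilon>0$ the exponent $1/(1+\varepsilon)$ is strictly less than $1$, so the Dudley entropy integral $\int_0^{\infty}\sqrt{\log N([0,1]^d,d,\delta)}\,d\delta$ converges. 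Dudley's theorem (equivalently, Fernique's criterion or the Garsia--Rodemich--Rumsey inequality) then furnishes a version of $Y|_{[0,1]^d}$ with almost surely uniformly continuous sample paths; patching over unit cubes and invoking stationarity yields a globally continuous version of $Y$, exactly as in \cite[Th.~3.4.3]{Ad81}.

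The main obstacle is the covariance estimate of the second paragraph: one must split the spectral integral at a frequency cutoff $R=R(|\theta|)$ and balance the low-frequency part — controlled here only by $R^2\mu(\mathbb{R}^d)$, since no moment of $\mu$ beyond finiteness is available — against the logarithmically weighted high-frequency tail. Any power $R=|\theta|^{-\gamma}$ with $0<\gamma<1$ does the job, but extracting precisely the power $1+\varepsilon$ of $\ln(1/|\theta|)$ is what makes the Dudley integral convergent and is the only delicate point; everything after that is the standard Gaussian continuity machinery applied verbatim.
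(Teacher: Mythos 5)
Your proof is correct. The paper does not prove this proposition at all — it is recalled verbatim from Adler's book (Theorem 3.4.3 of \cite{Ad81}) as a known continuity criterion — and your argument (finiteness of $\mu$, the spectral-cutoff estimate $\Gamma(0)-\Gamma(\theta)\lesssim |\theta|+(\ln(1/|\theta|))^{-(1+\varepsilon)}$ with $R=|\theta|^{-1/2}$, and the resulting convergent Dudley entropy integral) is essentially the standard proof of that cited theorem, so nothing further is needed.
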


\section{Regularity of solutions to stochastic Volterra equations} 
\label{SEEMTDsec:2}
\sectionmark{Regularity of solutions to sVe}
This section is concerned with the following stochastic Volterra equation
\begin{equation}\label{PizE1.1}
X(t,\theta)=X_0(\theta)+\int_0^tb(t-\tau)AX(\tau,\theta)d\tau
+ W (t,\theta),
\end{equation}
where $t\in\mathbb{R}_+$, $\theta\in \mathbb{R}^d$, $X_0\in S'(\mathbb{R}^d)$, 
$b\in L^1_{loc} (\mathbb{R}_+)$ and $W $ is a spatially
homogeneous Wiener process which takes values in the space of real, tempered
distributions $S'(\mathbb{R}^d)$. The class of operators $A\,$ covered in the
present chapter contains
in particular the Laplace operator $\Delta\,$ and its fractional powers $-
(-\Delta)^{\beta/2}$, $\beta \in (0,2]$.

The equation  (\ref{PizE1.1}) is a generalization of stochastic heat and wave
equations studied by many authors,
see e.g.\ \cite{DF98}, \cite{KZ01}, \cite{KZ00}, \cite {MM00},
\cite{MS99}, \cite{Mu97}, \cite{PZ00} and \cite{Wa86} and references therein.
In the context of infinite particle systems stochastic heat equation of 
a~similar type has been investigated by Bojdecki with Jakubowski
\cite{BJ90}, \cite{BJ97} and  \cite{BJ99} and by Dawson with Gorostiza in
\cite{DG90}.

As we have already said, our aim is to obtain conditions under which
solutions to the stochastic Volterra  equation (\ref{PizE1.1})  are 
function--valued and
even continuous with respect to the space variable. In the chapter we
treat the case of general dimension and the correlated, spatially 
homogeneous noise $W_{\Gamma}$ of the general form.

\subsection{Stochastic integration}\label{PizSS3}

In this chapter we will  integrate operator--valued functions
\index{${\cal R}(t)$}
${\cal R}(t)$, $t\ge 0$, with respect to a Wiener process $W$. The
operators ${\cal R}(t)$,
$t\ge 0$, will be  non-random  and will act from some  linear subspaces
of $S'(\mathbb{R}^d)$ into $S'(\mathbb{R}^d)$.
We shall assume that $W(t)$, $t\geq 0$, is a continuous process with
independent increments taking
values in $S'(\mathbb{R}^d)$. The process $W$ is space homogeneous in the
sense that, for each $t\geq
0$,   random  variables $W(t)\,$ are stationary, Gaussian, generalized
random fields. We denote by
$\Gamma$ the  covariance of $W(1)\,$ and the associated spectral measure by
$\mu$. To underline
the fact that the probability law of $W\,$ is determined by $\Gamma\,$ we
will write $W_{\Gamma}$.
From now on we denote by  $q$ a scalar product  on $S(\mathbb{R}^d)\,$ 
given by the formula
$$
q\langle\phi, \psi\rangle= \langle\Gamma,\phi*\psi_{(s)}\rangle\,,\,\,\phi, \psi \in
S(\mathbb{R}^d).
$$

Let us present three examples of spatially homogeneous Wiener
processes.

\noindent{\bf Examples} 1.
Important examples of random fields are provided by symmetric
$\alpha$--stable
distributions $\Gamma(x)=e^{-|x|^{\alpha}}$, where $\alpha\in]0,2]$.
For $\alpha=1$ and $\alpha=2$ the densities of the spectral measures
are given
by the formulas $c_1\left(1+|x|^2\right)^{-\frac{d+1}{2}}$ and
$c_2e^{-|x|^2}$, where $c_1$ and $c_2$ are appropriate constants.\\

2.
Let $q(\psi,\varphi)=\left\langle\left(-\Delta+m^2\right)^{-1}\psi,
\varphi\right\rangle$, where $\Delta$ is the Laplace operator on
$\mathbb{R}^d$ and
$m$ is a strictly positive constant. Then $\Gamma$ is a continuous
function on
$\mathbb{R}^d\setminus\{0\}$ and $\frac{d\mu}{dx}(x)=(2\pi)^{-\frac{d}{2}}
\left(|x|^2+m^2\right)^{-1}$. The law of $W(1)$ is the 
so--called Euclidean free field.

3. 
Let $q(\psi,\varphi)=\langle\psi,\varphi\rangle$. Then $\Gamma$ is
equal to the
Dirac $\delta_0$--function, its spectral density $\frac{d\mu}{dx}$ is
the
constant function $(2\pi)^{-\frac{d}{2}}$ and $\frac{\partial W}
{\partial t}$ is a white noise on $L^2([0,\infty[\times\mathbb{R}^d)$. If
$B(t,x)$, $t\ge 0$ and $x\in\mathbb{R}^d$, is a Brownian sheet on
$[0,\infty[\times\mathbb{R}^d$, then $W$ can be defined by the
formula
$$
W(t,x)=\frac{\partial^dB(t, x)}{\partial x_1\ldots\partial x_d},\quad
t\ge 0.
$$

The crucial role in the theory of  stochastic integration with respect
 to  $W_{\Gamma}$  is
played by the Hilbert space $S'_{q}\subset S'(\mathbb{R}^d)$ called
the {\tt kernel} or the {\tt reproducing kernel}
of $W_{\Gamma}$. Namely the space
$S'_{q}$ consists of all distributions $\xi\in S'(\mathbb{R}^d)$ for which
there exists a constant $C$
such that $$
|\langle\xi,\psi\rangle|\le C\sqrt{ q(\psi,\psi)},\quad \psi\in S(\mathbb{R}^d).
$$
The norm in $S'_{q}$ is given by the formula
$$
|\xi|_{S'_{q}}=\sup_{\psi\in S}\frac{|\langle\xi,\psi\rangle|}
{\sqrt{q(\psi,\psi)}}.
$$
Let us assume that we require that the stochastic integral should take
values in
a Hilbert space $H\,$ continuously imbedded into $S'(\mathbb{R}^d)\,.$ Let
$L_{HS}(S'_{q},H)$ be
the space of Hilbert--Schmidt operators  from
$S'_{q}$ into $H$. Assume that ${\cal R}(t)$,\, $t\ge 0$ is measurable
$L_{HS}(S'_q,H)$--valued function  such that
$$
\int_0^t\|{\cal R}(\sigma)\|^2_{L_{HS}(S'_{q},H)}d\sigma
 <+\infty,\quad {\rm for\ all~} t\ge 0\,.
$$
Then the stochastic integral
$$
\int_0^t{\cal R}(\sigma)dW_{\Gamma}(\sigma),\quad t\ge 0
$$
can be defined in a standard way, see \cite{It84}, \cite{DZ92} or \cite{PZ97}. 
The stochastic integral is an $H$--valued martingale for which
$$
\mathbb{E}\left(\int_0^t{\cal R}(\sigma)dW_{\Gamma}(\sigma)\right)=0,
\quad t\ge 0
$$
and
$$
\mathbb{E}\left|\int_0^t{\cal R}(\sigma)dW_{\Gamma}(\sigma)\right|_H^2
=\mathbb{E}\left(\int_0^t\|{\cal R}(\sigma)\|^2_{L_{HS}(S'_{q},H)}d\sigma
\right),\quad t\ge 0.
$$
We will need a characterization of the space $S'_{q}$. 
In the proposition below,
$L_{(s)}^2(\mathbb{R}^d,\mu)$ denotes the subspace of 
$L^2(\mathbb{R}^d,\mu;\mathbb{C})$ consisting of all
functions $u$ such that $u_{(s)}(\theta)=u(-\theta)$ for 
$\theta\in\mathbb{R}^d$.

\begin{proposition}\label{PizP3.1} \cite[Proposition 1.2]{PZ97}
A distribution $\xi$ belongs to $S'_{q}$ if and only if
$\xi\!=\!\widehat{u\mu}$ for some $u\in\! L_{(s)}^2(\mathbb{R}^d,\mu)$.
Moreover,
if $\xi=\widehat{u\mu}$ and $\eta=\widehat{v\mu}$, then
$$
\langle\xi,\eta\rangle_{S'_{q}}
=\langle u,v\rangle_{L_{(s)}^2(\mathbb{R}^d,\mu)}.
$$
\end{proposition}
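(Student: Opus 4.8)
The statement to prove is Proposition \ref{PizP3.1}, the characterization of the reproducing kernel space $S'_q$ via the Fourier transform: $\xi \in S'_q$ iff $\xi = \widehat{u\mu}$ for some $u \in L^2_{(s)}(\mathbb{R}^d,\mu)$, together with the isometry formula for inner products.

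The plan is to work on the Fourier side throughout, since the bilinear form $q$ is naturally diagonalized by the Fourier transform. First I would rewrite the defining seminorm $q(\psi,\psi) = \langle \Gamma, \psi * \psi_{(s)}\rangle$ in spectral terms: using $\Gamma = \mathcal{F}^{-1}\mu$ and the fact that $\mathcal{F}(\psi * \psi_{(s)}) = |\mathcal{F}\psi|^2$ (up to the normalization convention fixed in the excerpt), one gets $q(\psi,\psi) = \int_{\mathbb{R}^d} |\mathcal{F}\psi(\lambda)|^2 \, \mu(d\lambda) = \|\mathcal{F}\psi\|^2_{L^2(\mu)}$. Similarly $\langle \xi,\psi\rangle$ should be re-expressed: writing $\xi = \mathcal{F}\nu$ for the appropriate object and using $\langle \mathcal{F}\xi,\psi\rangle = \langle \xi, \mathcal{F}^{-1}\psi\rangle$, the pairing $\langle \xi,\psi\rangle$ becomes a pairing against $\mathcal{F}\psi$. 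So the condition $|\langle\xi,\psi\rangle| \le C\sqrt{q(\psi,\psi)}$ becomes: the linear functional $\mathcal{F}\psi \mapsto \langle\xi,\psi\rangle$ is bounded with respect to the $L^2(\mathbb{R}^d,\mu)$-norm on the image $\{\mathcal{F}\psi : \psi \in S\}$.

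The core of the argument is then a Riesz representation step. Since $\{\mathcal{F}\psi : \psi \in S(\mathbb{R}^d)\}$ is dense in $L^2(\mathbb{R}^d,\mu;\mathbb{C})$ (the measure $\mu$ is tempered, so Schwartz functions and their Fourier transforms are $\mu$-dense — this density I would either cite or verify by a standard mollification/truncation argument), the bounded functional extends uniquely to all of $L^2(\mu)$, and by Riesz there is a unique $u \in L^2(\mathbb{R}^d,\mu;\mathbb{C})$ with $\langle\xi,\psi\rangle = \int \mathcal{F}\psi \cdot \bar u \, d\mu$ for all $\psi$. Unwinding the Fourier pairing, this says precisely $\xi = \widehat{u\mu}$ in $S'$. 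The symmetry constraint $\xi \in S'_{(s)}$ (or rather that $\xi$ is a real tempered distribution in the relevant sense) forces $u \in L^2_{(s)}$, i.e. $u_{(s)}(\theta) = u(-\theta)$; this needs to be checked by testing against $\psi_{(s)}$ and using that $\Gamma$, hence $\mu$, is symmetric. Conversely, if $\xi = \widehat{u\mu}$ with $u \in L^2_{(s)}(\mu)$, then $|\langle\xi,\psi\rangle| = |\int \mathcal{F}\psi\, \bar u\, d\mu| \le \|u\|_{L^2(\mu)} \|\mathcal{F}\psi\|_{L^2(\mu)} = \|u\|_{L^2(\mu)}\sqrt{q(\psi,\psi)}$ by Cauchy–Schwarz, so $\xi \in S'_q$. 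The inner product formula $\langle\xi,\eta\rangle_{S'_q} = \langle u,v\rangle_{L^2_{(s)}(\mu)}$ then follows because the map $u \mapsto \widehat{u\mu}$ is, by the above, a surjective isometry onto $S'_q$ when $S'_q$ carries the norm $\sup_\psi |\langle\xi,\psi\rangle|/\sqrt{q(\psi,\psi)}$ — one identifies this supremum with $\|u\|_{L^2(\mu)}$ via the density and Riesz step, and then polarization gives the bilinear identity.

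The main obstacle I anticipate is handling the Fourier-transform bookkeeping cleanly: keeping track of the $2\pi$ normalization in $\mathcal{F}$ used in the excerpt, the complex conjugations hidden in $\psi_{(s)}(\theta) = \overline{\psi(-\theta)}$, and making sure the reality/symmetry conditions ($S'_{(s)}$ versus $S'$, $L^2_{(s)}$ versus $L^2$) are matched correctly on both sides. A secondary technical point is the density of $\{\mathcal{F}\psi : \psi \in S\}$ in $L^2(\mathbb{R}^d,\mu)$ for an arbitrary tempered (possibly infinite, non-finite) measure $\mu$; since $S$ is dense in $L^2$ of any Radon measure and $\mathcal{F}$ maps $S$ onto $S$, this is actually immediate, but it is the one place where one must be careful not to implicitly assume $\mu$ is finite. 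Everything else is routine Hilbert-space duality once the spectral reformulation is in place.
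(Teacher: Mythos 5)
Your proposal is correct. Note that the paper itself gives no proof of this statement — it is quoted verbatim from \cite[Proposition 1.2]{PZ97} — so there is nothing internal to compare against; your argument (diagonalize $q$ via the Fourier transform to get $q(\psi,\psi)=\|\mathcal{F}\psi\|^2_{L^2(\mu)}$, use density of $\mathcal{F}(S(\mathbb{R}^d))$ in $L^2(\mathbb{R}^d,\mu)$ plus Riesz representation for the forward direction, Cauchy--Schwarz for the converse, and the resulting isometry plus polarization for the inner-product formula) is exactly the standard route taken in the cited source, and the two technical points you flag (the symmetry/reality bookkeeping and the density for a merely tempered, possibly infinite $\mu$) are indeed the only places requiring care.
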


\index{${\bf r}(t)$}
The operators ${\cal R}(t)$, $t\geq 0$, of convolution type are
of special interest 
\[
{\cal R} (t) \xi \,= {\bf r}(t)*\xi\,, 
\qquad t\geq 0\,,\quad\xi \in  S'(\mathbb{R}^d)\,,  \]
with ${\bf r}(t)\in S'(\mathbb{R}^d)\,$. The convolution operator is not, in
general, defined for
all $\xi \in  S'(\mathbb{R}^d)\,$ and for the stochastic integration it is
important to know
  under what conditions on ${\bf r}(\cdot)$ and $\xi$ the
convolution  is well--defined. For many
important cases
the Fourier transform\, ${{\cal F}}{\bf r}(t)(\lambda)  \,,t\geq 0\,,
\lambda \in \mathbb{R}^d $, is continuous in both variables and, 
for any $T\ge 0$\,,
\begin{equation}\label{PizEA2}
\sup_{t\in[0,T]}\sup_{\lambda \in \mathbb{R}^d}|{{\cal F}}{\bf r}(t)(\lambda)|=
M_{T} <+\infty.
\end{equation}
If this is the case then the operators ${\cal R}(t)$ can be defined 
using Fourier transforms
$$
{\cal R} (t)\xi = {\cal F}^{-1}({\cal F}{\bf r}(t)\,{\cal F}\xi) \,,
$$
for all $\xi$ such that ${\cal F}\xi\,$ has a representation as a function.
\vspace{1mm}

Now, we can characterize the stochastic convolution as follows.

\begin{theorem}\label{PizThDawson}
Assume that the function  ${{\cal F}}{\bf r} \,$ is continuous in both
variables and satisfies
condition (\ref{PizEA2}). Then the stochastic convolution
\index{${\cal R} * W_{\Gamma}(t)$}
$$
{\cal R} * W_{\Gamma}(t) = \int_{0}^{t} {\cal R} (t-\sigma)
dW_{\Gamma}(\sigma)\,, \quad t\geq 0\,,
$$
is a well--defined $S'(\mathbb{R}^d)$--valued stochastic process. For each
$t\geq 0$, ${\cal R} * W_{\Gamma}(t)\,$ is a Gaussian, stationary, generalized
random field with the spectral measure
\begin{equation}\label{PizThDaw1}
\mu_{t}(d\lambda) = \left(\int_{0}^{t} |{\cal F}{\bf
r}(\sigma)(\lambda)|^{2} d\sigma\right) \mu(d\lambda)\,, 
\end{equation} \index{$\mu_{t}(d\lambda)$}
and with the covariance 
\begin{equation}\label{PizThDaw2}
\Gamma_{t}\,=\int_{0}^{t}{\bf r}(\sigma) * \Gamma * {\bf
r}_{(s)}(\sigma) d\sigma\,. 
\end{equation}
\end{theorem}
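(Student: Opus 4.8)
The plan is to push everything to the Fourier side, where the convolution operators $\mathcal{R}(t)$ become multiplication by $\mathcal{F}\mathbf{r}(t)$ and the spatial covariance is encoded by the tempered spectral measure $\mu$. Fix $T\ge 0$ and $t\in[0,T]$. First I would check that the stochastic convolution is well defined in the sense recalled in Subsection \ref{PizSS3}: for each $\psi\in S(\mathbb{R}^d)$ the dual integrand $\sigma\mapsto \mathbf{r}_{(s)}(t-\sigma)*\psi$ has finite $q$-energy. Using $q(\phi,\phi)=\langle\Gamma,\phi*\phi_{(s)}\rangle$, the identity $\mathcal{F}\Gamma=\mu$, the convolution theorem and hypothesis (\ref{PizEA2}), one gets
$$\int_0^t q(\mathbf{r}_{(s)}(t-\sigma)*\psi,\ \mathbf{r}_{(s)}(t-\sigma)*\psi)\,d\sigma=\int_0^t\!\!\int_{\mathbb{R}^d}|\mathcal{F}\mathbf{r}(t-\sigma)(\lambda)|^2\,|\mathcal{F}\psi(\lambda)|^2\,\mu(d\lambda)\,d\sigma\le t\,M_T^2\!\int_{\mathbb{R}^d}|\mathcal{F}\psi(\lambda)|^2\mu(d\lambda),$$
which is finite because $\mathcal{F}\psi$ decays faster than any polynomial while $\mu$ is tempered. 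Hence, by the construction of the integral against $W_\Gamma$, the random variable $\langle\mathcal{R}*W_\Gamma(t),\psi\rangle=\int_0^t\langle dW_\Gamma(\sigma),\mathbf{r}_{(s)}(t-\sigma)*\psi\rangle$ is a well-defined, centered, real Gaussian variable for every $\psi$.

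\textbf{From the cylindrical family to an $S'$-valued process.} Next I would note that $\psi\mapsto\langle\mathcal{R}*W_\Gamma(t),\psi\rangle$ is linear and, by the displayed bound, continuous from $S(\mathbb{R}^d)$ into $L^2(\Omega)$. Since $S'(\mathbb{R}^d)$ is nuclear, the Minlos/Bochner--Schwartz theorem (equivalently, the standard regularization argument, cf.\ \cite{It84}, \cite{PZ97}) yields a genuine $S'(\mathbb{R}^d)$-valued Gaussian random variable realizing this cylindrical family, with Gaussianity of all finite-dimensional projections inherited from $W_\Gamma$. Joint measurability in $(t,\omega)$ then follows once one verifies that $t\mapsto\langle\mathcal{R}*W_\Gamma(t),\psi\rangle$ is mean-square continuous, which is a consequence of the continuity of $\mathcal{F}\mathbf{r}$ in $t$ and dominated convergence applied to the energy integral above. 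This gives the first assertion.

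\textbf{Spatial stationarity, spectral measure and covariance.} For homogeneity I would use that convolution commutes with translations, $\tau_h'(\mathbf{r}(t-\sigma)*dW_\Gamma(\sigma))=\mathbf{r}(t-\sigma)*\tau_h'dW_\Gamma(\sigma)$, together with $\tau_h'W_\Gamma\stackrel{d}{=}W_\Gamma$, so that $\tau_h'(\mathcal{R}*W_\Gamma(t))\stackrel{d}{=}\mathcal{R}*W_\Gamma(t)$ for all $h$. For the covariance, the It\^o isometry for the scalar integrals and bilinearity give
$$\mathbb{E}(\langle\mathcal{R}*W_\Gamma(t),\varphi\rangle\langle\mathcal{R}*W_\Gamma(t),\psi\rangle)=\int_0^t q(\mathbf{r}_{(s)}(t-\sigma)*\varphi,\ \mathbf{r}_{(s)}(t-\sigma)*\psi)\,d\sigma.$$
Expanding $q(\cdot,\cdot)=\langle\Gamma,\cdot*(\cdot)_{(s)}\rangle$, using the elementary identity $(\mathbf{r}_{(s)}*\psi)_{(s)}=\mathbf{r}*\psi_{(s)}$ (valid since $\mathbf{r}$ is real-valued) and associativity of convolution, and substituting $\sigma\mapsto t-\sigma$, the right-hand side equals $\langle\Gamma_t,\varphi*\psi_{(s)}\rangle$ with $\Gamma_t=\int_0^t\mathbf{r}(\sigma)*\Gamma*\mathbf{r}_{(s)}(\sigma)\,d\sigma$, which is (\ref{PizThDaw2}). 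Taking Fourier transforms --- legitimate because $\mathcal{F}(\mathbf{r}(\sigma)*\Gamma*\mathbf{r}_{(s)}(\sigma))=|\mathcal{F}\mathbf{r}(\sigma)(\lambda)|^2\mu$, each such measure is dominated by $M_T^2\mu$ hence tempered, and Fubini applies on $[0,t]$ --- gives $\mathcal{F}\Gamma_t=\mu_t$ with $\mu_t$ as in (\ref{PizThDaw1}). Since $\mu_t\le t\,M_T^2\mu$, the measure $\mu_t$ is positive, symmetric and tempered, so $\Gamma_t=\mathcal{F}^{-1}\mu_t$ is a positive-definite tempered distribution, and by the characterization of homogeneous Gaussian fields recalled in Section \ref{PizSS2}, $\mu_t$ is exactly the spectral measure of $\mathcal{R}*W_\Gamma(t)$.

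\textbf{Main obstacle.} The Fourier computations are routine; the delicate point is the passage from the consistent family of Gaussian scalars $\{\langle\mathcal{R}*W_\Gamma(t),\psi\rangle\}_\psi$ to a bona fide $S'(\mathbb{R}^d)$-valued, measurable-in-$t$ process. This is where nuclearity of $S'(\mathbb{R}^d)$ is used, and where one must ensure that all objects involved --- $\mathbf{r}_{(s)}(t-\sigma)*\psi$ as an element of the reproducing kernel $S'_q$, and $\mathbf{r}(\sigma)*\Gamma*\mathbf{r}_{(s)}(\sigma)$ as a tempered distribution --- are legitimate. All of this is guaranteed precisely by the continuity of $\mathcal{F}\mathbf{r}$ together with the uniform bound (\ref{PizEA2}), which reduce the whole matter to the elementary $L^2(\mathbb{R}^d,\mu)$ estimate displayed in the first paragraph.
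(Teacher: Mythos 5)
Your proof is correct. The covariance and spectral-measure computation --- the It\^o isometry giving $\int_0^t\langle\Gamma,(\mathbf{r}_{(s)}(t-\sigma)*\varphi)*(\mathbf{r}_{(s)}(t-\sigma)*\psi)_{(s)}\rangle\,d\sigma$, rearranged into $\langle\Gamma_t,\varphi*\psi_{(s)}\rangle$ and then transported to the Fourier side --- coincides with the paper's, only written out in more detail (the paper merely asserts that (\ref{PizThDaw1}) ``easily follows'' from (\ref{PizThDaw2})). Where you genuinely diverge is in establishing that the convolution is a well-defined $S'(\mathbb{R}^d)$-valued random variable. The paper stays entirely inside the Hilbert-space framework of Subsection \ref{PizSS3}: it shows that $\mathcal{R}(t)$ maps the reproducing kernel $S'_{q}$ boundedly into itself with norm at most $M_T$ (via the representation $\xi=\mathcal{F}^{-1}(u\mu)$ of Proposition \ref{PizP3.1}), picks an auxiliary continuous scalar product $p$ for which the embedding $S'_{q}\subset S'_{p}$ is Hilbert--Schmidt, and concludes that $\sigma\mapsto\mathcal{R}(t-\sigma)$ is an $L_{HS}(S'_{q},S'_{p})$-valued integrand, so the integral is an $S'_{p}$-valued (hence $S'$-valued) random variable ``by the very definition''. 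You instead test against each $\psi\in S(\mathbb{R}^d)$, verify the finite $q$-energy of the dual integrand $\mathbf{r}_{(s)}(t-\sigma)*\psi$, and appeal to nuclearity and Minlos' theorem to realize the cylindrical Gaussian family in $S'(\mathbb{R}^d)$. These are dual formulations of the same $L^2(\mathbb{R}^d,\mu)$ estimate controlled by $M_T$: the paper's route buys a concrete Hilbert space $S'_{p}$ in which the process lives and avoids the separate regularization and $t$-measurability step you must supply, while yours is more self-contained at the scalar level and makes explicit the points the paper leaves implicit (spatial stationarity via translation invariance, and the identity $(\mathbf{r}_{(s)}*\psi)_{(s)}=\mathbf{r}*\psi_{(s)}$).
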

\begin{proof}
Let $p$ be an arbitrary continuous scalar product on  $S(\mathbb{R}^d)\,,$
such that the embedding
$\; S_{q}'(\mathbb{R}^d)\,\subset S_{p}'(\mathbb{R}^d)\;$
is Hilbert-Schmidt; here $S_{p}'(\mathbb{R}^d)$ denotes a space of 
distributions on $S(\mathbb{R}^d)$ endowed with $p$. For more information on
a family of Hilbert spaces of distributions we refer to \cite{It83}.
 Note that for $\xi\,\in S'_{q}(\mathbb{R}^d)\,$\,,
$$
\xi = {\cal F}^{-1} (u \mu )\,~~~~{\rm with}~~~~u\in L^{2}_{(s)}(\mathbb{R}^d\,,
\mu)\,.
$$
By (\ref{PizEA2}), $\;{\cal F}({\bf r}(t)){\cal F}(\xi)\,$  is a measure
$$
{\cal F}({\bf r}(t))(\lambda) u(\lambda) \mu(d\lambda)\,,
$$
belonging again to $S'_{q}(\mathbb{R}^d)\,$. Moreover, for $t\in [0,T]\,,$
$$
||{\cal R} (t) ||_{L(S'_{q},S'_{q})}
\leq \sup_{t\in[0,T]}\sup_{\lambda \in \mathbb{R}^d}|{{\cal F}}{\bf
r}(t)(\lambda)|= M_{T}< +\infty\,.
$$
Since the embedding $S'_{q} \subset S_{p}'\,$ is Hilbert-Schmidt, the
stochastic integral, by
the very definition, is an $S_{p}'\,$--valued random variable. Denote
$$
Z_{t}={\cal R} * W_{\Gamma}(t)\,.
$$
\index{$Z_{t}$}
Then we may write
$$
\mathbb{E}\left(\langle Z_t,\varphi\rangle\langle Z_t,\psi\rangle\right)
=\mathbb{E}\left(\! \left\langle\int_0^t{\cal R} (t-
\sigma)dW_{\Gamma}(\sigma),
\varphi\right\rangle\left\langle\int_0^t{\cal R}(t-
u)dW_{\Gamma}(u),
\psi\right\rangle\! \right)
$$
$$
=\mathbb{E}\left(\int_0^t\langle {\bf r}(t-
\sigma)*\varphi, dW_{\Gamma}(\sigma)\rangle
 \int_0^t\langle {\bf r}(t-
u) * \psi, dW_{\Gamma}(u)
 \rangle\right)
$$
$$
= \int_0^t\langle \Gamma, ({\bf r}(\sigma)*\varphi)*({\bf
r}(\sigma)*\psi)_{(s)}\rangle d\sigma\,
$$
where $\varphi$, $\psi \in S(\mathbb{R}^d)$ .
This   implies the formula (\ref{PizThDaw2}) of the theorem, from which
(\ref{PizThDaw1}) easily folows.
\qed
\end{proof}

As an application define
\begin{equation}\label{PizEKhinchin}
v(\lambda ) = \frac12 \langle Q\lambda,\lambda \rangle -
\int_{R^{d}}(e^{i\langle \lambda,y\rangle} - 1)\nu (dy)
\end{equation}
In the formula (\ref{PizEKhinchin}),  $Q$ is a symmetric, 
non--negative definite matrix and $\nu$ is a  
symmetric  measure concentrated on $R^d\setminus \{0\}$ such that
\begin{equation} \label{PizE4.2.3}
\int_{|y|\le 1}|y|^2\nu (dy)<+\infty,\quad\
\int_{|y|>1}1\nu (dy)<+\infty\,.
\end{equation}
This is the {\it Levy-Khinchin exponent}\index{Levy-Khinchin exponent}
of an infinitely divisible symmetric law.
From Theorem \ref{PizThDawson} we have the following proposition.
\begin{proposition}\label{PizP4}
Assume that
$$
{\cal F}{\bf r}(t)(\lambda) = e^{-t v(\lambda)}\,, \quad t\geq 0\,
$$
where $v\,$ is the Levy-Khinchin exponent given by (\ref{PizEKhinchin}) 
and (\ref{PizE4.2.3}). Then
the conditions of Theorem \ref{PizThDawson} are satisfied.
\end{proposition}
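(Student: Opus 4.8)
The plan is to verify directly the two hypotheses of Theorem \ref{PizThDawson}: that the map $(t,\lambda)\mapsto {\cal F}{\bf r}(t)(\lambda)=e^{-tv(\lambda)}$ is continuous in both variables, and that it satisfies the uniform bound (\ref{PizEA2}). The whole argument hinges on first rewriting the L\'evy--Khinchin exponent in a form that exhibits that it is real-valued and nonnegative.

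First I would exploit the symmetry of $\nu$. Since $\nu$ is a symmetric measure, $\int_{\mathbb{R}^d}\sin\langle\lambda,y\rangle\,\nu(dy)=0$, and therefore
\[
v(\lambda)=\frac12\langle Q\lambda,\lambda\rangle+\int_{\mathbb{R}^d}\bigl(1-\cos\langle\lambda,y\rangle\bigr)\,\nu(dy).
\]
Because $Q$ is non-negative definite and $1-\cos\langle\lambda,y\rangle\ge 0$, this shows $v(\lambda)$ is a well-defined nonnegative real number: the integral converges since on $\{|y|\le 1\}$ one has $1-\cos\langle\lambda,y\rangle\le\frac12|\lambda|^2|y|^2$, integrable against $\nu$ by (\ref{PizE4.2.3}), and on $\{|y|>1\}$ one has $1-\cos\langle\lambda,y\rangle\le 2$, integrable since $\nu(\{|y|>1\})<+\infty$ by (\ref{PizE4.2.3}). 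Consequently $|{\cal F}{\bf r}(t)(\lambda)|=e^{-t\,\mathrm{Re}\,v(\lambda)}=e^{-tv(\lambda)}\le 1$ for all $t\ge 0$ and all $\lambda\in\mathbb{R}^d$, so condition (\ref{PizEA2}) holds with $M_T=1$ for every $T\ge 0$.

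Second I would establish joint continuity of $(t,\lambda)\mapsto e^{-tv(\lambda)}$. It suffices to prove that $\lambda\mapsto v(\lambda)$ is continuous on $\mathbb{R}^d$, since then $(t,\lambda)\mapsto tv(\lambda)$ is continuous and $s\mapsto e^{-s}$ is continuous. The quadratic term $\frac12\langle Q\lambda,\lambda\rangle$ is a polynomial, hence continuous. For the integral term, fix $\lambda_0$ and take $\lambda\to\lambda_0$ with $|\lambda|\le R$; splitting $\mathbb{R}^d=\{|y|\le 1\}\cup\{|y|>1\}$ and applying the dominated convergence theorem on each piece, with dominating functions $\frac12 R^2|y|^2$ and $2$ respectively (both $\nu$-integrable as noted above), yields continuity of $v$. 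Hence ${\cal F}{\bf r}$ is continuous in both variables, both hypotheses of Theorem \ref{PizThDawson} are satisfied, and the proposition follows.

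The only point requiring genuine care is that one must symmetrize using $\nu=\nu_{(s)}$ \emph{before} estimating: without symmetry the integrand $e^{i\langle\lambda,y\rangle}-1$ is only of order $|y|$ near the origin, which need not be $\nu$-integrable, whereas after cancellation of the sine terms the integrand $1-\cos\langle\lambda,y\rangle$ is of order $|y|^2$, which is precisely what the moment condition (\ref{PizE4.2.3}) controls. Everything else is routine.
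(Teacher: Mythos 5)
Your proof is correct and is exactly the verification the paper leaves implicit (the proposition is stated there without proof as a direct application of Theorem \ref{PizThDawson}): using the symmetry of $\nu$ to reduce $v$ to $\frac12\langle Q\lambda,\lambda\rangle+\int(1-\cos\langle\lambda,y\rangle)\,\nu(dy)\ge 0$, so that $|e^{-tv(\lambda)}|\le 1$ gives (\ref{PizEA2}) with $M_T=1$, and dominated convergence gives joint continuity. Your closing remark about symmetrizing before estimating is well taken — it is precisely why the moment conditions (\ref{PizE4.2.3}) suffice without a compensator term.
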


For more information on   stochastic integral with
values in the
Schwartz space of tempered distributions $S'(\mathbb{R}^d)$ we refere to
It\^ o (\cite{It83}, \cite{It84}), Bojdecki with Jakubowski
(\cite{BJ89}--\cite{BJ99}), Bojdecki with 
Gorostiza
(\cite{BG86})\, and Peszat with Zabczyk (\cite{PZ97}--\cite{PZ00}).
\vskip5mm

\subsection{Stochastic Volterra equation}\label{PizSS4}

We finally pass to the linear,  stochastic, Volterra
equation in $S'(\mathbb{R}^d)$\,
\begin{equation}\label{PizEVolterra1}
X(t)=X_0 +\int_0^tb(t-\tau)AX(\tau)d\tau+ W_{\Gamma}(t),
\end{equation}
where $X_0\in S'(\mathbb{R}^d)$, $A$ is an operator given in the Fourier
transform form
\begin{equation}\label{PizFTr}
{\cal F}(A\xi)(\lambda) = -v(\lambda)\,{\cal F}(\xi)(\lambda)\,,
\end{equation}
\index{$v(\lambda)$}
$v$ is a locally
integrable
function and $W_\Gamma$ is an $S'(\mathbb{R}^d)$--valued space homogeneous
Wiener process.

Note that if $v(\lambda)=|\lambda|^2$, then $A=\Delta$ and if 
$v(\lambda)=|\lambda|^\alpha$, $\alpha\in(0,2)$, then 
\index{fractional Laplacian}
$A=-(-\Delta)^{\frac{\alpha}{2}}$ is the fractional Laplacian.\\

We shall assume the following\index{HYPOTHESIS (H)} {\tt ~HYPOTHESIS~(H)}:\\

{\it
For any $\gamma\ge 0$, the unique solution $s(\cdot,\gamma)$ to the equation
\begin{equation}\label{LiEq5}
{\bf s}(t)+\gamma\int_0^tb(t-\tau){\bf s}(\tau)d\tau=1,\quad t\ge 0
\end{equation}
\index{${\bf s}(t)$}
fullfils the following condition:
for any $T\ge 0~$, 
$$\sup_{t\in[0,T]}\sup_{\gamma\ge 0}|{\bf s}(t,\gamma)|<+\infty .$$}

\noindent{\bf Comment~} \label{LiC1}
 Let us note that under assumption, that the function $b$ is a locally
integrable
 function, the solution $s(\cdot,\gamma)$ of the equation (\ref{LiEq5}) is
 locally integrable function and measurable with respect to both variables 
 $\gamma\geq 0$ and $t\geq 0$.\\

For some special cases the function ${\bf s}(t;\gamma)$ may be found
explicitely. Namely,
we have (see e.g.\ \cite{Pr93}):
\begin{eqnarray}\label{PizE3.6a}
&&\hspace{-1.5cm} {\rm for\ } \ b(t) = 1,\qquad {\bf s}(t;\gamma)=e^{-\gamma
t},\quad
t\ge 0,\ \gamma \ge 0;\\
\label{PizE3.6b}
&& \nonumber\\
&&\hspace{-1.5cm} {\rm for\ }\  b(t)=t,\qquad {\bf
s}(t;\gamma)=\cos(\sqrt{\gamma}t),\quad t\ge 0,\
\gamma \ge 0;\\
&&\nonumber \\
\label{PizE3.6c}
&&\hspace{-1.5cm} {\rm for\ }\ b(t)=e^{-t},~~~
{\bf s}(t;\gamma){=}(1{+}\gamma)^{-1}\left[1{+}\gamma
e^{{-}(1{+}\gamma)t}\right]\,,~~
t\ge 0,\
\gamma \ge 0.\\
\nonumber
\end{eqnarray}

We introduce now the so called {\it resolvent} family ${\cal R}(\cdot)$\,
determined by the
operator $A\,$ and the function $v\,$. Namely,
$$
{\cal R}(t)\xi={\bf r}(t)*\xi,\quad \xi\in S'(\mathbb{R}^d),
$$
where,
$$
{\bf r}(t)= {\cal F}^{-1} {\bf s}(t, v(\cdot)),~~~~~~t\geq 0\,.
$$
\vskip2mm

As in the deterministic case the solution to the stochastic Volterra
equation (\ref{PizEVolterra1}) is of the form
\begin{equation} \label{PizE3n}
X(t) = {\cal R}(t)X_0\,+ \int_{0}^{t} {\cal R}(t-\tau) d
W_{\Gamma}(\tau)\,, \quad t\geq 0\,.
\end{equation}

By Theorem \ref{PizThDawson}, we can formulate the following result.

\begin{theorem}\label{PizTh2} (\cite[Theorem 1]{KZ00a})~
Let $W_{\Gamma}$ be a spatially homogeneous Wiener process   and ${\cal
R}(t)$, $t\ge 0$,  the
resolvent  for the
equation (\ref{PizEVolterra1}). If Hypothesis (H) holds then the stochastic
convolution 
$$
 {\cal R} * W_\Gamma(t) = 
 \int_0^t{\cal R}(t-\sigma)dW_{\Gamma}(\sigma)\,,~~~~~t\ge 0,
$$
is a well--defined $S'(\mathbb{R}^d)$--valued process. 
For each $t\geq 0\,$ the random
variable ${\cal R} * W_\Gamma(t)$ is
generalized, stationary random field on $\mathbb{R}^d\,$ with the spectral
measure 
\begin{equation}\label{PizEspectral}
\mu_t(d\lambda)=\left[\int_0^t\left({\bf s}(\sigma, v(\lambda)
 )\right)^2d\sigma\right]
\mu(d\lambda).
\end{equation}
\end{theorem}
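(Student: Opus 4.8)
\textbf{Proof plan for Theorem \ref{PizTh2}.}
The plan is to invoke Theorem \ref{PizThDawson} with the concrete resolvent kernel $\mathbf{r}(t)=\mathcal{F}^{-1}\mathbf{s}(t,v(\cdot))$, so the whole argument reduces to checking the two hypotheses of that theorem: that $\mathcal{F}\mathbf{r}(t)(\lambda)$ is continuous in $(t,\lambda)$, and that it satisfies the uniform boundedness condition (\ref{PizEA2}). First I would recall that, since the operator $A$ is given in Fourier variables by ${\cal F}(A\xi)(\lambda)=-v(\lambda){\cal F}(\xi)(\lambda)$, the resolvent family ${\cal R}(t)$ for (\ref{PizEVolterra1}) acts as multiplication by $\mathbf{s}(t,v(\lambda))$ on the Fourier side; this is the standard transference of the eigenvalue relation (\ref{DVEe4})--(\ref{DVEe5}) to the distributional setting, with $\gamma=v(\lambda)$ playing the role of the eigenvalue $\mu$. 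Hence $\mathcal{F}\mathbf{r}(t)(\lambda)=\mathbf{s}(t,v(\lambda))$.

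The key point is then that Hypothesis (H) is exactly what delivers (\ref{PizEA2}): for any $T\ge 0$,
$$\sup_{t\in[0,T]}\sup_{\lambda\in\mathbb{R}^d}|\mathcal{F}\mathbf{r}(t)(\lambda)|=\sup_{t\in[0,T]}\sup_{\lambda\in\mathbb{R}^d}|\mathbf{s}(t,v(\lambda))|\le\sup_{t\in[0,T]}\sup_{\gamma\ge 0}|\mathbf{s}(t,\gamma)|=:M_T<+\infty,$$
where the inequality uses that $v(\lambda)\ge 0$ so that $\{v(\lambda):\lambda\in\mathbb{R}^d\}\subset[0,\infty)$. For the continuity requirement I would argue that $\gamma\mapsto\mathbf{s}(t,\gamma)$ and $t\mapsto\mathbf{s}(t,\gamma)$ are jointly continuous (or, where $v$ is merely locally integrable, handle this through the measurability/local-integrability remark following Hypothesis (H), as in the Comment on page~\pageref{LiC1}, together with continuity of $v$ in the cases $v(\lambda)=|\lambda|^\alpha$), so that $(t,\lambda)\mapsto\mathbf{s}(t,v(\lambda))$ inherits the needed regularity; in the genuinely continuous cases one reads it off directly from the explicit formulas (\ref{PizE3.6a})--(\ref{PizE3.6c}) and their analogues.

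With both hypotheses of Theorem \ref{PizThDawson} verified, that theorem immediately gives that ${\cal R}*W_\Gamma(t)=\int_0^t{\cal R}(t-\sigma)dW_\Gamma(\sigma)$ is a well-defined $S'(\mathbb{R}^d)$-valued process, that for each $t\ge0$ it is a Gaussian stationary generalized random field, and that its spectral measure is $\mu_t(d\lambda)=\bigl(\int_0^t|\mathcal{F}\mathbf{r}(\sigma)(\lambda)|^2d\sigma\bigr)\mu(d\lambda)$. Substituting $\mathcal{F}\mathbf{r}(\sigma)(\lambda)=\mathbf{s}(\sigma,v(\lambda))$ yields precisely (\ref{PizEspectral}), which completes the proof. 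I expect the only genuine subtlety to be the continuity/measurability of $(t,\lambda)\mapsto\mathbf{s}(t,v(\lambda))$ when $v$ is only locally integrable rather than continuous — everything else is a direct bookkeeping substitution into Theorem \ref{PizThDawson}.
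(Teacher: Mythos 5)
Your proposal is correct and follows exactly the route the paper takes: the paper derives Theorem \ref{PizTh2} directly from Theorem \ref{PizThDawson} by taking ${\bf r}(t)={\cal F}^{-1}{\bf s}(t,v(\cdot))$, so that ${\cal F}{\bf r}(t)(\lambda)={\bf s}(t,v(\lambda))$ and Hypothesis (H) supplies precisely the uniform bound (\ref{PizEA2}). Your added remarks on the continuity/measurability of $(t,\lambda)\mapsto{\bf s}(t,v(\lambda))$ only make explicit a point the paper leaves implicit.
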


By Propositions \ref{PizPth1} and \ref{PizPl1}, we can conclude the below
result.

\begin{theorem}\label{PizTh3} (\cite[Theorem 2]{KZ00a})~
Assume that the Hypothesis\, (H)\, holds. Then the process  $~{\cal R} *
W_\Gamma(t)~$ is func\-tion--valued for all $t\ge 0$ if and only if
$$
\int_{\mathbb{R}^d}\left(\int_0^t
\left({\bf s}(\sigma,v(\lambda))\right)^2 d\sigma \right)
\mu(d\lambda)<+\infty\,, \quad t \ge 0\,. 
$$
If for some $\varepsilon>0$ and all $t\ge 0$,
$$
\int_0^t\int_{\mathbb{R}^d}
\left(\ln(1+|\lambda|)\right)^{1+\varepsilon} ({\bf s}
(\sigma,v(\lambda)))^2d\sigma\mu(d\lambda)<+\infty\,, 
$$
then, for each $t\ge 0$\,,  ${\cal R} *
W_\Gamma(t)$   is a sample continuous  random field.
\end{theorem}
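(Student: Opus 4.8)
The plan is to combine Theorem~\ref{PizTh2}, which realises ${\cal R}*W_\Gamma(t)$ as a generalized, stationary, Gaussian random field with an explicitly computed spectral measure, with the characterisation of function-valued fields in Proposition~\ref{PizPth1} and the continuity criterion in Proposition~\ref{PizPl1}. Throughout fix $t\ge 0$ and write, as in Theorem~\ref{PizTh2},
$$
\mu_t(d\lambda)=\left[\int_0^t\big({\bf s}(\sigma,v(\lambda))\big)^2\,d\sigma\right]\mu(d\lambda).
$$
First I would record that $(\sigma,\lambda)\mapsto{\bf s}(\sigma,v(\lambda))$ is jointly measurable, by the Comment following Hypothesis~(H) together with measurability of $v$, so that $\mu_t$ is a well-defined Borel measure; moreover Hypothesis~(H) supplies a finite bound $M_t:=\sup_{\sigma\in[0,t]}\sup_{\gamma\ge 0}|{\bf s}(\sigma,\gamma)|<+\infty$, whence $\mu_t\le tM_t^2\,\mu$, so $\mu_t$ is again a tempered measure and in particular is finite on bounded subsets of $\mathbb{R}^d$.

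For the equivalence: by Theorem~\ref{PizTh2} the random field ${\cal R}*W_\Gamma(t)$ is generalized, homogeneous and Gaussian with spectral measure $\mu_t$. Proposition~\ref{PizPth1} then states that ${\cal R}*W_\Gamma(t)$ is a classical (function--valued) random field if and only if $\mu_t$ is a finite measure, i.e.\ $\mu_t(\mathbb{R}^d)<+\infty$. By Tonelli's theorem,
$$
\mu_t(\mathbb{R}^d)=\int_{\mathbb{R}^d}\left(\int_0^t\big({\bf s}(\sigma,v(\lambda))\big)^2\,d\sigma\right)\mu(d\lambda),
$$
so ${\cal R}*W_\Gamma(t)$ is function--valued precisely when this integral is finite. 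Taking the conjunction over all $t\ge 0$ (or, since $t\mapsto\int_0^t({\bf s}(\sigma,v(\lambda)))^2\,d\sigma$ is nondecreasing, simply letting $t\to\infty$) yields the asserted "if and only if".

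For the continuity part, suppose that for some $\varepsilon>0$ and all $t\ge 0$ the iterated integral $\int_0^t\int_{\mathbb{R}^d}(\ln(1+|\lambda|))^{1+\varepsilon}({\bf s}(\sigma,v(\lambda)))^2\,d\sigma\,\mu(d\lambda)$ is finite; by Tonelli this equals $\int_{\mathbb{R}^d}(\ln(1+|\lambda|))^{1+\varepsilon}\,\mu_t(d\lambda)$. I would then verify that $\mu_t$ is finite: on $\{|\lambda|\le 1\}$ this holds because $\mu_t$ is locally finite (as noted above), while on $\{|\lambda|>1\}$ one has $(\ln(1+|\lambda|))^{1+\varepsilon}\ge(\ln 2)^{1+\varepsilon}>0$, so that $\mu_t(\{|\lambda|>1\})\le(\ln 2)^{-(1+\varepsilon)}\int_{\{|\lambda|>1\}}(\ln(1+|\lambda|))^{1+\varepsilon}\,\mu_t(d\lambda)<+\infty$. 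Hence $\mu_t(\mathbb{R}^d)<+\infty$, so by the equivalence just established (Proposition~\ref{PizPth1}) the field ${\cal R}*W_\Gamma(t)$ is a classical homogeneous Gaussian random field whose spectral measure is $\mu_t$. Applying Proposition~\ref{PizPl1} to this field, with $\mu_t$ playing the role of $\mu$ there, produces a version with almost surely continuous sample functions, which is exactly the claim.

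The proof is in essence an assembly of the three cited results; the only step requiring genuine care is the passage from the logarithmic-moment bound to the finiteness of $\mu_t$, where one must use both the uniform bound on ${\bf s}$ from Hypothesis~(H) (to obtain temperedness, hence local finiteness of $\mu_t$) and the strict positivity of $(\ln(1+|\lambda|))^{1+\varepsilon}$ away from the origin, alongside the routine Tonelli and joint-measurability bookkeeping needed to make sense of $\mu_t$ and to interchange the order of integration.
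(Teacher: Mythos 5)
Your proposal is correct and follows essentially the same route as the paper, which derives Theorem \ref{PizTh3} directly from Theorem \ref{PizTh2} (identifying the spectral measure $\mu_t$) together with Propositions \ref{PizPth1} and \ref{PizPl1}. The extra bookkeeping you supply — joint measurability, temperedness of $\mu_t$ under Hypothesis (H), and deducing finiteness of $\mu_t$ from the logarithmic moment bound before invoking the continuity criterion — is exactly the detail the paper leaves implicit, and it is handled correctly.
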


\subsection{Continuity in terms of $\Gamma$}\label{PizSS5}

In this subsection we provide sufficient conditions for continuity  of the
solutions in terms of the covariance kernel $\Gamma\,$ of the Wiener
process $W_{\Gamma}\,$ rather
than in terms of the spectral measure as we have done up to now. Analogical
conditions for
existence of function-valued solutions can be derived in a similar way.

\begin{theorem}\label{PizTh4}
Assume that $d\ge 2\,,$ that   $\Gamma$ is a non--negative measure and $
A=-(-\Delta)^{\alpha/2}$,\, $\alpha \in ]0,2]\,.$ If for some $\delta >0\,$,
$$
\int_{|\lambda|\le 1}\frac{1}{|\lambda|^{d-
\alpha +\delta}}\Gamma(d\lambda) < +\infty, \,\,
\int_{|\lambda|>1}\frac{1}{|\lambda|^{d+\alpha -\delta}}\Gamma(d\lambda)
<+\infty\,\,,
$$
then for the cases (\ref{PizE3.6a}), (\ref{PizE3.6b}) and (\ref{PizE3.6c}) the solution
to the stochastic Volterra equation (\ref{PizEVolterra1}) has continuous version.
\end{theorem}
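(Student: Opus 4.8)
The plan is to use the explicit representation (\ref{PizE3n}) to reduce the statement to a single integrability condition on the spectral measure $\mu$, and then to translate that condition into the stated hypotheses on $\Gamma$ through a Parseval-type identity. It suffices to show that the stochastic convolution $Z_t={\cal R}*W_\Gamma(t)$ has a spatially continuous version, the deterministic term ${\cal R}(t)X_0$ being of the same (or better) regularity once $X_0$ is taken in the appropriate space. First one checks Hypothesis (H) in each case directly from (\ref{PizE3.6a})--(\ref{PizE3.6c}): the functions ${\bf s}(t;\gamma)=e^{-\gamma t}$, ${\bf s}(t;\gamma)=\cos(\sqrt{\gamma}\,t)$ and ${\bf s}(t;\gamma)=(1+\gamma)^{-1}[1+\gamma e^{-(1+\gamma)t}]$ all satisfy $0\le {\bf s}(t;\gamma)\le 1$ uniformly in $t,\gamma\ge 0$. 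Hence Theorems \ref{PizThDawson}, \ref{PizTh2} and \ref{PizTh3} apply, and by the sufficient condition for sample continuity in Theorem \ref{PizTh3} it is enough to produce an $\varepsilon>0$ such that, for all $t\ge 0$,
\[
\int_{\mathbb{R}^d}\bigl(\ln(1+|\lambda|)\bigr)^{1+\varepsilon}\,G_t(\lambda)\,\mu(d\lambda)<+\infty ,\qquad
G_t(\lambda):=\int_0^t{\bf s}\bigl(\sigma;|\lambda|^\alpha\bigr)^2\,d\sigma ,
\]
where $v(\lambda)=|\lambda|^\alpha$ is the symbol of $A=-(-\Delta)^{\alpha/2}$.

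Next I would estimate $G_t$ from the explicit kernels. For $b\equiv 1$ one gets $G_t(\lambda)=(1-e^{-2t|\lambda|^\alpha})/(2|\lambda|^\alpha)\le \min\{t,(2|\lambda|^\alpha)^{-1}\}$; for $b(t)=e^{-t}$ an elementary evaluation of $\int_0^t{\bf s}^2$ gives $G_t(\lambda)\le\min\{t,(t+2)|\lambda|^{-\alpha}\}$; in both cases $G_t(\lambda)\le C_t(1+|\lambda|^2)^{-\alpha/2}$. For $b(t)=t$, $G_t(\lambda)=\tfrac t2+\sin(2t|\lambda|^{\alpha/2})/(4|\lambda|^{\alpha/2})$, so only the bound $G_t(\lambda)\le C_t$ is available. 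Thus in all three cases $G_t(\lambda)\le C_t(1+|\lambda|^2)^{-\rho/2}$ with $\rho=\alpha$ for (\ref{PizE3.6a}) and (\ref{PizE3.6c}), and $\rho=0$ for (\ref{PizE3.6b}).

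The core of the argument is the passage from $\mu$ to $\Gamma$. Absorbing the logarithm through $(\ln(1+|\lambda|))^{1+\varepsilon}\le C_{\varepsilon,\delta}(1+|\lambda|^2)^{\delta/2}$ (shrinking the $\delta$ of the hypothesis if necessary, which only weakens the assumptions), the integrand is dominated by $C_{t,\delta}(1+|\lambda|^2)^{-(\rho-\delta)/2}$. I would then recall that $(1+|\lambda|^2)^{-\beta/2}={\cal F}G_\beta(\lambda)$, with $G_\beta$ the Bessel potential kernel, and that for the non-negative tempered measures $\mu$ and $\Gamma$ satisfying $\mu={\cal F}\Gamma$ the duality identity
\[
\int_{\mathbb{R}^d}(1+|\lambda|^2)^{-\beta/2}\,\mu(d\lambda)=\int_{\mathbb{R}^d}G_\beta(x)\,\Gamma(dx)
\]
holds, justified by monotone approximation with Schwartz functions. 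For (\ref{PizE3.6a}) and (\ref{PizE3.6c}) take $\beta=\alpha-\delta$, which lies in $(0,d)$ since $d\ge 2\ge\alpha$; using $G_\beta(x)\le C|x|^{-(d-\alpha+\delta)}$ for $|x|\le 1$ and $G_\beta(x)\le Ce^{-|x|/2}$ for $|x|>1$, the right-hand side is at most $C\int_{|x|\le1}|x|^{-(d-\alpha+\delta)}\Gamma(dx)+C\int_{|x|>1}e^{-|x|/2}\Gamma(dx)$, which is finite by the first hypothesis and by the temperedness of $\Gamma$ (a non-negative tempered distribution being automatically a tempered measure). This yields sample continuity in the two parabolic cases.

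The case (\ref{PizE3.6b}) is the step I expect to be the main obstacle: there $\rho=0$, so one is forced to control $\int(1+|\lambda|^2)^{\delta/2}\mu(d\lambda)$, a growing weight for which the Bessel kernel is no longer integrable. I would split $G_t=\tfrac t2+(\text{remainder of order }|\lambda|^{-\alpha/2})$; the remainder is handled exactly as above with $\alpha/2$ in place of $\alpha$, while the constant part reduces matters to $\int(\ln(1+|\lambda|))^{1+\varepsilon}\mu(d\lambda)<+\infty$. To extract this from the hypotheses one writes $(1+|\lambda|^2)^{\delta/2}=(1+|\lambda|^2)(1+|\lambda|^2)^{(\delta-2)/2}$ and pairs $\Gamma$ with the distribution $(I-\Delta)G_{2-\delta}$, which away from the origin is a smooth, exponentially decaying function (controlled via $\int_{|x|>1}|x|^{-(d+\alpha-\delta)}\Gamma(dx)<+\infty$ together with temperedness) and near the origin is a finite-part kernel of order $|x|^{-(d+\delta)}$; pairing that singular part against $\Gamma$ by means of $\int_{|x|\le1}|x|^{-(d-\alpha+\delta)}\Gamma(dx)<+\infty$ and the symmetry of $\Gamma$ is the delicate point, and this is precisely where both conditions on $\Gamma$ are used simultaneously. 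Once the criterion of Theorem \ref{PizTh3} is verified in all three cases, the sample continuity of $Z_t$, and hence of the solution, follows.
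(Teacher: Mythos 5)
Your treatment of the cases (\ref{PizE3.6a}) and (\ref{PizE3.6c}) is sound and runs parallel to the paper's argument, with one difference of technique worth noting. The paper converts $\int (1+v(\lambda))^{-1}\mu(d\lambda)$ into an integral against $\Gamma$ using the subordinated kernel $G^{\gamma}_{d}(x)=\int_0^{\infty}e^{-t}p^{\gamma}_{t}(x)\,dt$, whose Fourier transform is exactly $(1+|\lambda|^{\gamma})^{-1}$ and whose tail decays only polynomially, $G^{\gamma}_{d}(x)\le c|x|^{-(d+\gamma)}$ (Lemmas \ref{PizLst2}--\ref{PizLst3}); this polynomial tail is precisely why the second hypothesis $\int_{|x|>1}|x|^{-(d+\alpha-\delta)}\Gamma(dx)<+\infty$ appears in the statement. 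Your Gaussian Bessel potential has exponentially decaying tails, so in your version the second hypothesis collapses to mere temperedness of $\Gamma$; the domination $(1+|\lambda|^{\alpha-\delta})^{-1}\le C(1+|\lambda|^2)^{-(\alpha-\delta)/2}$ is legitimate, so your route works for those two cases, but it obscures the role of the stated hypotheses. Note also that the paper's own proof is written only for $b(t)=1$, the remaining cases being dismissed as obtainable ``in a similar way''.

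The genuine gap is your case (\ref{PizE3.6b}), and your instinct that it is the obstacle is correct --- but the fix you sketch cannot be carried out. For $b(t)=t$ one has ${\bf s}(\sigma;\gamma)=\cos(\sqrt{\gamma}\,\sigma)$, so $\int_0^t {\bf s}^2\,d\sigma = t/2 + O(|\lambda|^{-\alpha/2})$, and the criterion of Theorem \ref{PizTh3} then requires $\int(\ln(1+|\lambda|))^{1+\varepsilon}\,\mu(d\lambda)<+\infty$, i.e.\ essentially that $\mu$ be a finite measure with a logarithmic moment. The hypotheses of Theorem \ref{PizTh4} control only negative-order moments of $\mu$ (equivalently, the local singularity and the tail of $\Gamma$) and cannot yield a zero-order moment: for $\Gamma_{\beta}(x)=|x|^{-\beta}$ with $\beta\in(0,2)$, $d\ge 2$, $\alpha=2$, both hypotheses hold for small $\delta$, yet $\mu_{\beta}(d\lambda)=c_{\beta}|\lambda|^{\beta-d}d\lambda$ is an infinite measure and $\int(\ln(1+|\lambda|))^{1+\varepsilon}\mu_{\beta}(d\lambda)=+\infty$. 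Hence the ``delicate pairing'' of $\Gamma$ with $(I-\Delta)G_{2-\delta}$ that you propose is asked to prove a false implication; no finite-part regularization turns a divergent nonnegative integral into a convergent one. As it stands, your argument establishes the theorem for (\ref{PizE3.6a}) and (\ref{PizE3.6c}) but not for (\ref{PizE3.6b}); closing that case would require either working with the damped kernel $r(t;\mu)=\sin(\sqrt{\mu}\,t)/\sqrt{\mu}$ in place of ${\bf s}$, or strengthening the hypotheses on $\mu$.
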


The proof will be based on several lemmas. 
For any $\gamma \,\in ]0,2]\,$ denote by $p^{\gamma}_{t}\,$ the density of
\index{$p^{\gamma}_{t}$}
the $\gamma$--stable,
rotationally invariant, density on the $d$--dimensional space. Thus,
\begin{equation}\label{PizEstable}
e^{-t|\lambda |^{\gamma}} = {\cal F}p^{\gamma}_{t}(\lambda)\,\,.
\end{equation}

\begin{lemma}\label{PizLst1}
For arbitrary $t > 0\,$ and arbitrary $x \,\in {\mathbb{R}^d}\,,$
$$
p^{\gamma}_{t}(x) = t^{- d/{\gamma}} p^{\gamma}_{1}(x t^{- 1/{\gamma}})\,.
$$
\end{lemma}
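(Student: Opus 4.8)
The plan is to read $p^{\gamma}_{t}$ off its defining relation (\ref{PizEstable}) by Fourier inversion and then carry out a single linear change of variables in the resulting integral. Since $\gamma\in\,]0,2]$, the function $\lambda\mapsto e^{-t|\lambda|^{\gamma}}$ is integrable over $\mathbb{R}^d$ for every $t>0$, so $p^{\gamma}_{t}$ is a genuine (and, by evenness, real) function, and (\ref{PizEstable}) together with the inversion formula for ${\cal F}$ gives
$$
p^{\gamma}_{t}(x)={\cal F}^{-1}\!\bigl(e^{-t|\cdot|^{\gamma}}\bigr)(x)=\int_{\mathbb{R}^d}e^{2\pi i\langle x,\lambda\rangle}\,e^{-t|\lambda|^{\gamma}}\,d\lambda,\qquad x\in\mathbb{R}^d.
$$

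First I would substitute $\lambda=t^{-1/\gamma}\eta$, whose Jacobian is $t^{-d/\gamma}$. Using $t\,\bigl|t^{-1/\gamma}\eta\bigr|^{\gamma}=t\cdot t^{-1}|\eta|^{\gamma}=|\eta|^{\gamma}$ and $\langle x,t^{-1/\gamma}\eta\rangle=\langle t^{-1/\gamma}x,\eta\rangle$, the integral becomes
$$
p^{\gamma}_{t}(x)=t^{-d/\gamma}\int_{\mathbb{R}^d}e^{2\pi i\langle t^{-1/\gamma}x,\eta\rangle}\,e^{-|\eta|^{\gamma}}\,d\eta=t^{-d/\gamma}\,{\cal F}^{-1}\!\bigl(e^{-|\cdot|^{\gamma}}\bigr)\!\bigl(t^{-1/\gamma}x\bigr).
$$
Applying (\ref{PizEstable}) once more with $t=1$ identifies the last factor as $p^{\gamma}_{1}\bigl(t^{-1/\gamma}x\bigr)$, which is exactly the asserted identity $p^{\gamma}_{t}(x)=t^{-d/\gamma}p^{\gamma}_{1}(xt^{-1/\gamma})$.

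There is no genuine obstacle here; the statement is just the self-similarity of rotationally invariant stable densities. Equivalently, one could verify the identity on the Fourier side: the same change of variables shows ${\cal F}\bigl[t^{-d/\gamma}p^{\gamma}_{1}(t^{-1/\gamma}\,\cdot\,)\bigr](\lambda)=e^{-t|\lambda|^{\gamma}}={\cal F}p^{\gamma}_{t}(\lambda)$, and injectivity of ${\cal F}$ yields the claim. The only points deserving a word in the write-up are the integrability of $e^{-t|\lambda|^{\gamma}}$ (legitimizing both the inversion and the interchange implicit in the change of variables) and keeping track of the $2\pi$ in the normalization of ${\cal F}$ adopted in this chapter.
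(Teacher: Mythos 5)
Your proof is correct and is essentially the paper's argument: both reduce the claim to a single linear change of variables $\lambda\mapsto t^{-1/\gamma}\eta$ (resp. $y=t^{1/\gamma}x$) combined with the identity $t\,|t^{-1/\gamma}\eta|^{\gamma}=|\eta|^{\gamma}$ and the injectivity/inversion of the Fourier transform; your closing remark about verifying the identity on the Fourier side is precisely the route the paper takes. The only difference is cosmetic (you invert first and substitute in the inversion integral, the paper substitutes in the characteristic function and appeals to uniqueness), and your explicit tracking of the Jacobian $t^{-d/\gamma}$ is in fact cleaner than the paper's display, which omits it.
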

\begin{proof}
From (\ref{PizEstable}) we have
$$
 I: \,\,= e^{-|t^{\frac{1}{\gamma}}\lambda|\gamma} = \int_{{\mathbb{R}}^d}
   e^{i\langle t^\frac{1}{\gamma}\lambda,x\rangle}\,p^\gamma_1(x)dx =  
 \int_{{\mathbb{R}}^d}
   e^{i\langle \lambda,t^\frac{1}{\gamma}x\rangle}\,p^\gamma_1(x)dx \,.
$$
Introducing a new variable $y = t^\frac{1}{\gamma}x$, one has 
$$
 I: \,\, = \int_{{\mathbb{R}}^d} e^{i\langle \lambda,y\rangle}\, p^\gamma_1
  \left(yt^{-\frac{1}{\gamma}}\right) dy
$$
and the result follows. 
\qed
\end{proof}

\begin{lemma}\label{PizLst2}
There exists a constant $c\,>0\,$ such that for all $\gamma \le 2\,,$
\index{$G^{\gamma}_{d}(x)$}
$$
G^{\gamma}_{d}(x) \stackrel{{\rm df}}= \int_{0}^{+ \infty}
e^{-t}p^{\gamma}_{t}(x) dt \leq {\frac
{c}{|x|^{d+ \gamma}}} , \quad x\,\in {\mathbb{R}^d}\,.
$$
\end{lemma}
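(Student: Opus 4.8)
The statement to be proved is Lemma~\ref{PizLst2}: there is a constant $c>0$ such that $G^{\gamma}_{d}(x)=\int_{0}^{+\infty}e^{-t}p^{\gamma}_{t}(x)\,dt\leq c/|x|^{d+\gamma}$ for all $\gamma\leq 2$ and all $x\in\mathbb{R}^d$. The natural strategy is to exploit the self-similarity relation just proved in Lemma~\ref{PizLst1}, namely $p^{\gamma}_{t}(x)=t^{-d/\gamma}p^{\gamma}_{1}(xt^{-1/\gamma})$, to reduce the whole estimate to a single pointwise bound on the $\gamma$-stable density $p^{\gamma}_{1}$ at time $1$. First I would substitute the scaling identity into the integral defining $G^{\gamma}_{d}(x)$, obtaining
\[
G^{\gamma}_{d}(x)=\int_{0}^{+\infty}e^{-t}\,t^{-d/\gamma}\,p^{\gamma}_{1}\!\left(xt^{-1/\gamma}\right)dt .
\]
Then I would change variables, e.g. $s=|x|t^{-1/\gamma}$ (so $t=|x|^{\gamma}s^{-\gamma}$, $dt=-\gamma|x|^{\gamma}s^{-\gamma-1}ds$), which turns the integral into $|x|^{-(d+\gamma)}$ times an integral in $s$ that no longer contains $x$ except through the exponential $e^{-|x|^{\gamma}s^{-\gamma}}\le 1$. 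This pulls the factor $|x|^{-(d+\gamma)}$ out front cleanly, and what remains is to show the residual $s$-integral is bounded by a constant independent of $\gamma\in(0,2]$.

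**Key steps.** The residual integral, after dropping the harmless factor $e^{-|x|^\gamma s^{-\gamma}}\le1$, has the form $\gamma\int_0^\infty s^{\,d-1} p^\gamma_1(s\hat x)\,ds$ where $\hat x=x/|x|$ is a unit vector; by rotational invariance this equals $\gamma\int_0^\infty s^{d-1}p^\gamma_1(se_1)\,ds$, a constant multiple of $\int_{\mathbb{R}^d}p^\gamma_1(y)\,dy=1$ — wait, more precisely $\int_{\mathbb{R}^d}p^\gamma_1(y)\,dy=\omega_{d-1}\int_0^\infty s^{d-1}p^\gamma_1(se_1)\,ds=1$ where $\omega_{d-1}$ is the surface area of the unit sphere. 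Hence the residual integral is exactly $\gamma/\omega_{d-1}\le 2/\omega_{d-1}$, a universal constant. So the bound holds with $c=2/\omega_{d-1}$, or whatever constant emerges from tracking the Jacobian carefully. The point is that the normalization of $p^\gamma_1$ as a probability density does all the work once the scaling has been used to extract the $|x|$-dependence.

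**Main obstacle.** The one place requiring care is uniformity in $\gamma$ as $\gamma\downarrow 0$: a priori $p^\gamma_1$ depends on $\gamma$, and one must be sure that the constant absorbing the $s$-integral does not blow up. The argument above sidesteps this precisely because $\int_{\mathbb{R}^d}p^\gamma_1\,dy=1$ for every $\gamma\in(0,2]$, so the $s$-integral is controlled by $1$ regardless of $\gamma$; the only $\gamma$-dependent prefactor is the linear factor $\gamma$ from the change of variables, which is bounded by $2$ on $(0,2]$. A secondary subtlety is justifying the change of variables and Fubini/Tonelli interchange — but since $p^\gamma_t\ge 0$ everywhere, Tonelli applies with no integrability worries, and the substitution $s=|x|t^{-1/\gamma}$ is a smooth monotone bijection of $(0,\infty)$ onto itself for fixed $x\ne 0$ (the case $x=0$ being excluded since the right-hand side is $+\infty$ there, or handled trivially). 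Thus the proof is essentially: apply Lemma~\ref{PizLst1}, substitute, bound $e^{-|x|^\gamma s^{-\gamma}}$ by $1$, and recognize the leftover integral as (a multiple of) the total mass of the stable density.
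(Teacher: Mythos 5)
Your change of variables does not produce the prefactor you claim. With $s=|x|t^{-1/\gamma}$ one has $t^{-d/\gamma}=|x|^{-d}s^{d}$ and $|dt|=\gamma|x|^{\gamma}s^{-\gamma-1}\,ds$, so
$$
G^{\gamma}_{d}(x)=\gamma\,|x|^{-(d-\gamma)}\int_{0}^{\infty}e^{-|x|^{\gamma}s^{-\gamma}}\,s^{d-\gamma-1}\,p^{\gamma}_{1}(s\hat{x})\,ds,\qquad \hat{x}=x/|x|,
$$
i.e.\ the factor that comes out is $|x|^{-(d-\gamma)}$, not $|x|^{-(d+\gamma)}$, and the residual power is $s^{d-\gamma-1}$, not $s^{d-1}$. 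Once you bound $e^{-|x|^{\gamma}s^{-\gamma}}$ by $1$ you are in effect estimating $G^{\gamma}_{d}(x)$ by the Riesz potential $\int_{0}^{\infty}p^{\gamma}_{t}(x)\,dt$, which decays only like $|x|^{-(d-\gamma)}$ --- that is the bound of Lemma \ref{PizLst3} (useful for $|x|<1$), not the $|x|^{-(d+\gamma)}$ decay asserted in Lemma \ref{PizLst2}. The exponential factor is precisely what must supply the missing $|x|^{-2\gamma}$ of decay at infinity, so it cannot be discarded.

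There is also a conceptual gap: the normalization $\int_{\mathbb{R}^d}p^{\gamma}_{1}(y)\,dy=1$ carries no information about the tail of $p^{\gamma}_{1}$, whereas the inequality $G^{\gamma}_{d}(x)\le c|x|^{-(d+\gamma)}$ is essentially a restatement of the heavy-tail behaviour of the stable density (replace $p^{\gamma}_{1}$ by a Gaussian, which also has total mass one, and $G$ decays far faster; replace it by a heavier-tailed density and the claimed bound fails). The missing ingredient is the pointwise tail estimate $p^{\gamma}_{1}(y)\le c_{1}\,(1+|y|^{d+\gamma})^{-1}$ (quoted in the paper from Gorostiza and Wakolbinger), which is what the paper's proof uses: inserting it into the scaled integrand gives
$$
e^{-t}\,t^{-d/\gamma}\,p^{\gamma}_{1}\bigl(xt^{-1/\gamma}\bigr)\le e^{-t}\,\frac{c_{1}\,t}{t^{(d+\gamma)/\gamma}+|x|^{d+\gamma}}\le \frac{c_{1}}{|x|^{d+\gamma}}\,e^{-t}\,t,
$$
and integrating in $t$ yields the lemma with $c=c_{1}\int_{0}^{\infty}e^{-t}t\,dt$. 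Your scaling step via Lemma \ref{PizLst1} is the right first move, but you need this tail bound on $p^{\gamma}_{1}$, not merely its total mass, to conclude.
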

\begin{proof}
It is well-known, see e.g.\ Gorostiza and Wakolbinger \cite[(p.~286)]{GW91},
that for some constant $c_1 > 0$:
\begin{equation}\label{PizE14}
 p^\gamma_1(x) \le \frac{c_1}{1 + |x|^{d+\gamma}}\,,~~~~x \in {\mathbb{R}}^d\,.
\end{equation}
From Lemma \ref{PizLst1} and the estimate (\ref{PizE14}) we obtain:
\begin{eqnarray*}
G^\gamma_d(x) &=& \int^{+\infty}_0 e^{-t}t^{-\frac{d}{\gamma}}
     p^\gamma_1\left(x\,t^{-\frac{1}{\gamma}}\right)\, dt   
  \le \int^{+\infty}_0 e^{-t} t^{-\frac{d}{\gamma}} \frac{c_1}{1 + \left|
	 xt^{-\frac{1}{\gamma}}\right|^{d+\gamma}}\, dt \\
 &\le& \int^{+\infty}_0 e^{-t} \,t^{-\frac{d}{\gamma}} 
	\frac{c_1 t^{\frac{d+\gamma}{\gamma}}} { t^{\frac{d+\gamma}{\gamma}} 
	+ |x|^{d+\gamma}}\, dt 
 \le \int^{+\infty}_0 e^{-t}\, t \frac{c_1}{ t^{\frac{d+\gamma}{\gamma}}
	  + |x|^{d+\gamma}}\, dt\\
 &\le& \frac{c_1}{|x|^{d+\gamma}} \, \int^{+\infty}_0 e^{-t}\,t\,dt\,.
\end{eqnarray*}
\qed
\end{proof}

\begin{lemma}\label{PizLst3}
If  $\gamma < d\,, \gamma \,\in ]0,2]$, \, then there exists a constant
$c\,>0\,$ such that,
$$
G^{\gamma}_{d}(x)\,\leq  {\frac {c}{|x|^{d- \gamma}}}\,\quad {\rm for}
 \quad |x| < 1\,.
$$
\end{lemma}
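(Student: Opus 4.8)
The goal is to bound $G_d^\gamma(x)$ near the origin by $c|x|^{-(d-\gamma)}$ for $\gamma<d$, $\gamma\in(0,2]$. Having already established in Lemma~\ref{PizLst2} the global upper bound $G_d^\gamma(x)\le c|x|^{-(d+\gamma)}$, I would instead go back to the integral representation $G_d^\gamma(x)=\int_0^{+\infty}e^{-t}p_t^\gamma(x)\,dt$ and use the scaling identity of Lemma~\ref{PizLst1}, namely $p_t^\gamma(x)=t^{-d/\gamma}p_1^\gamma(xt^{-1/\gamma})$. The plan is to split the $t$-integral at a point depending on $|x|$, the natural choice being $t=|x|^\gamma$, since that is the scale at which the argument $xt^{-1/\gamma}$ of $p_1^\gamma$ is of order one.

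First I would treat the range $t\ge |x|^\gamma$. There I estimate crudely $p_1^\gamma\le \|p_1^\gamma\|_\infty=:c_0<\infty$ (the stable density is bounded), so $p_t^\gamma(x)\le c_0\,t^{-d/\gamma}$, and $e^{-t}\le 1$, giving
\begin{equation*}
\int_{|x|^\gamma}^{+\infty}e^{-t}p_t^\gamma(x)\,dt\le c_0\int_{|x|^\gamma}^{+\infty}t^{-d/\gamma}\,dt=\frac{c_0}{d/\gamma-1}\,|x|^{\gamma-d},
\end{equation*}
which is finite precisely because $\gamma<d$, and is of the desired order $|x|^{-(d-\gamma)}$. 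For the complementary range $0<t<|x|^\gamma$ I would use instead the decay estimate $p_1^\gamma(y)\le c_1(1+|y|^{d+\gamma})^{-1}\le c_1|y|^{-(d+\gamma)}$ from inequality (\ref{PizE14}), so that $p_t^\gamma(x)=t^{-d/\gamma}p_1^\gamma(xt^{-1/\gamma})\le c_1 t^{-d/\gamma}\cdot t^{(d+\gamma)/\gamma}|x|^{-(d+\gamma)}=c_1\,t\,|x|^{-(d+\gamma)}$. Then
\begin{equation*}
\int_0^{|x|^\gamma}e^{-t}p_t^\gamma(x)\,dt\le c_1|x|^{-(d+\gamma)}\int_0^{|x|^\gamma}t\,dt=\frac{c_1}{2}|x|^{-(d+\gamma)}|x|^{2\gamma}=\frac{c_1}{2}|x|^{\gamma-d},
\end{equation*}
again of order $|x|^{-(d-\gamma)}$. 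Adding the two pieces yields $G_d^\gamma(x)\le c|x|^{-(d-\gamma)}$ with $c=c_0(d/\gamma-1)^{-1}+c_1/2$, uniformly for $|x|<1$ (indeed for all $x\neq0$, though only $|x|<1$ is claimed), and uniformly in $\gamma\le 2$ once one checks the constants $c_0,c_1$ can be taken independent of $\gamma$ on $(0,2]$.

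The only genuine subtlety is the uniformity in $\gamma$: I must make sure that $\|p_1^\gamma\|_\infty$ and the constant $c_1$ in (\ref{PizE14}) are bounded uniformly for $\gamma\in(0,2]$. For $c_1$ this is exactly the content of the Gorostiza--Wakolbinger estimate cited for Lemma~\ref{PizLst2}, which already gave a $\gamma$-uniform constant; for $\|p_1^\gamma\|_\infty$ one notes $p_1^\gamma(x)\le p_1^\gamma(0)=\int_{\mathbb R^d}e^{-|\lambda|^\gamma}d\lambda\,(2\pi)^{-d}$ (the density is maximized at the origin by symmetry and positivity of its Fourier transform), and this integral is continuous and hence bounded in $\gamma$ on any compact subinterval of $(0,\infty)$. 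I would remark that the splitting point $t=|x|^\gamma$ is dictated by matching the two regimes, and that the hypothesis $\gamma<d$ enters exactly once — to make the tail integral $\int^\infty t^{-d/\gamma}dt$ converge — which is why the statement excludes $\gamma\ge d$.
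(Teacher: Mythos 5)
Your proof is correct, but it follows a genuinely different route from the paper's. The paper's own argument is a one-liner: it simply drops the factor $e^{-t}$, bounds $G^{\gamma}_{d}(x)\le\int_{0}^{+\infty}p^{\gamma}_{t}(x)\,dt$, and identifies the right-hand side with the classical Riesz $\gamma$-potential kernel, which equals $c_{d,\gamma}|x|^{-(d-\gamma)}$ for $\gamma<d$ by the formula cited from Landkof. You instead give a self-contained estimate: splitting the $t$-integral at the natural scale $t=|x|^{\gamma}$, using the scaling identity of Lemma \ref{PizLst1} together with boundedness of $p^{\gamma}_{1}$ on the far range and the decay bound (\ref{PizE14}) on the near range, and both pieces come out of order $|x|^{\gamma-d}$ exactly as you compute. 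What your approach buys is independence from the external Riesz-potential formula, an explicit constant, and a proof that runs in complete parallel with the paper's proof of Lemma \ref{PizLst2} (same two ingredients, opposite splitting); what the paper's approach buys is brevity. Two small remarks: you do not need a separate constant $c_{0}=\|p^{\gamma}_{1}\|_{\infty}$, since (\ref{PizE14}) already gives $p^{\gamma}_{1}\le c_{1}$ pointwise; and the uniformity in $\gamma$ that you carefully discuss is not actually demanded by the statement (the constant is allowed to depend on $\gamma$ and $d$, and indeed your constant degenerates as $\gamma\uparrow d$ when $d=2$, which is harmless since in the application $\gamma=\alpha-\delta$ is fixed).
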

\begin{proof}
Since
$$
 G^\gamma_d(x) \le \int^{+\infty}_0 p^\gamma_t(x) \,dt\,,
$$
the result follows from the well-known formula for Riesz $\gamma$--potential,
see e.g.\ Landkof \cite{La75}.
\qed
\end{proof}

\noindent
{\bf Conclusion~}  There exists a constant $c > 0$ such that, if
$\gamma < d$, $\gamma \leq 2$, then:
$$
G^\gamma_d(x) \le \frac{c}{|x|^{d-\gamma}} \quad {\rm if} \quad
  |x| \le 1\,,
$$
and
$$
 G^\gamma_d(x) \le \frac{c}{|x|^{d+\gamma}} \quad {\rm if} \quad
  |x| \ge 1\,.
$$
\vskip2mm

\noindent
{\bf Proof of Theorem \ref{PizTh4}}
Now, we pass to the proof of the theorem and restrict to the case of
$b(t) = 1$, $t\ge 0$, that is, to stochastic heat equation.
(Proof for the next two cases may be obtained in a similar way.)
We have ${\bf s}(t;\gamma) = e^{-\gamma t}$, $t\ge 0$, $\gamma \ge 0$ and 
therefore
$$
 {\bf s}(t;v(\lambda)) = e^{-v(\lambda)t}\,,~~~~~~~~\lambda \in 
 {{\mathbb{R}}^d}\,,
  t\ge 0\,.
$$
By Theorem \ref{PizTh3}, if for some $\varepsilon > 0$ and all $t>0$,
\begin{equation}\label{Pe3.15}
\int_{{\mathbb{R}}^d} (\ln(1 +|\lambda|))^{1+\varepsilon} 
\left[\int^t_0 e^{-2v(\lambda)\sigma}d\sigma\right] \,\mu(d\lambda)  
< +\infty\,,
\end{equation}
then for all $t>0$, the solution of the stochastic equation (in the general
form), has a~continuous version.
Taking into account that $a$ is a non--negative continuous function, one can
replace (\ref{Pe3.15}) by
\begin{equation}\label{Pe3.16}
\int_{{\mathbb{R}}^d} (\ln(1 +|\lambda|))^{1+\varepsilon} 
\frac{1}{1+v(\lambda)} \,\mu(d\lambda) <+\infty\,.
\end{equation}
Since we have assumed that $A = -(-\Delta)^{\alpha/2}$, then $v(\lambda) =
|\lambda|^\alpha$, with $\alpha \in]0,2]$ and therefore (\ref{Pe3.16}) 
becomes
\begin{equation}\label{Pe3.17}
\int_{{\mathbb{R}}^d} (\ln(1 +|\lambda|))^{1+\varepsilon} 
\frac{1}{1+|\lambda|^\alpha} \,\mu(d\lambda)  < +\infty\,.
\end{equation}
However, the condition (\ref{Pe3.17}) holds for some $\varepsilon > 0$ if for some
$\delta > 0$
$$
\int_{{\mathbb{R}}^d} \frac{1}{1+|\lambda|^{\alpha-\delta}} \,\mu(d\lambda) <
+\infty\,. 
$$
In the same way as in the paper \cite{KZ00} by Karczewska and Zabczyk, for some
constant $c > 0$:                                  
$$
\int_{{\mathbb{R}}^d} \frac{1}{1+|\lambda|^\gamma} \,\mu(d\lambda) =
 c\int_{{\mathbb{R}}^d} G^\gamma_d(x)\Gamma(dx)\,,
$$
where $\gamma: = \alpha-\delta$. Taking into account Lemma \ref{PizLst2} 
and Lemma \ref{PizLst3}, the result follows.
\qed \medskip

\subsection{Some special cases}

In this subsection we illustrate the main results obtained considering 
 several special cases.

Let us recall that the linear stochastic Volterra equation 
(\ref{PizEVolterra1}) considered
in the chapter has the following form
$$
 X(t) = X_0 + \int^t_0 \,b(t-\tau)\, A\,X(\tau)d\tau + W_\Gamma(t)\,,
$$
where $X_0 \in S'({\mathbb{R}}^d)$, $A$ is an operator given in the Fourier 
transform form
$$
 {\cal F} (A\xi)(\lambda) = -v(\lambda)\,{\cal F}(\xi)(\lambda)\,, \quad
  \xi \in S'({\mathbb{R}}^d)\,,
$$
$v$ is a locally integrable function and $W_\Gamma$ is an 
$S'({\mathbb{R}}^d)$--valued space homogeneous Wiener process. This equation is
determined by three objects: the spatial correlation $\Gamma$
of the process $W_\Gamma$, the operator $A$ and the function $v$ or,
equivalently, by the spectral 
measure $\mu$, the function $a$ and the function ${\bf s}$,
respectively.

We apply our Theorems \ref{PizTh3} and \ref{PizTh4} to several 
special cases corresponding to particular choices of
functions $v$, $a$ and of the measure $\mu$. We will
assume, for instance, that $b(t) = 1$ or $b(t) = t$ or
$b(t) = e^{-t}$, $t\ge 0$, that $v(\lambda) = |\lambda|^\alpha$,
$\alpha\in ]0,2]$, $\lambda\in{\mathbb{R}}^d$ and that the 
measure $\mu$ is either finite or $\mu(d\lambda) =
\frac{1}{|\lambda|^\gamma}d\lambda$, $\gamma \in ]0,d[$. Note that if
$v(\lambda) = |\lambda|^2$, then $A = \Delta$ and if $v(\lambda) =
|\lambda|^\alpha$, $\alpha \in ]0,2[$, then $A = -(-\Delta)^{\alpha/2}$
is the fractional Laplacian. In all considered cases we assume that
Hypothesis (H), on the function $v$, holds.

\medskip
\noindent
{\bf Case 1}\, If (H) holds, the function $a$ is given by
(\ref{PizEKhinchin}) and (\ref{PizE4.2.3}) and the measure $\mu$ is finite then ${\cal R} * W_\Gamma$
is a function--valued process. To see this note that by (H) and Theorem
\ref{PizTh2}, the measure $\mu_t$ given by (\ref{PizEspectral}) is finite. 
So, the result follows from Theorem \ref{PizTh3}.

\medskip
\noindent
{\bf Case 2}\, If (H) holds, the function $a$ is given by 
(\ref{PizEKhinchin}) and (\ref{PizE4.2.3})
and $\mu$ is a measure such that for some $\varepsilon > 0$,
$$
 \int_{{\mathbb{R}}^d} (\ln(1 + |\lambda|)^{1+\varepsilon} \,
 \mu(d\lambda) < +\infty\,,
$$
then for arbitrary $t > 0$, ${\cal R} * W_\Gamma(t)$ is a continuous
random field. This follows immediately from Theorem \ref{PizTh3}.

\bigskip
\noindent
{\bf Case 3}\,\, Assume that $b(t) = 1$ or $b(t) = t$ or $b(t)
= e^{-t}$, $t\ge 0$, $A = \Delta$  (Laplace operator) and $\Gamma(x) =
\Gamma_\beta(x) = \frac{1}{|x|^\beta}$, $\beta\in [0,d[$. 
\index{$\Gamma_\beta(x)$}
Then function
${\bf s}$ is given by formulas (\ref{PizE3.6a}), 
(\ref{PizE3.6b}) and (\ref{PizE3.6c}), respectively.
Function $v(\lambda) = |\lambda|^2$, and the spectral measure
$\mu_\beta$ corresponding to $\Gamma_\beta$ is of the form
$\mu_\beta(d\lambda) = \frac{c_\beta}{|\lambda|^{d-\beta}}$, with
$c_\beta$ a positive constant. To simplify notation we assume that
$d\ge 2$. Then ${\cal R} * W_\Gamma$ is a function-valued process if an
only if $\beta \in ]0,2[$, see (\cite{KZ00}). Moreover, if $\beta \in
]0,2[$ then for each $t > 0$, ${\cal R} * W_\Gamma(t)$, is a continuous
random field. To prove this we use Theorem \ref{PizTh4} and show that for some
$\delta > 0$,
\begin{equation}\label{PizE18}
\int_{|x|<1} \frac{1}{|x|^{d-2+\delta}}\, \Gamma_\beta(x)dx < +\infty
\end{equation}
and
\begin{equation}\label{PizE19}
\int_{|x|\ge 1} \frac{1}{|x|^{d+2-\delta}}\, \Gamma_\beta(x)dx < +\infty\,.
\end{equation}
Condition (\ref{PizE19}) is always satisfied because (\ref{PizE19})
is equivalent to: $\beta > \delta - 2$.
Condition (\ref{PizE18}) may be replaced by the
following one:
$$
 \int_{|x|<1} \frac{1}{|x|^{d-2+\delta+\beta}}\,dx = 
  c\int^1_0 \frac{1}{r^{d-2+\delta+\beta}} \, r^{d-1}dr
 = c \int^1_0 \frac{1}{r^{\beta-1+\delta}} \, dr <  +\infty\,,
$$
equivalent to  $\beta < 2 -\delta$, which holds for sufficiently
small $\delta > 0$.

\bigskip
\noindent
{\bf Case 4}\,\, Assume that $b(t) = 1$ and the operator $A$ is
given by the formula
$$
{\cal F}(A\xi)(\lambda) = -v(\lambda)\,{\cal F}(\xi)\,,
$$
where
$$
 v(\lambda) =\langle Q\lambda,\lambda\rangle + \int_{{\mathbb{R}}^d}
 (1 -\cos\langle \lambda,x\rangle)\nu(dx)
$$
and $\nu$ is a symmetric measure such that
$$
 \int_{{\mathbb{R}}^d} (|x|^2 \wedge 1)\,\nu(dx) < +\infty\,.
$$
Then the equation (\ref{PizEVolterra1}) has a function--valued solution if and only if
$$
\int_{{\mathbb{R}}^d} \frac{1}{1+v(\lambda)} \,\mu(d\lambda) < +\infty\,.
$$
Additionally, if $X_0 = 0$ and
$$
\int_{{\mathbb{R}}^d} (\ln(1 +|\lambda|)^{1+\varepsilon})
  \frac{1}{1+v(\lambda)} \,\mu(d\lambda) < +\infty\,,
$$
then equation (\ref{PizEVolterra1}) has continuous version for each $t \ge 0$.\\
In this situation, ${\bf s}(\sigma, v(\lambda)) =
e^{-\sigma v(\lambda)}$. By Theorem \ref{PizTh3} the condition for
function--valued solution of the equation (\ref{PizEVolterra1}) becomes:
$$
\int_{{\mathbb{R}}^d} \left(\int^t_0({\bf
s}(\sigma,v(\lambda)))^2d\sigma\right) \mu(d\lambda) =
\int_{{\mathbb{R}}^d} \int^t_0 e^{-2\sigma v(\lambda)}
d\sigma\,\mu(d\lambda)  < +\infty\,,
$$
and it is equivalent to 
$$
\int_{{\mathbb{R}}^d} \int^t_0 \frac{1}{1+v(\lambda)}\,\mu(d\lambda) 
< +\infty\,.
$$

\section{Limit measure to stochastic Volterra equations} \label{SEEMTDsec:3}
\sectionmark{Limit measure to sVe}
This section is a natural continuation of the previous one.
Description of asymptotic properties of solutions to stochastic evolution equations
in finite dimensional spaces and Hilbert spaces is well-known and has been collected
in the monograph \cite{DZ96}. This problem has been studied for
generalized Langevin equations in conuclear spaces also by 
Bojdecki and Jakubowski
\cite{BJ99}. The question of existence of invariant and limit measures in the space of
distributions seems to be particularly interesting. Especially for stochastic
Volterra equations, because this class of equations is not well-investigated.

In the section we give necessary and sufficient conditions for the existence 
of a limit measure and describe all limit measures to the equation 
(\ref{PizE1.1}). Our results
are in a sense analogous to those formulated for the finite-dimensional and
Hilbert
space cases obtained for stochastic evolution equations, see 
\cite[Chapter 6]{DZ96}.

Let us recall the stochastic Volterra equation (\ref{PizE1.1}) in the simpler
form (\ref{PizEVolterra1}), that is, 
$$ 
X(t)= X_0 + \int_0^tb(t-\tau)AX(\tau)d\tau + W_{\Gamma}(t)\;.
$$ 
As previously,
we study this equation in the space $S'(\mathbb{R}^d)$,
where $X_0\in S'(\mathbb{R}^d)$, $A$ is an operator given in the Fourier
transform form (\ref{PizFTr}), i.e.,
$$ 
{\cal F}(A\xi)(\lambda) = -v(\lambda)\,{\cal F}(\xi)(\lambda)\,,
$$ 
where $v$ is a locally integrable
function and $W_\Gamma$ is an $S'(\mathbb{R}^d)$-valued space homogeneous
Wiener process. 




\subsection{The main results}\label{LiMain}

In this subsection we formulate results providing the existence of a limit
measure and the form of any limit measure for the stochastic Volterra equation
(\ref{PizEVolterra1}) 
with the operator $A$ given by (\ref{PizFTr}). 
In our considerations we assume that the  Hypothesis~(H) holds. 


Let us recall the definition of weak convergence of probability measures 
defined on the space $S'(\mathbb{R}^d)$ of tempered distributions.\\
\index{convergence!weak}
\index{probability measure}

\noindent
\begin{definition} \label{def4.12}
We say that 
a sequence $\{ \gamma_t\},~t\ge 0,$ of probability measures on 
$S'(\mathbb{R}^d)$ {\tt converges weakly}  
to probability measure $\gamma$ on $S'(\mathbb{R}^d)$ if for any function
$f\in C_b(S')$
\begin{equation}\label{LiEq13}
 \lim_{t\rightarrow +\infty} \int_{S'(\mathbb{R}^d)} \,f(x)\,\gamma_t(dx) = 
 \int_{S'(\mathbb{R}^d)} \,f(x)\,\gamma(dx) \;.
\end{equation}
\end{definition}
More general definition on weak convergence of probability measures defined on
topological spaces may be found, e.g.\ in \cite{Bi68} or \cite{KX95}.\vskip1mm

\index{$\nu_t$}
\index{$\mathcal{L}(\tilde{Z}(t))$}
\index{$\tilde{Z}(t)$}
By $\nu_t$ we denote the law 
$\mathcal{L}(\tilde{Z}(t))=\mathcal{N}(0,\Gamma_t)$ of the process 
\begin{equation}\label{LiMe1}
 \tilde{Z}(t):=\int_0^t \mathcal{R}(t-\sigma)\,dW_\Gamma(\sigma), 
 \quad t\ge 0. 
\end{equation}

\index{$\mu_\infty(d\lambda)$}
Let us define 
\begin{equation}\label{LiEq9}
\mu_\infty(d\lambda):=\left[\int_0^\infty\left({\bf s}
(\sigma, a(\lambda) )\right)^2d\sigma\right] \mu(d\lambda).
\end{equation}

Convergence of measures in the distribution sense is a special kind of weak
convergence of measures. This means that 
\begin{equation}\label{LiEq10}
 \int_{\mathbb{R}^d} \varphi(\lambda) d\mu_t(\lambda)
 \stackrel{t\rightarrow +\infty}{\longrightarrow}
 \int_{\mathbb{R}^d} \varphi(\lambda) d\mu_\infty(\lambda)
\end{equation}
for any test function $\varphi\in S({\mathbb{R}^d})$.\medskip

Now, we can formulate the following results.

\begin{lemma}\label{LiLl1} 
Let $\mu_t$ and $\mu_\infty$ be measures defined by (\ref{PizEspectral}) and 
(\ref{LiEq9}), respectively. 
If $\mu_\infty$ is a slowly increasing measure, then the measures 
$\mu_t \rightarrow \mu_\infty$, as $t \rightarrow +\infty$, 
in the distribution sense.
\end{lemma}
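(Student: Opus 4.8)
The plan is to show that for every test function $\varphi\in S(\mathbb{R}^d)$ the convergence $\int_{\mathbb{R}^d}\varphi(\lambda)\,d\mu_t(\lambda)\to\int_{\mathbb{R}^d}\varphi(\lambda)\,d\mu_\infty(\lambda)$ holds as $t\to+\infty$. Writing out the definitions of $\mu_t$ and $\mu_\infty$, the difference of the two integrals equals
\[
\int_{\mathbb{R}^d}\varphi(\lambda)\left[\int_t^\infty\bigl({\bf s}(\sigma,v(\lambda))\bigr)^2\,d\sigma\right]\mu(d\lambda),
\]
so the whole statement reduces to proving that this quantity tends to zero. The strategy is a dominated-convergence argument with respect to the measure $\mu$: the integrand converges pointwise to zero in $\lambda$ (since the inner integral is the tail of a convergent integral, as follows from the very definition of $\mu_\infty$), and it must be dominated by a $\mu$-integrable function uniformly in $t$.

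First I would record the pointwise convergence: for $\mu$-almost every $\lambda$, finiteness of $\mu_\infty$ as a measure forces $\int_0^\infty({\bf s}(\sigma,v(\lambda)))^2\,d\sigma<+\infty$ for such $\lambda$, hence its tail $\int_t^\infty({\bf s}(\sigma,v(\lambda)))^2\,d\sigma\to 0$ as $t\to\infty$. Next I would produce the dominating function. For $t\ge 1$ one has the crude bound
\[
\left|\varphi(\lambda)\int_t^\infty\bigl({\bf s}(\sigma,v(\lambda))\bigr)^2\,d\sigma\right|
\le |\varphi(\lambda)|\int_0^\infty\bigl({\bf s}(\sigma,v(\lambda))\bigr)^2\,d\sigma,
\]
and the right-hand side is exactly $|\varphi(\lambda)|$ times the $\mu$-density of $\mu_\infty$ relative to $\mu$. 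Since $\varphi\in S(\mathbb{R}^d)$ is rapidly decreasing and $\mu_\infty$ is assumed slowly increasing (i.e.\ tempered), the function $\lambda\mapsto\varphi(\lambda)\,\frac{d\mu_\infty}{d\mu}(\lambda)$ is integrable with respect to $\mu$ — this is the standard pairing of a Schwartz function against a tempered measure, after absorbing the polynomial growth of $\mu_\infty$ into the decay of $\varphi$. With the pointwise limit and this uniform dominator in hand, the Lebesgue dominated convergence theorem (applied to the measure $\mu$) yields the claim.

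The main obstacle, and the point that needs care, is the justification that $\lambda\mapsto\varphi(\lambda)\int_0^\infty({\bf s}(\sigma,v(\lambda)))^2\,d\sigma$ is genuinely $\mu$-integrable. One has to combine three facts: that $\mu_\infty$ is slowly increasing so that its total mass against any fixed Schwartz weight is finite; that $\varphi$ and all its derivatives decay faster than any polynomial; and that the inner integral, together with the measurability of ${\bf s}(\sigma,v(\lambda))$ in both variables noted in the Comment after Hypothesis (H), makes the integrand jointly measurable so that Tonelli/Fubini applies. A secondary technical point is handling the $\mu$-null set where $\int_0^\infty({\bf s}(\sigma,v(\lambda)))^2\,d\sigma$ may be infinite: on that set $\varphi\cdot\frac{d\mu_\infty}{d\mu}$ is still defined $\mu$-a.e.\ and the dominated convergence argument is unaffected since we only need almost-everywhere statements. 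Once these measurability and integrability bookkeeping items are settled, the convergence is immediate.
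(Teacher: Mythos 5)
Your proposal is correct and follows essentially the same route as the paper: both reduce the claim to showing that $\int_{\mathbb{R}^d}|\varphi(\lambda)|\left(\int_t^\infty({\bf s}(\sigma,v(\lambda)))^2\,d\sigma\right)\mu(d\lambda)\to 0$ and then invoke the pointwise vanishing of the tail for $\mu$-almost every $\lambda$. If anything, you are more careful than the paper, which asserts that the right-hand side tends to zero directly from the pointwise convergence; your explicit dominating function $|\varphi|\,g_\infty$ and the verification of its $\mu$-integrability via the temperedness of $\mu_\infty$ supply exactly the dominated-convergence justification the paper leaves implicit.
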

\begin{proof} 
First of all, let us notice that, by Theorem \ref{PizTh2}, 
$\mu_t$, $t\geq 0$, are spectral measures
of stationary generalized Gaussian random fields. Moreover, the measures
$\mu_t$, $t\geq 0$, are slowly increasing. Since the function ${\bf
s}(\tau,a(\lambda))$,
$\tau\geq 0$, $\lambda\in \mathbb{R}^d$ is bounded, then the integral
$g_t(\lambda)= \int_0^t ({\bf s}(\tau,a(\lambda)))^2\,d\tau $, for $t<+\infty$,
is bounded, as well. 
In the proof we shall use the
specific form of the measures $\mu_t$, $t\geq 0$, defined by 
(\ref{PizEspectral}).\medskip

We assume that the measure $\mu_\infty$ is slowly increasing, that is,
there exists $k>0$: 
$$\int_{\mathbb{R}^d} (1+|\lambda|^2)^{-k}\, 
 d\mu_\infty(\lambda) 
 = \int_{\mathbb{R}^d} (1+|\lambda|^2)^{-k} \left[
 \int_0^\infty ({\bf s}(t,a(\lambda)))^2\,d\tau\right] d\mu(\lambda) < +\infty
 \;. $$
Hence, the function $g_\infty(\lambda)=
 \int_0^\infty ({\bf s}(\tau,a(\lambda)))^2\,d\tau
\! <\! +\!\infty$ for $\mu$ - almost every~$\lambda$.\medskip

In our case, because of formulae (\ref{PizEspectral}) and (\ref{LiEq9}), we have to
prove the following convergence:
\begin{equation}\label{LiEq11}
 \lim_{t\rightarrow +\infty} \int_{\mathbb{R}^d} \varphi(\lambda)\, g_t(\lambda)
 \,d\mu(\lambda) = \int_{\mathbb{R}^d} \varphi(\lambda)\, g_\infty(\lambda)
 \,d\mu(\lambda) \;,
\end{equation}
where $\varphi\in S({\mathbb{R}^d})$, and 
$g_t$ and $g_\infty$ are as above.\medskip

In other words, the convergence (\ref{LiEq10}) of the measures $\mu_t, ~t\geq
0$, 
to the measure $\mu_\infty$ in the distribution sense, in our case is equivalent
to the weak convergence (\ref{LiEq11}) of functions $g_t, ~t\geq 0$, to the
function $g_\infty$. \medskip

Let us recall that the function ${\bf s}$ determining the measures $\mu_t, t\geq
0$,
and $\mu_\infty$, satisfies the Volterra equation (\ref{LiEq5}) 
(see  Hypothesis~(H)):
$$ {\bf s}(t)+\gamma\int_0^t b(t-\tau){\bf s}(\tau)d\tau = 1\;. $$
Additionally, 
by Lemma 2.1 from \cite{CD97}, $\lim_{t\rightarrow +\infty} {\bf s}(t)=0$.
\medskip

For any $\varphi\in  S({\mathbb{R}^d})$ we have the following estimations
\begin{eqnarray} \label{LiEq12}
&&  \left|  \int_{\mathbb{R}^d}  
 \varphi(\lambda)\, g_t(\lambda)\,d\mu(\lambda)   -  
 \int_{\mathbb{R}^d} \varphi(\lambda)\, g_\infty(\lambda)\,d\mu(\lambda) 
 \right| \leq \nonumber \\
   & \leq &  \int_{\mathbb{R}^d} |\varphi(\lambda)| \,
  |g_t(\lambda)-g_\infty(\lambda)| \,d\mu(\lambda) = \nonumber \\
   & = & \int_{\mathbb{R}^d} |\varphi(\lambda)|\,
 \left|\int_0^t ({\bf s}(\tau,a(\lambda)))^2\,d\tau - 
 \int_0^{+\infty} ({\bf s}(\tau,a(\lambda)))^2\,d\tau \right|
\,d\mu(\lambda)\leq \nonumber \\
  & \leq & \int_{\mathbb{R}^d} |\varphi(\lambda)|\,
 \left(\int_t^{+\infty} ({\bf s}(\tau,a(\lambda)))^2\,d\tau \right)\,
 d\mu(\lambda) \;. 
\end{eqnarray}

The right hand side of (\ref{LiEq12}) tends to zero because
$$h_{\infty}(\lambda)=\int_t^{+\infty} ({\bf s}(\tau,a(\lambda)))^2\,d\tau$$
tends to zero, as $t\rightarrow +\infty$.

Hence, we have proved the convergence (\ref{LiEq11}) which is equivalent to the 
convergence (\ref{LiEq10}) of the measures $\mu_t$, as $t\rightarrow +\infty$, 
to the measure  $\mu_\infty$ in the distribution sense.
\qed\end{proof}

\index{$\Gamma_t$}
\index{$\Gamma_\infty$}
\begin{lemma}\label{LiLl2} 
Let $\Gamma_t,\,\Gamma_\infty$ be covariance kernels of the stochastic
convolution (\ref{LiMe1}) for $t<+\infty$ and $t=+\infty$, respectively, and 
let 
$\mu_t,\;\mu_\infty$ be defined by (\ref{PizEspectral}) and (\ref{LiEq9}).  
Assume that $\mu_\infty$ is a slowly increasing measure on $\mathbb{R}^d$. 
Then $\Gamma_t\rightarrow\Gamma_\infty$, as $t\rightarrow +\infty$,
in the distribution sense if and only if the measures 
$\mu_t \rightarrow\mu_\infty$, for $t\rightarrow +\infty$,
in the distribution sense.
\end{lemma}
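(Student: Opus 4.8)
The plan is to reduce the statement to the elementary fact that the Fourier transform is a homeomorphism for the distributional (weak-$*$) topology, and then to invoke the Bochner--Schwartz representation recalled in Section~\ref{PizSS2}.

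First I would record that, by Theorem~\ref{PizTh2} and the Bochner--Schwartz theorem, for every finite $t$ the measure $\mu_t$ given by (\ref{PizEspectral}) is a positive, symmetric, tempered measure with $\Gamma_t={\cal F}^{-1}(\mu_t)$; under the standing hypothesis that the measure $\mu_\infty$ defined by (\ref{LiEq9}) is slowly increasing, the same applies to $\mu_\infty$, so that $\Gamma_\infty := {\cal F}^{-1}(\mu_\infty)$ is a well-defined positive-definite tempered distribution. This slow-increase hypothesis is precisely what guarantees that $\Gamma_\infty$ makes sense as an element of $S'(\mathbb{R}^d)$, and it is the only genuinely delicate point of the argument.

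Next I would use that ${\cal F}$, and hence ${\cal F}^{-1}$, maps $S(\mathbb{R}^d)$ bijectively onto itself, and that by the identity $\langle {\cal F}\xi,\psi\rangle = \langle \xi,{\cal F}^{-1}\psi\rangle$ it is sequentially continuous from $S'(\mathbb{R}^d)$ into $S'_{(s)}(\mathbb{R}^d)$ for the distributional convergence. Concretely: if $\xi_t\to\xi$ in the distribution sense, i.e.\ $\langle\xi_t,\varphi\rangle\to\langle\xi,\varphi\rangle$ for every $\varphi\in S(\mathbb{R}^d)$, then applying this to ${\cal F}^{-1}\varphi$ in place of $\varphi$ (which still ranges over all of $S(\mathbb{R}^d)$) yields $\langle {\cal F}\xi_t,\varphi\rangle\to\langle{\cal F}\xi,\varphi\rangle$; the reverse implication is the same argument with ${\cal F}$ and ${\cal F}^{-1}$ interchanged. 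Hence $\xi_t\to\xi$ distributionally if and only if ${\cal F}\xi_t\to{\cal F}\xi$ distributionally.

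Finally I would assemble the pieces: since $\Gamma_t={\cal F}^{-1}(\mu_t)$ for every $t\le+\infty$, the equivalence of $\mu_t\to\mu_\infty$ and $\Gamma_t\to\Gamma_\infty$ in the distribution sense follows at once from the continuity of ${\cal F}^{-1}$ in one direction and of ${\cal F}$ in the other. The only remaining bookkeeping concerns the real, symmetric subspaces $S_{(s)}(\mathbb{R}^d)$ and $S'_{(s)}(\mathbb{R}^d)$, which is harmless because all the measures and kernels involved are symmetric, so that no estimates beyond those already used in Lemma~\ref{LiLl1} are required.
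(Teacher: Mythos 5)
Your proof is correct, and for the direction ``$\mu_t\to\mu_\infty$ implies $\Gamma_t\to\Gamma_\infty$'' it is essentially identical to the paper's: both test the measures against ${\cal F}(\varphi)$ (equivalently ${\cal F}^{-1}\varphi$), use that the Fourier transform maps $S(\mathbb{R}^d)$ onto itself, and identify the resulting limits with the covariance kernels via Bochner--Schwartz. Where you genuinely diverge is in the converse direction: the paper disposes of it by appealing to ``a version of the L\'evy--Cram\'er theorem generalized to slowly increasing measures and their inverse Fourier transforms,'' whereas you observe that, since both convergences in the statement are taken in the distribution sense (i.e.\ tested against $S(\mathbb{R}^d)$), the very same duality identity $\langle{\cal F}\xi,\psi\rangle=\langle\xi,{\cal F}^{-1}\psi\rangle$ runs symmetrically in both directions, so ${\cal F}$ is a sequential homeomorphism of $S'(\mathbb{R}^d)$ for weak-$*$ convergence and no continuity theorem for characteristic functionals is needed. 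This is a real simplification: the L\'evy--Cram\'er-type argument would be required if one of the two convergences were weak convergence of measures against bounded continuous functions, but under the paper's own Definition of convergence in the distribution sense (formula (\ref{LiEq10})) your elementary argument suffices and is more self-contained. Your remark that the slowly-increasing hypothesis on $\mu_\infty$ is exactly what makes $\Gamma_\infty={\cal F}^{-1}(\mu_\infty)$ a well-defined tempered distribution is also the right place to locate the only nontrivial input.
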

\begin{proof}
The sufficiency comes from the convergence of measures in the
distribution sense which, in fact, is a type of weak convergence of measures. 
Actually, the convergence of $\mu_t, \,t\geq 0$, to the measure 
$\mu_\infty$ in the distribution sense means that
$\langle \mu_t,\varphi \rangle 
\stackrel{t\rightarrow +\infty}{\longrightarrow} 
\langle \mu_\infty,\varphi \rangle$
for any $\varphi \in S({\mathbb{R}^d})$.
Particularly, because the Fourier transform acts from $S({\mathbb{R}^d})$ into
$S({\mathbb{R}^d})$, we have 
$\langle \mu_t,{\cal F}(\varphi) \rangle 
\stackrel{t\rightarrow +\infty}{\longrightarrow} 
\langle \mu_\infty,{\cal F}(\varphi) \rangle$
for any $\varphi \in S({\mathbb{R}^d})$.
This is equivalent to the convergence
$\langle {\cal F}^{-1}(\mu_t),\varphi \rangle 
\stackrel{t\rightarrow +\infty}{\longrightarrow} 
\langle {\cal F}^{-1}(\mu_\infty),\varphi) \rangle, 
~\varphi \in S({\mathbb{R}^d})\;.$

This means the convergence of the Fourier inverse transforms of considered 
measures $\mu_t$, as 
$t\rightarrow +\infty$, to the inverse transform of the 
measure $\mu_\infty$ in the distribution sense.

\noindent
Because the measures $\mu_t,\,t\geq 0$, and $\mu_\infty$ are positive, symmetric
and slowly increasing on $\mathbb{R}^d$, then their Fourier inverse transforms
define, by
Bochner-Schwartz theorem, covariance kernels $\Gamma_t={\cal F}^{-1}(\mu_t),
\; t\geq 0$, and $\Gamma_\infty={\cal F}^{-1}(\mu_\infty)$, respectively.
Hence, $\Gamma_t\rightarrow\Gamma_\infty$ as $t\rightarrow +\infty$,
in the distribution sense.

The necessity is the version of L\'evy-Cram\'er's theorem generalized for a 
sequence of slowly increasing measures $\{\mu_t\},\,t\geq 0$, and their Fourier
inverse 
transforms which are their characteristic functionals. \qed 
\end{proof}

Now, we are able to formulate the main results of the section.

\index{limit measure}
\index{$\nu_\infty$}
\begin{theorem}\label{LiTh1}
There exists the limit measure $\nu_\infty = {\cal N}(0,\Gamma_\infty)$, 
the weak limit of the measures $\nu_t = {\cal N}(0,\Gamma_t)$, as 
$t\rightarrow +\infty$, if and only if the measure $\mu_\infty$ defined by
(\ref{LiEq9}) is slowly increasing.
\end{theorem}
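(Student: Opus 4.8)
The plan is to reduce the statement to the convergence of characteristic functionals on the nuclear space $S(\mathbb{R}^d)$ and to feed into this the two lemmas just proved. Recall that each $\nu_t=\mathcal{N}(0,\Gamma_t)$ is a centred Gaussian measure on $S'(\mathbb{R}^d)$ whose characteristic functional is
\[
\widehat{\nu_t}(\varphi)=\exp\Big(-\tfrac12\langle\Gamma_t,\varphi*\varphi_{(s)}\rangle\Big)
=\exp\Big(-\tfrac12\int_{\mathbb{R}^d}|\mathcal{F}\varphi(\lambda)|^2\,\mu_t(d\lambda)\Big),\qquad\varphi\in S(\mathbb{R}^d),
\]
where $\mu_t$ is the spectral measure from (\ref{PizEspectral}); here one uses $\varphi*\varphi_{(s)}\in S(\mathbb{R}^d)$, the relation $\Gamma_t=\mathcal{F}^{-1}(\mu_t)$, and the Bochner--Schwartz description of $\mu_t$ furnished by Theorem \ref{PizTh2}. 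Weak convergence of probability measures on $S'(\mathbb{R}^d)$ I would treat via the Lévy continuity theorem on the dual of a nuclear space: $\nu_t$ converges weakly to a probability measure $\nu$ if and only if $\widehat{\nu_t}(\varphi)\to\widehat{\nu}(\varphi)$ for every $\varphi$ and the limit functional is continuous at $0$.

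For the sufficiency, assume $\mu_\infty$ is slowly increasing. By Lemma \ref{LiLl1}, $\mu_t\to\mu_\infty$ in the distribution sense, and then Lemma \ref{LiLl2} gives $\Gamma_t\to\Gamma_\infty$ in the distribution sense with $\Gamma_\infty=\mathcal{F}^{-1}(\mu_\infty)$. Since $\varphi*\varphi_{(s)}\in S(\mathbb{R}^d)$, this yields $\langle\Gamma_t,\varphi*\varphi_{(s)}\rangle\to\langle\Gamma_\infty,\varphi*\varphi_{(s)}\rangle$, hence $\widehat{\nu_t}(\varphi)\to\exp(-\tfrac12\langle\Gamma_\infty,\varphi*\varphi_{(s)}\rangle)$ for each $\varphi$. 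Because $\mu_\infty$ is a positive, symmetric, slowly increasing measure, $\Gamma_\infty$ is a positive-definite tempered distribution, so by the Bochner--Minlos theorem $\varphi\mapsto\exp(-\tfrac12\langle\Gamma_\infty,\varphi*\varphi_{(s)}\rangle)$ is the characteristic functional of a centred Gaussian measure $\nu_\infty=\mathcal{N}(0,\Gamma_\infty)$ on $S'(\mathbb{R}^d)$, in particular it is continuous, and the continuity theorem then gives $\nu_t\to\nu_\infty$ weakly, as asserted.

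For the necessity, suppose $\nu_t\to\nu_\infty$ weakly for some probability measure $\nu_\infty$ on $S'(\mathbb{R}^d)$. Then $\widehat{\nu_t}(\varphi)\to\widehat{\nu_\infty}(\varphi)$ for all $\varphi$, with $\widehat{\nu_\infty}$ continuous and $\widehat{\nu_\infty}(0)=1$. Since $({\bf s}(\sigma,v(\lambda)))^2\ge 0$, the function $g_t(\lambda)=\int_0^t({\bf s}(\sigma,v(\lambda)))^2 d\sigma$ is nondecreasing in $t$, so the measures $\mu_t=g_t\,\mu$ increase setwise to $\mu_\infty$; by monotone convergence $\int|\mathcal{F}\varphi|^2 d\mu_t\uparrow\int|\mathcal{F}\varphi|^2 d\mu_\infty\in[0,+\infty]$, hence $\widehat{\nu_t}(\varphi)$ decreases to $\exp(-\tfrac12\int|\mathcal{F}\varphi|^2 d\mu_\infty)$. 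Comparing with the limit, using $\widehat{\nu_\infty}(\varphi)>0$ for $\varphi$ in a neighbourhood of $0$, and rescaling $\varphi$, one gets $\int_{\mathbb{R}^d}|\mathcal{F}\varphi(\lambda)|^2\mu_\infty(d\lambda)<+\infty$ for every $\varphi\in S(\mathbb{R}^d)$. It then remains to show that a positive measure with this property is slowly increasing: the inclusion $\psi\mapsto\psi$ of $S(\mathbb{R}^d)$ into $L^2(\mathbb{R}^d,\mu_\infty)$ is everywhere defined (take $\psi=\mathcal{F}\varphi$) and has a closed graph, since uniform convergence in $S$ forces $\mu_\infty$-a.e.\ convergence along subsequences, hence is continuous by the closed graph theorem; testing against translated Gaussians $g_a(\lambda)=e^{-|\lambda-a|^2}+e^{-|\lambda+a|^2}$, whose Schwartz seminorms are $O((1+|a|)^{N})$, yields $\mu_\infty(B(a,\tfrac12))\le C(1+|a|)^{2N}$, and summing this over a lattice of centres gives $\int(1+|\lambda|^2)^{-k}\mu_\infty(d\lambda)<\infty$ for large $k$, i.e.\ $\mu_\infty$ is slowly increasing.

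The main obstacle is precisely this last step of the necessity part — passing from integrability against all $|\mathcal{F}\varphi|^2$ to the quantitative slowly-increasing estimate — together with the correct use of the continuity theorem on the non-metrizable space $S'(\mathbb{R}^d)$; everything else is bookkeeping with Fourier transforms and the two lemmas. I would also remark, in the necessity direction, that the limit $\nu_\infty$ is automatically Gaussian of the claimed form, its characteristic functional being the pointwise limit $\exp(-\tfrac12\langle\Gamma_\infty,\varphi*\varphi_{(s)}\rangle)$ of the functionals $\widehat{\nu_t}$, with $\Gamma_\infty=\mathcal{F}^{-1}(\mu_\infty)$ now well defined because $\mu_\infty$ is slowly increasing.
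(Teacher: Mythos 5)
Your proof is correct, and the sufficiency half follows the paper's route exactly: Lemma \ref{LiLl1} gives $\mu_t\to\mu_\infty$ in the distribution sense, Lemma \ref{LiLl2} upgrades this to $\Gamma_t\to\Gamma_\infty$, and then convergence of the Gaussian characteristic functionals $\exp(-\tfrac12\langle\Gamma_t,\varphi*\varphi_{(s)}\rangle)$ plus Bochner--Minlos and the L\'evy continuity theorem on the nuclear dual give weak convergence to $\mathcal{N}(0,\Gamma_\infty)$. Where you genuinely diverge is in the necessity direction. The paper argues only that the pointwise limit of the functionals forces $\Gamma_\infty$ to be a positive-definite generalized function, and then invokes Bochner--Schwartz to produce \emph{some} slowly increasing spectral measure for it; it does not explicitly verify that this measure is the $\mu_\infty$ of formula (\ref{LiEq9}). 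Your argument closes exactly that gap: since $g_t(\lambda)=\int_0^t(\mathbf{s}(\sigma,v(\lambda)))^2\,d\sigma$ increases to $g_\infty$, monotone convergence identifies the limit of $\widehat{\nu_t}(\varphi)$ as $\exp(-\tfrac12\int|\mathcal{F}\varphi|^2\,d\mu_\infty)$ with $\mu_\infty=g_\infty\mu$ literally the measure of (\ref{LiEq9}), and continuity of $\widehat{\nu_\infty}$ at $0$ together with the scaling $\varphi\mapsto\epsilon\varphi$ forces $\int|\mathcal{F}\varphi|^2\,d\mu_\infty<\infty$ for every $\varphi\in S(\mathbb{R}^d)$. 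Your final step --- closed graph for the inclusion $S(\mathbb{R}^d)\hookrightarrow L^2(\mu_\infty)$ and testing against translated Gaussians to obtain the polynomial bound on $\mu_\infty(B(a,\tfrac12))$ --- is a standard and correct way to convert ``integrates all squares of Schwartz functions'' into ``slowly increasing''; the paper has no counterpart to it. The net effect is that your version is slightly stronger (any weak limit is automatically Gaussian of the claimed form, with the claimed spectral measure) and more self-contained, at the cost of the extra functional-analytic work in the last step.
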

\begin{theorem}\label{LiTh2}
Assume that the measure $\mu_\infty$ defined by (\ref{LiEq9}) is slowly
increasing. Then any limit measure of the stochastic Volterra equation 
(\ref{PizEVolterra1}) is of the form 
\begin{equation}\label{LiEq14}
 m_\infty *  {\cal N}(0,\Gamma_\infty) \;,
\end{equation}
where $m_\infty$  is the limit measure for the deterministic version of the
equation (\ref{PizEVolterra1}) with condition (\ref{PizFTr}),
and ${\cal N}(0,\Gamma_\infty)$ is the limit measure 
of the measures $\nu_t$, as $t\rightarrow +\infty$.
\end{theorem}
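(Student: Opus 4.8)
The plan is to exploit the linearity of (\ref{PizEVolterra1}). By the representation (\ref{PizE3n}) the solution splits as
$$
X(t) = \mathcal{R}(t)X_0 + \tilde{Z}(t),\qquad
\tilde{Z}(t) = \int_0^t \mathcal{R}(t-\sigma)\,dW_\Gamma(\sigma),
$$
and the two summands are stochastically independent, since $\mathcal{R}(t)X_0$ is $\mathcal{F}_0$-measurable while $\tilde{Z}(t)$ is built from the increments of $W_\Gamma$ after time $0$. Writing $\rho_0 = \mathcal{L}(X_0)$ and $m_t := \mathcal{L}(\mathcal{R}(t)X_0)$, independence gives
$$
\mathcal{L}(X(t)) = m_t * \nu_t,
$$
the convolution of probability measures on $S'(\mathbb{R}^d)$, where $\nu_t = \mathcal{N}(0,\Gamma_t) = \mathcal{L}(\tilde{Z}(t))$ as in (\ref{LiMe1}). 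The first ingredient is Theorem \ref{LiTh1}: since $\mu_\infty$ defined by (\ref{LiEq9}) is slowly increasing, $\nu_t$ converges weakly, as $t\to+\infty$, to $\nu_\infty = \mathcal{N}(0,\Gamma_\infty)$.

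I would then argue through characteristic functionals, which determine measures on $S'(\mathbb{R}^d)$ and turn convolution into pointwise multiplication: for $\varphi\in S(\mathbb{R}^d)$,
$$
\widehat{\mathcal{L}(X(t))}(\varphi) = \hat{m}_t(\varphi)\,\hat{\nu}_t(\varphi),
\qquad
\hat{\nu}_t(\varphi) = \exp\Big(-\tfrac{1}{2}\langle\Gamma_t,\varphi*\varphi_{(s)}\rangle\Big).
$$
Let $\gamma$ be any limit measure, i.e.\ $\mathcal{L}(X(t))\to\gamma$ weakly as $t\to+\infty$ for some initial law $\rho_0$. Because $\hat{\nu}_t(\varphi)\to\hat{\nu}_\infty(\varphi)=\exp(-\tfrac{1}{2}\langle\Gamma_\infty,\varphi*\varphi_{(s)}\rangle)$ and this limit is strictly positive, the ratio $\hat{m}_t(\varphi)=\widehat{\mathcal{L}(X(t))}(\varphi)/\hat{\nu}_t(\varphi)$ converges pointwise on $S(\mathbb{R}^d)$ to the continuous positive-definite functional $\hat{\gamma}(\varphi)/\hat{\nu}_\infty(\varphi)$. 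By the version of L\'evy--Cram\'er's continuity theorem for characteristic functionals on $S'(\mathbb{R}^d)$ already used in the proof of Lemma \ref{LiLl2}, this limit functional is the characteristic functional of a probability measure $m_\infty$ on $S'(\mathbb{R}^d)$, and $m_t\to m_\infty$ weakly. Passing to the limit in the product identity yields $\hat{\gamma}=\hat{m}_\infty\,\hat{\nu}_\infty$, i.e.\ $\gamma = m_\infty * \mathcal{N}(0,\Gamma_\infty)$. Since $m_t=\mathcal{L}(\mathcal{R}(t)X_0)$ is exactly the law at time $t$ of the solution $\mathcal{R}(\cdot)X_0$ of the deterministic version of (\ref{PizEVolterra1}) with (\ref{PizFTr}), $m_\infty$ is by definition a limit measure of that deterministic equation, which is the assertion (\ref{LiEq14}). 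Conversely, continuity of convolution under weak convergence shows that whenever $m_t\to m_\infty$ for the deterministic equation, $\mathcal{L}(X(t)) = m_t*\nu_t \to m_\infty*\nu_\infty$, so every such $m_\infty*\mathcal{N}(0,\Gamma_\infty)$ does occur.

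The step I expect to be the main obstacle is the passage from pointwise convergence of the quotient $\hat{m}_t$ to the existence of the limit measure $m_\infty$ and to weak convergence $m_t\to m_\infty$. This needs a Minlos-type continuity theorem on $S'(\mathbb{R}^d)$: one must verify that the limit functional is continuous, which follows from (sequential) equicontinuity of the family $\{\hat{m}_t\}$ --- obtainable from tightness of $\{\mathcal{L}(X(t))\}$ together with the uniform Gaussian tail bounds for $\nu_t$, both of which are controlled by Hypothesis (H) and, through Theorem \ref{LiTh1}, by the slowly-increasing assumption on $\mu_\infty$ --- and positive-definiteness, which is inherited in the limit. The remaining ingredients, namely independence of the two summands, multiplicativity of characteristic functionals under convolution, and continuity of convolution under weak convergence, are standard in this setting.
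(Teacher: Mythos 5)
Your proposal is correct and follows essentially the same route as the paper's own proof: the decomposition $\mathcal{L}(X(t)) = m_t * \nu_t$ via independence of $X_0$ and $W_\Gamma$, passage to characteristic functionals, Theorem \ref{LiTh1} for $\nu_t \to \nu_\infty$, and recovery of $m_\infty$ from the quotient $\hat{m}_t = \hat{\eta}_t/\hat{\nu}_t$ via a generalized Bochner / L\'evy--Cram\'er argument. Your explicit attention to why the limit of $\hat{m}_t$ is again a characteristic functional (equicontinuity, tightness) is if anything more careful than the paper's appeal to the generalized Bochner theorem, and the converse inclusion you add is a harmless extra.
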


We would like to emphasize that Theorems \ref{LiTh1} and \ref{LiTh2} have been
formulated
in the spirit analogous to well-known theorems giving invariant measures for
linear 
evolution equations, see e.g.\ \cite{DZ96} or \cite{BJ99}. Such results first
give 
conditions for the existence of invariant measure and next describe all
invariant
measures provided they exist. Our theorems extend, in some sense, Theorem 6.2.1
from 
\cite{DZ96}. Because we consider stochastic Volterra equations we can not study
invariant measures but limit measures.
 
\subsection{Proofs of theorems}
{\bf Proof of Theorem \ref{LiTh1}}

$(\Rightarrow)$
Let us notice that, by Theorem \ref{PizTh2}, the laws 
$\nu_t= {\cal N}(0,\Gamma_t)$, $t\geq 0$, are laws of Gaussian, stationary, 
generalized random fields with the spectral measures  $\mu_t$ 
and the covariances $\Gamma_t$.
The weak convergence (\ref{LiEq13})
is equivalent to the convergence of the characteristic
functionals corresponding to the measures $\nu_t$, $t\geq 0$ and 
$\nu_\infty$, respectively. Particularly
$$
\hat{\nu}_t(\varphi) \stackrel{t\rightarrow +\infty}{\longrightarrow}
\hat{\nu}_\infty(\varphi)\quad \mbox{for any} 
\quad \varphi\in S(\mathbb{R}^d)\,.
$$

We may use the specific form of the characteristic functionals of Gaussian
fields. Namely, we have 
$$
\hat{\nu}_t(\varphi) = \mathbb{E} \, e^{i\langle \tilde{Z}(t),\varphi\rangle} =
\exp \left(-\frac{1}{2}q_t(\varphi,\varphi)\right) = 
\exp \left(-\frac{1}{2}\langle \Gamma_t, \varphi*\varphi_{(s)}\rangle\right),
$$
where $t\geq 0$, $\varphi\in S(\mathbb{R}^d)$ and $\tilde{Z}(t)$ is the stochastic
convolution given by (\ref{LiMe1}).

Analogously
$$ \hat{\nu}_\infty(\varphi) = 
\exp \left(-\frac{1}{2}\langle \Gamma_\infty,
\varphi*\varphi_{(s)}\rangle\right), 
 \quad \varphi\in S(\mathbb{R}^d)\,.
$$
Hence, we have the following convergence
$$
\exp \left(-\frac{1}{2}\langle \Gamma_t, \varphi*\varphi_{(s)}\rangle\right)
\stackrel{t\rightarrow +\infty}{\longrightarrow}
\exp \left(-\frac{1}{2}\langle \Gamma_\infty,
\varphi*\varphi_{(s)}\rangle\right)
$$
for any $\varphi\in S(\mathbb{R}^d)$.

Because $\Gamma_t$, $t\geq 0$, are positive-definite
generalized functions then $\Gamma_\infty$ is a positive-definite generalized 
function, too. So, by Bochner-Schwartz theorem, there exists a slowly
increasing measure
$\mu_\infty$ such that $\Gamma_\infty={\cal F}^{-1}(\mu_\infty)$.
\medskip

$(\Leftarrow)$
Assume that the measure $\mu_\infty$, defined by the formula (\ref{LiEq9}) 
is slowly increasing.
Then, by Bochner-Schwartz theorem, there exists a positive-definite 
distribution $\Gamma_\infty$ on $S$ such that
$\Gamma_\infty = \mathcal{F}^{-1}(\mu_\infty)$ and 
$$ \langle \Gamma_\infty, \varphi \rangle = \int_{\mathbb{R}^d} \varphi(x) 
 d\mu_\infty (x) \;.
$$

Now, we have to show, that $\Gamma_\infty$ is the limit, in the distribution
sense, of the functionals $\Gamma_t$, $t\geq 0$.
In order to do this, by Lemma \ref{LiLl2}, we have to prove the convergence of
the
spectral measures $\mu_t\rightarrow\mu_\infty$, as $t\rightarrow +\infty$,
in the distribution sense.
But, by Lemma \ref{LiLl1}, the measures $\mu_t$, $t\geq 0$, defined by
(\ref{PizEspectral}), 
converge to the measure $\mu_\infty$ in the distribution sense. 
This fact implies, by Lemma \ref{LiLl2}, that
$\Gamma_t\rightarrow\Gamma_\infty$,
as  $t\rightarrow +\infty$, in the distribution sense.

Then, the following convergence 
$$
\exp \left(-\frac{1}{2}\langle \Gamma_t, \varphi*\varphi_{(s)}\rangle\right)
\stackrel{t\rightarrow +\infty}{\longrightarrow}
\exp \left(-\frac{1}{2}\langle \Gamma_\infty,
\varphi*\varphi_{(s)}\rangle\right)
$$
holds for any $\varphi\in S(\mathbb{R}^d)$. This means the convergence
of characteristic functionals of the measures $\nu_t=\mathcal{N} (0,\Gamma_t)$, 
$t\geq 0$, to the characteristic functional of the measure 
$\nu_\infty=\mathcal{N} (0,\Gamma_\infty)$.
Hence, there exists the weak limit $\nu_\infty$ 
of the sequence $\nu_t$, $t\geq 0$, and $\nu_\infty=\mathcal{N}
(0,\Gamma_\infty)$.
 \qed \\[2mm]

\noindent
{\bf Proof of Theorem \ref{LiTh2}}

Consider a limit measure for the stochastic Volterra equation
(\ref{PizEVolterra1})
with the condition (\ref{PizFTr}).
This means that we study a limit distribution of the solution given by 
(\ref{PizE3n}) 
to the considered equation~(\ref{PizEVolterra1}). 

Let us introduce the following notation for distributions, when 
$0\leq t <\infty$:\\
$\eta_t=\mathcal{L} (X(t))$ means the distribution of the solution $X(t)$;
$m_t=\mathcal{L} (\mathcal{R} (t)X_0)$ \linebreak
denotes the distribution of the part
$\mathcal{R}(t)X_0$ of the 
solution $X(t)$ and \linebreak
$\nu_t=\mathcal{L} (\tilde{Z}(t))= \mathcal{L} (\int_0^t
\mathcal{R}(t-\tau)dW_{\Gamma}(\tau))$ is,
as earlier, the distribution of the stochastic convolution
$\tilde{Z}(t)$, that is, $\nu_t =\mathcal{N} (0,\Gamma_t)$.

We assume that $\eta_\infty$ is any limit measure of the stochastic Volterra
equation (\ref{PizEVolterra1}) with the condition (\ref{PizFTr}). 
This means that distributions $\eta_t$ of the solution 
$X(t)$, as $t\rightarrow +\infty$, converge weakly to $\eta_\infty$.

We have to show the formula (\ref{LiEq14}), that is the distribution
$\eta_\infty$ 
has the form $\eta_\infty=m_\infty * \mathcal{N} (0,\Gamma_\infty)$.

The distribution of the solution (\ref{PizE3n}) can be written 
$$\mathcal{L} (X(t))=\mathcal{L} \left( \mathcal{R} (t) X_0+\int_0^t 
\mathcal{R} (t-\tau)dW_\Gamma(\tau) \right) $$
for any $0\leq t<+\infty$.

Because the initial value $X_0$ is independent of the process 
$W_\Gamma(t)$, we have 
$$\mathcal{L} ( X(t))= \mathcal{L} (\mathcal{R} (t) X_0) * \mathcal{L} (\tilde{Z}(t)) $$
or, using the above notation 
\index{$\eta_t$}
$$\eta_t=m_t * \nu_t, \quad \mbox{for any } 0\leq t< +\infty \;.$$
This formula can be rewritten in terms of characteristic functionals of the
above distributions:
\begin{equation}\label{LiEq15}
\hat{\eta}_t(\varphi) = \hat{m}_t(\varphi)\, \hat{\nu}_t(\varphi) \;, 
\end{equation}
\index{$\hat{\eta}_t$}
where $\varphi\in S(\mathbb{R}^d)$ and $0\leq t< +\infty$.

Then, letting in (\ref{LiEq15}) $t$ to tend to $+\infty$, we have 
\index{$\hat{\eta}_\infty$}
$$ \hat{\eta}_\infty (\varphi) = \mathcal{C} (\varphi)\,\hat{\nu}_\infty
(\varphi), 
\quad  \varphi\in S(\mathbb{R}^d) , $$
where $\hat{\eta}_\infty (\varphi)$ is the characteristic functional of the 
limit distribution $\eta_\infty$,  
$\mathcal{C} (\varphi)\!=\! \lim_{t\rightarrow\infty} \hat{m}_t(\varphi)$ and  
$\hat{\nu}_\infty(\varphi)$ 
is the characteristic functional of the limit
measure $\nu_\infty = \mathcal{N} (0,\Gamma_\infty)$; moreover 
$\hat{\nu}_\infty(\varphi)= \exp (-\frac{1}{2}\langle\Gamma_\infty,
\varphi * \varphi_{(s)}\rangle)$.

Now, we have to prove that $\mathcal{C} (\varphi)$ is the characteristic
functional
of the weak limit measure $m_\infty$ of the distributions 
$m_t=\mathcal{L} (\mathcal{R} (t)X_0)$.

In fact, 
$$ \mathcal{C} (\varphi) = \hat{\eta}_\infty(\varphi) \exp \left( \frac{1}{2}
\langle \Gamma_ \infty, \varphi *\varphi_{(s)}\rangle \right) \;,$$
where the right hand side of this formula, as the product of characteristic
functionals, satisfies conditions of the generalized Bochner's theorem 
(see e.g.\ \cite{It84}).
So, using the generalized Bochner's theorem once again, 
there exists a measure $m_\infty$
in $S'(\mathbb{R}^d)$, such that $\mathcal{C} (\varphi)=\hat{m}_\infty$, as
required.
Hence, we have obtained $\eta_\infty= m_\infty * \mathcal{N}
(0,\Gamma_\infty)$. 
 \qed

\subsection{Some special case}\label{LiSpec}
Stochastic Volterra equations have been considered by several authors, see e.g.\
\cite{CD96,CD97,CD00},\cite{CDP97} and \cite{KZ00a}, and are studied 
in connection with problems arising in viscoelasticity. Particularly, in 
\cite{CD97} the heat equation in materials with memory is treated. In that 
paper the authors consider an auxiliary equation of the form
\begin{equation}\label{LiEq16}
z(t) + \int_0^t [\mu\, c(t-\tau) + \beta(t-\tau)]\,z(\tau)\,d\tau =1,
\end{equation}
$t\geq 0$, where $\mu$ is a positive constant and $c, \beta$ are some functions
specified below.

Let us notice that if in the Volterra equation (\ref{LiEq5}) we take 
$b(\tau)=\frac{1}{\gamma}[\mu\, c(\tau) + \beta(\tau)]$, we arrive at the 
equation (\ref{LiEq16}). On the contrary, if we assume in the equation
(\ref{LiEq16}) 
that $\beta(\tau)=0$, $\mu=\gamma$ and $b(\tau)=c(\tau)$, we obtain the equation
(\ref{LiEq5}).

\medskip
Assume, as in \cite{CD97}, the following  {\tt HYPOTHESIS (H1)}: 
\index{HYPOTHESIS (H1)}
\begin{enumerate}
\item Function $\beta$ is nonnegative nonincreasing and integrable on 
  $\mathbb{R}^+$.
\item The constants $\mu,\; c_0$ are positive.
\item There exists a function $\delta \in L^1(\mathbb{R}^+)$ such that:
  $$ c(t) := c_0-\int_0^t \delta(\sigma) d\sigma  \quad  \mbox{and} \quad
   c_\infty := c_0-\int_0^{+\infty} |\delta(\sigma)| d\sigma >0\;. $$    
\end{enumerate}

\begin{proposition}\label{LiPr3} (\cite{CD97}, Lemma 2.1)
Let functions $\beta,\; \delta$ and $c$ be as in Hypothesis (H1). Then the
solution to (\ref{LiEq16}) satisfies:
\begin{enumerate}
\item $0\leq |z(t)| \leq 1, \hspace{2ex} t\geq 0$;
\item $\int_0^{+\infty} |z(t)| dt \leq (\mu\,c_\infty)^{-1} < +\infty$.
\end{enumerate}
\end{proposition}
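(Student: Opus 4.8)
The statement is Lemma 2.1 from Clément–Da Prato, reproduced here as Proposition~\ref{LiPr3}, and the cleanest route is to extract the resolvent kernel of the scalar convolution equation~(\ref{LiEq16}) and estimate its $L^1$-norm directly. First I would rewrite (\ref{LiEq16}) in the form $z + k\star z = 1$, where $k(t) := \mu\,c(t) + \beta(t)$. The key observation, using Hypothesis~(H1), is that $k$ is a positive-type kernel: $c(t) = c_0 - \int_0^t\delta$ is nonincreasing (after noting $c$ may be written, up to the integrable perturbation governed by $\delta$, as a nonincreasing function bounded below by $c_\infty>0$), and $\beta$ is nonnegative, nonincreasing and integrable; hence $k$ is, modulo an $L^1$ perturbation, nonnegative and nonincreasing, in particular $1$-regular/completely positive in the sense recalled in Section~\ref{DVEsec:3}.

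For part~1, the bound $0 \le |z(t)| \le 1$: by the theory of completely positive kernels recalled after Definition~\ref{DVEd3} (the result attributed to \cite{Fr63}), if the kernel is completely positive then the solution $s(t;\mu)$ of the scalar equation $s + \mu(a\star s) = 1$ satisfies $0 \le s(t;\mu)\le 1$ and is nonincreasing. Applying this with the kernel $k$ in place of $\mu a$ gives $z$ nonnegative, nonincreasing, and bounded above by $z(0)=1$; in particular $|z(t)| = z(t) \le 1$. The only wrinkle is that $c$ is not literally nonincreasing but a completely positive function perturbed by $\delta\in L^1$; here I would invoke the perturbation stability of complete positivity (or argue directly on the resolvent kernel), using that $c_\infty>0$ ensures the perturbation does not destroy positivity of the solution. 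This is the step I expect to require the most care.

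For part~2, the $L^1$ bound: integrate (\ref{LiEq16}) over $[0,T]$. Writing $Z(T) = \int_0^T z(t)\,dt$ and using Fubini on the convolution term,
\begin{equation*}
Z(T) + \int_0^T\!\!\Big(\int_0^{T-\tau}\!\!k(\sigma)\,d\sigma\Big) z(\tau)\,d\tau = T.
\end{equation*}
Since $z\ge 0$ and $\int_0^{T-\tau}k(\sigma)\,d\sigma \le \int_0^\infty k = \mu\int_0^\infty c + \int_0^\infty\beta$, and more usefully $\ge \mu\,c_\infty\cdot(T-\tau) + 0$ is not quite what I want — instead I would use that $k(\sigma)\ge \mu c_\infty$ fails pointwise, so better: bound $\int_0^{T-\tau}k \ge \mu\int_0^{T-\tau}c(\sigma)d\sigma \ge \mu c_\infty (T-\tau)$ using $c(\sigma)\ge c_\infty$. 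This still does not immediately give a uniform bound. The sharp approach is: from $z + k\star z = 1$ and $z\ge 0$, we get $k\star z \le 1$ pointwise, and since $z$ is nonincreasing, $z(t)\cdot\int_0^t k(\sigma)\,d\sigma \le (k\star z)(t) \le 1$; as $\int_0^t k \ge \mu c_\infty t \to\infty$, this already shows $z(t)\to 0$. For the integral bound, integrate $z(t) \le 1/(\mu c_\infty t)$ — no, that diverges at $0$. Instead I would use the Laplace transform: $\hat z(\lambda) = \frac{1}{\lambda(1+\hat k(\lambda))}$, so $\int_0^\infty z(t)\,dt = \lim_{\lambda\to 0^+}\lambda\hat z(\lambda)/\lambda$... more precisely $\int_0^\infty z = \lim_{\lambda\downarrow 0}\hat z(\lambda) = \lim_{\lambda\downarrow 0}\frac{1}{\lambda(1+\hat k(\lambda))}$, which is finite iff $\hat k(\lambda)\sim \hat k(0)/\lambda$... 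Here I would use $\hat k(0^+)$: since $k(t) \ge \mu c(t)$ and $c(t)\downarrow c_\infty>0$, we have $\hat k(\lambda) \ge \mu c_\infty/\lambda$, hence $\lambda(1+\hat k(\lambda)) \ge \mu c_\infty$, giving $\int_0^\infty z(t)\,dt = \lim_{\lambda\downarrow 0}\hat z(\lambda) \le \frac{1}{\mu c_\infty}$ by monotone convergence (legitimate since $z\ge 0$). This is the argument I would write out, checking the Tauberian/monotone-convergence justification carefully, and it delivers exactly the claimed bound $(\mu c_\infty)^{-1}$.

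**Main obstacle.** The delicate point is part~1 when $\delta \not\equiv 0$: establishing nonnegativity (and monotonicity) of $z$ when the kernel $c$ is only a completely positive function plus an $L^1$-perturbation with $c_\infty>0$, rather than genuinely nonincreasing. I would handle this either by a direct resolvent-kernel estimate (the resolvent $r$ of $k$ satisfies $r + k\star r = k$, and one shows $0\le r$, $\int_0^\infty r <\infty$ from $c_\infty>0$, whence $z = 1 - \int_0^t r \in [0,1]$ and $\int z = 1 - \hat r(0^+) = 1/(1+\hat k(0^+))\cdot(\dots)$), or by citing the perturbation results for completely positive kernels in \cite[Section~4.2]{Pr93}. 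Everything else is routine manipulation of convolutions and Laplace transforms.
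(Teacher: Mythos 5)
The paper itself offers no proof of this proposition: it is quoted from Cl\'ement and Da Prato \cite{CD97} (their Lemma 2.1) and used as a black box, so there is no in-text argument to compare yours against. Judged on its own, your proposal has one genuine gap on which both halves of the argument rest: the nonnegativity of $z$. Under Hypothesis (H1) the function $\delta$ is an arbitrary element of $L^1(\mathbb{R}_+)$ with no sign restriction, so $c(t)=c_0-\int_0^t\delta(\sigma)d\sigma$ need not be nonincreasing; all one gets is the two-sided bound $c_\infty\le c(t)\le 2c_0-c_\infty$. Hence the kernel $k=\mu c+\beta$ is bounded below by $\mu c_\infty>0$ but there is no reason for it to be completely positive, and ``completely positive up to an $L^1$ perturbation'' is not a usable notion: complete positivity is not stable under sign-changing $L^1$ perturbations of the kernel, and a merely nonnegative kernel can produce a sign-changing resolvent (the paper's own example $a(t)=t$ with $s(t;\mu)=\cos(\sqrt{\mu}\,t)$ makes the point). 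The statement itself is telling you this: it bounds $|z(t)|$ and $\int_0^{+\infty}|z(t)|\,dt$ rather than asserting $0\le z\le 1$ and monotonicity, which is exactly how one states the result when $z$ may change sign. Without $z\ge 0$, part 1 fails because the Friedman result \cite{Fr63} is a theorem about completely positive kernels; part 2 fails because your Laplace-transform computation --- which is otherwise correct and does correctly locate the constant, via $\hat k(\lambda)\ge\mu c_\infty/\lambda$ and $\lambda(1+\hat k(\lambda))\ge\mu c_\infty$ --- controls $\int_0^\infty z$, not $\int_0^\infty|z|$, and the monotone-convergence passage $\hat z(\lambda)\uparrow\int_0^\infty z$ as $\lambda\downarrow 0$ itself presupposes $z\ge 0$. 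Your fallback through the differential resolvent $r+k\star r=k$ meets the identical obstruction: proving $r\ge 0$ is just as hard as proving $z$ nonincreasing.

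What is actually needed is an argument that never claims a sign for $z$: isolate the constant part of the kernel by writing $\mu c\star z=\mu c_0(1\star z)-\mu\bigl(1\star\delta\star z\bigr)$, so that (\ref{LiEq16}) becomes a perturbation of the equation with the genuinely nonincreasing, nonnegative kernel $\mu c_0+\beta(t)$, and then absorb the $\delta$-term using that its $L^1$-size is $c_0-c_\infty<c_0$; this yields bounds on $|z|$ and on $\int|z|$ directly. As written, your proof would establish a strictly stronger conclusion ($z$ nonnegative and nonincreasing) from hypotheses that do not support it, so the gap is not a technicality but a wrong turn at the first step.
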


In the next result we will use the above assumption and Proposition \ref{LiPr3} 
of Cl\'ement and Da Prato and follow the spirit of their argumantation.

\begin{proposition}\label{LiPr4} 
Assume that the stochastic Volterra equation (\ref{PizEVolterra1}) 
has the kernel function $v$ given in the form 
$$b(t) = c_0 - \int_0^t |\delta(\sigma)|d\sigma > 0, \hspace{2ex} c_0>0\;
\quad where \quad \delta\in L^1(\mathbb{R}^+)\;, $$
and the operator $A$ is given by (\ref{PizFTr}).
In this case the limit measure $\mu_\infty$ given by the formula (\ref{LiEq9})
is
a slowly increasing measure. 
\end{proposition}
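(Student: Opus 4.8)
The plan is to reduce the claim to a uniform bound on the scalar relaxation function $\mathbf{s}(t;\gamma)$ across all $\gamma\ge 0$, and then to an integrability estimate on $\int_0^\infty (\mathbf{s}(\sigma;\gamma))^2\,d\sigma$ that grows at most polynomially in $\gamma$. Recall that by definition $\mu_\infty(d\lambda)=\left[\int_0^\infty (\mathbf{s}(\sigma,v(\lambda)))^2\,d\sigma\right]\mu(d\lambda)$, and $\mu$ is already a tempered (slowly increasing) measure. So it suffices to show that the density factor $g_\infty(\lambda):=\int_0^\infty (\mathbf{s}(\sigma,v(\lambda)))^2\,d\sigma$ is bounded by a polynomial in $|\lambda|$; then $\mu_\infty$ inherits slow increase from $\mu$. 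Since the problem hypothesis puts $b(t)=c_0-\int_0^t|\delta(\sigma)|\,d\sigma$ with $c_0>0$ and $\delta\in L^1(\mathbb{R}^+)$, this $b$ is exactly of the form covered by Hypothesis (H1) of Cl\'ement--Da Prato (take $\beta\equiv 0$, $c=b$, $\mu=\gamma$ in equation (\ref{LiEq16})), so Proposition \ref{LiPr3} applies to the solution $\mathbf{s}(\cdot;\gamma)$ of (\ref{LiEq5}).

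First I would invoke Proposition \ref{LiPr3}: for each $\gamma>0$ the solution $z=\mathbf{s}(\cdot;\gamma)$ of (\ref{LiEq16}) with the indicated data satisfies $0\le|\mathbf{s}(t;\gamma)|\le 1$ for all $t\ge 0$, and $\int_0^\infty|\mathbf{s}(t;\gamma)|\,dt\le (\gamma\, c_\infty)^{-1}$, where $c_\infty=c_0-\int_0^\infty|\delta(\sigma)|\,d\sigma>0$ is independent of $\gamma$. Combining the pointwise bound $|\mathbf{s}(\sigma;\gamma)|\le 1$ with the $L^1$ bound gives
\begin{equation}\label{LiEq17}
\int_0^\infty (\mathbf{s}(\sigma;\gamma))^2\,d\sigma \le \int_0^\infty |\mathbf{s}(\sigma;\gamma)|\,d\sigma \le \frac{1}{\gamma\,c_\infty}, \qquad \gamma>0.
\end{equation}
For $\gamma=0$ one has $\mathbf{s}(t;0)\equiv 1$, so the integral is infinite there, but this is a single point $\lambda$ with $v(\lambda)=0$ and carries no $\mu$-mass issue unless $\mu$ charges it; in any case the relevant region is $v(\lambda)$ away from $0$, and near $v(\lambda)=0$ one uses instead the crude bound $g_\infty(\lambda)\le$ (finite, via Hypothesis (H) boundedness) on any bounded set together with finiteness of $\mu$ on bounded sets. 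The upshot is that $g_\infty(\lambda)=\int_0^\infty(\mathbf{s}(\sigma;v(\lambda)))^2\,d\sigma$ is bounded on $\{|\lambda|\le 1\}$ and bounded by $\dfrac{1}{c_\infty\, v(\lambda)}$ on $\{|\lambda|>1\}$, hence $g_\infty$ grows at most polynomially provided $1/v(\lambda)$ does — which holds because $v$ is locally integrable and, in the cases of interest ($v(\lambda)=|\lambda|^\alpha$, or the L\'evy--Khinchin exponents considered), $v(\lambda)$ is bounded below by a positive constant for $|\lambda|\ge 1$.

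Then I would finish as follows: choose $k>0$ large enough that $\int_{\mathbb{R}^d}(1+|\lambda|^2)^{-k}\,\mu(d\lambda)<+\infty$ (possible since $\mu$ is tempered). Split the integral defining slow increase of $\mu_\infty$ into $|\lambda|\le 1$ and $|\lambda|>1$. On $|\lambda|\le 1$, $g_\infty$ is bounded and $\mu$ is finite there, so that part is finite. On $|\lambda|>1$, using (\ref{LiEq17}) with $\gamma=v(\lambda)$ and the lower bound on $v$,
\begin{equation}\label{LiEq18}
\int_{|\lambda|>1}(1+|\lambda|^2)^{-k} g_\infty(\lambda)\,\mu(d\lambda) \le \frac{1}{c_\infty}\int_{|\lambda|>1}(1+|\lambda|^2)^{-k}\,\frac{1}{v(\lambda)}\,\mu(d\lambda) < +\infty,
\end{equation}
after possibly enlarging $k$ to absorb the polynomial decay of $1/v(\lambda)$. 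Hence $\int_{\mathbb{R}^d}(1+|\lambda|^2)^{-k}\,\mu_\infty(d\lambda)<+\infty$, i.e. $\mu_\infty$ is slowly increasing, which is the assertion.

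The main obstacle I anticipate is the careful handling of the region where $v(\lambda)$ is small (equivalently $\gamma\to 0^+$): the bound (\ref{LiEq17}) degenerates like $1/\gamma$, so one must argue that either $\mu$ assigns controlled mass to neighbourhoods of $\{v=0\}$ or that $v$ stays away from $0$ outside a bounded set; this is exactly where Hypothesis (H) (uniform boundedness of $\mathbf{s}$ in both variables) is needed to keep $g_\infty$ finite on compact sets, and where the structure of $v$ (fractional Laplacian or L\'evy exponent) enters. Everything else is routine: the Cl\'ement--Da Prato lemma does the heavy lifting on the scalar equation, and temperedness of $\mu$ supplies the polynomial weight.
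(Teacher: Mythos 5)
Your argument is essentially the paper's own proof: both reduce the claim to Proposition \ref{LiPr3} (the Cl\'ement--Da Prato bounds $0\le|\mathbf{s}(t;\gamma)|\le 1$ and $\int_0^{+\infty}|\mathbf{s}(t;\gamma)|\,dt\le(\gamma\,c_\infty)^{-1}$), use $\mathbf{s}^2\le|\mathbf{s}|$ to bound the density $\int_0^\infty(\mathbf{s}(\sigma,v(\lambda)))^2\,d\sigma$ of $\mu_\infty$ relative to $\mu$ by $(c_\infty\,v(\lambda))^{-1}$, and conclude slow increase of $\mu_\infty$ from temperedness of $\mu$. One caution about the only place you go beyond the paper: your patch for the region where $v(\lambda)$ is small cannot rest on Hypothesis (H), which bounds $\mathbf{s}$ uniformly only on finite time intervals and gives no control of $\int_0^\infty\mathbf{s}^2\,d\sigma$ as $v(\lambda)\to 0$ (at $v=0$ that integral diverges); the paper's proof is equally silent on this degeneracy and simply asserts finiteness of the weighted integral, so your version is no weaker, but the appeal to (H) there should be dropped or replaced by an explicit integrability assumption on $1/v$ near $\{v=0\}$ with respect to $\mu$.
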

\begin{proof}
This Proposition is the direct consequence of the definition 
(\ref{LiEq9}) of the
measure $\mu_\infty$ and Proposition \ref{LiPr3}. In fact, 
from Proposition \ref{LiPr3} we have 
\begin{equation} \label{LiEq17}
 \int_0^{+\infty} |{\bf s}(\tau,\gamma)|\, d\tau 
\leq (\gamma \,c_\infty)^{-1}\;,
\end{equation}
where $\gamma$ satisfies Hypothesis (H1), so $c_\infty$ is finite.
Hence, the right hand side of (\ref{LiEq17}) is finite for any finite $\gamma$. 
In our case, because $A$ satisfies (\ref{PizFTr}), $\gamma=a(\lambda)$.

From the definition (\ref{LiEq9}) of the measure $\mu_\infty$ we have:
\begin{equation}\label{LiEq18}
\int_{\mathbb{R}^d} (1+|\lambda|^2)^{-k}\, 
 d\mu_\infty(\lambda) 
 = \int_{\mathbb{R}^d} (1+|\lambda|^2)^{-k} \left[
 \int_0^\infty ({\bf s}(t,a(\lambda)))^2\,d\tau\right] d\mu(\lambda) 
\end{equation}
for $k>0$. 
Let us notice that, by Proposition \ref{LiPr3},
$0\leq |{\bf s}(t,a(\lambda))|\leq 1$ for $t\geq 0$. So, 
$({\bf s}(t,a(\lambda)))^2\leq |{\bf s}(t,a(\lambda))|$.
Therefore, because (\ref{LiEq17}) holds and the measure $\mu$ is slowly
increasing,
the right hand side of (\ref{LiEq18}) is finite. Hence, the measure 
$\mu_\infty$ is slowly increasing, too.
\qed\end{proof}


\section{Regularity of solutions to equations with infinite delay} 
\label{SEEMTDsec:4}
\sectionmark{Regularity of solutions to equations with infinite delay}
\subsection{Introduction and setting the problem}\label{KLPIntro}

In this section 
we consider the following integro-differential stochastic equation
\index{integro-differential stochastic equation}
with infinite delay
\begin{equation}{\label{eq1.1}}
X(t, \theta) = \int_{-\infty}^t b(t-s)[ \Delta X(s, \theta) +
\dot{W}_{\Gamma} (s,\theta)] ds, \quad t
\geq 0, \quad \theta \in T^d,
\end{equation}
where $ b \in L^1 ( \mathbb{R}_+)$, $\Delta$ is the Laplace
operator and $T^d$ is the $d$-dimensional torus. In (\ref{eq1.1}),
$W_{\Gamma}$ is a spatially homogeneous Wiener process with the
space covariance $\Gamma$  taking values in the space of
tempered distributions $\mathcal{S}'(T^d)$ and $\dot{W}_{\Gamma}$ denotes
its partial derivative with respect to the first argument in the sense of
distributions. 
Such equation arises,
in the deterministic case, in the study of heat flow in materials
of fading memory type (see \cite{CD88}, \cite{Nu71}).

In this section we address the following question: under what
conditions on the covariance $\Gamma$ the process $X$ takes values
in a Sobolev space $H^\alpha(T^d)$, particularly in $L^2(T^d)$?

We remark that the knowledge of the regularity of
solutions is important in the study of nonlinear stochastic
equations (see e.g. \cite{DF98} and \cite{MS99}).

We study a particular case of weak solutions under the basis of an
explicit representation of the solution to (\ref{eq1.1}) (cf.
Definition \ref{solution}).

Observe that equation (\ref{eq1.1}) can be viewed as the {\it
limiting equation } for the stochastic Volterra equation
\begin{equation}{\label{eq1.2}}
X(t, \theta) = \int_{0}^t b(t-s)[ \Delta X(s, \theta) +
\dot{W}_{\Gamma} (s,\theta)] ds, \quad t
\geq 0, \quad \theta \in T^d.
\end{equation}
 If $b$ is sufficiently regular, we get, by
differentiating (\ref{eq1.2}) with respect to $t$,

\begin{equation}{\label{eq1.3}}
\frac{\partial X}{\partial t}(t, \theta) =
 b(0)[\Delta X(t,\theta)+ \dot{W}_{\Gamma} 
 (t,\theta)] + \int_0^t b'(t-s)[ \Delta
X(s, \theta) + \dot{W}_{\Gamma} (s,\theta)]
ds,
\end{equation}
where $ t \geq 0$ and $ \theta \in T^d.$\\

Taking in (\ref{eq1.3}), $b(t) \equiv 1$ we obtain
\begin{equation}{\label{eq1.4}}
 \left \{\begin{array}{rcl} \displaystyle
 \frac{\partial X}{\partial t} (t, \theta)
 &=& \Delta X(t,\theta)+ \dot{W}_{\Gamma} (t,\theta), \quad t >0, 
 \theta \in T^d,
\\ \noalign{\smallskip} X(0,\theta) &= & 0, \quad \theta \in
T^d.
\end{array} \right. \end{equation}
Similarly, taking $b(t) = t$ and differentiating
(\ref{eq1.2}) twice with respect to $t$ we obtain

\begin{equation}{\label{eq1.5}}
 \left \{\begin{array}{rcl} \displaystyle
 \frac{\partial^2 X}{\partial t^2}(t, \theta)
 &=& \Delta X(t,\theta)+ \dot{W}_{\Gamma} (t,\theta), \quad t >0, 
 \theta \in T^d,
\\ \noalign{\smallskip} X(0,\theta) &= & 0, \quad \theta \in
T^d, \\ \noalign{\smallskip} \frac{\partial X}{\partial t}
(0,\theta) &= & 0, \quad \theta \in T^d.
\end{array} \right. \end{equation}
It has been shown in \cite[Theorem 5.1]{KZ00} (see also
\cite[Theorem 1]{KZ01}) that equations
 (\ref{eq1.4}) and (\ref{eq1.5}) on the $d$-dimensional
 torus $ T^d$ have
an $ H^{\alpha + 1}( T^d)$-valued solutions if and only if the
Fourier coefficients $(\gamma_n)$ of the space covariance $\Gamma$
of the process $W_\Gamma$ satisfy
\begin{equation}{\label{eq1.6}}
\sum_{n\in \mathbb{Z}^d} \gamma_n (1 + |n|^2)^{\alpha} < \infty.
\end{equation}

Observe that for both, stochastic heat (\ref{eq1.4}) and wave
(\ref{eq1.5}) equations, the conditions are exactly the same,
despite of the different nature of the equations. On the other
hand, the obtained characterization form a natural framework in
which nonlinear heat and wave equations can be studied.

In this section, we will prove that condition (\ref{eq1.6}) even
characterizes $ H^{\alpha + 1}( T^d)$-valued solutions for the
stochastic Volterra equation (\ref{eq1.1}), provided certain
conditions on the kernel $b$ are satisfied. This is a strong
contrast with the deterministic case, where regularity of
(\ref{eq1.1}) is dependent on the kernel $b$. The conditions that
we impose on $b$ are satisfied by a large class of functions.
Moreover, the important example $b(t) = e^{-t}$ is shown to
satisfy our assumptions.

We use, instead of resolvent families, a direct approach to the
equation (\ref{eq1.1}) finding an explicit expression for the
solution in terms of the kernel $b$. This approach reduces the
considered problem to questions in harmonic analysis and lead us
with a complete answer.


 Let $(\Omega,\mathcal{F},(\mathcal{F}_t)_{t\geq 0},P)$ be a complete
filtered probability space. By $T^d$ we denote the $d$-dimensional
torus which can be identified with the product $(-\pi,\pi]^d$. Let
$D(T^d)$ and $D'(T^d)$ denote, respectively, the space of test
functions on $T^d$ and the space of distributions. By
$\langle\xi,\phi\rangle$ we denote the value of a distribution
$\xi$ on a test function. We assume that $W_\Gamma$ is a
$D'(T^d)$-valued spatially homogeneous Wiener process 
with covariance $\Gamma$ which is a positive-definite distribution. 

As we have already written, 
any arbitrary spatially homogeneous Wiener process $W_\Gamma$ is uniquely
determined by its covariance $\Gamma$ according to the formula
\begin{equation}{\label{eq2.1}}
 \mathbb{E} \langle W_\Gamma(t,\theta),\phi\rangle \,
 \langle W_\Gamma(\tau,\theta),\psi\rangle = \mathrm{min}(t,\tau)\,
 \langle \Gamma, \phi\star\psi_{(s)} \rangle \,,
\end{equation}
where $\phi,\psi\in D(T^d)$ and $\psi_{(s)}(\eta)=\psi(-\eta)$,
for $\eta\in T^d$. Because $W_\Gamma$ is spatially homogeneous
process, the distribution $\Gamma=\Gamma(\theta-\eta)$ for
$\theta,\eta\in T^d$.

The space covariance $\Gamma$ , like distribution in $D'(T^d)$,
may be uniquely expanded (see e.g.\ \cite{GW99} or \cite{Sc65} )
into its Fourier series (with parameter $w=1$ because the period
is $2\pi$)
\begin{equation}{\label{eq2.2}}
\Gamma(\theta) = \sum_{n\in \mathbb{Z}^d} e^{i(n,\theta)}
\gamma_n, \quad \theta\in T^d,
\end{equation}
convergent in $D'(T^d)$. In (\ref{eq2.2}),
$(n,\theta)\!=\!\sum_{i=1}^d n_i\theta_i$ and $\mathbb{Z}^d$
denotes the product of integers.

The coefficients $\gamma_n$, in the Fourier series (\ref{eq2.2}),
satisfy:
\index{$\gamma_n$}
\begin{enumerate}
\item $\gamma_n=\gamma_{-n}$ for $n\in \mathbb{Z}^d$,
\item the sequence $(\gamma_n)$ is {\em slowly increasing}, that is
\index{slowly increasing}
\begin{equation}{\label{eq2.3}}
 \sum_{n\in \mathbb{Z}^d} \frac{\gamma_n}{1+|n|^r} < +\infty,
 \mbox{~~for~some~~} r>0.
\end{equation}
\end{enumerate}

Let us introduce, by induction, the following set of indexes.
Denote  \linebreak
$\mathbb{Z}_s^1:=\mathbb{N}$, the set of natural numbers and define
$\mathbb{Z}_s^{d+1}:=(\mathbb{Z}_s^1\times\mathbb{Z}^d)\cup
\{(0,n):n\in\mathbb{Z}_s^{d}\}$. Let us notice that
$\mathbb{Z}^d=\mathbb{Z}_s^d\cup(-\mathbb{Z}_s^d) \cup\{0\}$. For
instance, for $d=2$,
$\mathbb{Z}_s^2=\mathbb{N}\times\mathbb{Z}\cup\{(0,n):n\in\mathbb{Z}\}$.

Now, the spatially homogeneous Wiener process $W_\Gamma$ corresponding to the
covariance $\Gamma$ given by (\ref{eq2.2}) may be represented in
the form
\begin{eqnarray}{\label{eq2.4}}
 W_\Gamma(t,\theta) =
 \sqrt{\gamma_0}\beta_0(t) + \sum_{n\in \mathbb{Z}_s^d}
 \sqrt{2\gamma_n} \left[ \cos (n,\theta)\,\beta_n^1(t) \right.
  \!&\!+\!&\! \left.\sin (n,\theta)\,\beta_n^2(t) \right], \\ &&
 t\geq 0 \mbox{~and~} \theta\in T^d. \nonumber
\end{eqnarray}
In (\ref{eq2.4}) $\beta_0, \beta_n^1, \beta_n^2, n\in
\mathbb{Z}_s^d$, are independent real Brownian motions and
$\gamma_0, \gamma_n$ are coefficients of the series (\ref{eq2.2}).
The series (\ref{eq2.4}) is convergent in the sense of $D'(T^d)$.

Because any periodic distribution with positive period is a
tempered distribution (see, e.g\ \cite{GW99}), we may restrict our
considerations to the space $S'(T^d)$ of tempered distributions.
By $S(T^d)$ we denote the space of infinitely differentiable
rapidly decreasing functions on the torus $T^d$.

Let us denote by $H^\alpha=H^\alpha(T^d)$, $\alpha\in\mathbb{R}$,
the real Sobolev space of order $\alpha$ on the torus $T^d$. The
norms in such spaces may be expressed in terms of the Fourier
coefficients (see \cite{Ad75})
\begin{eqnarray*} 
||\xi||_{H^\alpha} & = & \left(\sum_{n\in\mathbb{Z}^d}(1+|n|^2)^\alpha
 |\xi_n|^2 \right)^\frac{1}{2}\\  & = & \left( |\xi_0|^2 +
 2 \sum_{n\in \mathbb{Z}_s^d} (1+|n|^2)^\alpha \left( (\xi_n^1)^2
 +(\xi_n^2)^2 \right) \right)^\frac{1}{2},
\end{eqnarray*}  
where $\xi_n=\xi_n^1+i\xi_n^2,~\xi_=\bar{\xi}_{-n},~
n\in\mathbb{Z}^d$.

There is another possibility to define the Sobolev spaces (see,
e.g.\ \cite{RS75}). We say that a distribution $\xi\in S'(T^d)$
belongs to $H^\alpha,\alpha\in\mathbb{R}$, if its Fourier
transform $\widehat{\xi}$ is a measurable function and
$$ \int_{T^d} (1+|\lambda|^2)^\alpha\,|\widehat{\xi}(|\lambda |)|^2
 d\lambda < +\infty.
$$

\subsection{Main results}\label{KLPmain}

If $ b \in L^1_{loc} ( \mathbb{R}_+) $ and $ \mu \in \mathbb{C}$,
we shall denote by $ r(t,\mu) $ the unique solution in $
L^1_{loc}( \mathbb{R}_+ ) $ to the linear Volterra equation
\begin{equation}{\label{eq3.1}}
r(t,\mu) = b(t) + \mu \int_0^t b(t-s)r(s,\mu) ds, \quad t \geq 0.
\end{equation}
In many cases the function $r(t,\mu)$ may be found explicitly. For
instance:
$$
\begin{array}{lcl}
 b(t) & \equiv 1, & \quad r(t,\mu) = e^{\mu t} \\
  b(t) & = t,& \quad r(t,\mu) = \displaystyle\frac{\sinh \sqrt{\mu}\,t}
{\sqrt{\mu}} \quad \mu \ne 0 \\ b(t) & = e^{-t},
 & \quad r(t,\mu) = e^{ (1+\mu)t} \\ b(t) & =
 te^{-t}, & \quad r(t,\mu) = e^{-t} \, \displaystyle \frac{\sinh
 \sqrt{\mu}\,t}{\sqrt{\mu}}, \quad \mu \neq 0.
\end{array}
$$
For more examples, see monograph \cite{Pr93} by Pr\"uss.

\noindent Let us denote by $ \hat f(k), k \in \mathbb{Z}$, the
$k$-th Fourier coefficient of an integrable function~$f$:
$$ \hat f(k) = \frac{1}{2\pi} \int_0^{2\pi} e^{ikt}f(t) dt. $$
Given $ b\in L^1 ( \mathbb{R}_+)$, we find that, for $F(t):=
\int_{-\infty}^t b(t-s) f(s) ds$, we have
\begin{equation}{\label{eq3.2}}
\hat F(k) = \tilde b(ik) \hat f(k), \quad k \in \mathbb{Z},
\end{equation}
where $ \tilde b(\lambda) := \displaystyle \int_0^{\infty}
e^{-\lambda t }\, b(t) dt $ denotes the Laplace transform of $b$.
\index{transform!Laplace}

In what follows we will assume that $\tilde b(ik)$ exists for all
$ k\in \mathbb{Z}$ and suppose that $ \lambda \to \tilde
b(\lambda) $ admits an analytical extension to a sector containing
the imaginary axis, and still denote this extension by $\tilde b$.
We introduce the following definition.

\begin{definition} \label{def3.2}
We say that a kernel\index{kernel!admissible} 
$b\in L^1 ( \mathbb{R_+})$ is {\tt admissible} for equation (\ref{eq1.1}) if
$$ \lim_{|n| \to \infty} |n|^2 \int_{0}^{\infty}
[r(s, -|n|^2)]^2 ds =: C_b $$ exists.
\end{definition}

\noindent{\bf Examples}
\begin{enumerate}
\item In the case $ b(t) = e^{-t}$ we obtain $$|n|^2 \int_{0}^{\infty}
[r(s, -|n|^2)]^2 ds = \frac{-|n|^2}{2(1-|n|^2)} $$
and hence $C_b = \frac{1}{2}.$\\

\item In the case $ b(t)= te^{-t}$ we obtain $$|n|^2 \int_{0}^{\infty}
[r(s, -|n|^2)]^2 ds = \frac{|n|^2}{4 + 4|n|^2}$$ and hence $ C_b =
\frac{1}{4}.$
\end{enumerate}

Denote by $W_n(t,\theta):=\cos (n,\theta)\,\beta_n^1(t)+\sin
(n,\theta)\,\beta_n^2(t),~ n\in\mathbb{Z}_s^d $, that is the
$n$-th element in the expansion (\ref{eq2.4}).

\begin{definition}{\label{solution}}
By a solution\index{solution!to stochastic Volterra equation}
$X(t,\theta)$ to the stochastic Volterra equation
(\ref{eq1.1}) we will understand the process of the form
\begin{equation}{\label{eq3.3}}
 X(t,\theta) = \sqrt{\gamma_0}\,\beta_0(t) + \sum_{n\in\mathbb{Z}_s^d}
 \sqrt{2\gamma_n} \int_{-\infty}^t r(t-s,-|n|^2)\,dW_n(s,\theta),
\end{equation}
where the function $r$ is as above, $t\geq 0$ and $\theta\in T^d$.
\end{definition}

The process $X$ given by (\ref{eq3.3}) is a particular form of the
weak solution to the equation (\ref{eq1.1}) (cf.\ \cite{KZ00})
and takes values in the space $S'(T^d)$.\\

The following is our main result.

\begin{theorem}{\label{th3.4}}
Assume $b\in L^1( \mathbb{R_+}) $ is admissible for (\ref{eq1.1}).
Then, the equation (\ref{eq1.1}) has an $ H^{\alpha + 1}(
T^d)$-valued solution if and only if the Fourier coefficients
$(\gamma_n)$ of the covariance $\Gamma$ satisfy
\begin{equation}{\label{eq3.4}}
\sum_{n\in \mathbb{Z}^d} \gamma_n (1 + |n|^2)^{\alpha} < \infty.
\end{equation}
In particular, equation ~(\ref{eq1.1}) has an $L^2(T^d)$-valued
solution if and only if
$$ \sum_{n\in \mathbb{Z}^d} \frac{\gamma_n}{1+|n|^2}< \infty.$$
\end{theorem}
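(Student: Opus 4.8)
The plan is to compute the $H^{\alpha+1}(T^d)$-norm of the solution $X(t,\theta)$ given by the explicit representation \eqref{eq3.3} and to show that finiteness of its expectation is equivalent to \eqref{eq3.4}. First I would fix $t\ge 0$ and write $X(t,\cdot)=\sqrt{\gamma_0}\,\beta_0(t)+\sum_{n\in\mathbb{Z}_s^d}\sqrt{2\gamma_n}\,Z_n(t,\cdot)$, where $Z_n(t,\theta)=\int_{-\infty}^t r(t-s,-|n|^2)\,dW_n(s,\theta)$ and $W_n(t,\theta)=\cos(n,\theta)\beta_n^1(t)+\sin(n,\theta)\beta_n^2(t)$. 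Since the $\beta_0,\beta_n^1,\beta_n^2$ are independent, the summands are orthogonal in $L^2(\Omega)$, and each $Z_n(t,\cdot)$ has, as a function on $T^d$, exactly the Fourier modes $\pm n$. Hence, using the Fourier-coefficient expression for $\|\cdot\|_{H^{\alpha+1}}$ recalled in the excerpt, I would get
\begin{equation*}
\mathbb{E}\,\|X(t,\cdot)\|_{H^{\alpha+1}}^2 = \gamma_0 \cdot t + 2\sum_{n\in\mathbb{Z}_s^d} 2\gamma_n\,(1+|n|^2)^{\alpha+1}\,\mathbb{E}\!\left(\int_{-\infty}^t [r(t-s,-|n|^2)]^2\,ds\cdot\tfrac{1}{2}\right),
\end{equation*}
where the $\tfrac12$ comes from $\mathbb{E}[(\int\cdots d\beta^1)^2\cos^2+(\int\cdots d\beta^2)^2\sin^2]$ averaged appropriately; the Itô isometry gives $\mathbb{E}|Z_n(t,\cdot)|^2$ pointwise equal to $\int_{-\infty}^t[r(t-s,-|n|^2)]^2ds=\int_0^\infty[r(\sigma,-|n|^2)]^2d\sigma$.

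The second step is to invoke the admissibility hypothesis: $|n|^2\int_0^\infty[r(\sigma,-|n|^2)]^2d\sigma\to C_b$ as $|n|\to\infty$. I would first argue $C_b>0$ (otherwise the corresponding deterministic/stochastic problem degenerates; in the concrete examples $C_b=\tfrac12,\tfrac14$), so that there are constants $0<c_1\le c_2<\infty$ and an index bound $N$ with
\begin{equation*}
\frac{c_1}{|n|^2} \le \int_0^\infty [r(\sigma,-|n|^2)]^2\,d\sigma \le \frac{c_2}{|n|^2}, \qquad |n|\ge N,
\end{equation*}
while for the finitely many $|n|<N$ the integral is just some finite positive number. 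Plugging this two-sided bound into the series above, the general term is comparable to $\gamma_n (1+|n|^2)^{\alpha+1}\cdot|n|^{-2}\sim \gamma_n(1+|n|^2)^{\alpha}$. Therefore $\mathbb{E}\,\|X(t,\cdot)\|_{H^{\alpha+1}}^2<\infty$ if and only if $\sum_{n\in\mathbb{Z}^d}\gamma_n(1+|n|^2)^\alpha<\infty$, which is \eqref{eq3.4}; the $L^2(T^d)$ statement is the special case $\alpha=-1$, since $H^0=L^2$ and $(1+|n|^2)^{-1}$ is the weight. For the ``only if'' direction I would note that if \eqref{eq3.4} fails then the partial sums of $\mathbb{E}\|X(t,\cdot)\|^2_{H^{\alpha+1}}$ diverge, so $X(t,\cdot)\notin H^{\alpha+1}$ with positive probability (a Gaussian series in a Hilbert space converges a.s.\ iff it converges in $L^2$, by the Itô–Nisio theorem), hence there is no $H^{\alpha+1}$-valued solution.

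The main obstacle, and the place requiring genuine care, is the passage from the asymptotic statement ``$|n|^2\int_0^\infty[r(\sigma,-|n|^2)]^2d\sigma\to C_b$'' to the uniform two-sided comparison estimate, together with verifying $C_b\neq 0$ in the generality assumed — the definition of admissibility only gives existence of the limit. One must also handle the low-frequency modes $|n|<N$ separately (finitely many terms, each contributing a finite amount, so harmless) and confirm that the bound is uniform in $t$ on $[0,\infty)$, which follows because $\int_{-\infty}^t[r(t-s,-|n|^2)]^2ds$ is independent of $t$ after the change of variables $\sigma=t-s$. A secondary technical point is justifying that the series \eqref{eq3.3} indeed defines the claimed $H^{\alpha+1}$-valued process (not merely an $S'(T^d)$-valued one) precisely when the sum is finite, which is where the Itô–Nisio / Gaussian-series argument enters; everything else is the routine Fourier/Itô-isometry computation sketched above.
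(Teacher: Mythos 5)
Your proposal follows essentially the same route as the paper: it reduces, via the explicit representation of the solution and the It\^o isometry, to the condition $\sum_{n}(1+|n|^2)^{\alpha+1}\gamma_n\int_0^\infty[r(s,-|n|^2)]^2\,ds<\infty$, uses the zero--one behaviour of sums of independent Gaussian squares to pass between a.s.\ finiteness and finiteness of expectations, and then invokes admissibility to replace $\int_0^\infty[r(s,-|n|^2)]^2\,ds$ by $|n|^{-2}$. Your remark that the ``only if'' direction requires $C_b>0$ (admissibility alone only asserts existence of the limit) is a legitimate point which the paper's own proof passes over silently, so on that step you are, if anything, slightly more careful than the printed argument.
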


\begin{proof} We shall use the representation (\ref{eq2.4}) for the
Wiener process $W_\Gamma(t,\theta)$ with respect to the basis: $1,
\cos(n,\theta), \sin(n,\theta)$, where $n\in\mathbb{Z}_s^d$ and
$\theta\in T^d$. Equation (\ref{eq1.1}) may be solved
coordinatewise as follows.

Assume that
\begin{equation}\label{eq3.5}
 X(t,\theta)=\sum_{n\in\mathbb{Z}_s^d} [\cos(n,\theta)\,X_n^1(t) +
\sin(n,\theta)\,X_n^2(t)] + X_0(t).
\end{equation}

Introducing (\ref{eq3.5}) into (\ref{eq1.1}), we obtain
\begin{eqnarray*}
\cos(n,\theta)X_n^1(t) &\! + \! &\sin(n,\theta)X_n^2(t) \\
 &\! = \! & -|n|^2 \int_{-\infty}^t \!\!
b(t-s)[\cos(n,\theta)X_n^1(s)  + \sin(n,\theta)X_n^2(s)] ds \\
 && + \sqrt{2\gamma_n} \int_{-\infty}^t \!\!
 b(t-s) [\cos(n,\theta)\,\beta_n^1(s)  +
\sin(n,\theta)\,\beta_n^2(s)] ds,
\end{eqnarray*}
or, equivalently
\begin{eqnarray*}
[\cos(n,\theta),\sin(n,\theta)] \left[\! \begin{array}{l}
X_n^1(t) \\ X_n^2(t) \end{array} \!\right] & \!=\!&
-|n|^2 \int_{-\infty}^t \!\!\!\!\!\! b(t-s)
[\cos(n,\theta),\sin(n,\theta)]
\left[\! \begin{array}{l} X_n^1(s) \\ X_n^2(s)\end{array}\!\right] ds \\
 && + \sqrt{2\gamma_n} \!\!\int_{-\infty}^t \!\!\!\!\!\!
b(t-s)[\cos(n,\theta),\sin(n,\theta)]
\left[\! \begin{array}{l} d\beta_n^1(s) \\
d\beta_n^2(s)\end{array}\!\right] .
\end{eqnarray*}
Denoting
$$ X_n(t):=
\left[ \begin{array}{l} X_n^1(t) \\ X_n^2(t)\end{array}\right]
\quad \mbox{and} \quad
 \beta_n(t):=
 \left[ \begin{array}{l} \beta_n^1(t) \\ \beta_n^2(t)\end{array}\right]
$$
we arrive at the equation
\begin{equation}{\label{eq3.6}}
 X_n(t) =-|n|^2 \int_{-\infty}^t b(t-s)X_n(s)ds + \sqrt{2\gamma_n}
\int_{-\infty}^t b(t-s) d \beta_n(s) \,.
\end{equation}
Taking Fourier transform in $t$, and making use of (\ref{eq3.1})
with $\mu = -|n|^2$ and (\ref{eq3.2}),
  we get the following solution to the equation (\ref{eq3.6}):
$$ X_n(t) = \int_{-\infty}^t r(t-s, -|n|^2) \sqrt{2 \gamma_n}
d \beta_n(s)= \int_0^{\infty} r(s, -|n|^2) \sqrt{2 \gamma_n} d
\beta_n(t-s).
$$
Hence, we deduce the following explicit formula for the solution
to the equation (\ref{eq1.1}):
\begin{equation}{\label{eq3.7}}
\begin{array}{rcl}
X(t,\theta) = \sqrt{\gamma_0} \beta_0(t) &+& \displaystyle \sum_{
n\in \mathbb{Z}_s^d } \sqrt{2 \gamma_n} \left[ \cos( n, \theta)
 \int_{0}^{\infty} r(s, -|n|^2) d\beta_n^1 (t-s) \right. \\ &+& \left.
\sin (n, \theta) \displaystyle \int_{0}^{\infty} r(s, -|n|^2) d
\beta_n^2 (t-s)\right].
\end{array}
\end{equation}
Since the series defining the process $X$ converges in $S'(T^d),
P$-almost surely, it follows from the definition of the space
$H^{\alpha}$ that $ X(t) \in H^{\alpha + 1}, P$-almost surely if
and only if
 \begin{eqnarray}\label{eq3.8}
 \sum_{n\in \mathbb{Z}^d} (1 + |n|^2)^{\alpha +1} \gamma_n &\! \! & \left[ 
 \left(  \int_{-\infty}^{t}\!\! r(t-s, -|n|^2) d \beta_n^1 (s)
  \right)^2 \right.\\
 &\! + \! & \left.\;\; \left( \int_{-\infty}^{t}\!\! r(t-s,-|n|^2) 
 d \beta_n^2 (s) \right)^2 \right]  < \infty. \nonumber
 \end{eqnarray}
Because the stochastic integrals in (\ref{eq3.8}) are independent
Gaussian random variables, we obtain that (\ref{eq3.8}) holds 
$P$-almost surely if and only if
 \begin{eqnarray}\label{eq3.9}
 \sum_{n\in \mathbb{Z}^d} (1 + |n|^2)^{\alpha +1} \gamma_n & &
 \mathbb{E} \left[ \left(\int_{-\infty}^{t} r(t-s, -|n|^2) 
 d \beta_n^1 (s)\right)^2 \right. \\
 &+& \left. \left(\int_{-\infty}^{t} r(t-s, -|n|^2) 
 d \beta_n^2 (s)\right)^2\right] < \infty. \nonumber
 \end{eqnarray}
Or equivalently, using properties of stochastic integrals, if and
only if
\begin{equation}{\label{eq3.10}}
\sum_{n\in \mathbb{Z}^d} (1 + |n|^2)^{\alpha +1} \gamma_n
\int_{0}^{\infty} [ r(s, -|n|^2)]^2 ds < \infty.
\end{equation}
Since $b$ is admissible for the equation (\ref{eq1.1}), we
conclude that (\ref{eq3.10}) holds if and only if
$$
\sum_{n\in \mathbb{Z}^d} (1 + |n|^2)^{\alpha +1}
\frac{\gamma_n}{|n|^2} < \infty,
$$
and the proof is completed.
 \qed\end{proof}

 Concerning uniqueness, we have the following result.

\begin{proposition}\label{pr4.21}
Assume $b\in L^1( \mathbb{R_+}) $ is admissible for (\ref{eq1.1})
and the following conditions hold:

(i)~ \quad $ \displaystyle \sum_{n\in \mathbb{Z}^d}
  \gamma_n (1 + |n|^2)^{\alpha} <\infty,$

 (ii) \quad $ \displaystyle \{1/\tilde b(ik) \}_
 {k\in \mathbb{Z}}
 \subset \mathbb{C} \setminus \{ -|n|^2 : n\in \mathbb{Z}^d \}$.\\

 Then, (\ref{eq1.1}) has a unique $ H^{\alpha + 1}(
T^d)$-valued solution.

\end{proposition}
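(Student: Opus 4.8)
\textbf{Proof proposal for Proposition \ref{pr4.21}.}

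The plan is to argue that existence of an $H^{\alpha+1}(T^d)$-valued solution already follows from Theorem \ref{th3.4} together with hypothesis (i), so that the only new content is uniqueness, and that uniqueness should be obtained coordinatewise by exploiting hypothesis (ii). First I would recall the coordinatewise decomposition carried out in the proof of Theorem \ref{th3.4}: writing any $S'(T^d)$-valued weak solution in its Fourier expansion $X(t,\theta)=X_0(t)+\sum_{n\in\mathbb{Z}_s^d}[\cos(n,\theta)X_n^1(t)+\sin(n,\theta)X_n^2(t)]$ and pairing the equation (\ref{eq1.1}) with the test functions $1,\cos(n,\cdot),\sin(n,\cdot)$, one reduces (\ref{eq1.1}) to the family of scalar (vector) Volterra equations (\ref{eq3.6}),
$$
X_n(t)=-|n|^2\int_{-\infty}^t b(t-s)X_n(s)\,ds+\sqrt{2\gamma_n}\int_{-\infty}^t b(t-s)\,d\beta_n(s),\qquad n\in\mathbb{Z}_s^d,
$$
together with $X_0(t)=\sqrt{\gamma_0}\,\beta_0(t)$. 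Existence of a solution of the prescribed regularity is then exactly the statement that the process (\ref{eq3.3}) (equivalently (\ref{eq3.7})) lies in $H^{\alpha+1}$, which by the computation (\ref{eq3.8})--(\ref{eq3.10}) and admissibility of $b$ is equivalent to $\sum_n\gamma_n(1+|n|^2)^{\alpha}<\infty$, i.e.\ hypothesis (i). So existence is free.

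For uniqueness I would take two $H^{\alpha+1}$-valued solutions $X,\tilde X$, form the difference $Y:=X-\tilde X$, and show $Y\equiv 0$. By linearity and the coordinatewise reduction above, each Fourier component $Y_n(t)$ (for $n\in\mathbb{Z}_s^d$, and similarly the zeroth component) satisfies the \emph{homogeneous} equation
$$
Y_n(t)=-|n|^2\int_{-\infty}^t b(t-s)Y_n(s)\,ds,\qquad t\ge 0,
$$
with $Y_n$ having paths that are, say, locally integrable on $\mathbb{R}$ (this is where one must be slightly careful about the function class in which the infinite-delay convolution is taken — the natural class is the one implicit in Definition \ref{solution}, e.g.\ bounded or $L^2_{loc}$ sample paths). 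Taking Fourier transform in $t$ as in (\ref{eq3.2}) gives $\hat Y_n(k)=-|n|^2\,\tilde b(ik)\,\hat Y_n(k)$ for every $k\in\mathbb{Z}$, that is $(1+|n|^2\tilde b(ik))\hat Y_n(k)=0$. Now hypothesis (ii), $\{1/\tilde b(ik)\}_{k}\subset\mathbb{C}\setminus\{-|n|^2:n\in\mathbb{Z}^d\}$, says precisely that $1+|n|^2\tilde b(ik)\neq 0$ for all $k\in\mathbb{Z}$ and all $n$; hence $\hat Y_n(k)=0$ for all $k$, so $Y_n\equiv 0$, and since this holds for every $n$ (and for the zeroth mode, where the homogeneous equation forces $Y_0\equiv 0$ directly), $Y\equiv 0$ in $S'(T^d)$, $P$-a.s. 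This yields uniqueness.

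The main obstacle I anticipate is not the algebra but making the Fourier-transform-in-time argument rigorous for the difference process: one has to pin down the precise admissible class of sample paths (so that $t\mapsto Y_n(t)$ has a well-defined Laplace/Fourier transform and so that the homogeneous Volterra equation on the whole line genuinely forces $\hat Y_n(k)(1+|n|^2\tilde b(ik))=0$ rather than merely a.e.\ identities with boundary issues), and to justify that the analytic extension of $\tilde b$ used in hypothesis (ii) is the relevant object on the imaginary axis. I would handle this by invoking the standing assumption stated just before Definition \ref{def3.2} — that $\tilde b(ik)$ exists for all $k\in\mathbb{Z}$ and $\lambda\mapsto\tilde b(\lambda)$ extends analytically to a sector containing the imaginary axis — together with the uniqueness of solutions in $L^1_{loc}(\mathbb{R}_+)$ to (\ref{eq3.1}), and by treating the infinite-delay convolution via the representation (\ref{eq3.7}): any two solutions of the form (\ref{eq3.3}) must have the same Fourier-in-time coefficients once (ii) excludes the resonances, so they coincide. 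A remark reconciling this with the deterministic Volterra uniqueness theory (e.g.\ \cite{Pr93}) would close the argument.
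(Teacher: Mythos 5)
Your proposal is correct and follows essentially the same route as the paper: the paper likewise treats existence as settled by Theorem \ref{th3.4} under (i), and proves uniqueness by showing that a solution of the homogeneous equation has Fourier coefficients in $\theta$ satisfying $X_n(t)=-|n|^2\int_{-\infty}^t b(t-s)X_n(s)\,ds$, whence $(1+|n|^2\tilde b(ik))\hat X_n(k)=0$ and (ii) forces $\hat X_n(k)=0$. The only difference is that you flag the regularity issues in the Fourier-in-time step, which the paper passes over silently.
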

\begin{proof}
Let $ X(t,\theta)$ be solution of $$ X(t,\theta) =
\int_{-\infty}^t b(t-s) \Delta X(s,\theta) ds.$$ Taking Fourier
transform in $\theta$ and denoting by $X_n(t)$ the $n$-th Fourier
coefficient of $X(t,\theta)$($t$ fixed), we obtain
$$ X_n(t) = -|n|^2 \int_{-\infty}^t b(t-s) X_n(s) ds $$ for all $
n \in \mathbb{Z}^d.$ Taking now Fourier transform in $t$, we get
that the Fourier coefficients of $X_n(t)$ ( $n$ fixed) satisfy
$$( 1 + |n|^2 \tilde b(ik) )\hat{X_n}(k)=0$$ for all $k \in
\mathbb{Z}$. According to (ii) we obtain that $ \hat{X_n}(k) = 0 $
for all $k \in \mathbb{Z}$ and all $ n \in \mathbb{Z}^d.$ Hence,
the assertion follows by uniqueness of the Fourier transform.
\qed\end{proof}

 The following corollaries are an immediate consequence of Theorem \ref{th3.4}. The arguments are
  the same as in \cite{KZ01}. We give here the
 proof for the sake of completeness.

 \begin{corollary}{\label{cor3.6}}
 Suppose $b\in L^1( \mathbb{R_+}) $ is admissible for (\ref{eq1.1}) and
 assume $\Gamma \in L^2 ( T^d). $ Then
 the integro-differential stochastic equation (\ref{eq1.1}) has a solution
 with values in $L^2( T^d)$ for $d=1,2,3.$

 \end{corollary}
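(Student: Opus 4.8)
The plan is to derive the corollary directly from Theorem \ref{th3.4} by specializing the regularity index. Since $b$ is admissible for (\ref{eq1.1}), Theorem \ref{th3.4} tells us that (\ref{eq1.1}) has an $H^{\alpha+1}(T^d)$-valued solution if and only if the Fourier coefficients $(\gamma_n)$ of $\Gamma$ satisfy $\sum_{n\in\mathbb{Z}^d}\gamma_n(1+|n|^2)^\alpha<\infty$. Taking $\alpha=0$ gives an $H^1(T^d)$-valued solution precisely when $\sum_{n\in\mathbb{Z}^d}\gamma_n<\infty$, and since $H^1(T^d)\hookrightarrow L^2(T^d)$ continuously, this yields an $L^2(T^d)$-valued solution under the same condition. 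So the task reduces to showing that the assumption $\Gamma\in L^2(T^d)$ forces $\sum_{n}\gamma_n<+\infty$ when $d\le 3$.

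First I would recall that $\Gamma$ is a positive-definite distribution, so by (\ref{eq2.2}) its Fourier coefficients $\gamma_n$ are nonnegative and $\gamma_n=\gamma_{-n}$. If $\Gamma\in L^2(T^d)$, then by Parseval's identity $\sum_{n\in\mathbb{Z}^d}\gamma_n^2=\|\Gamma\|_{L^2(T^d)}^2<+\infty$; in particular $(\gamma_n)\in\ell^2(\mathbb{Z}^d)$ and $\gamma_n\to 0$. The key estimate is then a Cauchy--Schwarz argument: for any $R>0$,
\begin{equation*}
 \sum_{|n|\le R}\gamma_n \le \Big(\sum_{|n|\le R}\gamma_n^2\Big)^{1/2}\Big(\sum_{|n|\le R}1\Big)^{1/2}
 \le \|\Gamma\|_{L^2(T^d)}\,\big(\#\{n\in\mathbb{Z}^d:|n|\le R\}\big)^{1/2}.
\end{equation*}
This alone is not enough, so instead I would split dyadically and use the decay of $\gamma_n$ more carefully: writing $\sum_n\gamma_n=\sum_{k\ge0}\sum_{2^k\le 1+|n|<2^{k+1}}\gamma_n$ and applying Cauchy--Schwarz on each dyadic shell, one gets $\sum_{2^k\le 1+|n|<2^{k+1}}\gamma_n\le (\sum_{2^k\le 1+|n|}\gamma_n^2)^{1/2}(C\,2^{kd})^{1/2}$, where $C\,2^{kd}$ bounds the number of lattice points in the shell. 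This still requires quantitative decay of the $\ell^2$-tail, which is where the restriction $d\le 3$ must enter.

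The cleanest route, and the one I would actually write, uses the additional structure of a covariance: since $W_\Gamma$ is a genuine spatially homogeneous Wiener process, $\Gamma$ is not merely an $L^2$ function but a \emph{positive-definite} one, and for positive-definite $\Gamma\in L^2(T^d)$ one has the sharper regularity $\Gamma\in H^s(T^d)$ for $s<d/2$ by a standard embedding/interpolation fact about positive-definite functions whose Fourier coefficients are summable-squared; equivalently, using that $\sqrt{\gamma_n}$ behaves like an $\ell^2$ sequence against which the counting function $(1+|n|^2)^{-s}$ is summable when $2s>d$. Then $\sum_n\gamma_n=\sum_n\gamma_n(1+|n|^2)^{-s}(1+|n|^2)^{s}$ and another Cauchy--Schwarz split, combined with $\sum_n(1+|n|^2)^{-2s}<\infty$ for $2s>d$ (valid since $d\le 3<4$ allows choosing $s$ with $d/2<s$ and $2s>d$ simultaneously once $d\le 3$), gives $\sum_n\gamma_n<+\infty$. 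I expect the main obstacle to be pinning down exactly which elementary inequality makes the dimensional threshold $d\le 3$ appear with the right constants; the underlying idea — $L^2$ covariance plus lattice-point counting forces coefficient summability in low dimension — is routine, but the bookkeeping of exponents ($s$, $2s$, $d$) must be arranged so that all the relevant series converge at once. Once $\sum_n\gamma_n<+\infty$ is established, Theorem \ref{th3.4} with $\alpha=0$ and the embedding $H^1(T^d)\subset L^2(T^d)$ finish the proof.
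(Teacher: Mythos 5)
Your reduction goes wrong at the very first step: you set $\alpha=0$ and aim for an $H^1(T^d)$-valued solution, planning to descend to $L^2(T^d)$ by embedding. But the corollary asks for an $L^2(T^d)=H^0(T^d)$-valued solution, which is the case $\alpha=-1$ of Theorem \ref{th3.4} (note the theorem itself records this: the $L^2$ statement is equivalent to $\sum_n \gamma_n/(1+|n|^2)<\infty$). By choosing $\alpha=0$ you have replaced the target condition $\sum_n \gamma_n(1+|n|^2)^{-1}<\infty$ by the strictly stronger condition $\sum_n\gamma_n<\infty$, and that stronger condition is simply \emph{not} a consequence of $\Gamma\in L^2(T^d)$. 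Concretely, in $d=1$ take $\gamma_0=0$ and $\gamma_n=1/|n|$ for $n\neq 0$: the coefficients are nonnegative, symmetric, and square-summable, so $\Gamma\in L^2(T^1)$ and is positive-definite, yet $\sum_n\gamma_n=+\infty$. This also disposes of your ``cleanest route'': positive-definiteness plus $L^2$ does not force summability of the Fourier coefficients even in dimension one, so no amount of bookkeeping with the exponent $s$ will close that gap. The dyadic-shell Cauchy--Schwarz attempt fails for the same reason — you are trying to prove a false statement.

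The intended argument is the one you almost write down but apply to the wrong series. With $\alpha=-1$ the condition to verify is
\begin{equation*}
\sum_{n\in\mathbb{Z}^d}\frac{\gamma_n}{1+|n|^2}<+\infty .
\end{equation*}
Since $\Gamma\in L^2(T^d)$ gives $(\gamma_n)\in \ell^2(\mathbb{Z}^d)$ by Parseval, Cauchy--Schwarz yields
\begin{equation*}
\sum_{n\in\mathbb{Z}^d}\frac{\gamma_n}{1+|n|^2}
\le\Bigl(\sum_{n\in\mathbb{Z}^d}\gamma_n^2\Bigr)^{1/2}
\Bigl(\sum_{n\in\mathbb{Z}^d}\frac{1}{(1+|n|^2)^2}\Bigr)^{1/2},
\end{equation*}
and the second factor is finite if and only if $4>d$, i.e.\ exactly for $d=1,2,3$. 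That lattice-sum criterion $\sum_n(1+|n|^2)^{-q}<\infty\iff 2q>d$ is the ``elementary inequality'' you were looking for; the dimensional threshold comes from the weight $(1+|n|^2)^{-1}$ supplied by the correct choice $\alpha=-1$, not from any extra decay of the $\gamma_n$ themselves.
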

 \begin{proof}
 We have to check equation (\ref{eq3.4}) with $\alpha = -1$. Note, that if
$\Gamma \in L^2 (T^d) $ then $\hat
 \Gamma = (\gamma_n) \in l^2 ( \mathbb{Z}^d).$ Consequently
$$
\sum_{n\in \mathbb{Z}^d} \frac{\gamma_n }{1 + |n|^2} \leq (
\sum_{n\in \mathbb{Z}^d} {\gamma_n }^2)^{1/2} ( \sum_{n\in
\mathbb{Z}^d} \frac{1}{(1 + |n|^2)^2})^{1/2}.
$$
But $\sum_{n\in \mathbb{Z}^d} {\gamma_n }^2 < \infty $ and
\begin{equation}{\label{eq3.11}}
\sum_{n\in \mathbb{Z}^d} \frac{1}{(1 + |n|^2)^q} < \infty \mbox{
if and only if } 2q > d.
\end{equation}
Hence, the result follows.
 \qed\end{proof}

 \begin{corollary}{\label{cor3.7}}
 Suppose $b\in L^1( \mathbb{R_+}) $ is admissible for (\ref{eq1.1}) and
 assume $\hat \Gamma \in l^p ( \mathbb{Z}^d) $ for $ 1 < p \leq 2.$ Then
 the integro-differential stochastic equation (\ref{eq1.1}) has a
 solution with values in $L^2( T^d)$ for all $ d <
 \frac{2p}{p-1}.$
\end{corollary}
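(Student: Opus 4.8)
The plan is to derive Corollary \ref{cor3.7} from Theorem \ref{th3.4} in exactly the manner that Corollary \ref{cor3.6} was obtained, replacing the use of the Cauchy--Schwarz inequality ($p=2$) by H\"older's inequality with a general exponent $p\in(1,2]$. First I would observe that, by Theorem \ref{th3.4} with $\alpha=-1$, the equation (\ref{eq1.1}) has an $L^2(T^d)$-valued solution if and only if
$$
 \sum_{n\in\mathbb{Z}^d} \frac{\gamma_n}{1+|n|^2} < \infty,
$$
so it suffices to verify this summability condition under the hypothesis $\hat\Gamma=(\gamma_n)\in\ell^p(\mathbb{Z}^d)$ and the dimensional restriction $d<\frac{2p}{p-1}$.

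Next I would apply H\"older's inequality with exponents $p$ and its conjugate $p'=\frac{p}{p-1}$ to the product $\gamma_n\cdot(1+|n|^2)^{-1}$:
$$
 \sum_{n\in\mathbb{Z}^d} \frac{\gamma_n}{1+|n|^2}
 \le \left(\sum_{n\in\mathbb{Z}^d}\gamma_n^p\right)^{1/p}
 \left(\sum_{n\in\mathbb{Z}^d}\frac{1}{(1+|n|^2)^{p'}}\right)^{1/p'}.
$$
The first factor is finite because $\hat\Gamma\in\ell^p(\mathbb{Z}^d)$ (here one uses $\gamma_n\ge 0$, which holds since $\Gamma$ is positive-definite, so that $\gamma_n=|\gamma_n|$ and the $\ell^p$-norm is literally $(\sum\gamma_n^p)^{1/p}$). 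For the second factor I would invoke the elementary fact, already recorded as (\ref{eq3.11}) in the proof of Corollary \ref{cor3.6}, that $\sum_{n\in\mathbb{Z}^d}(1+|n|^2)^{-q}<\infty$ if and only if $2q>d$. Applying this with $q=p'=\frac{p}{p-1}$, the condition $2p'>d$ reads $\frac{2p}{p-1}>d$, which is precisely the hypothesis $d<\frac{2p}{p-1}$. Hence both factors are finite, the series $\sum_n\gamma_n/(1+|n|^2)$ converges, and Theorem \ref{th3.4} yields the $L^2(T^d)$-valued solution.

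There is essentially no serious obstacle here; the only point requiring a word of care is the borderline/degenerate behavior of the exponents --- for $p=2$ one recovers exactly Corollary \ref{cor3.6} with $p'=2$ and the constraint $d<4$, i.e. $d\in\{1,2,3\}$, so the new statement genuinely extends the previous one as $p$ decreases toward $1$ (at the expense of a stronger decay requirement on $\hat\Gamma$), and one should note that as $p\downarrow 1$ the admissible dimension range $d<\frac{2p}{p-1}$ widens without bound. I would also remark, as the corollary's statement presupposes, that admissibility of $b$ is used only through its role in Theorem \ref{th3.4}; no further property of the kernel enters the argument. This completes the proof sketch.
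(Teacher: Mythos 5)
Your argument is correct and is essentially identical to the paper's own proof: Hölder's inequality with exponents $p$ and $p'=\frac{p}{p-1}$ applied to $\sum_n \gamma_n/(1+|n|^2)$, followed by the summability criterion (\ref{eq3.11}) with $q=p'$, which gives exactly the condition $d<\frac{2p}{p-1}$. No further comment is needed.
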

\begin{proof} Note that
$$
\sum_{n\in \mathbb{Z}^d} \frac{\gamma_n }{1 + |n|^2} \leq (
\sum_{n\in \mathbb{Z}^d} {\gamma_n }^p)^{1/p} ( \sum_{n\in
\mathbb{Z}^d} \frac{1}{(1 + |n|^2)^q})^{1/q},
$$
where $ \frac{1}{p} + \frac{1}{q} = 1.$ Hence the result follows
from (\ref{eq3.11}) with $q= \frac{p}{p-1}.$
\qed\end{proof}

For $\alpha=-1$, the condition (\ref{eq3.4}) can be written as
follows.

\begin{theorem}{\label{th3.8}}
Let $b$ be admissible for the equation (\ref{eq1.1}). Assume that
the covariance $\Gamma$ is not only a positive definite
distribution but is also a non-negative measure. Then the equation
(\ref{eq1.1}) has $L^2(T^d)$-valued solution if and only if
\begin{equation}{\label{eq3.12}}
 (\Gamma,G_d) <+\infty,
\end{equation}
where
\begin{equation}{\label{eq3.13}}
 G_d(x) = \sum_{n\in\mathbb{Z}^d} \int_0^{+\infty}
 \frac{1}{\sqrt{(4\pi t)^d}}\, e^{-t}\, e^{-\frac{|x+2\pi n|^2}{t}}
 dt, \quad x\in T^d.
\end{equation}
\end{theorem}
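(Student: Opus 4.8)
The plan is to derive the criterion (\ref{eq3.12})--(\ref{eq3.13}) from Theorem~\ref{th3.4} applied with $\alpha=-1$. By that theorem the equation (\ref{eq1.1}) has an $L^2(T^d)=H^0(T^d)$-valued solution if and only if
$$ \sum_{n\in\mathbb{Z}^d} \frac{\gamma_n}{1+|n|^2} < +\infty, $$
so it suffices to show that, when $\Gamma$ is a non-negative measure on $T^d$, this series equals (up to its finiteness) the pairing $(\Gamma,G_d)$ with $G_d$ given by (\ref{eq3.13}). First I would rewrite each summand via the elementary scalar identity
$$ \frac{1}{1+|n|^2} = \int_0^{+\infty} e^{-t}\,e^{-t|n|^2}\,dt, $$
which is legitimate term by term since all $\gamma_n\ge 0$, so that Tonelli's theorem allows interchanging the sum over $n\in\mathbb{Z}^d$ and the integral over $t$ freely, with all quantities non-negative.

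The next step is to recognize $e^{-t|n|^2}$ as a Fourier coefficient on the torus. The periodization of the Gaussian heat kernel on $\mathbb{R}^d$,
$$ \vartheta_t(x) := \sum_{n\in\mathbb{Z}^d} \frac{1}{\sqrt{(4\pi t)^d}}\,e^{-\frac{|x+2\pi n|^2}{4t}}, \qquad x\in T^d, $$
has, by the Poisson summation formula (with the $2\pi$-period normalization used throughout the paper), the Fourier expansion $\vartheta_t(x)=\frac{1}{(2\pi)^d}\sum_{n\in\mathbb{Z}^d} e^{-t|n|^2}e^{i(n,x)}$; in particular its $n$-th Fourier coefficient is a constant multiple of $e^{-t|n|^2}$. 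Integrating in $t$ against $e^{-t}$ and summing the series defining $G_d$ in (\ref{eq3.13}) (note the exponent there is $e^{-|x+2\pi n|^2/t}$, which I would reconcile with the heat-kernel normalization by tracking the constant, possibly absorbing it), one gets that the $n$-th Fourier coefficient of $G_d$ is a fixed positive constant times $\int_0^\infty e^{-t}e^{-t|n|^2}dt = \frac{1}{1+|n|^2}$. Since $\Gamma$ has Fourier coefficients $(\gamma_n)$ and $\Gamma$, $G_d$ are both non-negative (a measure and a non-negative function respectively), Parseval's identity in the distribution--test function sense gives
$$ (\Gamma,G_d) = \sum_{n\in\mathbb{Z}^d} \gamma_n \cdot (\text{$n$-th Fourier coefficient of }G_d) = c\sum_{n\in\mathbb{Z}^d}\frac{\gamma_n}{1+|n|^2}, $$
with $c>0$, both sides being simultaneously finite or infinite.

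Finally I would check that the pairing $(\Gamma,G_d)$ makes sense: $G_d$ is continuous and bounded on $T^d$ away from $x=0$ and has at worst the local singularity of the Bessel/Riesz kernel at the origin (order $|x|^{-(d-2)}$ for $d\ge 3$, logarithmic for $d=2$), which is $\Gamma$-integrable precisely under a slowly increasing spectral condition; but since we are only asserting the equivalence "$L^2$-valued solution exists $\iff (\Gamma,G_d)<+\infty$", the interpretation of $(\Gamma,G_d)$ as $\sum_n \gamma_n\hat G_d(n)\in[0,+\infty]$ is unambiguous and no extra hypothesis is needed. The main obstacle I anticipate is purely bookkeeping: getting the normalization constants in the Poisson summation formula consistent with the paper's convention that the period is $2\pi$ (hence the $e^{i(n,\theta)}$, not $e^{2\pi i(n,\theta)}$, in (\ref{eq2.2})), and matching the exponent $e^{-|x+2\pi n|^2/t}$ in (\ref{eq3.13}) — as opposed to the textbook $e^{-|x+2\pi n|^2/(4t)}$ — which forces a rescaling $t\mapsto 4t$ inside the integral and a compensating change in the $e^{-t}$ factor; I would handle this by carrying the constant symbolically and only fixing it at the end, since its precise value is irrelevant to the stated equivalence.
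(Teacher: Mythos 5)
Your argument is correct and is essentially the proof the paper itself has in mind: the paper omits the details and refers to \cite{KZ01}, where exactly this reduction is carried out, namely apply Theorem \ref{th3.4} with $\alpha=-1$ and identify $G_d$ as the $e^{-t}$-weighted time integral of the periodized Gaussian kernel, whose $n$-th Fourier coefficient is comparable (uniformly in $n$) to $1/(1+|n|^2)$, so that by non-negativity of $\Gamma$, $G_d$ and the $\gamma_n$, Tonelli and Parseval give $(\Gamma,G_d)<+\infty$ if and only if $\sum_n\gamma_n/(1+|n|^2)<+\infty$. Your handling of the normalization mismatch in the exponent of (\ref{eq3.13}) is the right one: the rescaling only replaces $1/(1+|n|^2)$ by $c/(c'+|n|^2)$, which does not affect finiteness.
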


The proof of Theorem \ref{th3.8} is the same that for Theorem 2,
part 2) in \cite{KZ01}, so we omit it. For more details
concerning the function $G_d$ we refer to \cite{KZ01} and
\cite{La75}.

Additionally, from properties of function $G_d$ defined by
(\ref{eq3.13}) and the condition (\ref{eq3.12}) we obtain the
following result (see \cite[Theorem 6.1]{KZ00}).\\

\begin{corollary}{\label{cor3.9}}
Assume that $\Gamma$ is a non-negative measure and $b$ is
admissible. Then equation
(\ref{eq1.1}) has function valued solutions:\\
~~i) ~~\quad for all $\Gamma$ if $d=1$; \\
~ii) ~~\quad for exactly those $\Gamma$ for which
 $\int_{|\theta|\leq 1} \ln |\theta|\,\Gamma(d\theta)<+\infty$
 if $d=2$; \\
iii) \quad for exactly those $\Gamma$ for which
$\int_{|\theta|\leq 1} \frac{1}{|\theta|^{d-2}}\,
\Gamma(d\theta)<+\infty$ if $d\geq 3$.
\end{corollary}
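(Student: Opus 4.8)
The plan is to deduce Corollary~\ref{cor3.9} from Theorem~\ref{th3.8} by analyzing the kernel $G_d$ defined in (\ref{eq3.13}), reducing the convergence of $(\Gamma,G_d)$ to a question about the local singularity of $G_d$ near the origin. First I would observe that, since $\Gamma$ is a non-negative measure and periodic, the pairing $(\Gamma,G_d)$ splits into an integral over a neighborhood of $0$ in $T^d$ (where $G_d$ may blow up) plus an integral over the complement (where $G_d$ is bounded and continuous, so this piece is always finite because $\Gamma$ is a finite measure on the compact torus away from $0$, being slowly increasing). Hence $(\Gamma,G_d)<+\infty$ if and only if $\int_{|\theta|\le 1}G_d(\theta)\,\Gamma(d\theta)<+\infty$.

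Next I would extract the leading singular behavior of $G_d(x)$ as $x\to 0$. In the series (\ref{eq3.13}) only the $n=0$ term is singular at $x=0$; all terms with $n\ne 0$ are smooth near the origin and contribute a bounded quantity. For the $n=0$ term, $\int_0^{+\infty}(4\pi t)^{-d/2}e^{-t}e^{-|x|^2/t}\,dt$, the factor $e^{-t}$ is harmless near $t=0$, so the small-$t$ asymptotics coincide with those of the Riesz/Bessel kernel $\int_0^{+\infty}(4\pi t)^{-d/2}e^{-|x|^2/t}\,dt$, which is a classical computation (see \cite{La75}): for $d=1$ it is bounded, for $d=2$ it behaves like $c\ln\frac{1}{|x|}$, and for $d\ge 3$ it behaves like $c|x|^{-(d-2)}$. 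These are exactly the three regimes appearing in the statement. I would cite the analogous computation in \cite[Theorem 6.1]{KZ00} and \cite{KZ01}, since the kernel $G_d$ here is the same object that arises there for the stochastic heat equation.

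Combining the two observations: for $d=1$, $G_1$ is a bounded function on $T^1$, so $(\Gamma,G_1)<+\infty$ holds for every admissible $\Gamma$ (using again that $\Gamma$ is a finite measure on the compact torus), giving (i). For $d=2$, $(\Gamma,G_2)<+\infty$ is equivalent, modulo a bounded error, to $\int_{|\theta|\le 1}\ln\frac{1}{|\theta|}\,\Gamma(d\theta)<+\infty$, i.e.\ to $\int_{|\theta|\le 1}\ln|\theta|\,\Gamma(d\theta)<+\infty$ after absorbing the sign; this is (ii). For $d\ge 3$, it is equivalent to $\int_{|\theta|\le 1}|\theta|^{-(d-2)}\,\Gamma(d\theta)<+\infty$, which is (iii). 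Then ``function-valued solution'' follows because Theorem~\ref{th3.8} already identifies the $L^2(T^d)$-valued case with (\ref{eq3.12}), and an $L^2$-valued process is in particular function-valued.

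The main obstacle I expect is making the small-$x$ asymptotic analysis of $G_d$ rigorous and uniform enough to justify that the non-singular part (the $n\ne 0$ terms and the $e^{-t}$-correction to the $n=0$ term) really is bounded near $0$ and does not interfere with the borderline cases, especially the logarithmic case $d=2$ where constants and lower-order terms matter. This is, however, exactly the estimate carried out in \cite{KZ01} and \cite{La75}, so the cleanest route is to reduce $G_d$ to the heat-kernel Green function treated there and quote those results rather than redo the potential-theoretic estimates from scratch.
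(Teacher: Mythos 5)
Your proposal is correct and follows exactly the route the paper intends: Corollary \ref{cor3.9} is deduced from Theorem \ref{th3.8} by isolating the $n=0$ term of $G_d$ in (\ref{eq3.13}) and using its classical small-$|x|$ asymptotics (bounded for $d=1$, logarithmic for $d=2$, of order $|x|^{-(d-2)}$ for $d\ge 3$), with the $n\neq 0$ terms and the region away from the origin contributing only bounded quantities against the finite measure $\Gamma$; the paper itself gives no further detail and simply cites these properties of $G_d$ and \cite[Theorem 6.1]{KZ00}. Your elaboration of the splitting and of the $e^{-t}$-correction is a faithful filling-in of that argument.
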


In what follows, we will see that formula (\ref{eq3.7}) also
provides h\"olderianity of
$X$ with respect to $t$. In order to do that, we need assumptions
very similar to those in \cite{CD96}.\\

\noindent{\tt HYPOTHESIS~(H2)~}
\index{HYPOTHESIS (H2)}
{\it Assume that there exist $\delta\in (0,1)$ and $C_\delta>0$
such that, for all $s \in (-\infty,t)$ we have:\\
(i) $~\quad \displaystyle \int_s^t [r(t-\tau,-|n|^2)]^2 d\tau \leq
 C_\delta |n|^{2(\delta-1)}\,|t-s|^\delta$ ; \\
 (ii) $~\quad \displaystyle \int_{-\infty}^s [ r( t-\tau,-|n|^2)
 -r(s-\tau,-|n|^2)]^2 d\tau \leq C_\delta |n|^{2(\delta-1)}
 \,|t-s|^\delta$.}\\

\begin{proposition}
 Assume that $\displaystyle \sum_{n\in\mathbb{Z}^d}\frac{\gamma_n}
 {1+|n|^2}<+\infty$. Under Hypothesis~(H2), the trajectories of
 the solution $X$ to the equation
 (\ref{eq1.1}) are almost surely $\eta$-H\"older continuous
 with respect to $t$, for every $\eta\in (0,\delta/2)$.
\end{proposition}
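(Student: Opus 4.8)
The plan is to use the explicit series representation \eqref{eq3.7} of the solution and to estimate, for a fixed pair $0\le s<t$, the increments $X(t,\theta)-X(s,\theta)$ in $L^2(\Omega;H^0(T^d))=L^2(\Omega\times T^d)$, and then to invoke the Kolmogorov continuity theorem (Theorem~3.3 in \cite{DZ92}, already cited in the excerpt) to upgrade moment bounds to almost sure H\"older continuity of trajectories. First I would write, coordinatewise,
\begin{eqnarray*}
 X_n(t)-X_n(s) &=& \int_s^t r(t-\tau,-|n|^2)\,d\beta_n(\tau) \\
 && +\int_{-\infty}^s \bigl[r(t-\tau,-|n|^2)-r(s-\tau,-|n|^2)\bigr]\,d\beta_n(\tau),
\end{eqnarray*}
so that, since the two stochastic integrals are independent Gaussian vectors and the $W_n$ are mutually independent, the It\^o isometry gives
$$
 \mathbb{E}\,\|X(t)-X(s)\|_{L^2(T^d)}^2 = \gamma_0|t-s| + \sum_{n\in\mathbb{Z}_s^d}2\gamma_n\Bigl(\int_s^t r(t-\tau,-|n|^2)^2d\tau + \int_{-\infty}^s[r(t-\tau,-|n|^2)-r(s-\tau,-|n|^2)]^2d\tau\Bigr).
$$

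Next I would apply Hypothesis~(H2): parts (i) and (ii) bound each of the two time-integrals above by $C_\delta|n|^{2(\delta-1)}|t-s|^\delta$, whence
$$
 \mathbb{E}\,\|X(t)-X(s)\|_{L^2(T^d)}^2 \le C\,|t-s|^\delta\Bigl(\gamma_0 + \sum_{n\in\mathbb{Z}_s^d}\gamma_n|n|^{2(\delta-1)}\Bigr) \le C'\,|t-s|^\delta\sum_{n\in\mathbb{Z}^d}\frac{\gamma_n}{1+|n|^2},
$$
where in the last inequality I use $|n|^{2(\delta-1)}\le (1+|n|^2)^{-1}$ valid since $\delta<1$ (so $2(\delta-1)\le -1 < 0$ only for $|n|\ge 1$; for the constant term $\gamma_0$ one absorbs it into the sum). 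The summability assumption $\sum_n \gamma_n/(1+|n|^2)<+\infty$ then makes the right-hand side finite, so $\mathbb{E}\|X(t)-X(s)\|_{L^2}^2\le K|t-s|^\delta$. Because $X(t)-X(s)$ is a centered Gaussian random variable with values in the Hilbert space $L^2(T^d)$, all its moments are controlled by the second moment: $\mathbb{E}\|X(t)-X(s)\|_{L^2}^{2m}\le C_m K^m|t-s|^{m\delta}$ for every integer $m\ge 1$. Choosing $m$ large enough that $m\delta>1$, the Kolmogorov criterion yields a modification of $X$ whose $t$-trajectories (with values in $L^2(T^d)$) are a.s.\ $\eta$-H\"older continuous for every $\eta<\frac{m\delta-1}{2m}$, and letting $m\to\infty$ this exponent range exhausts $(0,\delta/2)$.

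The step I expect to require the most care is the passage from the formal series manipulation to a rigorous statement: one must check that the series \eqref{eq3.7} and the termwise-differenced series converge in $L^2(\Omega\times T^d)$ (this is exactly what the finiteness of $\sum_n\gamma_n/(1+|n|^2)$ delivers, and it is the reason that hypothesis is imposed), and that Fubini applies so that the $\Omega$-expectation and the $T^d$-integration commute with the infinite sum — here one uses that the summands are nonnegative after taking $\mathbb{E}\|\cdot\|^2$. A secondary technical point is the truncation at $-\infty$: the integrals over $(-\infty,s)$ are genuinely improper, so one should first establish (via (H2)(ii) with $s$ replaced by a cutoff) that $\int_{-\infty}^s r(s-\tau,-|n|^2)^2\,d\tau<+\infty$, which is implicit in admissibility of $b$ (Definition~\ref{def3.2}) together with (H2); this guarantees that each $X_n(t)$ is a well-defined $\mathbb{R}^2$-valued Gaussian variable. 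Once these convergence issues are settled, the remaining estimates are the routine bookkeeping sketched above, and the conclusion follows directly from Kolmogorov's theorem.
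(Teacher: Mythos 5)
Your overall route is exactly the paper's: decompose $X_n(t)-X_n(s)$ into the integral over $[s,t]$ plus the differenced integral over $(-\infty,s)$, apply the It\^o isometry and independence to get the second moment of the $L^2(T^d)$-increment, bound the two time-integrals by Hypothesis~(H2)(i)--(ii), use equivalence of Gaussian moments to pass to $2m$-th moments, and finish with the Kolmogorov test, letting $m\to\infty$ to exhaust $\eta\in(0,\delta/2)$. All of that is fine and coincides with the paper, including your remarks on convergence of the series and on the improper integrals over $(-\infty,s)$ being controlled by admissibility.

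There is, however, one concrete error in the step where you go beyond what the paper writes. After (H2) you arrive at the bound $C\,|t-s|^\delta\sum_n\gamma_n|n|^{2(\delta-1)}$ and claim this is dominated by $C'|t-s|^\delta\sum_n\gamma_n(1+|n|^2)^{-1}$ using ``$|n|^{2(\delta-1)}\le(1+|n|^2)^{-1}$.'' That inequality is false: for $\delta\in(0,1)$ and $|n|\ge1$ one has $|n|^{2(\delta-1)}=|n|^{2\delta}\cdot|n|^{-2}\ge|n|^{-2}$, so the comparison goes the \emph{other} way, and indeed $\sum_n\gamma_n(1+|n|^2)^{-1}<\infty$ does not imply $\sum_n\gamma_n|n|^{2(\delta-1)}<\infty$ (take $\gamma_n\sim|n|^{2-d}/\log^2|n|$). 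The paper's own proof simply stops at the constant $C_\delta\sum_n2\gamma_n|n|^{2(\delta-1)}$ and treats it as finite without comment, so what is really being used is the stronger summability $\sum_n\gamma_n|n|^{2(\delta-1)}<\infty$; your attempt to deduce it from the stated hypothesis does not work, and you should either invoke that stronger condition directly or find an additional ingredient (e.g.\ combining (H2) with the admissibility bound $\int_0^\infty r(\tau,-|n|^2)^2\,d\tau\le C|n|^{-2}$), noting that a naive interpolation between the two bounds degrades the H\"older exponent to $0$. Everything else in your argument is correct and matches the paper.
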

\begin{proof}
 From the expansion (\ref{eq3.7}) and properties of stochastic integral,
 we have
 \begin{eqnarray*}
  \mathbb{E} \!&\! &\!  ||X(t,\theta)-X(s,\theta)||^2_{L^2} =
 \mathbb{E}\left|\left| \sqrt{\gamma_0} (\beta_0(t)-\beta_0(s)) 
  \right.\right. \\
 & \!+\!& \sum_{n\in\mathbb{Z}_s^d} \sqrt{2\gamma_n} \left[
 \cos (n,\theta)\!\left(\!
 \int_{-\infty}^t \!\!\!\!\! r(t\!-\!\tau,\!-\!|n|^2) d\beta_n^1(\tau) \!
 -\!\!\int_{-\infty}^s \!\!\!\!\! r(s\!-\!\tau,\!-\!|n|^2)
  d\beta_n^1(\tau)\!\right) \right.\\
 & &\hspace{2ex} 
 +\sin (n,\theta)\left( \left.
 \int_{-\infty}^t \!\!\!\!\! r(t\!-\!\tau,\!-\!|n|^2) d\beta_n^2(\tau)
 -\int_{-\infty}^s \!\!\!\!\! r(s\!-\!\tau,\!-\!|n|^2) d\beta_n^2(\tau)
 \right) \right] ||^2_{L^2} \\
 &\! = \!&  (2\pi)^d \left(\gamma_0 |t-s| \right. \hspace{55ex} \\
  & & \!+ \!\sum_{n\in\mathbb{Z}_s^d}
 2\gamma_n\left[ \int_{-\infty}^s \!\!\!\!\! [r(t\!-\!\tau,\!-\!|n|^2)
 - r(s\!-\!\tau,\!-\!|n|^2)]^2d\tau \!+\!\! 
 \int_s^t \!\!\! r^2(t\!-\!\tau,\!-\!|n|^2)d\tau
 \right] ).
 \end{eqnarray*}

According to assumptions (i) and (ii) of the Hypothesis~(H2), we
have
$$ \mathbb{E}||X(t,\theta)-X(s,\theta)||^2_{L^2} \leq C_\delta
\sum_{n\in\mathbb{Z}_s^d} 2\gamma_n \,|n|^{2(\delta-1)}
 |t-s|^\delta . $$
Because $X$ is a Gaussian process, then for any $m\in N$, there
exists a constant $C_m>0$ that
$$\mathbb{E}||X(t,\theta)-X(s,\theta)||^{2m}_{L^2} \leq C_m
\left[ C_\delta \sum_{n\in\mathbb{Z}_s^d} 2\gamma_n
\,|n|^{2(\delta-1)} \right]^m |t-s|^{m\delta} . $$ Taking $m$ such
that $m\delta>1$ and using the Kolmogorov test, we see that the
solution $X(t,\theta)$ is $\eta$-H\"older continuous, with respect
to $t$, for $\eta=\delta/2-1/(2m)$. \qed\end{proof}
{\bf Example}
 Let us consider the particular case $b(t) = e^{-t}, ~t\geq 0$.
 Then, by previous considerations,
 $r(t,-|n|^2) = e^{(1-|n|^2)\,t}$. One can check that in this
 case the Hypothesis~(H2) is fulfilled.\\

\noindent
{\bf Remark}  We observe that the condition (i) in Hypothesis~(H2)
is the same as
$$
|n|^2 \int_0^{t} [r(s, -|n|^2)]^2 ds \leq C_{\delta} |n|^{2\delta}
|t|^{\delta}
$$
and hence it is nearly equivalent to say that the function $b$ is
admissible.


\backmatter

%
%
%
%
\printindex

\end{document}